\newtheorem{theorem}{Theorem}[section]
\newtheorem{corollary}[theorem]{Corollary}
\newtheorem{lemma}[theorem]{Lemma}
\newtheorem{proposition}[theorem]{Proposition}
\newtheorem{conA}{Condition~A\!\!}
\newtheorem{conB}{Condition~B\!\!}
\theoremstyle{definition}
\newtheorem{example}[theorem]{Example}
\newtheorem{question}[theorem]{Question}
\newtheorem{remark}[theorem]{Remark}
\theoremstyle{remark}
\numberwithin{equation}{section}
\newcommand{\id}{\approx}
\newcommand{\lbr}{\big[\hspace{-0.044in}\big[}      
\newcommand{\lbrs}{[\hspace{-0.024in}[}             
\newcommand{\rbr}{\big]\hspace{-0.044in}\big]}      
\newcommand{\rbrs}{]\hspace{-0.024in}]}             
\newcommand{\lpbr}{\langle\hspace{-0.037in}\langle} 
\newcommand{\rpbr}{\rangle\hspace{-0.037in}\rangle} 
\newcommand{\lobr}{(}                               
\newcommand{\robr}{)}                               
\newcommand{\til}{\widetilde}
\newcommand{\ov}{\overline}
\newcommand{\wh}{\widehat}
\newcommand{\ba}{\mathsf{bar}}
\newcommand{\occ}{\operatorname{\mathsf{occ}}}
\newcommand{\ini}{\operatorname{\mathsf{ini}}}
\newcommand{\fin}{\operatorname{\mathsf{fin}}}
\newcommand{\cont}{\operatorname{\mathsf{con}}}
\newcommand{\excl}{\operatorname{\mathsf{Excl}}}
\newcommand{\op}{\mathsf{op}}
\newcommand{\malce}{\,{\mathbin{\hbox{$\bigcirc$\rlap{\kern-8.5pt\raise0,50pt\hbox{${\tt m}$}}}}}\,}
\newcommand{\At}{A_2}
\newcommand{\Az}{A_0}
\newcommand{\Bt}{B_2}
\newcommand{\Bz}{B_0}
\newcommand{\el}{\ell_3}
\newcommand{\elop}{\ell_3^\op}
\newcommand{\LZ}{L_2}
\newcommand{\N}{N}
\newcommand{\RZ}{R_2}
\newcommand{\SL}{Sl_2}
\newcommand{\W}{W}
\newcommand{\Z}{\mathbb{Z}}
\newcommand{\OO}{O}
\newcommand{\Oe}{e}
\newcommand{\Ox}{x}
\newcommand{\eee}{\mathsf}
\newcommand{\ea}{\eee{a}}
\newcommand{\eb}{\eee{b}}
\newcommand{\ee}{\eee{e}}
\newcommand{\eE}{\eee{E}}
\newcommand{\ef}{\eee{f}}
\newcommand{\eg}{\eee{g}}
\newcommand{\eX}{\eee{X}}
\newcommand{\eY}{\eee{Y}}
\newcommand{\ei}{1}
\newcommand{\ez}{0}
\newcommand{\ZB}{\mathbb{Z}_2^\ba}                      
\newcommand{\ZBop}{\lobr\mathbb{Z}_2^\ba\robr^\op}      
\newcommand{\NB}{\N_2^\ba}                               
\newcommand{\NBop}{\lobr \N_2^\ba\robr^\op}              
\newcommand{\NBI}{\lobr \N_2^\ba\robr^I}                 
\newcommand{\NBIop}{\lobr\lobr \N_2^\ba\robr^I\robr^\op} 
\newcommand{\LZB}{\LZ^\ba}                              
\newcommand{\LZBop}{\lobr\LZ^\ba\robr^\op}              
\newcommand{\elB}{\el^\ba}                              
\newcommand{\elBop}{\lobr\el^\ba\robr^\op}              
\newcommand{\A}{\mathscr{A}} 
\newcommand{\be}{\mathbf{e}}
\newcommand{\bu}{\mathbf{u}}
\newcommand{\bv}{\mathbf{v}}
\newcommand{\bw}{\mathbf{w}}
\newcommand{\bA}{\mathbf{A}}
\newcommand{\bB}{\mathbf{B}}
\newcommand{\bCom}{\mathbf{Com}}
\newcommand{\bG}{\mathbf{G}}
\newcommand{\bLNB}{\mathbf{LNB}}
\newcommand{\bLZ}{\mathbf{LZ}}
\newcommand{\bPV}{\mathbf{PV}}
\newcommand{\bRZ}{\mathbf{RZ}}
\newcommand{\bSl}{\mathbf{Sl}}
\newcommand{\bU}{\mathbf{U}}
\newcommand{\bV}{\mathbf{V}}
\newcommand{\bW}{\mathbf{W}}
\newcommand{\latF}{\mathscr{F}}
\newcommand{\latL}{\mathscr{L}}
\newcommand{\latX}{\mathscr{X}}
\newcommand{\irr}{irre\-duci\-ble}
\newcommand{\jirr}{join irre\-duci\-bility}
\newcommand{\mog}{minimal order generator}
\newcommand{\pid}{pseu\-do\-identity}
\newcommand{\pids}{pseu\-do\-iden\-ti\-ties}
\newcommand{\pvar}{pseu\-do\-va\-ri\-ety}
\newcommand{\pvars}{pseu\-do\-va\-rie\-ties}
\newcommand{\ji}{\text{$\mathsf{ji}$}}
\newcommand{\fji}{\text{$\mathsf{fji}$}}
\newcommand{\sji}{\text{$\mathsf{sji}$}}
\newcommand{\sfji}{\text{$\mathsf{sfji}$}}
\newcommand{\mi}{\text{$\mathsf{mi}$}}
\newcommand{\fmi}{\text{$\mathsf{fmi}$}}
\newcommand{\sdi}{\text{$\mathsf{sdi}$}}
\newcommand{\smi}{\text{$\mathsf{smi}$}}
\newcommand{\sfmi}{\text{$\mathsf{sfmi}$}}
\newcommand{\xp}{$\times$-prime}
\newcommand{\KN}{\text{$\mathsf{KN}$}}
\newcommand{\up}{\textup}
\def\@tocline#1#2#3#4#5#6#7{\relax
  \ifnum #1>\c@tocdepth 
  \else
    \par \addpenalty\@secpenalty\addvspace{#2}%
    \begingroup \hyphenpenalty\@M
    \@ifempty{#4}{%
      \@tempdima\csname r@tocindent\number#1\endcsname\relax
    }{%
      \@tempdima#4\relax
    }%
    \parindent\z@ \leftskip#3\relax \advance\leftskip\@tempdima\relax
    \rightskip\@pnumwidth plus4em \parfillskip-\@pnumwidth
    #5\leavevmode\hskip-\@tempdima
      \ifcase #1
       \or\or \hskip 1em \or \hskip 2em \else \hskip 3em \fi%
      #6\nobreak\relax
    \dotfill\hbox to\@pnumwidth{\@tocpagenum{#7}}\par
    \nobreak
    \endgroup
  \fi}
\begin{document}

\title{Join irreducible semigroups}

\author[E. W. H. Lee]{Edmond W. H. Lee}

\address{Department of Mathematics, Nova Southeastern University, 3301 College Avenue, Fort Lauderdale, FL~33314, USA}
\email{edmond.lee@nova.edu}

\author[J. Rhodes]{John Rhodes}

\address{Department of Mathematics, University of California, Berkeley, 970 Evans Hall \#3840, Berkeley, CA~94720, USA}
\email{rhodes@math.berkeley.edu}
\email{blvdbastille@gmail.com}
 
\author[B. Steinberg]{Benjamin Steinberg}

\address{Department of Mathematics, City College of New York, NAC 8/133, Convent Avenue at 138th Street, New York, NY 10031, USA}
\email{bsteinberg@ccny.cuny.edu}

\thanks{The second author was supported by Simons Foundation Collaboration Grants for Mathematicians \#313548.
The third author was supported by Simons Foundation \#245268, United States--Israel Binational Science Foundation \#2012080, and NSA MSP \#H98230-16-1-0047.}

\subjclass[2000]{20M07}
\keywords{Semigroup, pseudovariety, join irreducible}

\begin{abstract}
We begin a systematic study of finite semigroups that generate join irreducible members of the lattice of pseudovarieties of finite semigroups, which are important for the spectral theory of this lattice.
Finite semigroups~$S$ that generate join irreducible pseudovarieties are characterized as follows: whenever~$S$ divides a direct product $A \times B$ of finite semigroups, then~$S$ divides either~$A^n$ or~$B^n$ for some $n \geq 1$.
We present a new operator ${ \mathbf{V} \mapsto \mathbf{V}^\mathsf{bar} }$ that preserves the property of join irreducibility, as does the dual operator, and show that iteration of these operators on any nontrivial join irreducible pseudovariety leads to an infinite hierarchy of join irreducible pseudovarieties.
We also describe all join irreducible pseudovarieties generated by a semigroup of order up to five.
It turns out that there are~$30$ such pseudovarieties, and there is a relatively easy way to remember them.
In addition, we survey most results known about join irreducible pseudovarieties to date and generalize a number of results in Sec.~7.3 of [\textit{The $q$-theory of Finite Semigroups}, Springer Monographs in Mathematics (Springer, Berlin, 2009)].
\end{abstract}

\dedicatory{Dedicated to the 80th birthday of Norman Reilly on 30 Jan 2020 \\ and the 65th birthday of Mikhail Volkov on 27 May 2020}

\maketitle

\setcounter{tocdepth}{2} \tableofcontents

\section{Introduction}

In the 1970s, Eilenberg~\cite{Eil76} highlighted the importance of~$\bPV$, the algebraic lattice of all {\pvars} of finite semigroups, via his research with Sch\"{u}tzenberger, by providing a correspondence between~$\bPV$ and varieties of regular languages.
Specifically, they proved that the lattice~$\bPV$ is isomorphic to the algebraic lattice of varieties of regular languages; see the monograph by the second and third authors~\cite[Introduction]{RS09} and the references therein.

The $\mathfrak{q}$-theory of finite semigroups focuses on~$\bPV$, but in a different manner, and can be viewed in analogy with the classical real analysis theory of continuous and differentiable functions from $[0,1]$ into $[0,1]$.
The analogy is given by replacing $[0,1]$ with~$\bPV$, continuous functions with $\mathbf{Cnt}(\bPV)$, and differentiable functions with $\mathbf{GMC}(\bPV)$; see \cite[Chapter~2]{RS09}.

From a number of points of view, $\bPV$ is an important algebraic lattice with many interesting properties, and several theories have been developed for its investigation.
For instance, the theorem of Reiterman~\cite{Rei82} characterized {\pvars} as exactly the classes defined by {\pids}.
This led to the syntactic approach---employed by Almeida in his work and monograph~\cite{Alm94}---that has became a fundamental tool in finite semigroup theory.
Some of these results and techniques will be employed in this paper.
Another important approach is the abstract spectral theory of~$\bPV$ going back to Stone with lattice theoretic foundations going back to Birkhoff; see \cite[Chapter~7]{RS09}.

Since~$\bPV$ is a lattice, it is natural to investigate its elements that satisfy important lattice properties.
For any element~$\ell$ in a lattice~$\latL$,
\begin{enumerate}[\ (1)]
\item $\ell$ is \textit{compact} if, for any $\latX \subseteq \latL$, \[ \ell \leq \bigvee \latX \quad \Longrightarrow \quad \ell \leq \bigvee \latF \text{ for some finite } \latF \subseteq \latX; \]
\item $\ell$ is \textit{join \irr} (\ji) if, for any $\latX \subseteq \latL$, \[ \ell \leq \bigvee \latX \quad \Longrightarrow \quad \ell \leq x \text{ for some } x \in \latX; \]
\item $\ell$ is \textit{finite join \irr} (\fji) if, for any finite $\latF \subseteq \latL$, \[ \ell \leq \bigvee \latF \quad \Longrightarrow \quad \ell \leq x \text{ for some } x \in \latF; \]
\item $\ell$ is \textit{meet \irr} (\mi) if, for any set $\latX \subseteq \latL$, \[ \ell \geq \bigwedge \latX \quad \Longrightarrow \quad \ell \geq x \text{ for some } x \in \latX; \]
\item $\ell$ is \textit{finite meet \irr} (\fmi) if, for any finite $\latF \subseteq \latL$, \[ \ell \geq \bigwedge \latF \quad \Longrightarrow \quad \ell \geq x \text{ for some } x \in \latF; \]
\item $\ell$ is \textit{strictly join \irr} (\sji) if, for any set $\latX \subseteq \latL$, \[ \ell = \bigvee \latX \quad \Longrightarrow \quad \ell \in \latX; \]
\item $\ell$ is \textit{strictly finite join \irr} (\sfji) if, for any finite $\latF \subseteq \latL$, \[ \ell = \bigvee \latF \quad \Longrightarrow \quad \ell \in \latF; \]
\item $\ell$ is \textit{strictly meet \irr} (\smi) if, for any $\latX \subseteq \latL$, \[ \ell = \bigwedge \latX \quad \Longrightarrow \quad \ell \in \latX; \]
\item $\ell$ is \textit{strictly finite meet \irr} (\sfmi) if, for any finite $\latF \subseteq \latL$, \[ \ell = \bigwedge \latF \quad \Longrightarrow \quad \ell \in \latF. \]
\end{enumerate}
An \textit{algebraic lattice} is a complete lattice that is join generated by its compact elements.
The compact elements of~$\bPV$ are the finitely generated {\pvars}.
The {\pvar} generated by a finite semigroup~$S$ is denoted by $\lpbr S \rpbr$.
It is clear that for any $\bV \in \bPV$, \[ \bV = \bigvee \{ \lpbr S\rpbr \mid S \in \bV \}. \]

The abstract spectral theory of a lattice is closely connected to the computation of its maximal distributive image, which is determined by the lattice's {\fji} and {\fmi} elements; see \cite[Chapter~7]{RS09} and the references therein.
The {\fji} and {\fmi} elements of~$\bPV$ are thus very important.
The {\ji} {\pvars} are just the compact {\fji} {\pvars}, as is easy to see, so we are interested in finite semigroups that generate {\pvars} that are {\fji} or equivalently {\ji}.

By abuse of terminology, we say that a finite semigroup~$S$ is \textit{join \irr} (\ji) if the {\pvar} $\lpbr S\rpbr$ is~{\ji}; finite semigroups that satisfy the properties in (3)--(9) are similarly defined.
A finite semigroup~$S$ is~{\ji} if and only if for all finite semigroups~$T_1$ and~$T_2$,
\[
S \prec T_1 \times T_2 \quad \Longrightarrow \quad S \prec T_1^n \text{ or } S \prec T_2^n \text{ for some } n \geq 1,
\]
where $A \prec B$ means that~$A$ is a homomorphic image of a subsemigroup of~$B$, and $A^n = A \times A \times \cdots \times A$ is the direct product of~$n$ copies of~$A$.
For finite semigroups, there are several properties stronger than being~{\ji}: a finite semigroup~$S$ is \textit{\xp} \cite[Section~9.3]{Alm94} if for all finite semigroups~$T_1$ and~$T_2$,
\[
S \prec T_1 \times T_2 \quad \Longrightarrow \quad S \prec T_1 \text{ or } S \prec T_2;
\]
a semigroup~$S$ is \textit{Kov\'{a}cs--Newman} (\KN) if whenever $f : T \twoheadrightarrow S$ is a surjective homomorphism where~$T$ is a subsemigroup of $T_1 \times T_2$ for some finite semigroups~$T_1$ and~$T_2$, subdirectly embedded, then~$f$ factors through one of the projections.
Semigroups that are {\KN} have been completely classified~\cite[Section~7.4]{RS09}.

The proper inclusions
\[
\{\text{{\KN} semigroups}\} \subsetneqq \{\text{{\xp} semigroups}\} \subsetneqq \{\text{{\ji} semigroups}\}
\]
are known to hold.
For example, while any simple non-abelian group  is~{\KN}, any cyclic group~$\Z_p$ of prime order~$p$ is~{\xp} but not~{\KN}.
The well-known Brandt semigroup~$\Bt$ of order five is~{\ji} but not~{\xp} \cite[Example~7.4.3]{RS09}.

Since the lattice~$\bPV$ is algebraic, it follows from a well-known theorem of Birkhoff that its~{\smi} elements constitute the unique minimal set of meet generators~\cite[Section~7.1]{RS09}.
It easily follows from Reiterman's theorem~\cite[Section~3.2]{RS09} that each~{\smi} {\pvar} is defined by a single {\pid} but not conversely.
Now the reverse of the lattice~$\bPV$ is not algebraic but is locally dually algebraic, so the {\sji} elements
of $\bPV$ constitute the unique minimal set of join generators for~$\bPV$~\cite[Section~7.2]{RS09}.
The {\sji} {\pvars} are precisely those having a unique proper maximal sub\-{\pvar}.

Every~{\ji} {\pvar} is~{\sji}, but the converse does not hold, as demonstrated by several known examples~\cite[Proposition~7.3.22]{RS09} and additional examples in Propositions~\ref{P: Bz not ji} and~\ref{P: W not ji}.
Hence~{\ji} {\pvars} do not join generate the lattice~$\bPV$.
This prompts the following tantalizing question.

\begin{question} \label{Q: ji join generate}
What do the~{\ji} elements in~$\bPV$ join generate?
\end{question}

It is well known and not difficult to prove that the function
\[
S \mapsto \begin{cases} 1 & \text{if $\lpbr S\rpbr$ is \sji} \\ 0 & \text{otherwise} \end{cases}
\]
on the class of finite semigroups is computable; see, for example, Proposition~\ref{P: maximal} and its proof.
On the other hand, it is unknown if the function
\[
S \mapsto \begin{cases} 1 & \text{if $\lpbr S\rpbr$ is \ji} \\ 0 & \text{otherwise} \end{cases}
\]
on the class of finite semigroups is decidable.

\begin{question} \label{Q: ji decidable}
Is~{\ji} decidable, that is, is the above function computable?
\end{question}

If~{\ji} is not decidable, then a systematic study of {\ji} semigroups seems doomed in general.
But even if~{\ji} is decidable, then it is probably hopeless, in practice, to find all~{\ji} semigroups.
In any case, an important step is to find methods to produce new {\ji} semigroups and methods to identify and eliminate finite semigroups that are not~{\ji}.
This paper develops several new methods.
For semigroups of small order, in particular, the (Birkhoff) equational theory is crucial and is often used.

A pleasant feature of a finite semigroup~$S$ being~{\ji} is the ``five for one phenomenon" related to the \textit{exclusion class} $\excl(S)$ of~$S$, the class of all finite semigroups~$T$ for which $S \notin \lpbr T \rpbr$.
Indeed, a finite semigroup~$S$ is {\ji} if and only if $\excl(S)$ is a {\pvar} \cite[Theorem~7.1.2]{RS09}.
In this case, $\excl(S)$ is~{\mi} and so is defined by a single {\pid}, and since $\excl(S)$ is also~{\smi}, it has $\excl(S) \vee \lpbr S\rpbr$ as a unique cover.
Further, $\lpbr S\rpbr \cap \excl(S)$ is the unique maximal sub\-{\pvar} of~$\lpbr S\rpbr$, and so $\excl(S)$ determines~$\lpbr S\rpbr$; see \cite[Section~7.1]{RS09}.
For example, the Brandt semigroup~$\Bt$ is~{\ji}, the exclusion class $\excl(\Bt)$ coincides with the {\pvar}
\[ \mathbf{DS} = \lbr ((xy)^\omega(yx)^\omega(xy)^\omega)^\omega \id (xy)^\omega \rbr \]
of finite semigroups whose $\mathscr{J}$-classes are sub\-semigroups \cite[Example~7.3.4]{RS09}, and $\lpbr \Bt\rpbr \cap \mathbf{DS} = \lpbr \Bz\rpbr$ is the unique maximal sub\-{\pvar} of~$\lpbr \Bt\rpbr$, where~$\Bz$ is a sub\-semigroup of~$\Bt$ of order four~\cite{Lee04}; see Subsection~\ref{sub: comp 0-simple}.
More examples of maximal sub\-{\pvars} can be found in Section~\ref{sec: ji}.

As mentioned earlier, a goal of this paper is to find new {\ji} semigroups.
One approach---and a very important problem in its own right---is to find new operators on $\bPV$ that preserve the property of being~{\ji}.
The following are some known examples.

\begin{example} \label{E: Sdual}
For any semigroup~$S$, the \textit{opposite} semigroup~$S^\op$ of~$S$ is obtained by reversing the multiplication on~$S$.
Then the \textit{dual operator} \[\bV \mapsto \bV^\op = \{ S^\op \mid S \in \bV \}\] on $\bPV$ preserves the property of being {\ji}.
\end{example}

\begin{example}[See Lemma~\ref{L: local}] \label{E: SI}
For any semigroup~$S$, let~$S^I$ denote the monoid obtained by adjoining an external identity element~$I$ to~$S$, and define
\[
S^\bullet =
\begin{cases}
S & \text{if $S$ is a monoid}, \\
S^I & \text{otherwise}.
\end{cases}
\]
Then the operator $\bV \mapsto \bV^\bullet = \{ S^\bullet \mid S \in \bV \}$ on $\bPV$ preserves the property of being {\ji}.
\end{example}

Example~\ref{E: Sdual} is not surprising; in fact, in many investigations, such as the finite basis problem for small semigroups \cite{LLZ12,Tra83}, it is common to identify~$S^\op$ with~$S$.
The situation for the operator $\bV \mapsto \bV^\bullet$, however, can be different because it is possible that no new {\ji} {\pvar} is produced.
Indeed, if a {\pvar}~$\bV$ is generated by some monoid, then $\bV^\bullet = \bV$ cannot be a new example of {\ji} {\pvar}.
But if $\bV = \lpbr S \rpbr$ is a {\ji} {\pvar} that is not generated by any monoid, then $\bV^\bullet = \lpbr S^I\rpbr$ is a~{\ji} {\pvar} properly containing~$\bV$.
Note that the operator $\bV \mapsto \bV^I = \{ S^I \mid S \in \bV \}$ does not preserve the property of being {\ji}.
For example, the cyclic group $\Z_p$ of any prime order~$p$ generates a {\ji} {\pvar}, but the {\pvar} $\lpbr \Z_p \rpbr^I = \lpbr \Z_p^I \rpbr$ is not {\ji} because $\lpbr \Z_p^I \rpbr = \lpbr \Z_p \rpbr \vee \lpbr \SL \rpbr$, where $\SL$ is the semilattice of order two.

On the other hand, it is possible for~$\bV^I$ to be~{\ji} even though~$\bV$ is not~{\ji}.
For example, if $S = \SL \times \RZ$, where~$\RZ$ is the right zero semigroup of order two, then the {\pvar} $\lpbr S\rpbr = \lpbr\SL\rpbr \vee \lpbr\RZ\rpbr$ is not~{\ji} but $\lpbr S \rpbr^I = \lpbr \SL^I \times \RZ^I\rpbr = \lpbr\RZ^I\rpbr$ is~{\ji} \cite[Example~7.3.1]{RS09}.

\begin{remark}
It is clear that the operator $\bV \mapsto \bV^\op$ also preserves the property of being {\sji}, but the operator $\bV \mapsto \bV^\bullet$ does not preserve this property.
For instance, the {\pvar} $\lpbr \Bz \rpbr$ is {\sji} while $\lpbr \Bz \rpbr^\bullet = \lpbr \Bz^I \rpbr$ is not {\sji}; see Proposition~\ref{P: Bz not ji}.
\end{remark}

Given a finite semigroup~$S$, consider the right regular representation $(S^\bullet,S)$ of~$S$ acting on~$S^\bullet$ by right multiplication.
Then~$S^\ba$ is defined by adding all constant maps on~$S^\bullet$ to~$S$, where multiplication is composition with the variable written on the left.
Note that if $(S,S)$ is a faithful transformation semigroup, then we shall see later that the semigroup obtained from~$S$ by adjoining the constant mappings on~$S$ generates the same {\pvar} as~$S^\ba$ and hence we sometimes (abusively) denote this latter semigroup by~$S^\ba$ as well.
Some small examples of~$S^\ba$ can be found in Section~\ref{sec: req sgps}.

It turns out that the operator $\bV \mapsto \bV^\ba = \lpbr S^\ba \mid S \in \bV \rpbr$ on $\bPV$ preserves the property of being~{\ji}.
This result, the details of which are given in Subsection~\ref{sub: augmentation}, is important: for any finite nontrivial {\ji} semigroup~$S$, the {\pvars}
\[ \lpbr S^\ba \rpbr, \, \lpbr \lobr S^\ba\robr^\flat \rpbr, \, \lpbr \lobr\lobr S^\ba\robr^\flat\robr^\ba \rpbr, \, \lpbr \lobr\lobr\lobr S^\ba\robr^\flat\robr^\ba\robr^\flat \rpbr, \, \ldots, \]
where $X^\flat=\lobr\lobr X^\op\robr^\ba\robr^\op$, constitute an infinite increasing chain of~{\ji} {\pvars} (Corollary~\ref{C: hierarchy}) whose complete union is an~{\fji} {\pvar} that is not compact \cite[Chapter~7]{RS09}.

Unsurprisingly, irregularities do show up when the operator $\bV \mapsto \bV^\ba$ is applied.
For instance, it is sometimes possible for $\lpbr S^\ba\rpbr = \lpbr S\rpbr$, so that no new~{\ji} {\pvar} is obtained.
Further, it is possible for $\lpbr S^\ba\rpbr$ to be~{\ji} even though~$\lpbr S\rpbr$ is not~{\ji}.
An important class of examples will be given in Subsection~\ref{sub: Ok}.

A main result of this paper is the complete classification of all {\ji} {\pvars} generated by a semigroup of order up to five.
We want to give the reader an easy way to remember their generators.
First, we have the three operators $S \mapsto S^\op$, $S \mapsto S^\bullet$, and $S \mapsto S^\ba$, and their \textit{iterations} such as $\lobr\lobr\lobr S^\op \robr^\ba \robr^\op$ and $\lobr\lobr\lobr\lobr\lobr S^\ba \robr^\op \robr^\bullet \robr^\ba \robr^\op \robr^\ba$.
If we have a list of~{\ji} semigroups, applying the three operators and their iterations give~{\ji} semigroups that may or may not generate new~{\ji} {\pvars}.

A~{\ji} {\pvar}~$\bV$ is \textit{primitive} if $\bV \neq \lpbr S^\bullet\rpbr$ and $\bV \neq \lpbr S^\ba\rpbr$ for any finite semigroup~$S$ that generates a~{\ji} proper sub\-{\pvar} of~$\bV$.
Now we are only interested in knowing the primitive~{\ji} {\pvars} up to isomorphism and anti-isomorphism of members since the others can be obtained by applying the operators.
Therefore when describing~{\ji} {\pvars} generated by a semigroup of order up to five, it suffices to list, up to isomorphism and anti-isomorphism, only generators of those that are primitive; see Table~\ref{Tab: primitive ji}.
Presentations and multiplication tables of these semigroups can be found in Section~\ref{sec: req sgps}.
The only new example of a semigroup of order five that generates a primitive {\ji} {\pvar} is~$\elB$; all the other semigroups were previously known to be~{\ji}.
Note that~$\elB$ is {\ji} but~$\el$ is not; see Subsection~\ref{sub: Ok}, where this example is extended to an infinite family of examples.

\begin{table}[ht!]
\[\def\arraystretch{1.5}
\begin{tabular}[m]{ccclc} \hline
& \multirow{2}{*}{$n$} & & Semigroups of order~$n$ that generate & \\[-0.08in]
&                      & & primitive {\ji} {\pvars}              & \\\hline
& 2                    & & $\Z_2$, $\N_2$, $\LZ$                 & \\[-0.04in]
& 3                    & & $\Z_3$, $\N_3$                        & \\[-0.04in]
& 4                    & & $\Z_4$, $\N_4$, $\Az$                 & \\[-0.04in]
& 5                    & & $\Z_5$, $\N_5$, $\At$, $\Bt$, $\elB$  & \\[0.02in] \hline
\end{tabular}
\]
\caption{Some generators of primitive {\ji} {\pvars}} \label{Tab: primitive ji}
\end{table}

The statement of the above result regarding semigroups of order up to five is straightforward, but its proof is not so; it requires knowledge of sub\-{\pvars} of {\pvars} generated by small semigroups \cite{Lee04,Lee08,LL11,LL15,LV07,LZ15,Tis07,Tra94,ZL09} and of bases of {\pids} for many {\pvars} of the form $\bigvee_{i=1}^k \lpbr S_i \rpbr$, and advanced algebraic theory of finite semigroups~\cite{RS09}.

The following are all other~{\ji} semigroups known to us, except for some well-known results on completely simple semigroups.

\subsection{Groups} \label{sub: groups}

It is an easy observation that a finite group generates a {\ji} {\pvar} of semigroups if and only if it generates a {\ji} {\pvar} of groups; see \cite[Chapter~7]{RS09}.
A {\pvar}~$\bV$ of groups is called \emph{saturated} if whenever $\varphi: G\to H$ is a homomorphism of finite groups with $H\in \bV$, there exists a subgroup $K\leq G$ such that $K\in \bV$ and $K\varphi=H$.
It is observed in~\cite[Example~7.6.5]{RZ10} that any {\pvar} of groups closed under extension is saturated.
In particular, for any prime~$p$, the {\pvar} of $p$-groups is saturated.
It is almost immediate from the definition that if~$\bV$ is a saturated {\pvar} of groups, then a group $G\in \bV$ generates a {\ji} {\pvar} in the lattice of all semigroup {\pvars} if and only if it generates a {\ji} member of the lattice of sub{\pvars} of~$\bV$.
In particular, a $p$-group~$G$ is {\ji} if and only if whenever~$G$ divides a direct product $A\times B$ of $p$-groups, then~$G$ divides either~$A^n$ or~$B^n$ for some $n \geq 1$.

\subsubsection*{Abelian groups}

The following statements on any directly indecomposable finite abelian group~$A$ are equivalent: $A$ is~{\ji}, $A$ is {\xp}, and $A \cong \Z_{p^n}$ for some prime~$p$ and $n \geq 1$.
This result follows from the Fundamental Theorem of Finite Abelian Groups and that $\Z_{p^n}$ \textit{lifts} in the sense that whenever~$\Z_{p^n}$ is a homomorphic image of some semigroup~$S$, then~$\Z_{p^{n+r}}$ embeds into~$S$ for some $r \geq 0$.

\subsubsection*{Monolithic groups}

A finite group~$G$ is \textit{monolithic} if it contains a unique minimal nontrivial normal subgroup~$N$; in this case, $N$ is called the \textit{monolith} of~$G$, and it is well known that $N \cong H^n$ for some simple group~$H$ and $n \geq 1$.

A finite group is monolithic if and only if it is sub\-directly indecomposable;
recall that a semigroup~$S$ is a \textit{subdirect product} of~$S_1$ and~$S_2$, written $S \ll S_1 \times S_2$, if~$S$ is a subsemigroup of $S_1 \times S_2$ mapping onto both~$S_1$ and~$S_2$ via the projections~$\pi_i$.
A semigroup~$S$ is \textit{subdirectly indecomposable} (\sdi) if $S \ll S_1 \times S_2$ implies that at least one of the projections $\pi_i : S \twoheadrightarrow S_i$ is an isomorphism.
Therefore when locating {\ji} groups from among finite groups, it suffices to concentrate on those that are monolithic.

\subsubsection*{Groups with non-abelian monolith}

Kov\'{a}cs and Newman proved that any monolithic group with non-abelian monolith is {\KN} \cite[Section~7.4]{RS09} and so also {\xp} and~{\ji}.
Therefore, all simple non-abelian groups  are~{\ji}.

\subsubsection*{Groups with abelian monolith}

An abelian monolith~$N$ of a finite group~$G$ \textit{splits} if there exists a subgroup~$K$ of~$G$ so that $N \cap K = \{1\}$ and $NK = G$.
A finite subdirectly indecomposable group with an abelian monolith that splits is {\ji}; this result is due to G.\,M. Bergman and its proof is given in Subsection~\ref{sub: Bergman}.
Therefore, the symmetric group $\mathsf{Sym}_3$ over three symbols is~{\ji}.

\subsubsection*{Groups of small order}

The {\ji} {\pvars} generated by a group of order seven or less are $\lpbr \Z_2 \rpbr$, $\lpbr \Z_3 \rpbr$, $\lpbr \Z_4 \rpbr$, $\lpbr \Z_5 \rpbr$, $\lpbr \mathsf{Sym}_3 \rpbr$, and $\lpbr \Z_7 \rpbr$.
Regarding groups of order eight that generate other {\ji} {\pvars}, besides~$\Z_8$, there are two nontrivial cases: the dihedral group~$D_4$ of the square and the quaternion group~$Q_8$.
Let $G \in \{ D_4, Q_8 \}$.
Then forming $G \times G$ and dividing out the two centers identified, $(G \times G)/\{(1,1),(a,a) \}$ gives isomorphic groups, denoted by $G\circ G$.
Since $G \leq G \circ G$, it follows that $\lpbr D_4\rpbr = \lpbr Q_8\rpbr$.
Therefore, the groups~$D_4$ and~$Q_8$ are not {\xp} and so also not {\KN}.
However, the {\pvar} $\lpbr D_4\rpbr = \lpbr Q_8\rpbr$ is {\ji}; see Subsection~\ref{sub: D4 Q8}.
This result is due independently to Kearnes~\cite{Kea18} and the anonymous reviewer.

Other {\ji} {\pvars} generated by a group of order up to~11 are $\lpbr \Z_9 \rpbr$, $\lpbr D_5 \rpbr$, and $\lpbr \Z_{11} \rpbr$.

\subsection{$\mathscr{J}$-trivial semigroups} \label{sub: J-trivial}

Presently, the only {\ji} {\pvars} of $\mathscr{J}$-trivial semigroups known in the literature are generated by the following:
\begin{enumerate}[1.]
\item[$\bullet$] $\N_n = \langle \ea \mid \ea^n=\ez \rangle$, $n \geq 1$;
\item[$\bullet$] $H_n = \langle \ee,\ef \mid \ee^2=\ee,\,\ef^2=\ef,\,(\ee\ef)^n = 0 \rangle$, $n \geq 1$;
\item[$\bullet$] $K_n = \langle \ee,\ef \mid \ee^2=\ee,\,\ef^2=\ef,\,(\ee\ef)^n\ee = 0 \rangle$, $n \geq 1$;
\item[$\bullet$] $\N_n^I$, $H_n^I$, $K_n^I$, $n \geq 1$.
\end{enumerate}
The {\pid} defining the {\pvar} $\excl(\N_n)$ is given in Subsection~\ref{sub: Nn ji}, while the {\pvars} $\excl(H_n)$ and $\excl(K_n)$ are defined by the {\pids} \[ (x^\omega y^\omega)^{n+\omega} \id (x^\omega y^\omega)^n \quad \text{and} \quad (x^\omega y^\omega)^{n+\omega}x^\omega \id (x^\omega y^\omega)^nx^\omega, \] respectively \cite[Propositions~2.3 and~3.3]{Lee17}.

\subsection{Commutative semigroups}

The {\pvar} $\bCom$ of finite commutative semigroups can be decomposed as
\[
\bCom = (\bCom \cap \bG) \vee (\bCom \cap \bA),
\]
where~$\bG$ is the {\pvar} of finite groups and~$\bA$ is the {\pvar} of finite aperiodic semigroups \cite[Figure~9.1]{Alm94}.
Therefore any {\ji} {\pvar} of commutative semigroups is contained in either $\bCom \cap \bG$ or $\bCom \cap \bA$.
As noted in Subsection~\ref{sub: groups}, the {\ji} sub{\pvars} of $\bCom \cap \bG$ are each generated by a cyclic group~$\Z_{p^n}$ of prime power order.
As for $\bCom \cap \bA$, each of its finite semigroups satisfies the identity $x^{n+1} \id x^n$ for all sufficiently large $n \geq 1$ and so belongs to $\lpbr \N_n^I\rpbr$; see Proposition~\ref{P: NnI}(i).
A complete description of {\ji} sub{\pvars} of $\bCom$ is thus dependent on the answer to the following question.

\begin{question}
For each $n \geq 1$\up, what are the {\ji} sub{\pvars} of $\lpbr \N_n^I\rpbr$?
\end{question}

Presently, the only known examples of {\ji} sub{\pvars} of $\lpbr \N_n^I\rpbr$ are $\lpbr \N_k\rpbr$ and $\lpbr \N_k^I\rpbr$, where $1 \leq k \leq n$.

\subsection{Bands}

The {\pvar}~$\bB$ of finite bands is {\fji} (Corollary~\ref{C: B fji}).
Each proper sub\-{\pvar} of~$\bB$ is compact and a complete description of the lattice of sub\-{\pvars} of~$\bB$ is well known; see, for example, Almeida~\cite[Section~5.5]{Alm94}.
The atoms of this lattice are $\bSl = \lpbr\SL\rpbr$, $\bLZ = \lpbr\LZ\rpbr$, and $\bRZ = \lpbr\RZ\rpbr$; see Subsection~\ref{sub: bands}.

Let $\bLNB=\bSl\vee \bLZ$.
For any {\pvar}~$\bV$, define the Mal'cev products $\til\alpha\bV = \bRZ\malce \bV$ and $\til\beta\bV=\bLZ\malce \bV$; see Subsection~\ref{sub: large}.
Then by~\cite{Pas90}, the proper, nontrivial {\sji} {\pvars} of bands are as follows:
\begin{enumerate}[1.]
\item[$\bullet$] $\bLZ$, $\bRZ$, and $\bSl$;
\item[$\bullet$] $(\til\alpha\til\beta)^n\bSl$ and $\til \beta(\til\alpha\til\beta)^n\bSl$, $n \geq 1$, and their duals;
\item[$\bullet$] $(\til\beta\til\alpha)^{n+1}\bLNB$ and $\til \alpha(\til\beta\til\alpha)^n\bLNB$, $n \geq 0$, and their duals.
\end{enumerate}
However, we observe that $\til\alpha \bLNB=\til\alpha \bLZ$.
Since $S\mapsto S^{\ba}$ preserves {\jirr}, it follows that the {\pvar} generated by a finite band is {\ji} if and only if it is {\sji}; see Theorem~\ref{t: band sji}.
As observed after Question~\ref{Q: ji join generate}, it is decidable if a finite semigroup generates a {\sji} {\pvar}.
Therefore, Question~\ref{Q: ji decidable} is affirmatively answered for bands.

\subsection{Kov\'{a}cs--Newman semigroups}

All {\KN} semigroups are known~\cite[Section~7]{RS09}.
These are semigroups with kernel (minimal two-sided ideal) a Rees matrix semigroup over a monolithic group with non-abelian monolith that acts faithfully on the right and left of the kernel.

\subsection{The subdirectly indecomposable viewpoint}

Since every finite semigroup divides (in fact, is a subdirect product of) its {\sdi} homomorphic images, we can restrict our search for new {\ji} semigroups to {\sdi} semigroups, just as in the case of groups, when we can restrict our search to monolithic finite groups.

In more detail, to find the {\ji} {\pvars}, we clearly need only to find finite semigroups~$S$ such that~$\lpbr S\rpbr$ is {\ji} and there exist no semigroups~$T$ with $|T| < |S|$ and $\lpbr T\rpbr = \lpbr S\rpbr$.
Such a semigroup~$S$ is called a \textit{\mog} for the compact {\pvar}~$\lpbr S\rpbr$.

Now the {\mog}s of {\ji} {\pvars}, in fact of {\sji} {\pvars}, must be {\sdi}.
To see this, suppose that~$S$ is any finite semigroup that is not {\sdi}.
Then $S \ll S_1 \times S_2 \times \cdots \times S_k$ for some homomorphic images~$S_j$ of~$S$ such that $|S_j| < |S|$.
But since $\lpbr S\rpbr = \lpbr S_1\rpbr \vee \lpbr S_2\rpbr \vee \cdots \vee \lpbr S_k\rpbr$ and~$\lpbr S\rpbr$ is {\sji}, it follows that $\lpbr S\rpbr = \lpbr S_j\rpbr$ for some~$j$, whence~$S$ is not a {\mog}

If a finite semigroup~$S$ is {\xp} (e.g. {\KN}), then~$S$ is a {\mog} and any {\mog} for~$\lpbr S\rpbr$ is isomorphic to~$S$.
The proof is clear.
However, {\mog}s for the same {\ji} {\pvar} need not be isomorphic; for example, $\lpbr Q_8\rpbr = \lpbr D_4\rpbr$ is {\ji} and $Q_8 \ncong D_4$.

It should be pointed out that a finite semigroup~$S$ being {\sdi} does not imply that the {\pvar}~$\lpbr S\rpbr$ is {\ji} or even {\sji}.
For example, the Rees matrix semigroup \[ S = \mathscr{M}^0 \big(\Z_2,\{1,2\},\{1,2\}; \big[\begin{smallmatrix} 1&0 \\ 0&1 \end{smallmatrix}\big]\big)\] is {\sdi}, but $\lpbr S\rpbr = \lpbr B_2\rpbr \vee \lpbr\Z_2\rpbr$ is not {\sji}; see \cite[Section~4.7]{RS09}.

\subsection{Organization}
The article is organized as follows.
In Section~\ref{sec: bar}, the operator $\bV \mapsto \bV^\ba$ is introduced in detail and some related results are established.
In Section~\ref{sec: req sgps}, some important small semigroups that are required for this paper are defined.
In Section~\ref{sec: general}, some general results regarding {\ji} {\pvars} are established.
In Section~\ref{sec: ji}, some explicit {\pvars} are shown to be {\ji}, and conditions sufficient for a finite semigroup to generate one of them are established.
In Section~\ref{sec: non-ji}, some conditions sufficient for a finite semigroup to generate a non-{\ji} {\pvar} are established.
Results in Sections~\ref{sec: general}--\ref{sec: non-ji} are then employed in Section~\ref{sec: proof} to prove that among all {\pvars} generated by a semigroup of order up to five, only 30~are {\ji}.

\section{Augmented semigroups} \label{sec: bar}

All semigroups and transformation semigroups, with the exception of free semigroups and free profinite semigroups, are assumed finite.
Notation in the monograph~\cite{RS09} will often be followed closely.

Let $(X,S)$ be a transformation semigroup where~$S$ is a semigroup that acts faithfully on the right of a set~$X$.
Then $\ov{(X,S)}=(X,S\cup \ov X)$ where~$\ov X$ is the set of constant maps on~$X$.
The constant map to a fixed element $x \in X$ is denoted by~$\ov x$.
If $(X,S)$ and $(Y,T)$ are transformation semigroups, then \[(X,S)\times (Y,T)=(X\times Y,S\times T)\] with the action $(x,y)(s,t)=(xs,yt)$.

Refer to Eilenberg~\cite{Eil76} for the definition of division~$\prec$ of transformation semigroups.

\begin{lemma}[Eilenberg~{\cite[Exercise~I.4.1, Propositions~I.5.4, and page~20]{Eil76}}] \label{L: Eilenberg}
Let $(X,S)$ and $(Y,T)$ be any transformation semigroups\up.
Then
\begin{enumerate}[\rm(i)]
\item $(X,S) \prec (Y,T)$ implies that $\ov{(X,S)}\prec \ov{(Y,T)}$\up;
\item $(X,S) \prec (Y,T)$ implies that $S\prec T$\up;
\item $\ov{(X,S)\times (Y,T)}\prec \ov{(X,S)}\times \ov{(Y,T)}$\up.
\end{enumerate}
\end{lemma}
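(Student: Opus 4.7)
The plan is to prove all three parts at the level of a single covering witness for division. Recall that $(X,S) \prec (Y,T)$ is witnessed by a subsemigroup $T' \leq T$, an invariant subset $Y' \subseteq Y$, a surjective partial function $\varphi : Y' \twoheadrightarrow X$, and a surjective relational morphism $R \subseteq S \times T'$ satisfying $\varphi(y \cdot t) = \varphi(y) \cdot s$ for all $(s,t) \in R$ and $y \in Y'$. Part~(ii) is then immediate: projecting $R$ onto its second coordinate witnesses $S \prec T' \leq T$ purely as semigroups, forgetting the action.

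For part~(i), I would extend $R$ to $R' \subseteq (S \cup \ov X) \times (T' \cup \ov Y)$ by adjoining the pair $(\ov x, \ov{y'})$ for every $x \in X$ and every $y' \in \varphi^{-1}(x)$, which is nonempty by surjectivity of $\varphi$. Compatibility with $\varphi$ is automatic since $y \cdot \ov{y'} = y'$ for any $y \in Y'$, and $R'$ is closed under componentwise multiplication: constants on the left of a product collapse that product to a constant, while $\ov x \cdot s = \ov{xs}$ matches $\ov{y' \cdot t}$ with $\varphi(y' \cdot t) = \varphi(y') \cdot s = xs$ whenever $(s,t) \in R$ and $\varphi(y') = x$. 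Surjectivity of the first projection of $R'$ onto $S \cup \ov X$ is then clear.

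For part~(iii), I would exhibit a direct embedding of transformation semigroups. Define
\[
\psi : \ov{(X \times Y, S \times T)} \to \ov{(X,S)} \times \ov{(Y,T)}
\]
by $(s,t) \mapsto (s,t)$ and $\ov{(x,y)} \mapsto (\ov x, \ov y)$. A four-case check--splitting on whether each factor is a constant or an element of $S \times T$--shows that $\psi$ is a semigroup homomorphism; the essential computations $(\ov x, \ov y) \cdot (s,t) = (\ov{xs}, \ov{yt})$ and $(s,t) \cdot (\ov x, \ov y) = (\ov x, \ov y)$ both follow from the ``variable on the left'' convention for constants. Since $\psi$ preserves the action on $X \times Y$ pointwise, its image is a sub-transformation semigroup of $\ov{(X,S)} \times \ov{(Y,T)}$ isomorphic to $\ov{(X \times Y, S \times T)}$, yielding the desired division.

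The main obstacle is notational: keeping straight how constants absorb under left versus right composition in the chosen convention. Once this is fixed, each part is mechanical. I expect part~(i) to be the most fiddly, as ensuring closure of the extended relational morphism requires taking \emph{all} constant covers $\ov{y'}$ with $\varphi(y') = x$, rather than a single chosen representative for each $x$.
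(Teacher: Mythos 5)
The paper itself gives no proof of this lemma (it is quoted from Eilenberg, with only the one-sentence remark after the statement), so the comparison is with that remark and with the standard argument. Your treatment of (i) and (iii) is correct: in (i) the extended relation $R'$ is fully defined (by surjectivity of $\varphi$), compatible with $\varphi$, and multiplicatively closed exactly because you adjoin \emph{all} constants $\ov{y'}$ with $\varphi(y')=x$; in (iii) the map $(s,t)\mapsto(s,t)$, $\ov{(x,y)}\mapsto(\ov x,\ov y)$ is compatible with the action on $X\times Y$ pointwise, which is all the division requires.

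Part (ii), however, is not ``immediate,'' and your one-line argument has a genuine gap. Projecting $R$ onto its second coordinate yields a subsemigroup $U=\pi_2(R)\leq T$, but a division $S\prec T$ requires a surjective homomorphism from a subsemigroup of $T$ onto $S$, and you have not produced one; the graph of $R$ only exhibits $S$ as a quotient of a subsemigroup of $S\times T'$, which is not the same thing. Moreover, ``forgetting the action'' is exactly what cannot be done: for any $S$ there is a fully defined relational morphism into the trivial semigroup, yet $S\not\prec\{1\}$; what makes (ii) true is the action data you discard. The missing step is this: if $(s_1,t)$ and $(s_2,t)$ both lie in $R$, then for every $y\in Y'$ one has $\varphi(y)s_1=\varphi(y\cdot t)=\varphi(y)s_2$; since $\varphi$ is surjective onto $X$ and the transformations are total, $xs_1=xs_2$ for all $x\in X$, and faithfulness of the action of $S$ on $X$ (part of the definition of a transformation semigroup here) forces $s_1=s_2$. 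Hence each $t\in U$ is related to at most one $s$, so $t\mapsto s$ is a well-defined surjective homomorphism $U\twoheadrightarrow S$, and $S\prec T$. This totality/faithfulness point is precisely what the paper flags in the sentence following Lemma~\ref{L: Eilenberg} (``Lemma~\ref{L: Eilenberg}(ii) holds because the mappings involved are total''), so it needs to be argued rather than waved away.
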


Lemma~\ref{L: Eilenberg}(ii) holds because the mappings involved are total.

\begin{lemma}[D.~Allen; see Eilenberg~{\cite[Proposition~I.9.8]{Eil76}}] \label{l:Allen}
If $(X,S)$ is any transformation semigroup\up, then $(S^\bullet,S)\prec (X,S)^{|X|}$\up.
\end{lemma}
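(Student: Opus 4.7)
The plan is to realize $(S^\bullet,S)$ explicitly as a quotient of an invariant subpart of the power $(X,S)^{|X|} = (X^{|X|}, S^{|X|})$. Index the coordinates by the elements of $X$ itself. As acting subsemigroup take the diagonal
\[ T' = \{(s)_{x \in X} : s \in S\} \leq S^{|X|}, \]
which is isomorphic to $S$ via the obvious homomorphism $\beta \colon (s)_{x \in X} \mapsto s$. As invariant state set take the orbit of the tautological tuple $\bu = (x)_{x \in X}$ under the coordinatewise action of $S^\bullet$:
\[ Y' = \{\bu s : s \in S^\bullet\} = \{(xs)_{x \in X} : s \in S^\bullet\}. \]
The identity $\bu s \cdot (t)_{x \in X} = \bu(st)$, with $st \in S^\bullet$, shows at once that $Y'$ is $T'$-stable.

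Next define $\alpha \colon Y' \to S^\bullet$ by $\alpha(\bu s) = s$. Once $\alpha$ is known to be well-defined, it is automatically surjective, $\beta$ is a semigroup isomorphism onto $S$, and the computation $\alpha(\bu s \cdot (t)_{x \in X}) = st = \alpha(\bu s)\,\beta((t)_{x \in X})$ verifies the action-compatibility, so the pair $(\alpha,\beta)$ witnesses $(S^\bullet, S) \prec (X,S)^{|X|}$. Well-definedness of $\alpha$ amounts to the assignment $s \mapsto \bu s$ being injective on $S^\bullet$. For $s,t \in S$ this is the hypothesis that $(X,S)$ is faithful. The one case requiring thought is when exactly one of $s,t$ is the adjoined identity $I \in S^\bullet \setminus S$; I would argue that if $s \in S$ satisfied $xs = x$ for every $x \in X$, then for any $t \in S$ we would have $x(st) = (xs)t = xt$ and $x(ts) = (xt)s = xt$, so faithfulness forces $st = t = ts$ for all $t$, whence $s$ is a two-sided identity of $S$. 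But then $S$ would already be a monoid and $I$ would not have been adjoined, a contradiction.

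The only real obstacle is this last case analysis, which hinges on the interplay between faithfulness of the action and the convention behind $S^\bullet$; everything else is a direct unpacking of the definition of division of transformation semigroups. Note that Lemma~\ref{L: Eilenberg}(ii) then converts this transformation-semigroup division into the semigroup division $S^\bullet \prec S^{|X|}$, which is presumably the form in which Allen's lemma will be applied when studying the operator $\bV \mapsto \bV^\ba$.
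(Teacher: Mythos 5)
Your proof is correct, and since the paper offers no argument of its own for this lemma (it simply cites Eilenberg/Allen), there is nothing to diverge from: your construction --- sending $s\in S^\bullet$ to the tuple $(xs)_{x\in X}$, acting by the diagonal copy of $S$, with faithfulness giving well-definedness and the two-sided-identity argument handling the adjoined $I$ --- is exactly the standard argument behind Allen's lemma. The delicate point you flag (an element of $S$ acting as the identity on $X$ forces $S$ to be a monoid, so no collision with $I$ can occur) is handled correctly.
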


Following~\cite[Chapter 4]{RS09}, write $\ov{(S^\bullet,S)}=(S^\bullet, S^\ba)$ and call $S^\ba$ the \textit{augmentation} of~$S$.
Note that if $(X,S)\prec (S^\bullet,S)$, then $(X,S)\prec (S^\bullet,S)\prec (X,S)^{|X|}$ by Lemma~\ref{l:Allen} and hence
\[\ov{(X,S)}\prec \ov{(S^\bullet,S)}\prec \ov{(X,S)}^{|X|}.\]
Thus if $S'=S\cup \ov X$, then $S'\prec S^\ba\prec (S')^{|X|}$, yielding the following result.

\begin{corollary}\label{c:samepv}
If $(X,S)$ is a transformation semigroup such that $(X,S)\prec (S^\bullet,S)$\up, then $\lpbr S\cup \ov X \rpbr = \lpbr S^\ba \rpbr$\up.
In particular\up, if~$S$ is any semigroup and~$J$ is any right ideal of~$S$ on which it acts faithfully\up, then $\lpbr S^\ba \rpbr = \lpbr S\cup \ov J \rpbr$\up.
\end{corollary}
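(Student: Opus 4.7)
The plan is essentially to formalize the sandwiching argument sketched in the paragraph immediately preceding the statement. Starting from the hypothesis $(X,S) \prec (S^\bullet,S)$, I would invoke Lemma~\ref{l:Allen} to get the reverse direction $(S^\bullet,S) \prec (X,S)^{|X|}$, producing the chain
\[(X,S) \;\prec\; (S^\bullet,S) \;\prec\; (X,S)^{|X|}.\]
All three transformation semigroups now sit in a single division sandwich, and I want to propagate this through the bar construction $\ov{\cdot}$ and then read off the underlying semigroups.

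Next, I would apply Lemma~\ref{L: Eilenberg}(i) to each arrow, which preserves division under $\ov{\cdot}$, and then combine with Lemma~\ref{L: Eilenberg}(iii) to handle the bar of the $|X|$-fold product. This gives
\[\ov{(X,S)} \;\prec\; \ov{(S^\bullet,S)} \;\prec\; \ov{(X,S)^{|X|}} \;\prec\; \ov{(X,S)}^{|X|}.\]
Writing $S' := S \cup \ov{X}$, the underlying semigroups of the outer terms are $S'$ and $(S')^{|X|}$, while the middle term has underlying semigroup $S^\ba$ by the very definition $\ov{(S^\bullet,S)} = (S^\bullet, S^\ba)$. By Lemma~\ref{L: Eilenberg}(ii), divisions of transformation semigroups descend to divisions of the underlying semigroups, so
\[S' \;\prec\; S^\ba \;\prec\; (S')^{|X|},\]
which immediately yields $\lpbr S' \rpbr = \lpbr S^\ba \rpbr$, proving the first assertion.

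For the ``in particular'' clause, I would check that a faithful right-ideal action $(J,S)$ satisfies the hypothesis. Since $J \subseteq S^\bullet$ and $J \cdot S \subseteq J$, the inclusion exhibits $(J,S)$ as a sub-transformation-semigroup of $(S^\bullet,S)$, hence $(J,S) \prec (S^\bullet,S)$, and the general statement applies with $X = J$. The only point requiring care in the whole argument is the bookkeeping in passing from $\ov{(X,S)^{|X|}}$ to $\ov{(X,S)}^{|X|}$ via Lemma~\ref{L: Eilenberg}(iii), and confirming that the semigroup of $\ov{(S^\bullet,S)}$ is exactly $S^\ba$ by definition; beyond those two identifications the proof is a direct chaining of the lemmas already established.
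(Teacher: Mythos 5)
Your argument is correct and is essentially the paper's own proof: the paper establishes the same sandwich $(X,S)\prec(S^\bullet,S)\prec(X,S)^{|X|}$ via Lemma~\ref{l:Allen}, bars it using Lemma~\ref{L: Eilenberg}(i) and (iii), and descends to $S'\prec S^\ba\prec (S')^{|X|}$ by Lemma~\ref{L: Eilenberg}(ii) in the paragraph immediately preceding the corollary. Your treatment of the right-ideal case as a sub-transformation-semigroup of $(S^\bullet,S)$ is exactly the intended specialization.
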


The following are some elementary properties enjoyed by augmentation.

\begin{proposition}\label{p:basic.aug}
Let~$S$ and~$T$ be any finite semigroups\up.
Then
\begin{enumerate}[\rm(i)]
\item $S\prec T$ implies that $S^\ba \prec T^\ba$\up;
\item $\lobr S\times T\robr^\ba\prec S^\ba\times T^\ba$\up.
\end{enumerate}
\end{proposition}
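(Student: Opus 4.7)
The plan is to reduce both assertions to transformation semigroup divisions via right regular representations, and then harvest them using the three parts of Lemma~\ref{L: Eilenberg}.

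For (i), the key intermediate step is to establish $(S^\bullet, S) \prec (T^\bullet, T)$ whenever $S \prec T$. Given such a division, realized by a subsemigroup $T' \leq T$ and a surjective homomorphism $\phi \colon T' \twoheadrightarrow S$, I would propose $(T' \cup \{I\}, T')$ as a subtransformation semigroup of $(T^\bullet, T)$, where $I$ denotes the identity of $T^\bullet$; this is well defined because $(T' \cup \{I\}) \cdot T' = T' \cdot T' \cup I \cdot T' = T' \subseteq T' \cup \{I\}$, and the action is faithful thanks to $I$ separating distinct elements of $T'$. I would then define the surjection $\psi \colon T' \cup \{I\} \twoheadrightarrow S^\bullet$ by $\psi|_{T'} = \phi$ and $\psi(I) = I_{S^\bullet}$. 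Surjectivity is immediate from surjectivity of $\phi$, and the equivariance $\psi(yt) = \psi(y)\phi(t)$ reduces to the homomorphism property of $\phi$ when $y \in T'$ and to the identity $\phi(t) = I_{S^\bullet} \cdot \phi(t)$ when $y = I$. Applying Lemma~\ref{L: Eilenberg}(i) upgrades this to $(S^\bullet, S^\ba) \prec (T^\bullet, T^\ba)$, and Lemma~\ref{L: Eilenberg}(ii) then delivers $S^\ba \prec T^\ba$.

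For (ii), the idea is to realize $((S \times T)^\bullet, S \times T)$ as a subtransformation semigroup of $(S^\bullet \times T^\bullet, S \times T) = (S^\bullet, S) \times (T^\bullet, T)$ via the identity inclusion on $S \times T$ together with $I \mapsto (I_{S^\bullet}, I_{T^\bullet})$ (used only when $S \times T$ is not itself a monoid); the $(S \times T)$-equivariance is then immediate from the componentwise action. Applying Lemma~\ref{L: Eilenberg}(i) and chaining with Lemma~\ref{L: Eilenberg}(iii) yields
\[
((S \times T)^\bullet, (S \times T)^\ba) \prec \ov{(S^\bullet, S) \times (T^\bullet, T)} \prec \ov{(S^\bullet, S)} \times \ov{(T^\bullet, T)} = (S^\bullet \times T^\bullet, S^\ba \times T^\ba),
\]
whence Lemma~\ref{L: Eilenberg}(ii) yields $(S \times T)^\ba \prec S^\ba \times T^\ba$.

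Rather than a genuine obstacle, the main point requiring care is the conceptual decision to route everything through the transformation semigroup calculus: a direct attempt to build $S^\ba$ as a quotient of a subsemigroup of $T^\ba$ by extending $\phi$ to the constant maps in $T^\ba$ runs aground, since elements of $T^\bullet \setminus T'$ need not multiply into $T'$ and so any naive extension of $\phi$ fails to respect products of the form $\ov{t} \cdot t'$. Lemma~\ref{L: Eilenberg} sidesteps this by letting one replace the underlying set $T^\bullet$ with the well-behaved subset $T' \cup \{I\}$.
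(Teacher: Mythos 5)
Your proof is correct and follows essentially the same route as the paper: reduce to the transformation semigroup divisions $(S^\bullet,S)\prec(T^\bullet,T)$ and $(\lobr S\times T\robr^\bullet,S\times T)\prec(S^\bullet\times T^\bullet,S\times T)$ and then apply Lemma~\ref{L: Eilenberg}(i)--(iii). The only difference is that you verify these two intermediate divisions by an explicit construction, whereas the paper cites Eilenberg (Proposition~I.5.8) for the first and states the second without proof.
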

\begin{proof}
(i) Suppose that $S\prec T$, so that $(S^\bullet,S)\prec (T^\bullet,T)$ by Eilenberg~\cite[Proposition~I.5.8]{Eil76}.
Then by Lemma~\ref{L: Eilenberg}(i),
\[ (S^\bullet, S^\ba)=\ov{(S^\bullet, S)} \prec \ov {(T^\bullet,T)}=(T^\bullet, T^\ba). \]
Therefore, $S^\ba\prec T^\ba$ by Lemma~\ref{L: Eilenberg}(ii).

(ii) First note that $(\lobr S\times T\robr^{\bullet}, S\times T)\prec (S^\bullet\times T^{\bullet},S\times T)$.
Then
\begin{alignat*}{2}
(\lobr S & \times T\robr^{\bullet}, \lobr S\times T \robr^\ba) \quad && \\
& = \ov{(\lobr S\times T\robr^{\bullet}, S\times T)} \prec \ov{(S^\bullet\times T^{\bullet},S\times T)} \quad && \text{by Lemma~\ref{L: Eilenberg}(i)} \\
& = \ov{(S^\bullet,S) \times (T^{\bullet},T)} \prec \ov{(S^\bullet,S)} \times \ov{(T^{\bullet},T)} \quad && \text{by Lemma~\ref{L: Eilenberg}(iii)} \\
& = (S^\bullet,S^\ba) \times (T^\bullet,T^\ba) = (S^\bullet \times T^\bullet,S^\ba \times T^\ba). &&
\end{alignat*}
Therefore, $\lobr S\times T \robr^\ba \prec S^\ba \times T^\ba$ by Lemma~\ref{L: Eilenberg}(ii).
\end{proof}

In the following, augmentation is viewed as a continuous operator on the lattice $\bPV$ of {\pvars}.
An operator is \textit{continuous} if it preserves order and directed joins~\cite{RS09}.
For any {\pvar} $\bV$, define \[ \bV^\ba = \lpbr S^\ba \mid S \in \bV\rpbr.\]
Recall that $\bRZ = \lpbr\RZ\rpbr$ is the {\pvar} of right zero semigroups.

\begin{proposition}\label{p:aug.pv}
The operator on $\bPV$ defined by $\bV \mapsto \bV^\ba$ is continuous\up, non-decreasing\up, and idempotent\up.
Further\up,
\begin{enumerate}[\rm(i)]
\item $\lpbr S\rpbr^\ba = \lpbr S^\ba \rpbr$ for any finite semigroup~$S$\up;
\item $\bRZ \subseteq \bV^\ba$ for any nontrivial {\pvar}~$\bV$\up.
\end{enumerate}
Consequently\up, if $\lpbr S \rpbr = \lpbr T \rpbr$\up, then $\lpbr S^\ba \rpbr = \lpbr T^\ba \rpbr$\up.
\end{proposition}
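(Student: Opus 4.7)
The plan is to dispatch the elementary facts first, reduce the consequence and part~(ii) to straightforward checks, and then reduce idempotence to a single-semigroup statement that follows from one clean application of Corollary~\ref{c:samepv}.

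First the easy parts. Non-decrease $\bV\subseteq \bV^\ba$ is immediate because $S$ embeds as a subsemigroup of $S^\ba = S\cup\ov{S^\bullet}$, so $S\in\lpbr S^\ba\rpbr\subseteq \bV^\ba$ for every $S\in\bV$. Monotonicity is clear from the definition. For~(ii), any nontrivial $\bV$ contains some $S$ with $|S|\ge 2$, so $|S^\bullet|\ge 2$ and the constant maps $\ov{S^\bullet}\subseteq S^\ba$ form a right zero of size at least two, which generates $\bRZ$. Continuity reduces to the observation that for a directed family $\{\bV_i\}$ one has $\bigvee \bV_i = \bigcup \bV_i$, so every generator $S^\ba$ of $(\bigvee \bV_i)^\ba$ satisfies $S\in \bV_{i_0}$ for some $i_0$, whence $S^\ba \in \bV_{i_0}^\ba\subseteq \bigvee \bV_i^\ba$ (the reverse inclusion holds by monotonicity).

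For~(i), the inclusion $\lpbr S^\ba\rpbr \subseteq \lpbr S\rpbr^\ba$ is tautological. For the reverse inclusion, every generator of $\lpbr S\rpbr^\ba$ has the form $T^\ba$ with $T\prec S^n$; Proposition~\ref{p:basic.aug}(i) then gives $T^\ba \prec (S^n)^\ba$, and iterating Proposition~\ref{p:basic.aug}(ii) yields $(S^n)^\ba \prec (S^\ba)^n$, so $T^\ba \in \lpbr S^\ba\rpbr$. The final consequence of the proposition is immediate from~(i): if $\lpbr S\rpbr=\lpbr T\rpbr$, then $\lpbr S^\ba\rpbr = \lpbr S\rpbr^\ba = \lpbr T\rpbr^\ba = \lpbr T^\ba\rpbr$.

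The heart of the proof, and the point requiring genuine care, is idempotence $\bV^{\ba\ba}=\bV^\ba$. I first reduce to the single-semigroup claim $\lpbr (S^\ba)^\ba\rpbr = \lpbr S^\ba\rpbr$. To establish it, apply the right-ideal form of Corollary~\ref{c:samepv} to the semigroup $S^\ba$ with right ideal $J=\ov{S^\bullet}\subseteq S^\ba$. One checks that $J$ is a right ideal (since $\ov{x}\cdot f = \ov{xf}\in J$ for any $f\in S^\ba$) and that $S^\ba$ acts faithfully on $J$ (inherited from faithfulness on $S^\bullet$ via the bijection $\ov{x}\leftrightarrow x$). The corollary then gives $\lpbr (S^\ba)^\ba\rpbr = \lpbr S^\ba \cup \ov{J}\rpbr$. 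The key observation---and the only delicate step---is that $\ov{J}$ is already contained in $S^\ba$ viewed as transformations on $J$: for each $y\in S^\bullet$, the element $\ov{y}\in S^\ba$ acts on $J$ by sending every $\ov{z}\in J$ to $\ov{z}\cdot\ov{y}=\ov{y}$, which is precisely the constant map on $J$ to $\ov{y}\in J$. Hence $S^\ba\cup\ov{J} = S^\ba$ as transformation semigroups on $J$, proving the claim. Finally, lifting to pseudovarieties: any generator $T^\ba$ of $\bV^{\ba\ba}$ has $T\prec \prod_i S_i^\ba$ with each $S_i\in\bV$, so Proposition~\ref{p:basic.aug} gives $T^\ba \prec \prod_i (S_i^\ba)^\ba$, and by the single-semigroup claim each factor lies in $\lpbr S_i^\ba\rpbr = \lpbr S_i\rpbr^\ba\subseteq \bV^\ba$, whence $T^\ba\in\bV^\ba$.
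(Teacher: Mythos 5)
Your proof is correct and follows essentially the same route as the paper: the elementary parts (monotonicity, continuity via directed unions, part~(i) via Proposition~\ref{p:basic.aug}, part~(ii) via the constant maps) are handled identically, and your idempotence argument—realizing the constant maps on $\ov{S^\bullet}$ inside $S^\ba$ and invoking Corollary~\ref{c:samepv}—is exactly the paper's argument, merely phrased through the right-ideal special case of that corollary rather than the transformation-semigroup form; you also usefully spell out the product/division step that the paper leaves implicit when reducing to single semigroups.
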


\begin{proof}
Clearly augmentation is order preserving.
Let $\{\bV_\delta \mid \delta \in D \}$ be any directed set of {\pvars}, so that the complete join $\bV = \bigvee_{\delta \in D} \bV_\delta$ is a union.
The inclusion $\bV_\delta^\ba \subseteq \bV^\ba$ clearly holds for all $\delta \in D$, so that $\bigvee_{\delta\in D} \bV_\delta^\ba \subseteq \bV^\ba$.
Conversely, if $S\in \bV^\ba$, say $S\prec T_1^\ba \times T_2^\ba \times \cdots \times T_k^\ba$ for some $T_1,T_2,\ldots, T_k \in \bV$, then due to directedness, there exists $\delta \in D$ with $T_1, T_2,\ldots, T_k \in \bV_\delta$, whence $S\in \bV_\delta^\ba$.
Therefore, augmentation is continuous.

Since $S\prec S^\ba$, it is obvious that augmentation is non-decreasing and the inclusion $\bV^\ba \subseteq \lobr\bV^\ba\robr^\ba$ holds.
To establish the reverse inclusion, it suffices to prove that $\lobr S^\ba\robr^\ba \in \bV^\ba$ for all $S \in \bV$.
But~$S^\ba$ acts faithfully on the right of its minimal ideal $\ov {S^\bullet}$ and it contains all the constant mappings.
Thus $(\ov{S^\bullet}, S^\ba) = \ov{(\ov{S^\bullet}, S^\ba)}$ and $(\ov{S^\bullet}, S^\ba) \prec (\lobr S^\ba\robr^{\bullet}, S^\ba)$.
It follows from Corollary~\ref{c:samepv} that $S^\ba = S^\ba \cup \ov{S^\bullet}$ generates the same {\pvar} as $\lobr S^\ba\robr^\ba$.
This shows that $\lobr S^\ba\robr^\ba \in \bV^\ba$, so that augmentation is idempotent.

It remains to establish parts~(i) and~(ii).

(i) The inclusion $\lpbr S^\ba \rpbr \subseteq \lpbr S \rpbr^\ba$ holds trivially.
To establish the reverse inclusion, suppose that $T \in \lpbr S\rpbr^\ba$, so that $T \prec U^\ba$ for some $U \in \lpbr S \rpbr$.
Then $U \prec S^n$ for some $n\geq 0$ and so $T \prec U^\ba \prec \lobr S^n \robr^\ba\prec \lobr S^\ba\robr^n$ by Proposition~\ref{p:basic.aug}(ii).
Therefore, $T \in \lpbr S^\ba \rpbr$.
Consequently, $\lpbr S^\ba \rpbr = \lpbr S \rpbr^\ba$.

(ii) If~$S$ is a nontrivial semigroup in~$\bV$, then the right zero semigroup~$\RZ$ is a subsemigroup of $S^\ba$, whence $\bRZ \subseteq \bV$.
\end{proof}

\begin{corollary}\label{c:freeze}
Let~$S$ be any finite semigroup whose minimal ideal~$J$ consists of right zeroes\up.
Suppose that~$S$ acts faithfully on the right of~$J$\up.
Then $\lpbr S \rpbr^\ba = \lpbr S \rpbr$\up.
\end{corollary}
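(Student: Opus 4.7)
The plan is to combine Proposition~\ref{p:aug.pv}(i) with Corollary~\ref{c:samepv} and then exploit the right-zero structure of $J$ to show that the constant maps $\ov J$ are already realized by the action of $S$ on $J$.

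First I would rewrite $\lpbr S\rpbr^\ba$ using Proposition~\ref{p:aug.pv}(i) as $\lpbr S^\ba\rpbr$. Since $J$ is a (minimal, hence right) ideal of $S$ on which $S$ acts faithfully, Corollary~\ref{c:samepv} gives
\[
\lpbr S\rpbr^\ba \;=\; \lpbr S^\ba\rpbr \;=\; \lpbr S\cup \ov J\rpbr,
\]
where $\ov J$ denotes the constant maps on $J$, viewed inside the full transformation semigroup $T_J$.

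The key observation is then that, because $J$ is a right zero semigroup, for every $j\in J$ and every $x\in J$ one has $xj=j$. Thus the element $j\in J\subseteq S$ acts on $J$ from the right as the constant map $\ov j$. Since $S$ acts faithfully on $J$, we may identify $S$ with its image in $T_J$, and the above observation shows $\ov J\subseteq S$ as subsets of $T_J$. Hence $S\cup \ov J=S$ in $T_J$, so $\lpbr S\cup \ov J\rpbr=\lpbr S\rpbr$, which combined with the previous displayed equality yields $\lpbr S\rpbr^\ba=\lpbr S\rpbr$.

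There is no real obstacle; the only point that needs care is the interpretation of $\ov J$ in Corollary~\ref{c:samepv} as the constant maps on $J$ itself (and not on $S^\bullet$), so that $S\cup \ov J$ is computed inside $T_J$. Once this is set up, the hypothesis that $J$ consists of right zeroes is exactly what is needed to make each constant map $\ov j$ coincide with the right action of $j\in S$ on $J$, collapsing the augmentation to nothing.
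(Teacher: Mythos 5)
Your proposal is correct and follows essentially the same route as the paper: rewrite $\lpbr S\rpbr^\ba$ as $\lpbr S^\ba\rpbr$ via Proposition~\ref{p:aug.pv}, apply Corollary~\ref{c:samepv} to the faithful action on the right ideal $J$, and note that the right-zero structure of $J$ makes $\ov{(J,S)}=(J,S)$, i.e.\ $S\cup\ov J=S$. Your extra care about interpreting $\ov J$ as constant maps on $J$ inside $T_J$ is exactly the intended reading, so nothing is missing.
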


\begin{proof}
By Proposition~\ref{p:aug.pv}, it suffices to prove that $\lpbr S^\ba \rpbr = \lpbr S \rpbr$.
But since $\ov{(J,S)}=(J,S)$, it follows that $S = S \cup \ov J$.
The desired conclusion then follows from Corollary~\ref{c:samepv}.
\end{proof}

\section{Some important semigroups} \label{sec: req sgps}

In this section, semigroups that are required throughout the paper are introduced.
Semigroups are given by their presentations, and whenever feasible, multiplication tables.
In presentations, the symbols~$\ee$ and~$\ef$ are exclusively reserved for idempotent elements.

\subsection{Cyclic groups}

The cyclic group of order $n \geq 1$ is \[ \Z_n = \langle \eg \mid \eg^n = \ei \rangle = \{ \ei,\eg,\eg^2,\ldots,\eg^{n-1}\}. \]
The augmentation of $\Z_2 = \{ \ei,\eg \}$ is the semigroup $\ZB = \{ \ei,\eg,\ov\ei,\ov\eg\}$ given in Table~\ref{Tab: Z2bar N2bar}.
The semigroup~$\ZB$ is isomorphic to the semigroup of transformations of the set $\{1,2\}$.
Information on identities satisfied by the semigroups~$\Z_n$ and~$\ZB$ is given in Subsections~\ref{sub: Zn ji} and~\ref{sub: ZB ji}, respectively.

\subsection{Nilpotent semigroups} \label{sub: nilpotent}

The monogenic nilpotent semigroup of order $n \geq 1$ is
\[
\N_n = \langle \ea \mid \ea^n = \ez \rangle = \{ \ez,\ea,\ea^2,\ldots,\ea^{n-1}\}.
\]
The augmentation of $\N_2  = \{ \ez,\ea \}$ is the semigroup $\N_2^\ba = \{ \ov\ez, \ea, \ov\ea, \ov I \}$ given in Table~\ref{Tab: Z2bar N2bar}.
Information on identities satisfied by the semigroups~$\N_n$, $\N_n^I$, $\NB$, and~$\NBI$ is given in Subsections~\ref{sub: Nn ji}, \ref{sub: NnI ji}, \ref{sub: NB ji}, and~\ref{sub: NBI ji}, respectively.

\begin{table}[ht!]
\[\def\arraystretch{1.3}
\begin{array} [c]{c|cccc}
   \ZB & \,    \ei &    \eg & \ov\ei & \ov\eg \\ \hline
   \ei & \,    \ei &    \eg & \ov\ei & \ov\eg \\
   \eg & \,    \eg &    \ei & \ov\ei & \ov\eg \\
\ov\ei & \, \ov\ei & \ov\eg & \ov\ei & \ov\eg \\
\ov\eg & \, \ov\eg & \ov\ei & \ov\ei & \ov\eg
\end{array}
\qquad
\begin{array} [c]{c|cccc}
\N_2^\ba & \, \ov\ez &    \ea & \ov\ea & \ov I \\ \hline
  \ov\ez & \, \ov\ez & \ov\ez & \ov\ea & \ov I \\
     \ea & \, \ov\ez & \ov\ez & \ov\ea & \ov I \\
  \ov\ea & \, \ov\ez & \ov\ez & \ov\ea & \ov I \\
   \ov I & \, \ov\ez & \ov\ea & \ov\ea & \ov I
\end{array}
\]
\caption{Multiplication tables of~$\ZB$ and~$\N_2^\ba$} \label{Tab: Z2bar N2bar}
\end{table}

\subsection{Bands} \label{sub: bands}

The smallest nontrivial bands are the semilattice $\SL = \{ \ez,\ei \}$ and the left zero and right zero semigroups of order two:
\begin{align*}
\LZ & = \langle \ee,\ef \mid \ee^2=\ee\ef=\ee, \, \ef^2=\ef\ee=\ef \rangle = \{ \ee,\ef \}, \\
\RZ & = \langle \ee,\ef \mid \ee^2=\ef\ee=\ee, \, \ef^2=\ee\ef=\ef \rangle = \{ \ee,\ef \};
\end{align*}
see Table~\ref{Tab: Sl2 L2 R2}.
Note that $\SL \cong \N_1^I$ and $\LZ^\op \cong \RZ$.
It is well known that~$\SL$ generates the {\pvar} $\bSl$ of semilattices, $\LZ$ generates the {\pvar} $\bLZ$ of left zero semigroups, and~$\RZ$ generates the {\pvar} $\bRZ$ of right zero semigroups.

\begin{table}[ht!]
\[\def\arraystretch{1.3}
\begin{array} [c]{c|cccc}
\SL \, & \, \ez & \ei \\ \hline
\ez \, & \, \ez & \ez \\
\ei \, & \, \ez & \ei
\end{array}
\qquad
\begin{array} [c]{c|cccc}
\LZ \, & \, \ee & \ef \\ \hline
\ee \, & \, \ee & \ee \\
\ef \, & \, \ef & \ef
\end{array}
\qquad
\begin{array} [c]{c|cccc}
\RZ \, & \, \ee & \ef \\ \hline
\ee \, & \, \ee & \ef \\
\ef \, & \, \ee & \ef
\end{array}
\]
\caption{Multiplication tables of $\SL$, $\LZ$, and~$\RZ$} \label{Tab: Sl2 L2 R2}
\end{table}

The augmentation of $\LZ$ is the semigroup $\LZB = \{ \ee,\ef,\ov\ee,\ov\ef, \ov I\}$ given in Table~\ref{Tab: L2bar}.
Information on identities satisfied by the semigroups~$\LZ$, $\LZ^I$, and~$\LZB$ is given in Subsections~\ref{sub: LZ ji}, \ref{sub: LZI ji}, and~\ref{sub: LZB ji}, respectively.

\begin{table}[ht!]
\[\def\arraystretch{1.3}
\begin{array} [c]{c|ccccc}
  \LZB & \,    \ee &    \ef & \ov\ee & \ov\ef & \ov I \\ \hline
   \ee & \,    \ee &    \ee & \ov\ee & \ov\ef & \ov I \\
   \ef & \,    \ef &    \ef & \ov\ee & \ov\ef & \ov I \\
\ov\ee & \, \ov\ee & \ov\ee & \ov\ee & \ov\ef & \ov I \\
\ov\ef & \, \ov\ef & \ov\ef & \ov\ee & \ov\ef & \ov I \\
 \ov I & \, \ov\ee & \ov\ef & \ov\ee & \ov\ef & \ov I
\end{array}
\]
\caption{Multiplication table of $\LZB$} \label{Tab: L2bar}
\end{table}

\subsection{Completely 0-simple semigroups} \label{sub: comp 0-simple}

The smallest completely 0-simple semi\-groups with zero divisors are the idempotent-generated semigroup
\[
\At = \langle \ea,\ee \mid \ea^2=\ez, \, \ea\ee\ea=\ea, \, \ee^2 = \ee\ea\ee = \ee \rangle = \{ \ez,\ea,\ee,\ea\ee,\ee\ea \}
\]
and the Brandt semigroup
\[
\Bt = \langle \ea,\eb \mid \ea^2=\eb^2=\ez, \, \ea\eb\ea=\ea, \, \eb\ea\eb = \eb \rangle = \{ \ez,\ea,\eb,\ea\eb,\eb\ea\};
\]
see Table~\ref{Tab: A2 B2}.
The Rees matrix representations of these semigroups are \begin{align*} \At & = \mathscr{M}^0 \big(\{1\},\{1,2\},\{1,2\}; \big[\begin{smallmatrix} 1&1 \\ 0&1 \end{smallmatrix}\big]\big) \\ \text{and} \quad \Bt & = \mathscr{M}^0 \big(\{1\},\{1,2\},\{1,2\}; \big[\begin{smallmatrix} 1&0 \\ 0&1 \end{smallmatrix}\big]\big).\end{align*}
The semigroups~$\At$ and~$\Bt$ contain subsemigroups isomorphic to
\begin{align*}
\Az & = \langle \ee,\ef \mid \ee^2=\ee, \, \ef^2=\ef, \, \ee\ef = \ez \rangle = \{ \ez,\ee,\ef,\ef\ee \} \\
\text{and} \quad \Bz & = \langle \ea,\ee,\ef \mid \ee^2=\ee, \, \ef^2=\ef, \, \ee\ef=\ef\ee=\ez, \, \ee\ea = \ea\ef = \ea \rangle = \{ \ez,\ea,\ee,\ef \},
\end{align*}
respectively; see Table~\ref{Tab: A0 B0}.
The semigroup
\[
\el = \langle \ea,\ee \mid \ea\ee=\ez, \, \ee\ea=\ea, \, \ee^2=\ee \rangle = \{ \ez,\ea,\ee\}
\]
and its augmentation $\elB = \{ \ov\ez, \ea,\ee,\ov\ea,\ov\ee \}$ are given in Table~\ref{Tab: l3 l3bar}.
Note that \[\Az \cong \At \setminus \{ \ee \}, \quad \Bz \cong \Bt \setminus \{ \eb \}, \quad \text{and} \quad \el \cong \Az \setminus \{ \ee \} \cong \Bz \setminus \{ \ef \}. \]
Information on identities satisfied by the semigroups~$\Az$, $\Az^I$, $\At$, $\Bt$, and~$\elB$ is given in Subsections~\ref{sub: Az ji}, \ref{sub: AzI ji}, \ref{sub: At ji}, \ref{sub: Bt ji}, and~\ref{sub: elB ji}, respectively.

\begin{table}[ht!]
\[\def\arraystretch{1.2}
\begin{array} [c]{c|ccccc}
   \At & \, \ez &    \ea & \ea\ee & \ee\ea & \ee \\ \hline
   \ez & \, \ez &    \ez &    \ez &    \ez & \ez \\
   \ea & \, \ez &    \ez &    \ez &    \ea & \ea\ee \\
\ea\ee & \, \ez &    \ea & \ea\ee &    \ea & \ea\ee \\
\ee\ea & \, \ez &    \ez &    \ez & \ee\ea & \ee \\
   \ee & \, \ez & \ee\ea &    \ee & \ee\ea & \ee
\end{array}
\qquad
\begin{array} [c]{c|ccccc}
   \Bt & \, \ez &    \ea & \ea\eb & \eb\ea & \eb \\ \hline
   \ez & \, \ez &    \ez &    \ez &    \ez & \ez \\
   \ea & \, \ez &    \ez &    \ez &    \ea & \ea\eb \\
\ea\eb & \, \ez &    \ea & \ea\eb &    \ez & \ez \\
\eb\ea & \, \ez &    \ez &    \ez & \eb\ea & \eb \\
   \eb & \, \ez & \eb\ea &    \eb &    \ez & \ez
\end{array}
\]
\caption{Multiplication tables of $\At$ and~$\Bt$} \label{Tab: A2 B2}
\end{table}

\begin{table}[ht!]
\[\def\arraystretch{1.2}
\begin{array} [c]{c|ccccc}
   \Az & \, \ez & \ef\ee & \ef & \ee \\ \hline
   \ez & \, \ez & \ez    & \ez & \ez \\
\ef\ee & \, \ez & \ez    & \ez & \ef\ee \\
   \ef & \, \ez & \ef\ee & \ef & \ef\ee \\
   \ee & \, \ez & \ez    & \ez & \ee
\end{array}
\qquad
\begin{array} [c]{c|ccccc}
\Bz & \, \ez & \ea & \ee & \ef \\ \hline
\ez & \, \ez & \ez & \ez & \ez \\
\ea & \, \ez & \ez & \ez & \ea \\
\ee & \, \ez & \ea & \ee & \ez \\
\ef & \, \ez & \ez & \ez & \ef
\end{array}
\]
\caption{Multiplication tables of $\Az$ and $\Bz$} \label{Tab: A0 B0}
\end{table}

\begin{table}[ht!]
\[\def\arraystretch{1.3}
\begin{array} [t]{c|ccccc}
\el & \, \ez & \ea & \ee \\ \hline
\ez & \, \ez & \ez & \ez \\
\ea & \, \ez & \ez & \ez \\
\ee & \, \ez & \ea & \ee
\end{array}
\qquad
\begin{array} [t]{c|ccccc}
  \elB & \, \ov\ez &    \ea &    \ee & \ov\ea & \ov\ee \\ \hline
\ov\ez & \, \ov\ez & \ov\ez & \ov\ez & \ov\ea & \ov\ee \\
   \ea & \, \ov\ez & \ov\ez & \ov\ez & \ov\ea & \ov\ee \\
   \ee & \, \ov\ez &    \ea &    \ee & \ov\ea & \ov\ee \\
\ov\ea & \, \ov\ez & \ov\ez & \ov\ez & \ov\ea & \ov\ee \\
\ov\ee & \, \ov\ez & \ov\ea & \ov\ee & \ov\ea & \ov\ee
\end{array}
\]
\caption{Multiplication tables of~$\el$ and $\elB$} \label{Tab: l3 l3bar}
\end{table}

It is shown in Subsection~\ref{sub: Ok} that the semigroup~$\el$ belongs to an infinite class of semigroups~$S$ with the property that $\lpbr S \rpbr$ is not {\ji} but $\lpbr S \rpbr^\ba$ is {\ji}.
The semigroup~$\Bz$ serves as a counterexample to the implications \[ S \text{ is } \sji \ \Longrightarrow \ S \text{ is } \ji \qquad \text{and} \qquad S \text{ is } \sji \ \Longrightarrow \ S^I \text{ is } \sji \] mentioned in the introduction.

\begin{proposition} \label{P: Bz not ji}
\begin{enumerate}[\rm(i)]
\item The {\pvar}~$\lpbr \Bz \rpbr$ is {\sji}\up.
\item The {\pvar}~$\lpbr \Bz \rpbr$ is not {\ji}\up.
\item The {\pvar}~$\lpbr \Bz^I \rpbr$ is not {\sji}\up.
\end{enumerate}
\end{proposition}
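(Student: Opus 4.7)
The plan is to analyze the lattice of sub\-{\pvar}s of $\lpbr \Bz \rpbr$, drawing on the classification given in Lee~\cite{Lee04}, which shows that this lattice has a unique maximal proper element $\bU$. The natural candidate for $\bU$ is $\lpbr \el \rpbr \vee \lpbr \el^\op \rpbr$, since $\el$ and $\el^\op$ embed in $\Bz$ as the sub\-semigroups $\{\ez,\ea,\ee\}$ and $\{\ez,\ea,\ef\}$; the containment $\bU \subsetneq \lpbr \Bz \rpbr$ is strict because every member of $\bU$ satisfies a {\pid} (expressing the absence of a two-sided unit for the ``arrow'' $\ea$) that fails in $\Bz$. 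With the uniqueness of $\bU$ in hand, (i) follows at once: if $\lpbr \Bz \rpbr = \bigvee \latX$ with $\lpbr \Bz \rpbr \notin \latX$, then each $X \in \latX$ is a proper sub\-{\pvar}, hence $X \subseteq \bU$, whence $\bigvee \latX \subseteq \bU \subsetneq \lpbr \Bz \rpbr$, contradicting equality.

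For (ii), I would exhibit two finite semigroups $T_1, T_2 \in \excl(\Bz)$ witnessing $\Bz \prec T_1 \times T_2$. Observe that such $T_1, T_2$ cannot both lie in $\lpbr \Bz \rpbr$: if they did, both would lie in $\bU$ by~(i), so $T_1 \times T_2 \in \bU$, forcing $\Bz \in \bU$, a contradiction. Hence at least one of $T_1, T_2$ must come from outside $\lpbr \Bz \rpbr$ while remaining in $\excl(\Bz)$; concrete candidates arise from semigroups whose generated {\pvar}s are incomparable with $\lpbr \Bz \rpbr$. The division $\Bz \prec T_1 \times T_2$ is then certified by identifying a sub\-semigroup of $T_1 \times T_2$ together with a congruence whose quotient is isomorphic to~$\Bz$.

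For (iii), I would decompose $\lpbr \Bz^I \rpbr = \lpbr \el^I \rpbr \vee \lpbr (\el^\op)^I \rpbr$ with both constituents proper in $\lpbr \Bz^I \rpbr$ (properness being verified by a {\pid} satisfied by each constituent but failing in $\Bz^I$). The inclusion $\supseteq$ is immediate because $\{I,\ez,\ea,\ee\}$ and $\{I,\ez,\ea,\ef\}$ are sub\-monoids of $\Bz^I$ isomorphic to $\el^I$ and $(\el^\op)^I$ respectively. For the reverse inclusion, I would exhibit $\Bz^I$ as a divisor of $(\el^I)^n \times ((\el^\op)^I)^m$: the adjoined identity $I$ furnishes the two-sided unit for $\ea$ that is missing in $\el \times \el^\op$, enabling an assignment such as $\ee \mapsto (e, I)$, $\ef \mapsto (I, e)$, $\ea \mapsto (a, a)$, $I \mapsto (I, I)$, modulo a congruence on the generated sub\-semigroup that identifies auxiliary elements such as $(e, e)$, $(a, 0)$, or $(0, a)$ with the image of~$\ez$.

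The main obstacle is the explicit verification of the divisions in (ii) and (iii), in particular choosing sub\-semigroups and congruences so that the quotient recovers $\Bz$ (respectively $\Bz^I$) and not some larger auxiliary semigroup. The conceptual reason for the asymmetry between (i) and (iii)---that is, why $\lpbr \Bz^I \rpbr$ is not {\sji} while $\lpbr \Bz \rpbr$ is---is precisely the two-sided unit supplied by $I$: it can be realized as the diagonal $(I, I)$ in a direct product of monoids, whereas no analogous two-sided identity for the ``arrow'' exists in the non-monoidal product $\el \times \el^\op$. This obstruction is exactly what prevents the analogous join decomposition from including $\Bz$, thereby preserving {\sji} for $\lpbr \Bz \rpbr$.
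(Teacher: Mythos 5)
Part (ii) is the heart of the proposition, and as written you do not prove it: you announce that you would exhibit $T_1,T_2\in\excl(\Bz)$ with $\Bz\prec T_1\times T_2$, but no such $T_1,T_2$ are produced, and the observation that they cannot both lie in $\lpbr\Bz\rpbr$ gives no help in finding them; so (ii) remains a plan rather than an argument. The paper takes a different route that avoids any construction: it establishes (iii) first (citing \cite{Lee07b} for the fact that $\lpbr\Bz^I\rpbr$ has two maximal proper sub{\pvars}) and then deduces (ii) by contraposition via Lemma~\ref{L: local}: if $\lpbr\Bz\rpbr$ were {\ji}, then $\lpbr\Bz^I\rpbr$ would be {\ji}, hence {\sji}, contradicting (iii). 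Your (i) follows the same line as the paper (a unique maximal proper sub{\pvar} quoted from the literature; the paper cites \cite{Lee07b} rather than \cite{Lee04}), and your explicit identification of that maximal sub{\pvar} as $\lpbr\el\rpbr\vee\lpbr\elop\rpbr$ is an unverified guess which the argument does not actually need.

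Your (iii) is essentially workable and genuinely different from the paper's citation-based treatment: inside $\el^I\times(\elop)^I$ the subsemigroup generated by $(\ee,I)$, $(I,\ee)$, $(\ea,\ea)$, $(I,I)$ contains, besides these four elements, only $(\ee,\ee)$ and elements with a zero coordinate, and those extra elements form an ideal whose Rees quotient is $\Bz^I$; hence $\lpbr\Bz^I\rpbr=\lpbr\el^I\rpbr\vee\lpbr(\elop)^I\rpbr$. What is missing is the properness of the two factors, which you assert but never witness; it can be supplied, for instance $\el^I$ satisfies $xyx\id yx^2$, which $\Bz$ (take $x=\ef$, $y=\ea$), and hence $\Bz^I$, violates, and dually $(\elop)^I$ satisfies $xyx\id x^2y$, which $\Bz$ also violates. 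Note that these same identities, combined with your division and $\Bz\prec\Bz^I$, would close the gap in (ii) directly: $\Bz\in\lpbr\el^I\rpbr\vee\lpbr(\elop)^I\rpbr$ while $\Bz\notin\lpbr\el^I\rpbr$ and $\Bz\notin\lpbr(\elop)^I\rpbr$, so $\lpbr\Bz\rpbr$ is not {\ji}; this would give a constructive proof of (ii), in contrast with the paper's argument through Lemma~\ref{L: local}. As it stands, however, neither the witnesses for (ii) nor the properness identities for (iii) appear in your proposal, so the proof is incomplete.
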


\begin{proof}
The {\pvar} $\lpbr \Bz \rpbr$ is {\sji} since it has a unique maximal proper sub{\pvar} \cite[Lemma~5(b)]{Lee07b}.
The {\pvar} $\lpbr \Bz^I \rpbr$ is not {\sji} since it has two maximal proper sub{\pvars} \cite[Lemma~6(b)]{Lee07b}.
In particular, the {\pvar} $\lpbr \Bz^I \rpbr$ is not {\ji}, whence the {\pvar}~$\lpbr \Bz \rpbr$ is also not {\ji}; see Lemma~\ref{L: local}.
\end{proof}

\section{Some general results on {\jirr}} \label{sec: general}

The {\pvar} defined by a class~$\Sigma$ of {\pids} is denoted by~$\lbrs\Sigma\rbrs$, while the {\pvar} generated by a class~$\mathscr{K}$ of finite semigroups is denoted by~$\lpbr\mathscr{K}\rpbr$.
A {\pvar} is \textit{compact} if it is generated by a single finite semigroup.

\begin{proposition} \label{P: maximal}
Every compact {\pvar} contains positively and only finitely many maximal sub{\pvars}\up.
\end{proposition}

\begin{proof}
Let $S = \{ s_1, s_2, \ldots, s_n\}$ be any finite semigroup and let~$\mathcal{V}$ denote the variety generated by~$S$.
Since the lattice of subvarieties of~$\mathcal{V}$ is isomorphic to the lattice of sub{\pvars} of~$\lpbr S\rpbr$, it suffices to show that~$\mathcal{V}$ contains positively and only finitely many maximal subvarieties.
Given any identity $\bu \id \bv$ such that $S \not\models \bu \id \bv$, there exists some substitution~$\varphi$ into~$S$ such that $\bu\varphi \neq \bv\varphi$.
Then~$\varphi$ induces a substitution~$\varphi'$ into the set $\mathscr{X}_n = \{ x_1, x_2, \ldots, x_n \}$ given by $x\varphi' = x_i$ if $x\varphi = s_i$.
Therefore $\bu\varphi' \id \bv\varphi'$ is an identity over $\mathscr{X}_n$ such that $\bu \id \bv \vdash \bu\varphi' \id \bv\varphi'$ and $S \not\models \bu\varphi' \id \bv\varphi'$.
It follows that every proper subvariety of~$\mathcal{V}$ satisfies some identity over $\mathscr{X}_n$.
Modulo the equational theory of the semigroup~$S$, there can only be finitely many identities over $\mathscr{X}_n$ that are violated by~$S$; these identities form a finite preordered set~$\mathcal{P}$ under equational deduction~$\vdash$.
Each greatest element of $(\mathcal{P}, \vdash)$ defines within~$\mathcal{V}$ a maximal subvariety.
\end{proof}

The \textit{exclusion class} $\excl(S)$ of a finite semigroup~$S$ is the class of all finite semigroups~$T$ for which $S \notin \lpbr T \rpbr$.
Recall that a finite semigroup~$S$ is {\ji} if and only if $\excl(S)$ is a {\pvar} \cite[Theorem~7.1.2]{RS09}.

In this section, some results on the property of being {\ji} are established.
There are seven subsections.
The main result of Subsection~\ref{sub: simple pids} demonstrates that many exclusion classes of {\ji} semigroups in this paper are not definable by a certain type of {\pids}.
In Subsection~\ref{sub: large}, the notion of a ``large'' {\pvar} is introduced.
It turns out that the exclusion class of a {\ji} semigroup that is right letter mapping, left letter mapping, or group mapping satisfies this largeness condition.
In Subsection~\ref{sub: augmentation}, it is shown that the operator $\bV \mapsto \bV^\ba$ on $\bPV$ preserves the property of being {\ji}.
More specifically, if $\bu \id \bv$ is a {\pid} that defines the exclusion class $\excl(S)$ of a {\ji} semigroup~$S$, then it is shown how a {\pid} that defines $\excl(S^\ba)$ can be obtained from $\bu \id \bv$.

In Subsection~\ref{sub: iterating}, it is shown that alternately performing the operators $\bV \mapsto \bV^\ba$ and $\bV \mapsto \bV^{\flat} = \lpbr \lobr\lobr S^\op\robr^\ba\robr^\op \mid S \in \bV \rpbr$ on a nontrivial {\pvar}~$\lpbr S\rpbr$ results in an infinite increasing chain of {\pvars}; if the semigroup~$S$ is {\ji} to begin with, then the {\pvars} are all {\ji}.
In Subsection~\ref{sub: Ok}, an infinite class $\{ \OO_k \mid k \geq 2 \}$ of finite semigroups is introduced and shown to satisfy the following property: for each $k \geq 2$, the {\pvar} $\lpbr \OO_k \rpbr$ is not {\ji}, while the {\pvar} $\lpbr \OO_k \rpbr^\ba$ is {\ji}.

In Subsection~\ref{sub: Bergman}, a sufficient condition, due to G.\,M. Bergman, is presented under which a finite {\sdi} group is {\ji}.
In Subsection~\ref{sub: D4 Q8}, the {\pvar} $\lpbr Q_8\rpbr = \lpbr D_4\rpbr$ is shown to be {\ji}; this result is due independently to Kearnes~\cite{Kea18} and the anonymous reviewer.

\subsection{Non-definability by simple {\pids}} \label{sub: simple pids}

For this subsection, the assumption that all semigroups are finite is temporarily abandoned.
The free profinite semigroup on a set~$\A$ is denoted by~$\wh{\A^+}$.
A {\pid} $\bu \id \bv$ is \textit{simple} if~$\bu$ and~$\bv$ belong to the smallest subsemigroup $F(\A)$ of $\wh{\A^+}$ containing~$\A$ that is closed under product and unary implicit operations; the latter condition means that ${ \ov{\{\bw\}^+} \subseteq F(\A) }$ for all $\bw \in F(\A)$.

The following theorem was essentially proved by Almeida and Volkov~\cite{AV03}, based on an earlier variant of Rhodes~\cite{Rho87}.

\begin{theorem}\label{t:simple.ids}
Suppose that~$\bV$ is any proper {\pvar} of semigroups containing all semigroups with abelian maximal subgroups\up.
Then~$\bV$ cannot be defined by simple {\pids}\up.
\end{theorem}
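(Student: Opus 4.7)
The strategy is by contradiction. Suppose that $\bV = \lbrs \Sigma \rbrs$ for some set $\Sigma$ of simple {\pids}, and let $\mathbf{A}$ denote the {\pvar} of all finite semigroups whose maximal subgroups are abelian. Since $\mathbf{A} \subseteq \bV$ by hypothesis, every {\pid} in $\Sigma$ is satisfied by $\mathbf{A}$. The plan is therefore to reduce to the following key claim: any simple {\pid} $\bu \id \bv$ that is satisfied by $\mathbf{A}$ is forced to be a triviality, i.e.\ $\bu = \bv$ as elements of $\wh{A^+}$. Once this claim is established, every element of $\Sigma$ becomes a trivial {\pid}, so $\Sigma$ defines the {\pvar} of all finite semigroups, contradicting the hypothesis that $\bV$ is proper.

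To prove the key claim I would follow the approach of Almeida and Volkov~\cite{AV03}, in the variant of Rhodes~\cite{Rho87}. The idea is to show that the canonical continuous morphism from $F(A)$ into the free pro-$\mathbf{A}$ semigroup $\wh{F_\mathbf{A}(A)}$ on $A$ is injective when restricted to $F(A)$. Equivalently, any two distinct elements of $F(A)$ can be separated by a continuous homomorphism into some semigroup of $\mathbf{A}$, so that $\mathbf{A}$ fails to satisfy $\bu \id \bv$ whenever $\bu \neq \bv$ in $\wh{A^+}$. Since $F(A)$ is built inductively from $A$ using only the binary product and the monogenic unary closures $\ov{\{\bw\}^+}$, and since these monogenic closures behave transparently on cyclic groups (which all lie in $\mathbf{A}$), the required separating morphism is constructed by a structural induction on the $F(A)$-terms that at each level combines a projection to a suitable finite abelian group with a projection to a suitable finite combinatorial semigroup.

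The main obstacle is precisely this injectivity of $F(A) \to \wh{F_\mathbf{A}(A)}$. Three kinds of separating targets inside $\mathbf{A}$ are available: (a) arbitrary finite combinatorial semigroups, such as nilpotent semigroups and bands, which discriminate products of letters and record finite prefixes and suffixes of words; (b) arbitrary finite abelian groups, whose projections distinguish $\bw^{\omega+k}$ from $\bw^{\omega+\ell}$ for $k \neq \ell$; and (c) combinations of (a) and (b) via wreath products or Rees matrix constructions, required to discriminate nested unary implicit operations. The combinatorial bookkeeping needed to realize this separation in general, inducting on the complexity of the $F(A)$-terms under the standing hypothesis of abelian maximal subgroups, is where the technical substance lies, and it is essentially the content of Almeida--Volkov~\cite{AV03} together with the earlier variant of Rhodes~\cite{Rho87}. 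In the write-up I would quote these results directly rather than reprove them, and devote the proof to the short reduction step described in the first paragraph.
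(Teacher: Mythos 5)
Your reduction in the first paragraph is sound, but notice that it carries essentially no content: the ``key claim'' (every simple {\pid} $\bu\id\bv$ valid in all finite semigroups with abelian maximal subgroups has $\bu=\bv$ in $\wh{A^+}$) is just an equivalent restatement of the theorem itself, since a nontrivial such {\pid} would define a proper {\pvar} containing that class. So the entire burden of proof sits in the key claim, and this is where your proposal has a genuine gap. The separation argument you sketch---a structural induction on $F(A)$-terms, combining projections to finite abelian groups and to combinatorial semigroups via wreath or Rees matrix constructions---is not developed, and there is no reason offered why it should succeed: the whole difficulty is to control \emph{arbitrary} nested unary implicit operations (not just $\omega$-powers) interacting with products, while staying inside the class of semigroups with abelian maximal subgroups, and no induction scheme or invariant is proposed that handles this. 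Moreover, this is not in fact how Almeida--Volkov or Rhodes argue, so your description of what you would be citing is inaccurate.

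The ingredients that actually make the proof work, and which are absent from your proposal, are the following. One fixes, for a given finite semigroup $T$, a Burnside variety $\mathcal B_{m,n}$ defined by $x^m\id x^{m+n}$ with $m\geq 6$ containing $T$; because the free monogenic object there is finite, every unary implicit operation has a natural interpretation on all of $\mathcal B_{m,n}$, so $\mathcal B_{m,n}$ becomes a variety in the enriched signature (multiplication plus all unary implicit operations) and the simple {\pids} $\Sigma$ can be evaluated on it. The decisive input is then McCammond's theorem that the relatively free semigroups $B(k,m,n)$ have cyclic maximal subgroups and admit a system of cofinite ideals with empty intersection, so they are (infinite) subdirect products of finite semigroups with abelian maximal subgroups; hence they satisfy $\Sigma$, hence so does every member of $\mathcal B_{m,n}$, in particular $T$, which forces $\bV$ to be the {\pvar} of all finite semigroups. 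Simply quoting \cite{AV03} and \cite{Rho87} would not discharge this either: the paper says the theorem was only ``essentially'' proved there, precisely because the statement for this particular notion of simple {\pid} (closure under \emph{all} unary implicit operations) is not literally available and the bridging argument via $\mathcal B_{m,n}$ and McCammond's theorem still has to be supplied.
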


\begin{proof}
Let~$\A$ be a fixed countably infinite set and for any $m,n\geq 1$, let $\mathcal B_{m,n}$ be the variety of semigroups defined by the identity $x^m\id x^{m+n}$.
Then the free semigroup $B(1,m,n)$ on one-generator in $\mathcal B_{m,n}$ is finite and if $x^\eta\in \wh{\{x\}^+}$, then there exists an integer $n_\eta\leq m+n-1$ such that $x^\eta=x^{n_\eta}$ in $B(1,m,n)$.
Thus each implicit operation in $F(\A)$ has a natural interpretation on any semigroup in $\mathcal B_{m,n}$ which agrees with its usual interpretation in finite semigroups (namely interpret $w^\eta$ as $w^{n_\eta}$ for every element~$w$ of a semigroup $S\in \mathcal B_{m,n}$).

Suppose that~$\bV$ is defined by a set~$\Sigma$ of simple {\pids}.
Let $\mathcal W$ be the variety of universal algebras defined by~$\Sigma$ in the signature~$\tau$ consisting of multiplication and all unary implicit operations and let~$T$ be a finite semigroup.
Then there exist $m \geq 6$ and $n \geq 1$ such that~$T$ belongs to $\mathcal B_{m,n}$.
As discussed above, $\mathcal B_{m,n}$ can be viewed as a variety in the signature~$\tau$ such that the unary implicit operations have their usual interpretations in all finite semigroups in $\mathcal B_{m,n}$.

Now McCammond~\cite{McC91} has shown that for each integer $k\geq 1$, the semigroup $B(k,m,n)$ has cyclic maximal subgroups and that there is a system of cofinite ideals for $B(k,m,n)$ with empty intersection.
Therefore, $B(k,m,n)$ is an infinite subdirect product of finite semigroups with abelian maximal subgroups.
Since $\mathcal W$ contains all finite semigroups with abelian maximal subgroups, it follows that $B(k,m,n)\in \mathcal W$, whence $\mathcal B_{m,n} \subseteq \mathcal W$.
Therefore, $T$ belongs to $\mathcal W$ and so satisfies the {\pids}~$\Sigma$.
Consequently, $T\in \bV$ and hence~$\bV$ is the {\pvar} of all finite semigroups.
\end{proof}

In this paper, {\pids} involving idempotents from the minimal ideal of a free profinite semigroup are often used to define the exclusion {\pvars} of {\ji} semigroups.
Since many of these exclusion {\pvars} contain all semigroups with abelian maximal subgroups, Theorem~\ref{t:simple.ids} implies that, in general, simple {\pids} cannot be used in their definition.
It is presently unknown if one must use idempotents from the minimal ideal.

\subsection{Large exclusion {\pvars}} \label{sub: large}

If~$\bV$ and~$\bW$ are {\pvars} of semigroups, then their \emph{Mal'cev product} $\bV \malce \bW$ is the {\pvar} generated by all semigroups~$S$ with a homomorphism $\varphi\colon S\to T$ such that $T\in \bW$ and $e\varphi^{-1}\in \bV$ for all idempotents $e\in T$.
A remarkable property of the Mal'cev product is that
\begin{equation}\label{eq:malcev.meet}
\bigg(\bigcap_{\alpha\in A}\bV_{\alpha}\bigg)\malce \bW = \bigcap_{\alpha\in A}(\bV_\alpha\malce \bW);
\end{equation}
see~\cite{RS09} for details.

Let~$\mathbf{1}$ denote the {\pvar} of trivial semigroups.
For any {\ji} semigroup~$S$, we say that $\excl(S)$ is \textit{large} if \[\mathbf 1\malce \excl(S)=\excl(S).\]
If $\excl(S)$ is large and $\{ \bV_\alpha \mid \alpha \in A \}$ is a collection of {\pvars} such that $\bigcap_{\alpha\in A} \bV_\alpha=\mathbf{1}$, then it follows from~\eqref{eq:malcev.meet} and the fact that $\excl(S)$ is {\mi} that $\bV_\alpha\malce \excl(S)=\excl(S)$ for some $\alpha\in A$.
In particular, either \[ \bA\malce \excl(S) = \excl(S) \quad \text{or} \quad \bG\malce \excl(S)=\excl(S), \]where~$\bA$ is the {\pvar} of finite aperiodic semigroups and~$\bG$ is the {\pvar} of finite groups.
For more examples of {\pvars} with trivial intersection, see~\cite{RS09}.

If $S$ is a finite subdirectly indecomposable semigroup, then~$S$ has a unique $0$-minimal ideal~$I$ (where if~$S$ has no zero, then we consider the minimal ideal as $0$-minimal).
Moreover, one of the following cases holds:
\begin{enumerate}[1.]
\item[$\bullet$] $I^2=0$ (the null case);
\item[$\bullet$] $S$ acts faithfully on the right of the set of $\mathscr L$-classes of~$I$ (the left letter mapping case);
\item[$\bullet$] $S$ acts faithfully on the left of the set of $\mathscr L$-classes of~$I$ (the right letter mapping case);
\item[$\bullet$] $I$ contains a nontrivial maximal subgroup and~$S$ acts faithfully on both the left and right of~$I$ (the group mapping case).
\end{enumerate}
In the last three cases we say that~$S$ is of \textit{semisimple} type; see~\cite[Sec.~4.7]{RS09}.

\begin{theorem} \label{T: large}
Let~$S$ be any subdirectly indecomposable {\ji} semigroup of semi\-simple type \up(left letter mapping\up, right letter mapping\up, or group mapping\up)\up.
Then $\excl(S)$ is large\up.
\end{theorem}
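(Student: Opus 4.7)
\smallskip
\noindent\textbf{Proof proposal.}
The plan is to establish the non-trivial inclusion $\mathbf 1\malce\excl(S)\subseteq\excl(S)$; the reverse inclusion is immediate since $\excl(S)$ is a {\pvar} (as $S$ is {\ji}) and contains the trivial {\pvar}. So let $T$ be a generator of $\mathbf 1\malce\excl(S)$: there exists a surjective homomorphism $\varphi\colon T\twoheadrightarrow U$ with $U\in\excl(S)$ whose idempotent fibres $e\varphi^{-1}$ are all singletons. (These singletons must then be the unique idempotents of $T$ mapping to the corresponding idempotents of $U$, so $\varphi$ is idempotent-pure in the usual sense.) I must show $T\in\excl(S)$, i.e., $S\not\prec T^n$ for every $n\ge1$.

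Suppose, for contradiction, that $S\prec T^n$, witnessed by a subsemigroup $R\le T^n$ and a surjection $\pi\colon R\twoheadrightarrow S$. Let $U':=\varphi^n(R)\le U^n$; as $\excl(S)$ is a {\pvar}, $U'\in\excl(S)$. The restriction $\varphi^n|_R\colon R\twoheadrightarrow U'$ is still idempotent-pure (idempotent-purity is inherited by direct products and by restriction). The contradiction will come from the following key claim: \emph{$\pi$ factors through $\varphi^n|_R$}, that is, there is a surjection $\bar\pi\colon U'\twoheadrightarrow S$ with $\pi=\bar\pi\circ\varphi^n|_R$. Granting this, $S\prec U'\in\excl(S)$, contradicting $U'\in\excl(S)$.

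To prove the key claim I will use the hypothesis that $S$ is {\sdi} of semisimple type. In each of the three semisimple cases $S$ is determined, up to isomorphism, by its $0$-minimal ideal $I$ together with the faithful action(s) of $S$ on the $\mathscr L$- or $\mathscr R$-classes of $I$ (and, in the group mapping case, by the maximal subgroup of $I$ with its biaction)---this is the Rees/Sch\"utzenberger reconstruction recorded in [RS09, Chapter~4.7]. Since idempotent-pure homomorphisms induce a bijection between the idempotents of the domain and those of the image and are injective on each $\mathscr H$-class of an idempotent (a standard fact: $t_1,t_2$ in the same idempotent $\mathscr H$-class with $\varphi(t_1)=\varphi(t_2)$ forces $t_1=t_2$ via the group structure on the $\mathscr H$-class), $\varphi^n|_R$ transports the structural data of the $0$-minimal ideal $\pi^{-1}(I)\trianglelefteq R$ faithfully onto the corresponding data in $U'$. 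Because $S$ is reconstructible from that data, the reconstructed quotient $\bar\pi$ of $U'$ exists; verifying this amounts to checking that the congruence $\ker\varphi^n|_R$ is contained in $\ker\pi$ on the idempotents of $\pi^{-1}(I)$ and then propagating to all of $R$ by faithful action.

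\textbf{Main obstacle.} The hard part is the structural claim in the previous paragraph: producing $\bar\pi$ by case analysis on the three semisimple types, and in particular verifying that idempotent-purity of $\varphi^n|_R$ suffices to recover, from $U'$, the action(s) of $S$ on the $\mathscr L$- and/or $\mathscr R$-classes of $I$ that reconstruct $S$. I expect this step to use [RS09, Chapter~4.7] nontrivially (the description of {\sdi} semisimple semigroups and the standard lemmas that idempotent-pure homomorphisms are injective on each subgroup and on the set of idempotents of any regular $\mathscr J$-class), together with a diagram chase organised around the $0$-minimal ideal of $S$ and its preimage in $R$.
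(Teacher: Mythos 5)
Your overall reduction is fine: since $S$ is {\ji}, $\excl(S)$ is a {\pvar}, so it suffices to put every generator $T$ of $\mathbf 1\malce\excl(S)$ into $\excl(S)$, and your observations that trivial idempotent fibres pass to finite powers and to subsemigroups, that such maps are bijective on idempotents, and injective on maximal subgroups, are all correct. The problem is that your ``key claim'' is not a lemma you can wave at reconstruction theory -- it \emph{is} the theorem. Saying that $\pi$ factors through $\varphi^n|_R$ is exactly the statement that the relational morphism $\varphi^n\circ\pi^{-1}\colon S\to U'$ (which has trivial fibres over idempotents) is a division, and that is precisely the content of the result the paper invokes, namely \cite[Theorem~4.6.50]{RS09}: for a left letter mapping, right letter mapping or group mapping semigroup $S$, $S\in\mathbf 1\malce\bV$ if and only if $S\in\bV$, for every {\pvar} $\bV$. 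The paper's proof is just the two-line reduction (by maximality of $\excl(S)$ among {\pvars} omitting $S$, it suffices to show $S\notin\mathbf 1\malce\excl(S)$) followed by this citation; you have unwound the Mal'cev product by hand and then left the hard kernel of the argument as your ``main obstacle''.

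Moreover, the sketch you give of how to close that obstacle points at the wrong place. Checking that $\ker(\varphi^n|_R)\subseteq\ker\pi$ ``on the idempotents of $\pi^{-1}(I)$'' is automatic: distinct idempotents of $R$ always have distinct images under a map with trivial idempotent fibres, so that verification is vacuous. The genuine difficulty is to control arbitrary pairs $r_1,r_2$ with $\varphi^n(r_1)=\varphi^n(r_2)$ but $\pi(r_1)\neq\pi(r_2)$, and this is done not by ``reconstructing $S$ from its $0$-minimal ideal'' in the abstract, but by using the faithful (bi)action of $S$ on $I$ to translate such a pair into a pair of elements of a maximal subgroup (or of an $\mathscr H$-class meeting the structure matrix) with the same image, where trivial idempotent fibres finally force equality; the group mapping case in particular needs the Rees coordinates of $I$ and is where the real work lies. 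Unless you carry out that case analysis (essentially reproving the RS09 theorem), or simply cite \cite[Theorem~4.6.50]{RS09} -- at which point your generator-by-generator unwinding becomes unnecessary -- the proof is incomplete.
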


\begin{proof}
Obviously, $\excl(S)\subseteq \mathbf{1} \malce \excl(S)$.
As $\excl(S)$ is the largest {\pvar} that fails to contain~$S$, it suffices to show that $S \notin \mathbf{1} \malce \excl(S)$.
But~\cite[Theorem~4.6.50]{RS09} immediately implies that in any of the three cases, $S\in \mathbf{1} \malce \bV$ if and only if $S\in \bV$ for any {\pvar}~$\bV$.
Thus $S\notin \mathbf{1}\malce \excl(S)$ and so $\mathbf{1} \malce \excl(S)=\excl(S)$.
\end{proof}

The proof of Theorem~\ref{T: large} is in fact valid if~$S$ is left letter mapping, right letter mapping, or group mapping even if it is not {\sdi}.

\subsection{Augmentation preserves {\jirr}} \label{sub: augmentation}

In this subsection, augmentation is shown to preserve {\jirr}.
Some special cases were previously considered in~\cite[Section~7.3]{RS09}.

\begin{theorem} \label{T: Sbar}
The operator $\bV \mapsto \bV^\ba$ preserves the property of being {\ji}\up.
In particular\up, if a {\pvar} $\lpbr S\rpbr$ is {\ji}\up, then the {\pvar} $\lpbr S^\ba\rpbr$ is also {\ji}\up.
Further\up, if $\excl(S)=\lbrs \bu\id\bv\rbrs$ where $\bu,\bv\in \wh{\A^+}$\up, then \[ \excl(S^\ba) = \lbr (\be z\bu)^{\omega}\id (\be z\bv)^{\omega}\rbr\] where $z\notin \A$ and~$\be$ is an idempotent in the minimal ideal of $\wh{(\A\cup \{z\})^+}$\up.
\end{theorem}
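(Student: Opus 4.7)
The plan is to establish the explicit formula $\excl(S^\ba) = \lbr (\be z \bu)^\omega \id (\be z \bv)^\omega \rbr$; this exhibits $\excl(S^\ba)$ as a pseudovariety, so $S^\ba$ is join-irreducible by \cite[Theorem~7.1.2]{RS09}, which is the operator claim. The argument splits into two inclusions.

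For the inclusion $\lbr (\be z \bu)^\omega \id (\be z \bv)^\omega \rbr \subseteq \excl(S^\ba)$, I would verify that $S^\ba$ itself violates the pseudoidentity. Choose $\psi \colon \wh{A^+} \to S$ with $\psi(\bu) \neq \psi(\bv)$ (which exists because $S \notin \excl(S)$), and extend $\psi$ to a continuous morphism $\varphi \colon \wh{(A \cup \{z\})^+} \to S^\ba$ by setting $\varphi(z) = \ov I$, the constant map to the identity $I$ of $S^\bullet$. Since the minimal ideal of $S^\ba$ consists of constant maps (a right-zero semigroup) and continuous morphisms carry minimal ideals into minimal ideals, $\varphi(\be) = \ov c$ for some $c \in S^\bullet$. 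Using the multiplication rules $\ov c \cdot \ov I = \ov I$ and $\ov I \cdot s = \ov s$ for $s \in S$, together with the fact that every constant is idempotent, one computes $\varphi((\be z \bu)^\omega) = \ov{\psi(\bu)} \neq \ov{\psi(\bv)} = \varphi((\be z \bv)^\omega)$, so $S^\ba$ fails the pseudoidentity.

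For the reverse inclusion, I would show that if a finite semigroup $T$ fails the pseudoidentity, then $S^\ba \in \lpbr T \rpbr$. Let $\varphi \colon \wh{(A \cup \{z\})^+} \to T$ witness the failure, let $T_0$ denote its image, and set $e = \varphi(\be)$ (an idempotent in the minimal ideal $M$ of $T_0$), $t_0 = \varphi(z)$, and $\psi = \varphi|_{\wh{A^+}}$. If $\psi(\bu) = \psi(\bv)$, then $e t_0 \psi(\bu) = e t_0 \psi(\bv)$ and the $\omega$-powers agree; so $\psi(\bu) \neq \psi(\bv)$, $T$ violates $\bu \id \bv$, and $S \prec T^n$ for some $n$. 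The key structural observation is that in the completely simple minimal ideal $M$, left multiplication by any element depends only on the $\mathscr L$-class of the right factor, so each element of $M$ acts as a constant on the set $X$ of $\mathscr L$-classes of $M$. In the faithful quotient $T'$ of $T_0$ by the kernel of its action on $X$, the image of $M$ therefore realises every constant on $X$; the failure of the pseudoidentity unpacks into the statement that $\psi(\bu)$ and $\psi(\bv)$ send $e t_0$ to distinct $\mathscr L$-classes, whence $T'$ also fails $\bu \id \bv$, giving $S \prec (T')^k$ for some $k$.

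The main obstacle is upgrading $S \prec (T')^k$ to $S^\ba \prec (T')^m$. The strategy is to work in the category of transformation semigroups: from the division $S \prec (T')^k$, extract a faithful transformation semigroup $(Y, S) \prec (X, T')^k$ via the induced action on $X^k$, then invoke Lemma~\ref{l:Allen} to obtain $(S^\bullet, S) \prec (Y, S)^{|Y|} \prec (X, T')^{k|Y|}$. Applying Lemma~\ref{L: Eilenberg}(i) augments both sides, giving $(S^\bullet, S^\ba) = \ov{(S^\bullet, S)} \prec \ov{(X, T')^{k|Y|}}$. Crucially, because $T'$ already contains every constant on $X$, the augmented transformation semigroup $\ov{(X, T')^{k|Y|}}$ sits inside $(X^{k|Y|}, (T')^{k|Y|})$, so Lemma~\ref{L: Eilenberg}(ii) yields $S^\ba \prec (T')^{k|Y|}$; and $(T')^{k|Y|} \in \lpbr T \rpbr$ because $T'$ is a subquotient of $T$.
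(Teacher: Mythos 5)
Your proposal follows the paper's own proof almost step for step: the same extension $z\mapsto \ov I$ shows that $S^\ba$ violates the pseudoidentity (your appeal to ``minimal ideals map into minimal ideals'' is loosely stated but also unnecessary, since $t\cdot\ov I=\ov I$ for every $t\in S^\ba$), and the same passage to the right action on the set $X$ of $\mathscr L$-classes of the minimal ideal, with the observation that $et_0\psi(\bu)$ and $et_0\psi(\bv)$ are $\mathscr R$- but not $\mathscr L$-equivalent (otherwise they would lie in a common group $\mathscr H$-class and have equal $\omega$-powers), shows that $T'=\mathsf{RLM}$ of the image of $T$ violates $\bu\id\bv$ and contains every constant map on $X$. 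Up to that point you are reproducing the paper's argument.

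The one genuine gap is in your upgrade from $S\prec (T')^k$ to $S^\ba\prec (T')^m$, where you ``extract a faithful transformation semigroup $(Y,S)\prec (X,T')^k$ via the induced action on $X^k$''. There is no such induced action: $S$ is only a homomorphic image of a subsemigroup $U\le (T')^k$, and a quotient of $U$ does not act on $X^k$; moreover, division of transformation semigroups is strictly finer than division of the underlying semigroups, so the existence of \emph{some} faithful $(Y,S)$ dividing $(X,T')^k$ with the same exponent $k$ is not automatic and would itself need proof. The repair is to route through the right regular representation: from $S\prec (T')^k$ one gets $(S^\bullet,S)\prec (((T')^k)^\bullet,(T')^k)\prec ((T')^\bullet,T')^k\prec (X,T')^{k|X|}$, using Eilenberg's Proposition~I.5.8, the embedding used in the proof of Proposition~\ref{p:basic.aug}(ii), and Lemma~\ref{l:Allen}; after that your closing argument (augment both sides via Lemma~\ref{L: Eilenberg}(i), note that every constant on the product state set is already realised by a tuple of constants from $T'$, and project to semigroups via Lemma~\ref{L: Eilenberg}(ii)) goes through verbatim, only with a larger exponent. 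Alternatively---and this is what the paper does---once $S\in\lpbr T'\rpbr$ is known, Corollary~\ref{c:freeze} and Proposition~\ref{p:aug.pv}(i) give directly $S^\ba\in\lpbr T'\rpbr^\ba=\lpbr T'\rpbr\subseteq\lpbr T\rpbr$, with no further transformation-semigroup bookkeeping; your final paragraph is essentially a hand re-derivation of those two results, correct in outline once the faulty intermediate claim is replaced.
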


\begin{proof}
First note that since $S \notin \excl(S)$, there exists some homomorphism $\varphi \colon \wh{\A^+}\to S$ such that $\bu \varphi \neq \bv \varphi$.
Let~$1$ denote the identity element of~$S^\bullet$, and extend~$\varphi$ to a homomorphism $\wh{(\A\cup \{z\})^+} \to S^\ba$ by sending~$z$ to $\ov 1$.
Then $(\be z\bu)^{\omega}\varphi = \ov{\bu\varphi} \neq \ov{\bv\varphi} = (\be z\bv)^{\omega}\varphi$ and so $S^\ba \notin \lbrs (\be z\bu)^\omega \id (\be z\bv)^\omega \rbrs$.

To complete the proof, it suffices to assume that $T \notin \lbrs (\be z\bu)^\omega \id (\be z\bv)^\omega \rbrs$, and show that $S^\ba \in \lpbr T \rpbr$.
Replacing~$T$ by a subsemigroup if necessary, generality is not lost by assuming the existence of a surjective homomorphism $\psi \colon \wh{(\A\cup\{z\})^+} \to T$ such that $(\be z\bu)^\omega \psi\neq (\be z\bv)^\omega \psi$.
Now the semigroup~$T$ acts on the right of the set~$B$ of $\mathscr L$-classes of its minimal ideal~$J$; let $(B,\mathsf{RLM}(T))$ denote the resulting faithful transformation semigroup.
Note that $(B,\mathsf{RLM}(T)) = \ov{(B,\mathsf{RLM}(T))}$ because if $b\in B$, then any element of~$T$ in the $\mathscr L$-class of~$b$ acts on~$B$ as a constant map to~$b$ by the structure of completely simple semigroups.
It follows from Corollary~\ref{c:freeze} that $\lpbr\mathsf{RLM}(T)\rpbr^\ba= \lpbr\mathsf{RLM}(T)\rpbr$, since the constant mappings form the minimal ideal of $\mathsf{RLM}(T)$.

Since $(\be z)\psi$ is in the minimal ideal~$J$ of~$T$, the elements $((\be z)\psi)( \bu\psi)$ and $((\be z)\psi)( \bv\psi)$ are $\mathscr R$-equivalent.
However, they are not $\mathscr L$-equivalent because otherwise they would be $\mathscr H$-equivalent and hence have the same idempotent power, as~$J$ is completely simple.
Thus $\bu\psi$ and $\bv\psi$ have distinct images under the quotient map $T \to \mathsf{RLM}(T)$.
Consequently, there is a homomorphism $\varphi \colon \wh \A^+\to \mathsf{RLM}(T)$ such that $\bu\varphi \neq \bv\varphi$, that is, $\mathsf{RLM}(T) \notin \excl(S)$.
Therefore, $S \in \lpbr \mathsf{RLM}(T) \rpbr$, whence $S^\ba \in \lpbr\mathsf{RLM}(T)\rpbr^\ba = \lpbr\mathsf{RLM}(T) \rpbr \subseteq \lpbr T\rpbr$ as required.
\end{proof}

\begin{corollary}
Let $(X,S)$ be any transformation semigroup with $(X,S) \prec (S^\bullet,S)$\up.
Suppose that the {\pvar} $\lpbr S\rpbr$ is {\ji}\up.
Then the {\pvar} $\lpbr S\cup \ov X\rpbr$ is also {\ji}\up.
\end{corollary}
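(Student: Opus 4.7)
The plan is to observe that this corollary is essentially an immediate synthesis of two results already established in this subsection: Corollary~\ref{c:samepv} identifies the {\pvar} $\lpbr S\cup \ov X\rpbr$ with $\lpbr S^\ba\rpbr$ under the hypothesis $(X,S)\prec (S^\bullet,S)$, and Theorem~\ref{T: Sbar} asserts that the augmentation operator preserves the property of being {\ji}. So the strategy is simply to chain these two facts together.

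More concretely, I would first invoke Corollary~\ref{c:samepv} with the given transformation semigroup $(X,S)$. Its hypothesis $(X,S)\prec (S^\bullet,S)$ is exactly what we are assuming, so the corollary directly yields the equality $\lpbr S\cup \ov X\rpbr = \lpbr S^\ba\rpbr$ of {\pvars}. Thus the problem reduces to showing that $\lpbr S^\ba\rpbr$ is {\ji}.

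Next, I would apply Theorem~\ref{T: Sbar} to the hypothesis that $\lpbr S\rpbr$ is {\ji}. The theorem states precisely that in this situation $\lpbr S^\ba\rpbr$ is also {\ji} (and in fact gives an explicit {\pid} defining $\excl(S^\ba)$, though only the preservation of {\jirr} is needed here). Combining the equality from the previous paragraph with this conclusion gives that $\lpbr S\cup \ov X\rpbr$ is {\ji}, as required.

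There is no real obstacle here: the entire content of the statement lies in the two results already proved, and the corollary is just packaging them in a convenient form. The only thing worth double-checking is that the hypothesis $(X,S)\prec (S^\bullet,S)$ used for Corollary~\ref{c:samepv} is literally the hypothesis assumed in the statement, so no additional verification (about faithfulness of the action, say) is needed before applying it.
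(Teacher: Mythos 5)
Your proposal is correct and matches the paper's own proof, which simply cites Corollary~\ref{c:samepv} to get $\lpbr S\cup \ov X\rpbr = \lpbr S^\ba\rpbr$ and then Theorem~\ref{T: Sbar} to conclude that this {\pvar} is {\ji}. Nothing further is needed.
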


\begin{proof}
This follows from Corollary~\ref{c:samepv} and Theorem~\ref{T: Sbar}.
\end{proof}

Note that if $S$ is {\ji}, then $\excl(S^\ba)$ will be large by Theorem~\ref{T: large} (and the remark following it).

\subsection{Iterating augmentation and its dual to bands} \label{sub: iterating}

For any semigroup~$S$, define \[ S^\flat=\lobr\lobr S^\op\robr^\ba\robr^\op .\]
In other words, $S^\flat$ is obtained by considering the left action of $S$ on $S^{\bullet}$ and adjoining constant maps.
For any {\pvar}~$\bV$, define \[ \bV^{\flat} = \lpbr S^{\flat} \mid S \in \bV \rpbr. \]
By symmetry, $\bV \mapsto \bV^\flat$ is a continuous idempotent operator that preserves {\jirr}; see~\cite[Chapter~2]{RS09}.
Define the operators $\alpha,\beta \colon \bPV \to \bPV$ by $\alpha\bV=\bV^\ba$ and $\beta\bV=\bV^\flat$.
The aim of this subsection is to show that for any nontrivial finite semigroup~$S$, the hierarchy
\begin{equation}\label{eq:hier}
\bV_n=(\beta\alpha)^n\lpbr S\rpbr, \quad n\geq 0
\end{equation}
is strict, as is the dual hierarchy obtained by interchanging the roles of $\alpha$ and $\beta$.
An important observation is that $\beta\alpha\lpbr S \rpbr$ is a compact {\pvar} containing~$\SL$ that is generated by $\lobr S^\ba \robr^\flat$, which is left mapping with respect to its minimal ideal.
Thus it suffices to handle the case that $\bSl \subseteq \lpbr S\rpbr$ and~$S$ is left mapping with respect to its minimal ideal.

\begin{proposition}\label{p:rzbound}
For any finite semigroup~$S$\up, \[ S^\ba\in \bRZ \malce (\lpbr S\rpbr\vee \bSl) \quad \text{and} \quad S^\flat\in \bLZ \malce (\lpbr S\rpbr\vee \bSl). \]
\end{proposition}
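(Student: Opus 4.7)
The plan is to exhibit $S^\ba$ as lying in the claimed Mal'cev product by producing an ideal that is a right zero semigroup and whose Rees quotient lies in $\lpbr S\rpbr \vee \bSl$. The natural candidate is the set $\ov{S^\bullet}$ of constant mappings. Using the composition convention of Section~\ref{sec: bar}, one computes $\ov x \cdot s = \ov{xs}$, $s \cdot \ov x = \ov x$, and $\ov x \cdot \ov y = \ov y$ for $s \in S$ and $x, y \in S^\bullet$. These formulas show simultaneously that $\ov{S^\bullet}$ is an ideal of $S^\ba$ and is a right zero semigroup, so $\ov{S^\bullet} \in \bRZ$.

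Since the complement $S^\ba \setminus \ov{S^\bullet} = S$ is closed under multiplication, the Rees quotient $S^\ba / \ov{S^\bullet}$ is isomorphic to $S^0$, the semigroup obtained from $S$ by adjoining a new absorbing zero (regardless of whether $S$ already has one). The semigroup $S^0$ belongs to $\lpbr S\rpbr \vee \bSl$ because it is the Rees quotient of $S \times \SL \in \lpbr S\rpbr \vee \bSl$ by its ideal $S \times \{0\}$. Under the quotient map $\varphi \colon S^\ba \twoheadrightarrow S^0$, the preimage of $0$ is $\ov{S^\bullet} \in \bRZ$, while the preimage of every idempotent of $S$ is a singleton, hence trivially in $\bRZ$. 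Therefore $S^\ba \in \bRZ \malce \lpbr S^0\rpbr \subseteq \bRZ \malce \lobr\lpbr S\rpbr \vee \bSl\robr$.

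The dual assertion will follow by applying the first part to $S^\op$ and taking opposites, using that Mal'cev product is defined via homomorphisms and idempotent preimages (so $\lobr\bV \malce \bW\robr^\op = \bV^\op \malce \bW^\op$), together with $\bRZ^\op = \bLZ$ and $\bSl^\op = \bSl$, yielding
\[ S^\flat \in \lobr\bRZ \malce \lobr\lpbr S^\op\rpbr \vee \bSl\robr \robr^\op = \bLZ \malce \lobr\lpbr S\rpbr \vee \bSl\robr. \]

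There is no serious obstacle; the proof is essentially a structural calculation once the multiplication formulas in $S^\ba$ are in hand. The only point requiring minor care is recognizing that the Rees quotient by $\ov{S^\bullet}$ produces $S^0$ with a genuinely new absorbing zero, even when $S$ already happens to contain a zero element, and that every idempotent preimage under $\varphi$ indeed falls within $\bRZ$ (handled trivially for idempotents of $S$ and via the right zero structure of $\ov{S^\bullet}$ for the adjoined zero).
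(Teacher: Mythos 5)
Your strategy is the same as the paper's: $\ov{S^\bullet}$ is a right zero ideal of $S^\ba$, the Rees quotient is compared with $S^0$, and the definition of the Mal'cev product (plus duality for $S^\flat$) finishes the proof. But the step you single out as needing only ``minor care'' is false as stated: in general $S^\ba\setminus\ov{S^\bullet}\neq S$, and $S^\ba/\ov{S^\bullet}$ is \emph{not} isomorphic to $S^0$. Recall that $S^\ba$ is a semigroup of transformations of $S^\bullet$; any $s\in S$ with $xs=s$ for all $x\in S^\bullet$ is, as a transformation, literally the constant map $\ov s$, hence already lies in $\ov{S^\bullet}$. This happens whenever $S$ has a zero, or more generally an element $s$ with $Ss=\{s\}$. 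The paper's own table for $\NB$ illustrates it: $\NB$ has only four elements, with $\ez$ identified with $\ov{\ez}$, and $\NB/\ov{\N_2^\bullet}\cong\N_2$, not $\N_2^0$; for $S=\RZ$ every element acts as a constant and the Rees quotient is trivial. This is exactly why the paper claims only that $S^\ba/\ov{S^\bullet}$ \emph{divides} $S^0$.

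The damage is local and easily repaired. The composite of the right regular representation $S\to S^\ba$ with the Rees quotient map, extended to $S^0$ by sending the adjoined zero to the zero class, is a surjective homomorphism $S^0\twoheadrightarrow S^\ba/\ov{S^\bullet}$; hence $S^\ba/\ov{S^\bullet}\in\lpbr S^0\rpbr\subseteq\lpbr S\rpbr\vee\bSl$, the last inclusion by your (correct) observation that $S^0\cong (S\times\SL)/(S\times\{\ez\})$. Your Mal'cev argument then goes through with the Rees quotient map in place of a surjection onto $S^0$: the preimage of its zero is $\ov{S^\bullet}\in\bRZ$, and every other idempotent preimage is a singleton. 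The computation of the multiplication in $S^\ba$, the right zero ideal claim, and the duality argument for $S^\flat$ are all correct.
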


\begin{proof}
Clearly, $S^\ba/\ov{S^{\bullet}}$ divides the semigroup $S^0$ obtained by adjoining an external zero element~$0$ to~$S$.
Since $\ov{S^\bullet}$ is a right zero semigroup and $\lpbr S^0 \rpbr \subseteq \lpbr S\rpbr\vee \bSl$, the inclusion $S^\ba \in \bRZ\malce (\lpbr S\rpbr\vee \bSl)$ holds.
The second inclusion is dual.
\end{proof}

Define the operators $\til \alpha,\til \beta\colon \bPV \to \bPV$ by $\til \alpha \bV = \bRZ \malce \bV$ and $\til \beta\bV = \bLZ \malce \bV$.
These operators are idempotent.
For any finite semigroup~$S$ that contains~$\SL$ as a subsemigroup, define the hierarchy
\begin{equation}\label{eq:hier2}
\bU_n=(\til \beta\til \alpha)^n\lpbr S\rpbr,\quad n\geq 0.
\end{equation}
Observe that $\bV_n\subseteq \bU_n$ for all $n\geq 0$ as a consequence of Proposition~\ref{p:rzbound}.

\begin{proposition}\label{p:band.rm.lm}
Suppose that~$S$ is any nontrivial band that is left mapping with respect to its minimal ideal and that $\bV$ is any {\pvar} such that $\bSl \subseteq \bV$\up.
Then $S^\ba\in \bRZ \malce \bV$ if and only if $S \in \bV$\up.
\end{proposition}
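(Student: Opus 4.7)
The forward direction is routine: if $S \in \bV$, then Proposition~\ref{p:rzbound} gives $S^\ba \in \bRZ \malce (\lpbr S \rpbr \vee \bSl) \subseteq \bRZ \malce \bV$, where the last containment uses $\bSl \subseteq \bV$.

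For the converse, suppose $S^\ba \in \bRZ \malce \bV$. Since $S$ is a subsemigroup of $S^\ba$, we have $S \in \bRZ \malce \bV$; so $S$ divides a finite semigroup $U$ admitting a surjective homomorphism $\pi\colon U \to W$ with $W \in \bV$ and with each idempotent fiber of $\pi$ a right zero semigroup. Write $S = U''/{\sim}$ where $U''$ is a subsemigroup of $U$ and $\sim$ is a congruence on $U''$. My plan is to prove that $\ker(\pi|_{U''}) \subseteq {\sim}$: this will imply that the quotient map $U'' \twoheadrightarrow S$ factors through $\pi(U'')$, so that $S$ is a quotient of $\pi(U'')\leq W$ and hence $S \in \bV$.

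Given $u_1, u_2 \in U''$ with $\pi(u_1) = \pi(u_2)$, the band structure of $S$ gives $[u_i^\omega] = [u_i]^\omega = [u_i]$, so I may replace each $u_i$ by $u_i^\omega$ and assume that $u_1, u_2$ are idempotent in $U''$ and that $\pi(u_1) = \pi(u_2)$ is idempotent in $W$. Set $s_i = [u_i]$. Now invoke the left-mapping hypothesis, which says that the minimal ideal $L$ of $S$ is a left zero semigroup and that $S$ acts faithfully on the left of $L$. For an arbitrary $l \in L$, lift $l$ to an idempotent $\tilde l \in U''$ (choose any $\sim$-preimage in $U''$ and take its $\omega$-power) and set $v_i = u_i \tilde l \in U''$. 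Then $\pi(v_1) = \pi(v_2)$ and $[v_i] = s_i l \in L$ is idempotent in the band $S$, so $v_i \sim v_i^\omega$. Passing to $\omega$-powers makes $\pi(v_1^\omega) = \pi(v_2^\omega)$ idempotent in $W$, so the right zero fiber property forces $v_1^\omega v_2^\omega = v_2^\omega$ in $U''$. Projecting to $S$ yields the identity $(s_1 l)(s_2 l) = s_2 l$. On the other hand $s_1 l, s_2 l \in L$ and $L$ is left zero, so $(s_1 l)(s_2 l) = s_1 l$. Comparing the two forces $s_1 l = s_2 l$, and since this holds for every $l \in L$, the faithful left action of $S$ on $L$ yields $s_1 = s_2$, i.e., $u_1 \sim u_2$, as required.

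The principal subtlety is orchestrating the three structural features at once: the band identity is used repeatedly to replace an element of $U''$ by its $\omega$-power without changing its $\sim$-class; the Mal'cev right zero property produces multiplicative relations only over idempotent $\pi$-images (which is precisely why the $\omega$-power reduction is indispensable); and the left zero nature of $L$ is the ingredient that contradicts the Mal'cev relation when carried out inside $S$, thereby collapsing the $\pi$-fiber there. The auxiliary elements $v_i = u_i \tilde l$ are the device that brings all three ingredients to bear simultaneously, producing the clash $(s_1 l)(s_2 l) = s_1 l = s_2 l$ from which faithfulness of the left action on $L$ delivers the conclusion.
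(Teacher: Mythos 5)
Your argument is correct, but it takes a genuinely different route from the paper. The paper handles the converse by staying with the band $S^\ba$: it invokes \cite[Theorem~4.6.50]{RS09} to translate $S^\ba\in\bRZ\malce\bV=\mathbf D\malce\bV$ into membership of the quotient of $S^\ba$ by its least left mapping congruence, identifies that quotient with $S^0$, and then uses $\SL\in\bV$ to pass between $S^0$ and $S$. You instead observe that $S\leq S^\ba$ forces $S\in\bRZ\malce\bV$, and then prove from scratch that a nontrivial band that is left mapping with respect to its minimal ideal and lies in $\bRZ\malce\bV$ must already lie in $\bV$: unwinding a division $S\prec U$ with $\pi\colon U\to W$, $W\in\bV$, and right zero idempotent fibers, the fiber relation $v_1^\omega v_2^\omega=v_2^\omega$ played against the left zero minimal ideal yields $s_1l=s_2l$ for all $l$, and faithfulness of the left action collapses the fibers inside $S$. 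What each approach buys: the paper's proof is short given the $\mathsf{LM}$/Mal'cev machinery of~\cite{RS09} and parallels the proof of Theorem~\ref{T: large}; yours is elementary and self-contained, and in fact establishes a slightly stronger statement, since your converse needs no hypothesis $\bSl\subseteq\bV$ (that hypothesis enters only in the easy direction via Proposition~\ref{p:rzbound}). Two points you should make explicit but which are not genuine gaps: first, that every member of $\bRZ\malce\bV$ divides a \emph{single} semigroup $U$ carrying the defining homomorphism, because the class of such semigroups is closed under finite direct products (or cite the relational morphism description of Mal'cev products in~\cite{RS09}); second, that for a band the left mapping hypothesis indeed forces the minimal ideal to be a left zero semigroup (left translation by a kernel element of a rectangular band depends only on its $\mathscr R$-class, so faithfulness kills the right zero factor), a fact you state as part of the hypothesis rather than derive.
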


\begin{proof}
If $S\in \bV$, then $S^\ba\in \bRZ \malce \bV$ by Proposition~\ref{p:rzbound}.
Conversely, since $S^\ba$ is a band, $S^\ba \in \bRZ \malce \bV$ if and only if $S^\ba\in \mathbf{D} \malce \bV$, where~$\mathbf{D}$ is the {\pvar} of semigroups whose idempotents are right zeroes; this occurs if and only if the quotient of $S^\ba$ by the intersection $\mathsf{LM}$ of all its left mapping congruences belongs to~$\bV$ \cite[Theorem~4.6.50]{RS09}.
Note that since~$S$ is a left mapping band with respect to its minimal ideal, its minimal ideal consists of at least two left zeroes.
Therefore, the minimal ideal of $S^\ba$ contains no elements of~$S$.
Then $S^\ba/\mathsf{LM}\cong S^0$ because $S^\ba$ acts trivially on the left of its minimal ideal and acts as~$S$ does on the left of its other $\mathscr J$-classes.
Since $\SL \in \bV$ by assumption, it follows that $S^\ba/\mathsf{LM}\in \bV$ if and only if $S \in \bV$.
\end{proof}

\begin{corollary}\label{c:erase}
Suppose that~$S$ is any nontrivial band that is left mapping with respect to its minimal ideal and that $\bV$ is any {\pvar} such that $\bSl \subseteq \bV$\up.
Then $\lobr S^\ba\robr^\flat \in \til\beta \til \alpha\bV$ if and only if $S\in \bV$\up.
\end{corollary}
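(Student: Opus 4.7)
The plan is to apply Proposition~\ref{p:band.rm.lm} twice---once directly to $S$, and once in its op-dual form to the augmentation $S^\ba$---and chain the two resulting biconditionals.

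The key preparatory observation is that the augmentation $S^\ba$ itself satisfies the hypothesis dual to ``left mapping with respect to its minimal ideal''. Indeed, $S^\ba$ is a band, its minimal ideal is the right zero semigroup $\ov{S^\bullet}$, and since $S$ is nontrivial one has $|S^\bullet|\geq 2$, so $\ov{S^\bullet}$ is nontrivial. Thus $S^\ba$ is a nontrivial band whose minimal ideal is a nontrivial right zero semigroup, which is precisely the hypothesis of the op-dual of Proposition~\ref{p:band.rm.lm}. This op-dual is obtained by applying the proposition to $T^\op$ and translating through the identities $S^\flat=((S^\op)^\ba)^\op$, $\bLZ=\bRZ^\op$, and $(\bV\malce\bW)^\op=\bV^\op\malce\bW^\op$; its conclusion reads: if $T$ is a nontrivial band whose minimal ideal is a nontrivial right zero semigroup and $\bSl\subseteq\bW$, then $T^\flat\in\bLZ\malce\bW$ if and only if $T\in\bW$.

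The argument then proceeds in two steps. First, the op-dual of Proposition~\ref{p:band.rm.lm} applied with $T=S^\ba$ and $\bW=\til\alpha\bV$---whose side condition $\bSl\subseteq\til\alpha\bV$ is immediate from $\bSl\subseteq\bV\subseteq\til\alpha\bV$---yields
\[
(S^\ba)^\flat \in \til\beta\til\alpha\bV \ \Longleftrightarrow\ S^\ba \in \til\alpha\bV.
\]
Second, Proposition~\ref{p:band.rm.lm} itself, applied to $S$ and $\bV$ under exactly the hypotheses granted by the corollary, gives
\[
S^\ba \in \til\alpha\bV \ \Longleftrightarrow\ S \in \bV.
\]
Concatenating these two equivalences will then yield the corollary.

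I do not anticipate any serious obstacle. The argument reduces to two applications of a single proposition; the only bookkeeping item is verifying that the op-dual of Proposition~\ref{p:band.rm.lm} has the form stated above, which is a routine translation through the anti-automorphism $\bV\mapsto\bV^\op$ of $\bPV$ together with the observation that both augmentation operators $\ba$ and $\flat$ are interchanged by this anti-automorphism.
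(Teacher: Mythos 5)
Your proposal is correct and takes essentially the same route as the paper: apply the dual of Proposition~\ref{p:band.rm.lm} to $S^\ba$ with the pseudovariety $\til\alpha\bV$ (noting $\bSl\subseteq\bV\subseteq\til\alpha\bV$), then apply Proposition~\ref{p:band.rm.lm} itself to $S$ and $\bV$, and chain the equivalences. One small precision: the dual hypothesis is that the band is \emph{right mapping} with respect to its minimal ideal (faithful right action), which is strictly stronger than merely having a nontrivial right zero minimal ideal; this holds for $S^\ba$ because it acts faithfully on the right of $S^\bullet$ and the constant maps $\ov{S^\bullet}$ realize that action.
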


\begin{proof}
Since $S^\ba$ is a nontrivial band that is right mapping with respect to its minimal ideal, the dual of Proposition~\ref{p:band.rm.lm} implies that $\lobr S^\ba \robr^\flat\in \bLZ \malce (\bRZ \malce \bV)$ if and only if $S^\ba\in \bRZ \malce \bV$.
An application of Proposition~\ref{p:band.rm.lm} then yields that $\lobr S^\ba\robr^\flat \in \bLZ \malce (\bRZ \malce \bV)$ if and only if $S\in \bV$.
\end{proof}

The hierarchies~\eqref{eq:hier} and~\eqref{eq:hier2} for the case $S = \SL$ are now analyzed.
Recall that~$\bB$ denotes the {\pvar} of finite bands.

\begin{lemma}\label{l:go.up}
Consider the hierarchies~\eqref{eq:hier} and \eqref{eq:hier2} with $S = \SL$\up.
Then
\begin{enumerate}[\rm(i)]
\item $\bV_n\nsubseteq \bU_{n-1}$ for all $n\geq 1$\up;
\item the hierarchies~\eqref{eq:hier} and~\eqref{eq:hier2} are strict\up;
\item $\bigcup_{n\geq 0} \bU_n=\bigcup_{n\geq 0}\bV_n = \bB$\up.
\end{enumerate}
\end{lemma}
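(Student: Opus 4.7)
\emph{Proof plan.}

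The plan is to prove~(i) by induction on $n$, deduce~(ii) formally from~(i) together with the inclusion $\bV_n \subseteq \bU_n$ of Proposition~\ref{p:rzbound}, and establish~(iii) by invoking the compactness of every proper subpvar of $\bB$ (noted in the introduction, citing Almeida~\cite[Section~5.5]{Alm94}). The key technical tool for the induction is Corollary~\ref{c:erase}, which lets us transfer non-membership $(S^\ba)^\flat \notin \til\beta\til\alpha\bV$ to non-membership $S \notin \bV$ whenever $S$ is a nontrivial left-mapping band and $\bSl \subseteq \bV$.

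Set $S_0 = \SL$ and $S_n = (S_{n-1}^\ba)^\flat$ for $n \geq 1$; then Proposition~\ref{p:aug.pv}(i) and its dual give $\bV_n = \lpbr S_n \rpbr$. Each $S_n$ is a nontrivial band, and for $n \geq 1$ the observation preceding Proposition~\ref{p:rzbound} guarantees that $S_n$ is left mapping with respect to its minimal ideal. For the base case $n = 1$ of~(i): Proposition~\ref{p:aug.pv}(ii) gives $\bRZ \subseteq \alpha\bSl \subseteq \bV_1$, while $\bRZ \nsubseteq \bSl = \bU_0$. For the inductive step with $n \geq 2$, assume $S_{n-1} \notin \bU_{n-2}$. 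Since $\bSl \subseteq \bU_{n-2}$ and $S_{n-1}$ is a nontrivial left-mapping band, Corollary~\ref{c:erase} applied with $S = S_{n-1}$ and $\bV = \bU_{n-2}$ yields
\[
S_n = (S_{n-1}^\ba)^\flat \in \til\beta\til\alpha\bU_{n-2} = \bU_{n-1}
\quad\Longleftrightarrow\quad
S_{n-1} \in \bU_{n-2},
\]
so the inductive hypothesis forces $S_n \notin \bU_{n-1}$ and hence $\bV_n \nsubseteq \bU_{n-1}$, completing the induction.

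Part~(ii) now follows: $\bV_{n-1} \subseteq \bU_{n-1}$ together with $\bV_n \nsubseteq \bU_{n-1}$ gives $\bV_{n-1} \subsetneq \bV_n$ (using monotonicity of $\beta\alpha$), and $\bU_n \supseteq \bV_n \nsubseteq \bU_{n-1}$ gives $\bU_{n-1} \subsetneq \bU_n$. For~(iii), each $\bU_n$ sits in $\bB$ since $\bLZ \malce \bB, \bRZ \malce \bB \subseteq \bB$, whence $\bigcup_n \bV_n \subseteq \bigcup_n \bU_n \subseteq \bB$. Were $\bigcup_n \bV_n$ a proper subpvar of $\bB$, it would be compact by Almeida~\cite[Section~5.5]{Alm94}, hence equal to some $\bV_n$ (a compact element below the join of an increasing chain must lie below one member of the chain), contradicting~(ii); thus $\bigcup_n \bV_n = \bB$, and the same argument handles the $\bU_n$ chain.

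The main obstacle is the inductive step of~(i), where the left-mapping hypothesis of Corollary~\ref{c:erase} must be in place; this is secured by the recursive definition, which ensures every $S_n$ with $n \geq 1$ has the shape $T^\flat$ for $T = S_{n-1}^\ba$. A subtlety is that $S_0 = \SL$ is \emph{not} left mapping---its minimal ideal is $\{0\}$ on which it acts trivially---so the base case $n = 1$ cannot be handled via Corollary~\ref{c:erase} and must be argued by the direct $\bRZ$-containment observation.
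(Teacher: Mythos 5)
Your proposal is correct and follows essentially the same route as the paper: the base case via the $\bRZ$-containment of Proposition~\ref{p:aug.pv}(ii), the inductive step via Corollary~\ref{c:erase} applied to the left-mapping band generator $(S^\ba)^\flat$, part~(ii) as a formal consequence of~(i) and $\bV_n\subseteq\bU_n$, and part~(iii) from the known structure of the band lattice. Your phrasing of~(iii) through compactness of proper subpseudovarieties of $\bB$ is just a slightly more explicit version of the paper's appeal to the absence of strictly increasing chains with union properly below $\bB$.
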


\begin{proof}
(i) This is established by induction on~$n$.
The exclusion $\bV_1\nsubseteq \bU_0$ holds since $S^\ba\in \bV_1$ while $S^\ba \notin \bSl = \bU_0$ due to $\RZ \subseteq S^\ba$.
Suppose that $\bV_n\nsubseteq \bU_{n-1}$ for some $n\geq 2$.
Note that $\bV_n$ is generated by a band of the form $T=R^\flat$ and so $T$ is left mapping with respect to its minimal ideal.
Since $T\notin \bU_{n-1}$, it follows from Corollary~\ref{c:erase} that $\lobr T^\ba\robr^\flat\notin \bU_n$.
Therefore, $\lobr T^\ba\robr^\flat\in \bV_{n+1}\setminus \bU_n$, whence $\bV_{n+1}\nsubseteq \bU_n$.

(ii)
Since $\bV_n\nsubseteq \bU_{n-1}$ by part~(i) and $\bV_{n-1}\subseteq \bU_{n-1}$, the hierarchy~\eqref{eq:hier} is strict.
Similarly, $\bV_n\subseteq \bU_n$ and $\bV_n\nsubseteq \bU_{n-1}$ imply that the hierarchy~\eqref{eq:hier2} is strict.

(iii) This result holds because the lattice of band {\pvars} is well known not to contain any strictly increasing infinite chain of sub\-{\pvars} whose union is not~$\bB$.
\end{proof}

\begin{theorem}\label{t:hier}
The hierarchy~\eqref{eq:hier} is strict for any nontrivial finite semigroup~$S$\up.
\end{theorem}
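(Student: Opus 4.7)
The plan is a proof by contradiction. Suppose $\bV_{n_0}^S=\bV_{n_0+1}^S$ for some $n_0\geq 0$. Then the hierarchy stabilises at $\bV_\infty:=\bV_{n_0}^S$, which is compact (generated by a single finite semigroup) and fixed by both $\alpha$ and $\beta$: the chain $\bV_\infty\subseteq \alpha\bV_\infty\subseteq \beta\alpha\bV_\infty=\bV_\infty$, whose first two inclusions are always valid (since $T\prec T^\ba$ for all $T$, and likewise $T^\ba\prec (T^\ba)^\flat$), forces $\alpha\bV_\infty=\bV_\infty$, whence $\beta\bV_\infty=\beta\alpha\bV_\infty=\bV_\infty$.

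The crucial preliminary observation is that $\bSl\subseteq \bV_1^S$ for any nontrivial finite $S$. Choosing any idempotent $e\in S$ (which exists since $S$ is finite and nontrivial), the pair $\{e,[e]\}\subseteq \lobr S^\ba\robr^\flat$ is a subsemigroup isomorphic to $\SL$: one has $e^2=e$ in $S\subseteq S^\ba$, $[e]\cdot [e]=[e]$ by construction of the $\flat$-operator, and both $e\cdot [e]=[e\cdot e]=[e]$ and $[e]\cdot e=[e]$, so $[e]$ acts as a zero. Monotonicity of $\beta\alpha$ then yields $\bV_m^{\SL}\subseteq \bV_{m+1}^S$ for every $m\geq 0$, and Lemma~\ref{l:go.up}(iii) gives
\[
\bB=\bigcup_m\bV_m^{\SL}\subseteq \bigcup_m\bV_m^S=\bV_\infty.
\]

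The main obstacle is to derive a contradiction from the conjunction: $\bV_\infty$ is compact, contains $\bB$, and is fixed by both $\alpha$ and $\beta$. A naive identity-theoretic shortcut does not suffice: the containment $\bV_\infty\supseteq \bB$ only forces the identities of a generator $T$ of $\bV_\infty$ to be a subset of those of $\bB$, which does not by itself make $T$ a band. This is where the reduction noted in the discussion preceding the theorem---allowing one to additionally assume $\bSl\subseteq \lpbr S\rpbr$ and $S$ left mapping with respect to its minimal ideal---must come into play: under these structural hypotheses, Proposition~\ref{p:rzbound} combined with parts~(i) and~(ii) of Lemma~\ref{l:go.up} permits comparison of $\bV_\infty$ with the upper hierarchy $\bU_n^S=(\til\beta\til\alpha)^n\lpbr S\rpbr$, and band witnesses of strictly increasing ``bar-depth'' can be extracted that must lie in $\bV_\infty$ yet are incompatible with compactness of $\bV_\infty$. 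Making this complexity argument rigorous---essentially transferring the inductive engine of Lemma~\ref{l:go.up}(i) from bands to the present non-band setting---is where I expect the technical heart of the proof to lie.
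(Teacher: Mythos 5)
Your setup is exactly the paper's: assume the hierarchy stabilises, note the stable value $\bV_\infty$ is compact, observe that $\bSl$ embeds in $\lobr S^\ba\robr^\flat$ (your computation with $\{e,\ov e\}$ is correct and matches the paper's remark that $\beta\alpha\lpbr S\rpbr$ contains $\bSl$), and then use monotonicity of $\beta\alpha$ together with Lemma~\ref{l:go.up}(iii) to conclude $\bB\subseteq\bigcup_n\bV_n=\bV_\infty$. Up to this point your argument is correct and is the same reduction the paper performs.

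The gap is the final step, which you explicitly leave open: you need the fact that $\bB$ is not contained in \emph{any} compact {\pvar}, i.e.\ that no single finite semigroup $T$ satisfies $\bB\subseteq\lpbr T\rpbr$. This is precisely Sapir's theorem that the variety of bands is inherently non-finitely generated, cited in the paper as~\cite{Sap05}, and it is what turns ``$\bV_\infty$ is compact and contains $\bB$'' into a contradiction. Your worry that the identity-theoretic shortcut fails is well founded (a generator of $\bV_\infty$ need not be a band), but the remedy is this known nontrivial theorem, not the ``complexity/bar-depth'' argument you sketch: the comparison with the $\bU_n=(\til\beta\til\alpha)^n$ hierarchy and Lemma~\ref{l:go.up}(i) only shows that the $\bV_n$ keep climbing \emph{before} stabilisation is assumed; once you posit $\bV_{n_0}=\bV_{n_0+1}$, those band witnesses all lie in the single compact {\pvar} $\bV_\infty$, and nothing in the tools you invoke rules that out. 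Any rigorous completion along your lines would in effect have to reprove that $\bB$ lies in no compact {\pvar}, so as written the proof is incomplete at its decisive point.
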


\begin{proof}
Since the hierarchy stabilizes as soon as two consecutive {\pvars} are identical, replacing~$S$ by $\lobr S^\ba\robr^\flat$ if necessary, $S$ can be assumed to contain~$\SL$ as a subsemigroup.
It then follows from Lemma~\ref{l:go.up} that $\bigcup_{n\geq 0} \bV_n$ contains the {\pvar}~$\bB$.
But since~$\bB$ is not contained in any compact {\pvar} \cite{Sap05}, the union $\bigcup_{n\geq 0} \bV_n$ is not compact.
Since each {\pvar}~$\bV_n$ is compact, the hierarchy is strict.
\end{proof}

\begin{corollary} \label{C: hierarchy}
If $\lpbr S\rpbr$ is {\ji}\up, then the {\pvars} \[ \lpbr S \rpbr, \, \lpbr S^\ba \rpbr, \, \lpbr \lobr S^\ba\robr^\flat \rpbr, \, \lpbr \lobr\lobr S^\ba\robr^\flat\robr^\ba \rpbr, \, \ldots \] are {\ji}\up; these {\pvars} are all distinct except possibly for $\lpbr S \rpbr = \lpbr S^\ba \rpbr$\up.
A dual result holds when $^\flat$ is first applied before~$^\ba$\up.
\end{corollary}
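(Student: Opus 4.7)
The plan is to separate the two claims: (a) every {\pvar} in the chain is {\ji}, and (b) the chain is strictly increasing from $\lpbr S^\ba\rpbr$ onward. Part~(a) is immediate by induction. Theorem~\ref{T: Sbar} shows that the operator $\alpha:\bV\mapsto \bV^\ba$ preserves {\jirr}, and by symmetry---as noted at the beginning of Subsection~\ref{sub: iterating}---so does $\beta:\bV\mapsto \bV^\flat$. Since the chain is obtained from $\lpbr S\rpbr$ by alternately applying $\alpha$ and $\beta$, the {\jirr} of $\lpbr S\rpbr$ propagates to every entry.

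For part~(b), set $\mathbf{W}_0=\lpbr S\rpbr$, $\mathbf{W}_{n+1}=\alpha \mathbf{W}_n$ for $n$ even, and $\mathbf{W}_{n+1}=\beta \mathbf{W}_n$ for $n$ odd, so that the chain in the statement is exactly $\mathbf{W}_0\subseteq \mathbf{W}_1\subseteq \mathbf{W}_2\subseteq \cdots$. For each $n\geq 1$, the {\pvar} $\mathbf{W}_n$ is compactly generated by a semigroup of the form $U^\ba$ (if $n$ is odd) or $U^\flat$ (if $n$ is even), and by the idempotence in Proposition~\ref{p:aug.pv} together with its dual one has $\alpha \mathbf{W}_n=\mathbf{W}_n$ for $n$ odd and $\beta \mathbf{W}_n=\mathbf{W}_n$ for $n$ even and positive. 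Consequently $\mathbf{W}_{n+1}=\beta\alpha \mathbf{W}_n$ when $n$ is odd and $\mathbf{W}_{n+1}=\alpha\beta \mathbf{W}_n$ when $n$ is even and positive. Applying Theorem~\ref{t:hier}---or the dual hierarchy obtained by swapping the roles of $\alpha$ and $\beta$---to the generator of $\mathbf{W}_n$, which is nontrivial because $S$ is nontrivial, yields the strict inclusion $\mathbf{W}_n\subsetneq \mathbf{W}_{n+1}$ in both cases. The only adjacent pair not treated this way is $\mathbf{W}_0\subseteq \mathbf{W}_1$, which is precisely the inclusion the statement allows to collapse.

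No real obstacle should arise: preservation of {\jirr} under $\alpha$ and $\beta$ and the strictness of the $(\beta\alpha)$-hierarchy have already been established, so what remains is essentially bookkeeping---recognizing that from the very first application of $\alpha$ onward the generator is already a fixed point of either $\alpha$ or $\beta$, and then invoking Theorem~\ref{t:hier} with the correct base semigroup at each stage. The dual result claimed at the end of the corollary follows at once by exchanging the roles of $\alpha$ and $\beta$ throughout.
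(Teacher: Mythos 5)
Your proposal is correct and follows the route the paper intends for this (unproved) corollary: {\jirr} of every term comes from Theorem~\ref{T: Sbar} and its dual, while strictness beyond the first step is obtained by combining the idempotence of $\alpha$ and $\beta$ (Proposition~\ref{p:aug.pv} and its dual) with Theorem~\ref{t:hier} (and its dual) applied afresh to the nontrivial generator of each stage, so that each later step is the first step of a $(\beta\alpha)$- or $(\alpha\beta)$-hierarchy and hence proper. In particular you correctly isolate the one inclusion $\lpbr S\rpbr\subseteq\lpbr S^\ba\rpbr$ that is allowed to collapse, so nothing needs to be added.
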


\begin{corollary} \label{C: B fji}
The {\pvar}~$\bB$ is {\fji}\up.
\end{corollary}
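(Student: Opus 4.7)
The plan is to realize $\bB$ as a directed union of an increasing chain of {\ji} (hence {\fji}) compact subpseudovarieties, and then deduce {\fji} of $\bB$ itself by a pigeonhole argument.

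Concretely, I would take $S = \SL$ in the hierarchy $\bV_n = (\beta\alpha)^n \lpbr \SL \rpbr$ defined in~\eqref{eq:hier}. Lemma~\ref{l:go.up}(iii) gives $\bigcup_{n\geq 0} \bV_n = \bB$, and Theorem~\ref{t:hier} ensures that the chain $\bV_0 \subsetneq \bV_1 \subsetneq \bV_2 \subsetneq \cdots$ is strictly increasing. Since $\lpbr \SL \rpbr$ is a well-known {\ji} pseudovariety (it is generated by the two-element semilattice, whose exclusion class is the pseudovariety of completely simple semigroups), Corollary~\ref{C: hierarchy} guarantees that every $\bV_n$ is {\ji}, and in particular {\fji}.

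Next, I would suppose that $\bB \subseteq \bW_1 \vee \cdots \vee \bW_k$ for some pseudovarieties $\bW_1, \ldots, \bW_k$. For each $n \geq 0$ we then have $\bV_n \subseteq \bW_1 \vee \cdots \vee \bW_k$, and {\fji} of $\bV_n$ produces an index $j_n \in \{1, \ldots, k\}$ with $\bV_n \subseteq \bW_{j_n}$. By the pigeonhole principle some fixed $j$ coincides with $j_n$ for infinitely many $n$, and since the chain $(\bV_n)$ is increasing, this forces $\bV_n \subseteq \bW_j$ for every $n$. Passing to the directed join yields $\bB = \bigcup_n \bV_n \subseteq \bW_j$, which is exactly the {\fji} property for $\bB$.

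All the substantive work has already been carried out in Theorem~\ref{t:hier} and Lemma~\ref{l:go.up}(iii); relative to those, the deduction is essentially bookkeeping plus pigeonhole, so I do not expect any genuine obstacle. The only small point to justify carefully is that $\lpbr \SL \rpbr$ is itself {\ji}, which is needed to feed Corollary~\ref{C: hierarchy} at the base of the hierarchy, but this is classical.
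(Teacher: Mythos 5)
Your proof is correct and follows essentially the same route as the paper: take the hierarchy $\bV_n=(\beta\alpha)^n\lpbr\SL\rpbr$, use Corollary~\ref{C: hierarchy} to see each $\bV_n$ is {\ji}, and use Lemma~\ref{l:go.up}(iii) to identify $\bigcup_n\bV_n$ with $\bB$. The only (harmless) difference is that the paper simply cites the fact that a union of a chain of {\ji} pseudovarieties is {\fji}, whereas you prove that fact inline by the pigeonhole argument; note also that strictness of the chain is not actually needed.
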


\begin{proof}
Since the {\pvar}~$\bSl$ is {\ji}, each step in the hierarchy \eqref{eq:hier} is {\ji} with $S=\SL$.
As the union of a chain of {\ji} {\pvars} is {\fji} \cite[Lemma~6.1.13]{RS09}, it follows from Lemma~\ref{l:go.up} that~$\bB$ is {\fji}.
\end{proof}

Using the known structure of the lattice of band {\pvars}~\cite{Pas90} (which coincides with the lattice of all band varieties), we can say more. Namely, we will show that any {\sji} band is {\ji}.
Recall that $\bLNB=\bSl\vee\bLZ$.

\begin{proposition}\label{p: sjisave}
The {\pvar} $\bRZ\malce \bLNB$ is generated by $\LZ^{\ba}$\up.
\end{proposition}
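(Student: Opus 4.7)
The plan is to establish the two inclusions $\lpbr \LZ^\ba \rpbr \subseteq \bRZ \malce \bLNB$ and $\bRZ \malce \bLNB \subseteq \lpbr \LZ^\ba \rpbr$ separately. The first is essentially a corollary of what has already been proved, while the second exploits the known structure of the band-variety lattice.

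For $\lpbr \LZ^\ba \rpbr \subseteq \bRZ \malce \bLNB$, I would apply Proposition~\ref{p:rzbound} to $S = \LZ$. Since $\lpbr \LZ \rpbr \vee \bSl = \bLZ \vee \bSl = \bLNB$, the proposition immediately gives $\LZ^\ba \in \bRZ \malce \bLNB$, and taking the generated pseudovariety yields the containment.

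For the reverse inclusion, I would invoke Pastijn's description of the lattice of band varieties~\cite{Pas90} cited in the introduction. There, the band variety $\bRZ \malce \bLNB = \til\alpha \bLNB$ appears in the list of {\sji} band varieties, and its unique maximal proper sub-variety is $\mathbf{NB}$, the variety of normal bands, defined within $\bB$ by the identity $xyzx \id xzyx$. Since $\lpbr \LZ^\ba \rpbr \subseteq \bRZ \malce \bLNB$ has already been established, it therefore suffices to check $\lpbr \LZ^\ba \rpbr \not\subseteq \mathbf{NB}$, or equivalently that the normal band identity fails in $\LZ^\ba$. Reading off the multiplication table of $\LZ^\ba$ in Section~\ref{sec: req sgps}, the substitution $x = \ee$, $y = \ov I$, $z = \ef$ yields $xyzx = \ov\ef$ and $xzyx = \ov\ee$, which are distinct; hence $\LZ^\ba \notin \mathbf{NB}$, and the {\sji} structure of $\bRZ \malce \bLNB$ forces $\lpbr \LZ^\ba \rpbr = \bRZ \malce \bLNB$.

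The main obstacle is invoking the band-lattice classification precisely enough to identify $\mathbf{NB}$ as the unique maximal proper sub-variety of $\bRZ \malce \bLNB$; this is standard from Pastijn's work but requires care. An alternative, more self-contained route would be to take any finite band $B \in \bRZ \malce \bLNB$ with right-zero congruence $\rho$ onto some $T \in \bLNB$ and directly construct a division $B \prec (\LZ^\ba)^N$: compose $\pi \colon B \twoheadrightarrow T$ with a subdirect embedding $T \hookrightarrow \LZ^k \times \SL^m$ to separate distinct $\rho$-classes, and add extra homomorphisms into the minimal ideal of $\LZ^\ba$ to separate elements within a common fiber via the constant-map structure of the augmentation. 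This route avoids external classification but is computationally heavier and is where most of the real work would lie.
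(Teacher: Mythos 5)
Your first inclusion is fine: Proposition~\ref{p:rzbound} with $S=\LZ$ gives $\LZ^\ba\in\bRZ\malce(\bLZ\vee\bSl)=\bRZ\malce\bLNB$, which is if anything cleaner than the paper's direct verification of the identities $x^2\id x$, $xyz\id xzxyz$. The gap is in the reverse inclusion: the variety of normal bands is \emph{not} the unique maximal proper sub{\pvar} of $\bRZ\malce\bLNB$, so checking $\LZ^\ba\notin\mathbf{NB}$ does not finish the proof. Concretely, $\RZ^I$ belongs to $\bRZ\malce\bLNB$ (it is a band satisfying $xyz\id xzxyz$, since both sides have final part $xyz$; alternatively, collapsing $\{\ee,\ef\}$ gives a homomorphism onto $\SL$ whose idempotent fibres are right zero semigroups), but $\RZ^I$ is not a normal band: the two sides of $xyzx\id xzyx$ have distinct final parts, so the identity fails in $\RZ^I$ by Lemma~\ref{L: word Zn NnI LZI RZI}(iv). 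Thus $\RZ^I$ passes exactly the test you propose for $\LZ^\ba$ (it lies in $\bRZ\malce\bLNB$ and outside $\mathbf{NB}$), yet $\lpbr\RZ^I\rpbr\neq\bRZ\malce\bLNB$ because $\LZ\notin\lpbr\RZ^I\rpbr$ (the identity $xyx\id yx$ holds in $\RZ^I$ but not in $\LZ$). So the inference ``\sji{} plus failure of the normal-band law forces equality'' is invalid: the unique maximal proper sub{\pvar}, which must contain every proper sub{\pvar} of the compact {\pvar} $\bRZ\malce\bLNB$, contains $\lpbr\LZ,\RZ^I\rpbr$ and hence is strictly bigger than $\mathbf{NB}$.

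The repair is precisely what the paper does: by Pastijn's description of the band lattice (see also Almeida's Figure~5.1), the unique maximal proper sub{\pvar} of $\bRZ\malce\bLNB=\lbr x^2\id x,\ xyz\id xzxyz\rbr$ is $\lpbr\LZ,\RZ^I\rpbr=\lbr x^2\id x,\ xyz\id xzyz\rbr$, and one must check that $\LZ^\ba$ violates $xyz\id xzyz$ rather than the normal-band law. That check does succeed: taking $x=\ov I$, $y=\ee$, $z=\ef$ in the table of $\LZB$ gives $xyz=\ov\ee$ while $xzyz=\ov\ef$. (Your computation showing $\LZ^\ba\notin\mathbf{NB}$ is arithmetically correct, just not strong enough.) The alternative ``self-contained'' route you sketch, constructing a division $B\prec(\LZ^\ba)^N$ for an arbitrary band $B\in\bRZ\malce\bLNB$, is where all the substantive work would lie and is not carried out, so as written it cannot substitute for the missing (and incorrect) identification of the maximal sub{\pvar}.
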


\begin{proof}
It follows from Pastijn~\cite[Figure~3]{Pas90} and the description of the lattice of {\pvars} of bands (see, for example, Almeida~\cite[Figure~5.1]{Alm94}) that the {\pvar}
$\lpbr \LZ,\RZ^I\rpbr = \lbr x^2 \id x, \, xyz \id xzyz \rbr$ is the unique maximal sub\-{\pvar} of $\bRZ\malce\bLNB = \lbr x^2 \id x, \, xyz \id xzxyz \rbr$.
It is then routinely checked that $\LZ^{\ba} \in \bRZ\malce\bLNB \setminus \lpbr \LZ,\RZ^I\rpbr$.
Consequently, $\lpbr \LZ^{\ba} \rpbr = \bRZ\malce\bLNB$.
\end{proof}

By Proposition~\ref{p: sjisave} and results from Pastijn~\cite{Pas90}, a description of proper {\sji} {\pvars} of bands can be given as follows.
Let $S=\LZ^{\ba}$ and $T=\RZ^\flat$.
Then the proper nontrivial {\sji} {\pvars} of bands are $\bLZ$, $\bRZ$, and those {\pvars} that can be obtained by applying an alternating word $\bw(\til \alpha,\til \beta)$ over $\{ \til \alpha, \til \beta \}$ to the {\pvars} generated by~$S$, $T$, or~$\SL$ (where the last letter of~$\bw$ should be $\til \beta$ when starting from $\lpbr S\rpbr$ and should be $\til \alpha$ when starting from $\lpbr T\rpbr$).
Further, there are no {\sji} {\pvars} strictly in between any successive iterations of these operators.
Since $\alpha\bV\leq \til \alpha\bV$ and $\beta\bV\leq \til\beta\bV$ for any {\pvar} $\bV$ containing $\bSl$, and each successive iteration of~$\alpha$ and~$\beta$ starting from the {\pvar} generated by one of~$S$, $T$, or~$\SL$ (where the rightmost operator applied must be~$\beta$ for~$S$ and~$\alpha$ for~$T$) results in a new {\ji} {\pvar}, it follows that if $\bw(x,y)$ is any alternating word over $\{ x, y \}$, then $\bw(\alpha,\beta)\bV=\bw(\til \alpha,\til\beta)\bV$ whenever~$\bV$ is one of the {\pvars} generated by~$S$, $T$, or~$\SL$.
Consequently, each {\sji} proper {\pvar} of bands is, in fact, {\ji} by Corollary~\ref{C: hierarchy}.
The following result is thus established.

\begin{theorem}\label{t: band sji}
Any {\sji} band  is {\ji}\up, that is\up, a proper {\pvar} of bands is {\sji} if and only if it is {\ji}\up.
\end{theorem}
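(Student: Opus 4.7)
The plan is as follows. One direction is trivial: every {\ji} {\pvar} is {\sji}. For the converse, I would combine Pastijn's classification of proper {\sji} band {\pvars} (already displayed in the paragraph preceding the theorem) with the fact that the operators $\alpha \colon \bV \mapsto \bV^\ba$ and $\beta \colon \bV \mapsto \bV^\flat$ preserve {\jirr}.

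First I would settle the atoms $\bLZ$, $\bRZ$, and $\bSl$, each of which is well known to be {\ji}. By Theorem~\ref{T: Sbar} and its dual, $\alpha$ and $\beta$ preserve {\jirr}, so $\lpbr \LZ^\ba \rpbr = \alpha\bLZ$ and $\lpbr \RZ^\flat \rpbr = \beta\bRZ$ are {\ji} as well. Iterating from these three seed {\pvars} by alternating $\alpha$ and $\beta$ produces, via Corollary~\ref{C: hierarchy}, strictly ascending chains of {\ji} {\pvars}.

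Next, I would show that each {\pvar} $W(\alpha,\beta)\bV$ (with $\bV \in \{\bSl, \lpbr \LZ^\ba \rpbr, \lpbr \RZ^\flat \rpbr\}$ and $W$ an alternating word over $\{\alpha,\beta\}$ of the appropriate parity) coincides with the corresponding band {\pvar} $W(\til\alpha, \til\beta)\bV$ on Pastijn's list. The inclusion $\subseteq$ is free from Proposition~\ref{p:rzbound}, since every {\pvar} that arises contains $\bSl$, whence $\alpha \bU \subseteq \til\alpha \bU$ and $\beta \bU \subseteq \til\beta \bU$ at each step. For the reverse inclusion, I would argue by induction on the length of $W$: at each stage, $W(\alpha,\beta)\bV$ is {\ji} (hence {\sji}), strictly contains the previous stage by Theorem~\ref{t:hier}, and is contained in $W(\til\alpha,\til\beta)\bV$. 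Since Pastijn's classification places no {\sji} band {\pvars} strictly between consecutive levels of the $\til\alpha,\til\beta$ hierarchy, the two must agree. Together with the atom cases $\bLZ$ and $\bRZ$, this exhausts Pastijn's list of proper {\sji} band {\pvars} and shows each of them is {\ji}.

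The main obstacle is making rigorous the ``no intermediate {\sji}'' step: one must match the two chains level by level, track the parity constraint on alternating words (rightmost letter $\til\beta$ when seeded at $\lpbr \LZ^\ba \rpbr$, rightmost letter $\til\alpha$ at $\lpbr \RZ^\flat \rpbr$), and invoke just enough of Pastijn's description of the band lattice to rule out hidden {\sji} {\pvars} between successive iterations. Once that bookkeeping is in place, the argument is a direct assembly of Theorem~\ref{T: Sbar}, Corollary~\ref{C: hierarchy}, Proposition~\ref{p:rzbound}, and Theorem~\ref{t:hier}.
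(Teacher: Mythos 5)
Your overall architecture coincides with the paper's: the trivial direction, the {\ji}-preservation of $\alpha$ and $\beta$ (Theorem~\ref{T: Sbar}, Corollary~\ref{C: hierarchy}), the inclusion $\alpha\bV\subseteq\til\alpha\bV$, $\beta\bV\subseteq\til\beta\bV$ from Proposition~\ref{p:rzbound}, strictness of the chains, and the ``no intermediate {\sji}'' squeeze against Pastijn's lattice. But there is a concrete gap at the base of your induction. The version of Pastijn's classification you are entitled to invoke lists the proper nontrivial {\sji} band {\pvars} seeded at $\bSl$ and at $\bLNB=\bSl\vee\bLZ$ (namely $(\til\beta\til\alpha)^{n+1}\bLNB$, $\til\alpha(\til\beta\til\alpha)^n\bLNB$ and their duals), not at $\lpbr\LZ^\ba\rpbr$. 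So your claim that $W(\til\alpha,\til\beta)\lpbr\LZ^\ba\rpbr$ is ``the corresponding {\pvar} on Pastijn's list'', and hence that your three chains exhaust that list, silently requires the identification $\lpbr\LZ^\ba\rpbr=\til\alpha\bLNB=\bRZ\malce\bLNB$ (equivalently $\til\alpha\bLNB=\til\alpha\bLZ$). This is exactly Proposition~\ref{p: sjisave}, the one step of the paper's argument that is a genuinely new computation rather than an assembly of earlier results: it is proved by using the identity basis $x^2\id x$, $xyz\id xzxyz$ of $\bRZ\malce\bLNB$, the fact that $\lpbr\LZ,\RZ^I\rpbr$ is its unique maximal sub{\pvar}, and a check that $\LZ^\ba$ satisfies the former basis but not the latter {\pvar}'s defining identity. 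Your proposal neither supplies nor flags this ingredient.

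Nor can the seed identification be absorbed into your generic induction step as written. Your justification of the inclusion step (``free from Proposition~\ref{p:rzbound}, since every {\pvar} that arises contains $\bSl$'') breaks precisely at the passage from the atom $\bLZ$ to $\lpbr\LZ^\ba\rpbr$, because $\bLZ\not\supseteq\bSl$; Proposition~\ref{p:rzbound} there only yields $\lpbr\LZ^\ba\rpbr\subseteq\til\alpha(\bLZ\vee\bSl)=\til\alpha\bLNB$, and the pair $(\bLZ,\til\alpha\bLNB)$ is not a pair of consecutive levels of the $\til\alpha,\til\beta$ hierarchy, so the ``no {\sji} strictly in between'' fact you invoke does not apply to it off the shelf. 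One could repair this by verifying separately, from Pastijn's description of the band lattice, that no {\sji} {\pvar} lies strictly between $\bLZ$ and $\til\alpha\bLNB$ and then squeezing as before; but that is additional input your sketch does not state or prove, and it is precisely the seam the paper closes with Proposition~\ref{p: sjisave} before running the matching argument you describe.
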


In particular, since {\sji} is a decidable property, {\ji} is also decidable for finite bands.
The answer to Question~\ref{Q: ji decidable} is thus affirmative for bands.

\subsection{From non-{\ji} {\pvars} to {\ji} {\pvars}} \label{sub: Ok}

For each $k \geq 2$, define the semigroup \[ \OO_k= \langle \Ox,\Oe \mid \Ox^k = \Ox^{k-1}\Oe = \ez, \, \Oe\Ox=\Ox, \, \Oe^2=\Oe \rangle.\]
The main goal of this subsection is to show that the {\pvar} $\lpbr \OO_k \rpbr$ is not {\ji} whereas the {\pvar} $\lpbr \OO_k \rpbr^\ba$ is {\ji}.
It is also shown that the {\pvars} $\lpbr \OO_2 \rpbr^\ba, \lpbr \OO_3 \rpbr^\ba, \lpbr \OO_4 \rpbr^\ba, \ldots$ are all distinct.

It is easily seen that the semigroups $\OO_2$ and $\el$ are isomorphic by referring to their presentations.
Since the semigroup $\elB$ is of order five (Subsection~\ref{sub: comp 0-simple}), the~{\ji} {\pvar} $\lpbr \elB \rpbr = \lpbr \OO_2 \rpbr^\ba$ is required later in the paper (Theorem~\ref{T: el3}).

\begin{lemma}\label{L: Ok order}
For each $k\geq 2$\up, the semigroup $\OO_k$ consists precisely of the following $2k-1$ distinct elements\up:
\begin{equation}\label{eq:rhs}
\ez, \, \Ox, \, \Ox^2, \ldots, \, \Ox^{k-1}, \, \Oe, \, \Ox \Oe, \, \Ox^2 \Oe, \ldots, \, \Ox^{k-2} \Oe.
\end{equation}
\end{lemma}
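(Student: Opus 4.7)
The proof splits into two parts: reducing any product of $\Ox$'s and $\Oe$'s to one of the listed $2k-1$ normal forms, and then showing these $2k-1$ forms represent pairwise distinct elements of $\OO_k$.

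For the reduction, I would argue by induction on word length that every nonempty word in $\{\Ox,\Oe\}^+$ reduces modulo the relations $\Oe\Ox=\Ox$ and $\Oe^2=\Oe$ to a word of the form $\Ox^n$ with $n\geq 1$ or $\Ox^n\Oe$ with $n\geq 0$ (interpreting $\Ox^0\Oe$ as $\Oe$). The inductive step is routine: appending $\Ox$ to $\Ox^n\Oe$ collapses via $\Oe\Ox=\Ox$ to $\Ox^{n+1}$, and appending $\Oe$ to $\Ox^n\Oe$ collapses via $\Oe^2=\Oe$. I would then show that $\Ox^k$ is a two-sided zero purely from the defining relations, using the calculation
\[ \Ox^{k+1}=\Ox^k\cdot\Ox=\Ox^{k-1}\Oe\cdot\Ox=\Ox^{k-1}(\Oe\Ox)=\Ox^k \]
together with $\Ox^k\cdot\Oe=\Ox\cdot\Ox^{k-1}\Oe=\Ox^{k+1}=\Ox^k$ and $\Oe\cdot\Ox^k=(\Oe\Ox)\cdot\Ox^{k-1}=\Ox^k$; a short induction then yields $w\cdot\Ox^k=\Ox^k\cdot w=\Ox^k$ for every word $w$. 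Identifying this zero with $\ez$ and invoking $\Ox^{k-1}\Oe=\ez$ collapses every normal form $\Ox^n$ with $n\geq k$ and every $\Ox^n\Oe$ with $n\geq k-1$ down to $\ez$, leaving exactly the $2k-1$ candidates in~\eqref{eq:rhs}.

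For distinctness, I would exhibit $\OO_k$ as a transformation semigroup on $X=\{\infty,0,1,\dots,k-1\}$ by declaring $\Ox(\infty)=k-1$, $\Ox(i)=\max(i-1,0)$, $\Oe(\infty)=k-1$, and $\Oe(i)=i$ for $i\in\{0,\dots,k-1\}$. A routine check verifies the relations: $\Oe\Ox=\Ox$ because $\Oe$ fixes the image of $\Ox$, $\Oe^2=\Oe$ is immediate, and both $\Ox^k$ and $\Ox^{k-1}\Oe$ coincide with the constant map to $0$, which is what plays the role of $\ez$. The candidates are then separated coordinate-wise: $\Ox^j$ sends $\infty\mapsto k-j$ and acts on $\{0,\dots,k-1\}$ as $m\mapsto\max(m-j,0)$, whereas $\Ox^i\Oe$ sends $\infty\mapsto k-1-i$ and acts on $\{0,\dots,k-1\}$ as $m\mapsto\max(m-i,0)$; thus pairs with distinct exponents are distinguished on $\{0,\dots,k-1\}$, pairs of the form $\Ox^j$ and $\Ox^{j-1}\Oe$ are distinguished at $\infty$, and no candidate is the constant map to $0$. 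The reduction step is purely symbolic pushing, so the main point is to build a transformation model large enough to separate $\Ox^j$ from $\Ox^{j-1}\Oe$---the extra point $\infty$ serves exactly this role.
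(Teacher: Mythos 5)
Your reduction to normal forms and your derivation that $\Ox^k$ is a two-sided zero already from $\Ox^k=\Ox^{k-1}\Oe$, $\Oe\Ox=\Ox$, $\Oe^2=\Oe$ are correct (and the latter is a nice touch: it means a map to any semigroup respecting these three equalities automatically induces a homomorphism on $\OO_k$, so you need not worry about the zero in the target). Your route to distinctness is genuinely different from the paper's: the paper separates the elements of \eqref{eq:rhs} by mapping $\Ox\mapsto(\ea,\ee)$, $\Oe\mapsto(I,\ef)$ into the Rees quotient $T/J$ of $T=(\N_k^I\times\RZ)\setminus\{(I,\ee)\}$, a choice that does double duty, since the resulting division $\OO_k\prec\N_k^I\times\RZ$ is exactly what Proposition~\ref{P: Ok not ji} reuses to show $\lpbr\OO_k\rpbr\subseteq\lpbr\N_k^I\rpbr\vee\bRZ$ is not {\ji}. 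Your explicit transformation model on $\{\infty,0,\dots,k-1\}$ (with products acting rightmost letter first, so that $\Oe\Ox=\Ox$ because $\Oe$ fixes the image of $\Ox$) is a legitimate alternative for the lemma itself, though it does not feed into the later argument the way the paper's model does.

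There is, however, a concrete slip in your separation bookkeeping. With your definitions, $\Ox^j$ and $\Ox^{j-1}\Oe$ are \emph{not} distinguished at $\infty$: $(\Ox^{j-1}\Oe)(\infty)=\Ox^{j-1}(k-1)=k-j=\Ox^{j}(\infty)$. Those pairs are already covered by your first clause, since their shift amounts $j$ and $j-1$ differ on $\{0,\dots,k-1\}$ (compare values at $m=k-1$). What your case analysis actually leaves out are the equal-exponent pairs $\Ox^j$ versus $\Ox^j\Oe$ with $1\le j\le k-2$, which act identically on $\{0,\dots,k-1\}$ and are separated only at $\infty$, where they take the distinct values $k-j$ and $k-1-j$. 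So the extra point $\infty$ is indeed needed, but for these pairs, not the ones you named; with that correction every pair of candidates in \eqref{eq:rhs} is separated and your proof is complete.
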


\begin{proof}
It is routinely checked that~\eqref{eq:rhs} are all the elements of~$\OO_k$.
Therefore, it remains to verify that the elements in~\eqref{eq:rhs} are distinct.
Recall that the right zero semigroup of order two is $\RZ = \{ \ee,\ef \}$ and that the monogenic nilpotent semigroup of order~$k$ is
\[
\N_k = \langle \ea \mid \ea^k = \ez \rangle = \{ \ez,\ea,\ea^2,\ldots,\ea^{k-1}\}.
\]
Consider the subsemigroup  $T = (\N_k^I\times \RZ) \setminus \{(I,\ee)\}$ of $\N_k^I\times \RZ$ and the ideal $J=\{(\ez,\ee), (\ez,\ef),(\ea^{k-1},\ef)\}$ of~$T$.
Define $\varphi \colon \{\Ox,\Oe\}^+\to T/J$ by $\Ox\varphi=(\ea,\ee)$ and $\Oe\varphi=(I,\ef)$.
Then
\[
\Ox^k \varphi = (\ez,\ee) \in J, \quad (\Ox^{k-1}\Oe) \varphi = (\ea^{k-1},\ef) \in J, \quad (\Oe\Ox) \varphi = \Ox\varphi, \quad \Oe^2\varphi = \Oe \varphi.
\]
It follows that~$\varphi$ induces a homomorphism $\OO_k \mapsto T/J$ that separates the elements in~\eqref{eq:rhs}.
\end{proof}

\begin{proposition} \label{P: Ok not ji}
The {\pvar} $\lpbr\OO_k\rpbr$ is not {\ji}\up.
\end{proposition}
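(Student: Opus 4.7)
The plan is to exhibit $\OO_k$ as a divisor of the direct product $\N_k^I \times \RZ$ while showing that $\OO_k$ divides no power of either factor taken alone. This will place $\OO_k$ in the join $\lpbr\N_k^I\rpbr \vee \bRZ$ without placing it in either summand, directly violating the defining property of {\jirr} recalled in the introduction.

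The division $\OO_k \prec \N_k^I \times \RZ$ is essentially free of charge from the proof of Lemma~\ref{L: Ok order}: the subsemigroup $T=(\N_k^I\times \RZ)\setminus\{(I,\ee)\}$ and its Rees quotient by the ideal $J=\{(\ez,\ee),(\ez,\ef),(\ea^{k-1},\ef)\}$ was precisely constructed there to realize $\OO_k$ as a homomorphic image of a subsemigroup of $\N_k^I \times \RZ$.

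Next I would rule out $\OO_k \prec (\N_k^I)^n$. Since $\N_k^I$ is monogenic with an identity adjoined, it is commutative; hence so is every $(\N_k^I)^n$ and every divisor thereof. But $\OO_k$ is non-commutative: by Lemma~\ref{L: Ok order} the elements $\Ox$ and $\Ox\Oe$ appear as distinct entries of the list of $2k-1$ elements for $k\geq 3$, while for $k=2$ one has $\Ox\Oe=\Ox^{k-1}\Oe=\ez \neq \Ox$; in every case $\Oe\Ox=\Ox \neq \Ox\Oe$. Similarly $\OO_k \not\prec \RZ^n$ because $\RZ^n$ is a band while $\Ox \in \OO_k$ is not idempotent (again by Lemma~\ref{L: Ok order}, $\Ox^2$ is either a separate listed element or equals $\ez$, both distinct from $\Ox$).

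Combining, $\lpbr\OO_k\rpbr \subseteq \lpbr\N_k^I\rpbr \vee \bRZ$ while $\lpbr\OO_k\rpbr \not\subseteq \lpbr\N_k^I\rpbr$ and $\lpbr\OO_k\rpbr \not\subseteq \bRZ$, so $\lpbr\OO_k\rpbr$ is not {\ji}. There is no serious obstacle; the only care needed is in the uniform verification of non-commutativity and non-idempotency of the generator~$\Ox$ across the boundary case $k=2$ and the generic case $k\geq 3$, both of which follow immediately from the enumeration in Lemma~\ref{L: Ok order}.
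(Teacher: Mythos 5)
Your proposal is correct and follows essentially the same route as the paper: both use the division $\OO_k \prec T/J \prec \N_k^I \times \RZ$ from the proof of Lemma~\ref{L: Ok order} to get $\lpbr\OO_k\rpbr \subseteq \lpbr\N_k^I\rpbr \vee \bRZ$, and then exclude each summand because $\lpbr\N_k^I\rpbr$ consists of commutative semigroups while $\bRZ$ consists of bands, whereas $\OO_k$ is noncommutative with non-idempotent generator $\Ox$. Your explicit case check at $k=2$ versus $k\geq 3$ is a harmless elaboration of the same argument.
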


\begin{proof}
Since $\OO_k\prec T/J \prec \N_k^I \times \RZ$ by the proof of Lemma~\ref{L: Ok order} (where we retain the notation of that proof), the inclusion $\lpbr\OO_k\rpbr \subseteq \lpbr\N_k^I \rpbr \vee \bRZ$ holds.
But $\lpbr\N_k^I\rpbr$ consists of commutative semigroups while $\bRZ$ consists of bands.
Therefore, $\lpbr\OO_k\rpbr \nsubseteq \lpbr\N_k^I\rpbr$ and $\lpbr\OO_k\rpbr \nsubseteq \bRZ$.
\end{proof}

It remains to prove that the {\pvar} $\lpbr\OO_k\rpbr^\ba$ is {\ji}.

\begin{lemma}\label{l:general.good.cyclic}
Suppose that~$U$ is any semigroup generated by two elements~$f$ and~$y$ such that $f^2=f$\up, $fy=y$\up, and $y^{k-1} \notin \{y^n \mid n\geq k\}$\up.
Then
\begin{enumerate}[\rm(i)]
\item $y,y^2,\ldots,y^{k-1}$ are distinct and not in $\{y^n \mid n\geq k\}$\up;
\item $f, yf, y^2f, \ldots, y^{k-2}f$ are distinct and not in $\{ y^mf \mid m \geq k-1\}$\up;
\item $y^i=y^jf$ implies that either $i=j$ or $i,j\geq k-1$\up.
\end{enumerate}
\end{lemma}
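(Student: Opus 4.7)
The plan is to derive all three parts from the single hypothesis $y^{k-1}\notin\{y^n\mid n\geq k\}$ together with the relation $fy=y$, which gives (by induction) $fy^n=y^n$ for every $n\geq 1$, and in particular the key identity $y^jf\cdot y=y^j\cdot fy=y^{j+1}$ for every $j\geq 0$ (interpreting $y^0f$ as $f$). Parts (ii) and (iii) will then be reduced to (i) by right multiplication by $y$, while (i) itself is proved by right multiplication by $y^{k-1-i}$.

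For part (i), I would argue by contradiction. Suppose first that $y^i=y^j$ with $1\leq i<j\leq k-1$. Since $i\leq k-1$, the power $y^{k-1-i}$ is a well-defined element of $U^1$ (with the convention $y^0=1$, or alternatively, right-multiplying after rewriting). Multiplying gives $y^{k-1}=y^{k-1+(j-i)}$, and since $j-i\geq 1$ the exponent on the right is at least $k$, contradicting the hypothesis. The case $y^i=y^n$ with $1\leq i\leq k-1$ and $n\geq k$ is handled identically: multiplying by $y^{k-1-i}$ yields $y^{k-1}=y^{n+k-1-i}$ and $n+k-1-i\geq k$, again contradicting the hypothesis. (When $i=k-1$ the multiplication is trivial and the contradiction is immediate.)

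For part (ii), assume a coincidence $y^if=y^jf$ with either $0\leq i<j\leq k-2$ or with $0\leq i\leq k-2$ and $j\geq k-1$. Right-multiplication by $y$ combined with $fy=y$ gives $y^{i+1}=y^{j+1}$; the indices $i+1$ and $j+1$ then fall in the ranges covered by part~(i) (the first case has $1\leq i+1<j+1\leq k-1$, the second has $1\leq i+1\leq k-1$ and $j+1\geq k$), producing the required contradiction. Part~(iii) is the same idea: from $y^i=y^jf$ right-multiply by $y$ to obtain $y^{i+1}=y^{j+1}$; if $i\neq j$ then by part~(i) it cannot be that both $i+1,j+1$ lie in $\{1,\dots,k-1\}$ nor that one of them lies there while the other is $\geq k$, so both $i+1,j+1\geq k$, giving $i,j\geq k-1$.

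The argument is essentially bookkeeping of exponents, so no deep obstacle is expected; the only care needed is handling the boundary case $i=k-1$ (where the auxiliary multiplier $y^{k-1-i}$ degenerates to the empty product) and remembering that $y^0 f$ means $f$ when $i=0$ appears in (ii). Once part~(i) is established, parts~(ii) and~(iii) follow uniformly from a single application of the identity $y^jf\cdot y=y^{j+1}$.
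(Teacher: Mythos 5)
Your proposal is correct and follows essentially the same route as the paper: the paper likewise right-multiplies by $y$ and uses $fy=y$ to reduce parts (ii) and (iii) to part (i). The only difference is cosmetic: for (i) the paper simply invokes the standard structure of monogenic semigroups, whereas you spell it out directly by right-multiplying by $y^{k-1-i}$, which is a fine elementary substitute.
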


\begin{proof}
(i) This follows from the structure of monogenic semigroups.

(ii) Suppose that $y^if = y^jf$ for some $i,j \geq 0$.
Then $y^{i+1}=y^ify=y^jfy=y^{j+1}$.
Therefore, by part~(i), either $i=j$ or $i,j\geq k-1$.

(iii) Suppose that $y^i=y^jf$ for some $i \geq 1$ and $j \geq 0$.
Then $y^{i+1}=y^jfy=y^{j+1}$.
Therefore, by part~(i), either $i=j$ or $i,j\geq k-1$.
\end{proof}

Recall that the inclusion $\lpbr\OO_k\rpbr \subseteq \lpbr\N_k^I \rpbr \vee \bRZ$ was established in the proof of Proposition~\ref{P: Ok not ji}; this result is generalized in the following.

\begin{lemma}\label{l:good.cyc.rz}
Suppose that~$T$ is any finite semigroup generated by two elements~$d$ and~$z$ such that $d^2=d$\up, $dz=z$\up, and $z^{k-1} \notin \{ z^n \mid n \geq k \}$\up.
Then $\lpbr \OO_k \rpbr \subseteq \lpbr T \rpbr \vee \bRZ$\up.
\end{lemma}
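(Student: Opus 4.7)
The plan is to realize $\OO_k$ as a quotient of a carefully chosen two-generated subsemigroup of $T \times \RZ$. Once this is done, $\OO_k \prec T \times \RZ$, which gives $\lpbr \OO_k \rpbr \subseteq \lpbr T \rpbr \vee \bRZ$.

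Concretely, write $\RZ = \{\ee, \ef\}$, set $y = (z, \ee)$ and $f = (d, \ef)$ in $T \times \RZ$, and let $U = \langle y, f \rangle$. Direct computation gives $f^2 = f$ and $fy = y$, while the hypothesis $z^{k-1} \notin \{z^n \mid n \geq k\}$ transfers via the first coordinate to $y^{k-1} \notin \{y^n \mid n \geq k\}$. Hence Lemma~\ref{l:general.good.cyclic} applies to $U$. The relations $f^2 = f$ and $fy = y$ let one reduce any word in $\{y, f\}^+$ to the normal form $y^i$ ($i \geq 1$) or $y^i f$ ($i \geq 0$).

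I would then introduce the set $J' = \{y^n \mid n \geq k\} \cup \{y^n f \mid n \geq k-1\}$ and verify routinely, using $fy = y$ to collapse products, that $J'$ is a two-sided ideal of $U$. The complement $U \setminus J'$ consists of the candidates $y, y^2, \ldots, y^{k-1}$ together with $f, yf, \ldots, y^{k-2}f$, giving $2(k-1)$ elements. Parts~(i) and~(ii) of Lemma~\ref{l:general.good.cyclic} assert that within each group these $k-1$ elements are pairwise distinct and lie outside $J'$. A cross-equality $y^i = y^j f$ is ruled out by projecting to the second coordinate, which gives $\ee$ versus $\ef$; this is precisely where the $\RZ$-factor is used. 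Consequently $|U/J'| = 2k - 1$, matching $|\OO_k|$ by Lemma~\ref{L: Ok order}.

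Finally, I would define a map $\OO_k \to U/J'$ by $x \mapsto y + J'$, $\ee_{\OO_k} \mapsto f + J'$, $\ez \mapsto J'$, and check the defining relations of $\OO_k$ on the images: $x^k$ and $x^{k-1}e$ both land in $J'$, $ex = x$ becomes $fy = y$, and $e^2 = e$ becomes $f^2 = f$. So the map is a well-defined surjective homomorphism between semigroups of equal size, hence an isomorphism. This yields $\OO_k \cong U/J' \prec T \times \RZ$, as required. The principal obstacle is the counting argument for $|U/J'|$: Lemma~\ref{l:general.good.cyclic} separates pure $y$-powers from one another and $yf$-products from one another, but it cannot separate a $y$-power from a $yf$-product; the role of the $\RZ$-factor (and hence of the $\vee\, \bRZ$ in the conclusion) is exactly to supply that last separation automatically.
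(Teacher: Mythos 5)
Your proposal is correct and follows essentially the same route as the paper: the same subsemigroup $U=\langle (z,\ee),(d,\ef)\rangle$ of $T\times\RZ$, the same ideal $J$, the same appeal to Lemma~\ref{l:general.good.cyclic} and Lemma~\ref{L: Ok order} to identify $U/J$ with $\OO_k$. Your extra remarks (the second coordinate supplying the separation of $y$-powers from $y^jf$, and the explicit presentation argument for the isomorphism) only make explicit what the paper leaves implicit.
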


\begin{proof}
Consider the semigroup $T \times \RZ$ and its subsemigroup $U=\langle y,f\rangle$ generated by $y=(z,\ee)$ and $f=(d,\ef)$.
Then it is routinely checked that
\begin{enumerate}[\ (a)]
\item $f^2=f$, $fy=y$,
\item $y^n=(z^n,\ee)$ for all $n\geq 1$,
\item $y^nf=(z^nd,\ef)$ for all $n\geq 1$.
\end{enumerate}
It follows from~(b) and the assumption $z^{k-1} \notin \{ z^n \mid n \geq k \}$ that
\begin{enumerate}[\ (a)]
\item[(d)] $y^{k-1} \notin \{y^n \mid n\geq k\}$.
\end{enumerate}
It is clear from~(a) that $U = \{ y^i, y^jf \mid i\geq 1, j\geq 0 \}$.
In fact, it follows from (a)--(d) and Lemma~\ref{l:general.good.cyclic} that
\begin{enumerate}[\ (a)]
\item[(e)] the elements $y,y^2,y^3, \ldots, y^k, f,yf,y^2f, \ldots, y^{k-1}f$ of~$U$ are distinct.
\end{enumerate}

Now it is routinely checked that the set \[J=\{y^n, y^mf \mid n\geq k,\, m\geq k-1\}\] is an ideal of~$U$.
By~(e), the set $U\setminus J$ consists of the elements \[ y,y^2,y^3, \ldots, y^{k-1}, f,yf,y^2f, \ldots, y^{k-2}f.\]
Therefore, $\OO_k\cong U/J$ by Lemma~\ref{L: Ok order}, whence $\lpbr \OO_k \rpbr \subseteq \lpbr T\rpbr \vee \bRZ$.
\end{proof}

\begin{theorem} \label{T: Ok}
\begin{enumerate}[\rm(i)]
\item For each $k \geq 2$\up, the {\pvar} $\lpbr \OO_k \rpbr^\ba$ is {\ji} and
\begin{equation}\label{id: exOk}
\excl(\OO_k^\ba) = \lbr (\be c(a^\omega b)^{k-1})^\omega \id (\be c((a^\omega b)^{k-1})^{\omega+1})^\omega \rbr,
\end{equation}
where $\be$ is an idempotent in the minimal ideal of $\wh{\{a,b,c\}^+}$\up.
\item The {\pvars} $\lpbr \OO_2 \rpbr^\ba, \lpbr \OO_3 \rpbr^\ba, \lpbr \OO_4 \rpbr^\ba, \ldots$ are all distinct\up.
\end{enumerate}
\end{theorem}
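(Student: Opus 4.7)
The plan for part~(i) mirrors Theorem~\ref{T: Sbar}, with one important twist: since $\lpbr \OO_k\rpbr$ is not {\ji} (Proposition~\ref{P: Ok not ji}), there is no single identity defining $\excl(\OO_k)$. Instead, I use the ``internal'' identity $(a^\omega b)^{k-1}\id ((a^\omega b)^{k-1})^{\omega+1}$ as a stand-in and show it captures $\OO_k$ modulo right zero semigroups. I first verify that~\eqref{id: exOk} fails in $\OO_k^\ba$: under $a\mapsto\Oe$, $b\mapsto\Ox$, $c\mapsto\ov I$, the prefix $\be c$ collapses (regardless of which constant map $\be$ maps to, since any constant is right-absorbing under composition) to $\ov I$, while the inner sides evaluate to $\Ox^{k-1}$ and $\ez$ respectively; the two $\omega$-powers therefore become the distinct constant maps $\ov{\Ox^{k-1}}$ and $\ov\ez$.

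The central technical step is a \emph{Key Lemma}: a finite semigroup $T$ fails $(a^\omega b)^{k-1}\id ((a^\omega b)^{k-1})^{\omega+1}$ if and only if $\lpbr\OO_k\rpbr\subseteq \lpbr T\rpbr\vee \bRZ$. For the nontrivial forward direction, given a substitution making the identity fail, set $d=(a^\omega)\psi$ (an idempotent) and $y=(a^\omega b)\psi$, so that $dy=y$. The failure of $y^{k-1}=y^{k-1}(y^{k-1})^\omega$ means, by the structure of the monogenic semigroup $\langle y\rangle$, that $y^{k-1}$ lies in the tail rather than in the maximal subgroup of $y^\omega$; consequently $y^{k-1}\notin\{y^n\mid n\geq k\}$. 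Lemma~\ref{l:good.cyc.rz} applied to the subsemigroup of $T$ generated by $d$ and $y$ then yields the desired inclusion. The converse is immediate: $\OO_k$ fails the identity, while right zero semigroups satisfy it trivially (both sides reduce to the last variable).

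To complete~(i), suppose $T$ fails~\eqref{id: exOk} via a continuous homomorphism $\psi$, and set $E=\be\psi$, $C=c\psi$; continuity forces $E$, hence $EC$, into the minimal ideal $J$ of~$T$. Mimicking the proof of Theorem~\ref{T: Sbar}, the elements $EC\cdot\bu\psi$ and $EC\cdot\bv\psi$ (with $\bu=(a^\omega b)^{k-1}$ and $\bv=((a^\omega b)^{k-1})^{\omega+1}$) are $\mathscr R$-equivalent in $J$ but not $\mathscr L$-equivalent, for otherwise their $\omega$-powers would coincide inside a common maximal subgroup, contradicting failure in $T$. Hence in $\mathsf{RLM}(T)$ the images of $\bu$ and $\bv$ act distinctly on the $\mathscr L$-class of $EC$, so $\mathsf{RLM}(T)$ fails the internal identity and $J$ has at least two $\mathscr L$-classes, giving $\bRZ\subseteq\lpbr\mathsf{RLM}(T)\rpbr$. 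The Key Lemma then yields $\lpbr\OO_k\rpbr\subseteq\lpbr\mathsf{RLM}(T)\rpbr$, and Corollary~\ref{c:freeze} collapses $\lpbr\mathsf{RLM}(T)\rpbr^\ba=\lpbr\mathsf{RLM}(T)\rpbr$, so $\OO_k^\ba\in\lpbr\mathsf{RLM}(T)\rpbr\subseteq\lpbr T\rpbr$, finishing part~(i).

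For~(ii), it suffices to show that $\OO_j^\ba$ satisfies~\eqref{id: exOk} whenever $k\geq j+1$; combined with the failure in $\OO_k^\ba$ from~(i), this forces $\OO_k^\ba\notin\lpbr\OO_j^\ba\rpbr$. A short case analysis shows that any $y=(\alpha^\omega\beta)\psi$ in $\OO_j^\ba$ is one of $\ez$, $\Oe$, $\Ox^i$, $\Ox^i\Oe$ (with indices bounded by $j$), or a constant map. Since every non-idempotent element of $\OO_j$ is nilpotent of index at most $j$, the power $y^{k-1}$ with $k-1\geq j$ is in every case already an idempotent ($\ez$, $\Oe$, or a constant map), so $y^{k-1}=(y^{k-1})^{\omega+1}$ holds in $\OO_j^\ba$ and the augmented identity follows. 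The main technical obstacle throughout is the forward direction of the Key Lemma---specifically, pinning down that failure of the inner identity forces $y^{k-1}$ outside $\{y^n\mid n\geq k\}$ so that Lemma~\ref{l:good.cyc.rz} can be invoked; the remainder of the proof is assembly via $\mathsf{RLM}$ and Corollary~\ref{c:freeze}.
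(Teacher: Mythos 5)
Your proposal is correct and follows essentially the same route as the paper: the same explicit substitution witnesses the failure of the pseudoidentity in $\OO_k^\ba$, and for the converse you pass to $\mathsf{RLM}(T)$, use the $\mathscr R$-but-not-$\mathscr L$-equivalence of $((\be c)\psi)y^{k-1}$ and $((\be c)\psi)(y^{k-1})^{\omega+1}$ to see that the image of $y^{k-1}$ is not a group element, and then apply Lemma~\ref{l:good.cyc.rz} and Corollary~\ref{c:freeze} exactly as the paper does (your ``Key Lemma'' is just this step packaged separately). The only cosmetic difference is in part~(ii), where the paper compares the identities $x^{k+1}\id x^k$ and $x^k\id x^{k-1}$ whereas you verify the pseudoidentity \eqref{id: exOk} directly in $\OO_j^\ba$ for $j<k$; both rest on the same observation that in $\OO_j^\ba$ all powers of exponent at least $j$ are idempotent.
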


\begin{proof}
(i) Let~$\varphi$ denote the substitution into $\OO_k^\ba$ given by $a \mapsto \Oe$, $b \mapsto \Ox$, and $c \mapsto \ov 1$.
Then $(\be c(a^{\omega}b)^{k-1})^\omega \varphi = \ov{\Ox^{k-1}}$ and $(\be c((a^{\omega}b)^{k-1})^{\omega+1})^\omega \varphi = \ov{\Ox^k}$, and these are different elements of $\OO_k^\ba$.
Therefore, the semigroup $\OO_k^\ba$ violates the {\pid} in~\eqref{id: exOk}.

It remains to assume that a semigroup~$T$ violates the {\pid} in~\eqref{id: exOk}, and then show that $\OO_k^\ba\in \lpbr T \rpbr$.
Replacing~$T$ by a subsemigroup if necessary, generality is not lost by assuming the existence of a surjective homomorphism $\psi\colon \wh{\{a,b,c\}^+} \to T$ such that \[ (\be c(a^{\omega}b)^{k-1})^\omega \psi \neq (\be c((a^{\omega}b)^{k-1})^{\omega+1})^\omega \psi .\]
Put $f=a^{\omega}\psi$ and $y=(a^{\omega}b)\psi$ and note that $f^2=f$ and $fy=y$.

The semigroup~$T$ acts on the right of the set~$B$ of $\mathscr L$-classes of its minimal ideal~$J$; denote the corresponding faithful transformation semigroup by $(B,\mathsf{RLM}(T))$.
Note that $(B,\mathsf{RLM}(T)) = \ov{(B,\mathsf{RLM}(T))}$ because if $b\in B$, then any element of~$T$ in the $\mathscr L$-class of~$b$ acts on~$B$ as a constant map to~$b$ by the structure of completely simple semigroups.
Consequently, $\lpbr \mathsf{RLM}(T) \rpbr^\ba = \lpbr\mathsf{RLM}(T) \rpbr$ by Corollary~\ref{c:freeze} since the constant mappings form the minimal ideal of $\mathsf{RLM}(T)$.

Since $(\be c)\psi$ is in the minimal ideal~$J$ of~$T$, it follows that the elements $((\be c)\psi) y^{k-1}$ and $((\be c) \psi) (y^{k-1})^{\omega+1}$ are $\mathscr R$-equivalent.
However, they are not $\mathscr L$-equivalent because otherwise they would be $\mathscr H$-equivalent and hence have the same idempotent power, as~$J$ is completely simple.  Consequently, $\mathsf{RLM}(T)$ is nontrivial and so it follows from Proposition~\ref{p:aug.pv} that $\lpbr\mathsf{RLM}(T) \rpbr = \lpbr\mathsf{RLM}(T) \rpbr \vee \bRZ$.
Also, if~$z$ denotes the image of~$y$ under the quotient map $T\to \mathsf{RLM}(T)$ and~$d$ denotes the image of~$f$ under this map, then $d^2=d$, $dz=z$, and $z^{k-1}$ is not a group element (as $z^{k-1}$ and $(z^{k-1})^{\omega+1}$ act differently on the $\mathscr L$-class of $(\be c)\psi$).
Thus Lemma~\ref{l:good.cyc.rz} implies that $\OO_k \in \lpbr\mathsf{RLM}(T)  \rpbr \vee \bRZ = \lpbr\mathsf{RLM}(T)\rpbr$.
Consequently,  $O_k^\ba\in \lpbr\mathsf{RLM}(T)\rpbr^\ba = \lpbr \mathsf{RLM}(T) \rpbr \subseteq \lpbr T\rpbr$.

(ii) This holds because for each $k \geq 2$, the semigroup $O_k^\ba$ satisfies the identity $x^{k+1} \id x^k$ but violates the identity $x^k \id x^{k-1}$.
\end{proof}

\subsection{A sufficient condition for the {\jirr} of groups} \label{sub: Bergman}
Recall that a normal subgroup~$N$ of a group~$G$ \textit{splits} if there exists a subgroup~$K$ of~$G$ so that $N \cap K = \{1\}$ and $NK = G$.

\begin{theorem}[G.\,M. Bergman, private communication, 2014] \label{T: Bergman}
Suppose that~$G$ is any finite {\sdi} group with an abelian monolith~$N$ that splits\up.
Then~$G$ is {\ji}\up.
\end{theorem}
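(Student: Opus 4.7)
The plan is to verify {\ji} directly: assuming $G\prec A\times B$ for finite groups $A,B$, I will show that $G\prec A^n$ or $G\prec B^n$ for some $n\geq 1$. First, I would pick $H\leq A\times B$ admitting a surjection $\pi\colon H\twoheadrightarrow G$ of minimal order, and set $K=\ker\pi$. A standard Frattini argument (any proper subgroup $M<H$ with $MK=H$ would contradict the minimality of $H$) then forces $K\subseteq\Phi(H)$. Next I introduce the coordinate-kernels $H_A=H\cap(A\times\{1\})$ and $H_B=H\cap(\{1\}\times B)$, both normal in $H$ with $H_A\cap H_B=1$, $H/H_A\hookrightarrow B$, $H/H_B\hookrightarrow A$, together with their images $\tilde N_A=\pi(H_A)$ and $\tilde N_B=\pi(H_B)$ in $G$.

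The easy cases dispose of themselves: if $\tilde N_A=1$ then $\pi$ factors through $H/H_A\hookrightarrow B$, so $G\prec B$; if $\tilde N_A=G$ then $H_A\leq A$ itself surjects onto $G$, so $G\prec A$; symmetric statements handle $\tilde N_B$. In the remaining case both $\tilde N_A$ and $\tilde N_B$ are proper and nontrivial, so by the {\sdi} hypothesis both contain the monolith $N$. Using the splitting $G=N\rtimes C$ I would write $\tilde N_A=N\rtimes C_A$ and $\tilde N_B=N\rtimes C_B$, where $C_A=\tilde N_A\cap C$ and $C_B=\tilde N_B\cap C$ are proper normal subgroups of $C$. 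A short separate check using the {\sdi} hypothesis also forces the $C$-action on $N$ to be faithful: any nontrivial element of $C_C(N)$ would generate a nontrivial normal subgroup of $G$ avoiding $N$, contradicting sub\-direct indecomposability.

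The core of the argument is then to produce, in this remaining case, a subgroup of $A^n$ (or of $B^n$) having $G$ as a quotient. The idea is to leverage the surjection $H_A\twoheadrightarrow\tilde N_A=N\rtimes C_A$ inside $A$ together with the Krasner--Kaloujnine-type embedding $G\hookrightarrow\tilde N_A\wr(C/C_A)$, which is available because $\tilde N_A\triangleleft G$ with quotient $C/C_A$. Using a transversal to $C_A$ in $C$ lifted to the subgroup $\Gamma=\pi^{-1}(C)\leq H$, I would construct a subgroup of $H_A^{[C:C_A]}\leq A^{[C:C_A]}$ admitting $G$ as a quotient. The abelian and $\mathbb{F}_p[C]$-irreducible nature of $N$ is exactly what makes this construction collapse down to $G$ itself rather than to a proper quotient.

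The hard part will be verifying the last construction rigorously: I need the chosen coset lifts to interact compatibly with both the $C$-action on $N$ and the multiplication in $H$, and I need to ensure that the resulting surjection onto $G$ does not secretly factor through $G/\tilde N_A$. Splittedness of the abelian monolith is precisely the hypothesis that makes these verifications go through, since it supplies a canonical complement to $N$ whose lifts can be transported along the transversal without introducing new obstructions modulo $K$.
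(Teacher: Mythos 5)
Your opening moves coincide with the paper's: a minimal cover $H\le A\times B$ of $G$, the coordinate kernels $H_A=H\cap(A\times\{1\})$, $H_B=H\cap(\{1\}\times B)$, the easy cases when $\pi(H_A)$ or $\pi(H_B)$ is trivial or all of $G$, the observation that subdirect indecomposability forces $N\subseteq\tilde N_A\cap\tilde N_B$ in the remaining case, and the faithfulness of the $C$-action on $N$. But you never use the fact that drives the whole argument in the paper: $H_A$ and $H_B$ lie in complementary coordinates and hence commute elementwise, so $\tilde N_A$ and $\tilde N_B$ centralize each other; since both contain $N$ and $C_G(N)=N$ (this is where the splitting hypothesis acts), this forces $\tilde N_A=\tilde N_B=N$, i.e.\ $C_A=C_B=1$. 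This is not a cosmetic simplification. The containment $\pi(H_B)\subseteq N$ is exactly what allows the $G/N$-part of $G$ to be read off from the $A$-coordinate (via $H/H_B\hookrightarrow A$), and without it no completion of your third paragraph is available.

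The core construction, as stated, cannot work. You propose to find a subgroup of $H_A^{[C:C_A]}$ with $G$ as a quotient; but in the remaining case $H_A$ may perfectly well be abelian (indeed the reductions in the paper's proof lead precisely to $H_A\cong N$), and then every subgroup of every power of $H_A$ is abelian, while $G=N\rtimes C$ is nonabelian whenever $C\ne 1$ (the action being faithful). More generally, the Krasner--Kaloujnine embedding $G\hookrightarrow\tilde N_A\wr(C/C_A)$ gives no traction towards $G\prec A^n$: a direct power $A^n$ has no mechanism for realizing the top group permuting coordinates, and from $\tilde N_A\prec A$ and $C/C_A\prec A$ one cannot deduce $\tilde N_A\wr(C/C_A)\prec A^n$ (for instance $\Z_3,\Z_2\prec\Z_6$, yet $S_3\le\Z_3\wr\Z_2$ divides no power of $\Z_6$). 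What can be made to work is to use the full projection $\bar A=p_A(H)\le A$ rather than $H_A$ alone: once the missing step gives $\pi(H_A)=N$ and $\pi(H_B)\subseteq N$, one gets a surjection $\rho\colon\bar A\to C$ and a $\bar A$-equivariant surjection $\mu\colon p_A(H_A)\to N$, and the subgroup $\{(mx,x)\mid m\in p_A(H_A),\,x\in\bar A\}$ of $A\times A$ maps onto $N\rtimes C=G$ (splitting is used again to multiply $\mu(m)$ by a complement element). That is a genuinely different construction from the one you sketch, and it stands or falls with the centralizer step you omitted. Finally, note that your minimality (over subgroups of the fixed $A\times B$, yielding only $K\le\Phi(H)$) is weaker than the paper's, which varies the ambient factors and thereby achieves $\ker\pi\cap H_A=\ker\pi\cap H_B=1$; your sketch neither obtains these reductions nor explains how to do without them.
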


\begin{proof}
By assumption, there exists a subgroup~$K$ of~$G$ with $N \cap K = \{1\}$ and $NK = G$.
Seeking a contradiction, suppose there exist finite groups~$G_1$ and~$G_2$ and some surjective homomorphism~$f$ from a subgroup~$H$ of $G_1 \times G_2$ onto~$G$ such that $G \notin \lpbr G_1\rpbr$ and $G \notin \lpbr G_2 \rpbr$.
Clearly we can assume that $H \pi_j = G_j$ for the projection maps $\pi_j : G_1 \times G_2 \twoheadrightarrow G_j$, that is, $H$ is a subdirect product of~$G_1$ and~$G_2$.
Further, we may assume that~$G_1$, $G_2$, and~$H$ are chosen so that the order of~$H$ is minimal.

Let $H_2 = \{ h_2 \in G_2 \mid (1,h_2) \in H \} \cong \ker(\pi_1) \cap H$.
If~$H_2$ is trivial, then $\pi_1$ is injective on~$H$, so that $H \cong G_1$, whence the contradiction $G \prec G_1$ is obtained.
Hence~$H_2$ is nontrivial.
Observe that
\begin{enumerate}[\ (a)]
\item[(\dag)] if~$L$ is a subgroup of~$H_2$ such that $\{1\} \times L \unlhd H$, then $L \unlhd G_2$;
\end{enumerate}
in particular, $H_2\unlhd G_2$.
Indeed, if $\ell \in L$ and $g_2 \in G_2$, then choosing any $g_1 \in G_1$ with $(g_1,g_2) \in H$, we have
\[
(1,g_2 \ell g_2^{-1}) = (g_1,g_2)(1,\ell)(g_1,g_2)^{-1} \in \{1\} \times L
\]
by normality of $\{1\} \times L$ in~$H$, whence $g_2 \ell g_2^{-1} \in L$.

Suppose that $\ker (f)$ has nontrivial intersection with the subgroup $\{1\} \times H_2$ of~$H$, say $\ker(f) \cap (\{1\} \cap H_2) = \{1\} \times L$ for some $L \subseteq G_2$.
Then~$L$ is normal in~$H_2$ and so also normal in~$G_2$ by~(\dag).
By dividing~$G_2$ by this intersection, we could contradictorily decrease the order of~$H$.
Therefore, $\ker (f)$ intersects $\{1\} \times H_2$ trivially.

Similarly, defining $H_1 = \{ h_1 \in G_1 \mid (h_1,1) \in H \}$, we have $\{1\} \neq H_1 \unlhd G_1$ and $\ker(f)$ intersects $H_1 \times \{1\}$ trivially.
Then $H_1 \times \{1\}$, $\{1\} \times H_2$, and $\ker(f)$ are all normal in~$H$ and have pairwise trivial intersections.

Note that the centralizer of~$N$ in~$G$ is~$N$.
Indeed, since~$N$ is the unique minimal normal subgroup of~$G$, the action of~$K$ on~$N$ by conjugation is faithful (otherwise, the kernel would be a normal subgroup of $G$ not containing $N$).
If~$kn$ centralizes~$N$ with $k \in K$ and $n \in N$, then since~$N$ is abelian, we have that~$k$ centralizes~$N$ and hence $k=1$ by the previous observation.

From now on, identify~$H_1$ with $H_1 \times\{1\}$ and~$H_2$ with $\{1\} \times H_2$.
Then~$H_1$ and~$H_2$ are normal in~$H$ and commute elementwise.
We claim now that $H_1f = N = H_2f$.
Indeed, since~$f$ is injective on each of these subgroups and these subgroups are normal in~$H$, we conclude that~$N$ is contained in $H_1 f \cap H_2 f$.
Since~$H_1$ and~$H_2$ commute elementwise, both $H_1f$ and $H_2f$ are contained in the centralizer of~$N$, which is~$N$.
We conclude that $H_1f = N = H_2f$ and~$f$ restricts to an isomorphism of~$H_1$ and~$H_2$ with~$N$.

Let $H^* = Kf^{-1}$.
Then since $N \cap K =\{1\}$, it follows that $H^* \cap H_1$ is a subgroup of $\ker(f)$.
But $\ker(f) \cap H_1$ is trivial, so that $H^* \cap H_1 = \{ 1 \}$.
Similarly, $H^* \cap H_2 = \{ 1 \}$.
Note that $H^*H_1$ and $H^*H_2$ are subgroups of~$H$ because~$H_1$ and~$H_2$ are normal.
Also $(H^*H_1)f=KN=G=(H^*H_2)f$ and so by minimality of~$H$, we have $H^*H_1 = H = H^*H_2$.
In particular, $G_2 \cong H \pi_2 = (H^*H_1)\pi_2 = H^*\pi_2$ and so, since $H^* \cap H_1 = \{1\}$, we deduce that $G_2 \cong H^*$.
Similarly, $G_1 \cong H^*$.
Therefore, $G \prec G_1 \times G_1$ and so $G \in \lpbr G_1 \rpbr$, a contradiction.
\end{proof}

\subsection{Join irre\-duci\-bility of the {\pvar} $\lpbr D_4\rpbr = \lpbr Q_8\rpbr$} \label{sub: D4 Q8}

\begin{proposition} \label{P: D4 Q8}
The {\pvar} $\lpbr D_4\rpbr = \lpbr Q_8\rpbr$ is {\ji}\up.
Further\up, the {\pvar} $\excl(Q_8)$ is the class of all finite semigroups whose $2$-subgroups are abelian\up, that is\up, finite semigroups whose maximal subgroups have abelian $2$-Sylow subgroups\up.
\end{proposition}

\begin{proof}
Since the variety of $2$-groups is saturated, it follows that the finite semigroups whose $2$-subgroups are abelian form a {\pvar}.
To complete the proof, it suffices to observe that the {\pvar} generated by any finite non-abelian $2$-group contains $\lpbr Q_8\rpbr$.
A proof can be found in Almeida~\cite[Theorem~4.5]{Alm86}, based on the classification of finite $2$-groups whose proper subgroups are abelian, going back to Miller and Moreno~\cite{MM03}; we are indebted to the anonymous reviewer for pointing this out.
Kearnes~\cite{Kea18} gave a direct proof that any finite non-abelian $2$-group generates a variety containing $\lpbr Q_8\rpbr$ via a general description of identity bases for finite nilpotent groups of class~$2$.
\end{proof}

Since the group $Q_8$ is $2$-generated as a semigroup, the {\pvar} $\excl(Q_8)$ can be defined by a {\pid} over two variables \cite[Proposition~7.1.9]{RS09}.
We proceed to describe such a {\pid}.
Let $\widehat{F}=\wh{\{x,y\}^+}$ be the free profinite semigroup over $\{x,y\}$ and let~$\eta$ be an idempotent in the minimal ideal of $\widehat{F}$.
Then $\eta \widehat{F}\eta$ is a profinite group and maps onto the free profinite group on two generators under the natural projection.
The elements $x'=\eta x\eta$ and $y'=\eta y\eta$ map onto the free generators and thus freely topologically generate a free profinite subgroup of $\widehat{F}$, which is a retract.
This observation was first made by Almeida and Volkov~\cite{AV03}.
If $\bu(x,y)$ and $\bv(x,y)$ are elements of the free profinite group $\wh{F}_\bG(\{x,y\})$ over $\{x,y\}$, then $\bu(x',y'),\bv(x',y')\in \eta \widehat{F}\eta$ and it is easy to see that the {\pid} $\bu(x',y') \id \bv(x',y')$ defines the {\pvar} of all semigroups whose maximal subgroups belong to the {\pvar} defined by $\bu(x,y)\id\bv(x,y)$; see~\cite{AV03} for details.
Thus, it suffices to find a two-variable group {\pid} $\bu(x,y) \id \bv(x,y)$ defining the {\pvar} of groups with abelian $2$-Sylow subgroups (or, equivalently, $2$-subgroups).

Let $\wh{F}_{\bG_2}(\{x,y\})$ be the free pro-$2$ group over $\{x,y\}$.
We have a natural continuous surjection $\pi\colon \wh{F}_\bG(\{x,y\})\to \wh{F}_{\bG_2}(\{x,y\})$.
The {\pvar} of $2$-groups is saturated and so by Ribes and Zalesskii~\cite[Proposition~7.6.7]{RZ10}, the group $\wh{F}_{\bG_2}(\{x,y\})$ is a projective profinite group.
Therefore, there is a continuous splitting of $\pi$, that is, we can find $\bw_1(x,y),\bw_2(x,y)\in \wh{F}_\bG(\{x,y\})$ which freely topologically generate a free pro-$2$ subgroup with $\pi(\bw_1(x,y))=\pi(x)$ and $\pi(\bw_2(x,y))=\pi(y)$; in other words, $\bw_1(x,y)$ and $\bw_2(x,y)$ topologically generate a free pro-$2$ retract of $\wh{F}_\bG(\{x,y\})$.
Then the {\pvar} of groups with abelian $2$-subgroups is defined by the {\pid} $\bw_1(x,y)\bw_2(x,y) \id \bw_2(x,y)\bw_1(x,y)$.
Thus the {\pvar} $\excl(Q_8)$ is defined by $\bw_1(x',y')\bw_2(x',y') \id \bw_2(x',y')\bw_1(x',y')$, where~$x'$ and~$y'$ are as given in the previous paragraph.

\section{Join {\irr} {\pvars}} \label{sec: ji}

The present section contains 15~subsections.
Some background results are recorded in the first subsection, while the latter 14~subsections are devoted to the {\pvars} generated by the following 14~semigroups:
\begin{equation}
\begin{gathered} \label{D: ji sgps}
\Z_{p^n}, \quad \ZB, \quad \N_n, \quad \N_n^I, \quad \NB, \quad \NBI, \\ \LZ, \quad \LZ^I, \quad \LZB, \quad \Az, \quad \Az^I, \quad \At, \quad \Bt, \quad \elB.
\end{gathered}
\end{equation}
Each subsection that is concerned with a semigroup~$S$ from~\eqref{D: ji sgps} begins with a theorem that establishes the {\ji} property of~$\lpbr S \rpbr$ by exhibiting a {\pid} that defines the {\pvar} $\excl(S)$.
A basis~$\Sigma_S$ of identities for the {\pvar} $\lpbr S \rpbr$ and an identity~$\varepsilon_S$ that defines its maximal sub{\pvar} $\lpbr S \rpbr \cap \excl(S)$ are then given in a proposition.
The pair $(\Sigma_S,\varepsilon_S)$ can be used to easily test if a finite semigroup generates the {\ji} {\pvar} $\lpbr S \rpbr$.
Indeed, for any finite semigroup~$T$,
\begin{align*}
T \models \Sigma_S \ \text{ and } \ T \not\models \varepsilon_S & \quad \Longleftrightarrow \quad \lpbr T \rpbr \subseteq \lpbr S \rpbr \ \text{ and } \ \lpbr T \rpbr \nsubseteq \lpbr S \rpbr \cap \excl(S) \\
& \quad \Longleftrightarrow \quad \lpbr T \rpbr = \lpbr S \rpbr.
\end{align*}
The pairs $(\Sigma_S,\varepsilon_S)$, where~$S$ ranges over the semigroups from~\eqref{D: ji sgps}, will be used in Section~\ref{sec: proof} to locate all {\ji} {\pvars} generated by a semigroup of order up to five.

\subsection{Preliminaries} \label{sub: prelim}

The free semigroup and free monoid over a countably infinite alphabet~$\A$ are denoted by~$\A^+$ and~$\A^*$, respectively.
Elements of~$\A$ are called \textit{variables} while elements of $\A^*$ are called \textit{words}.
For any word $\bw \in \A^+$,
\begin{enumerate}[1.]
\item[$\bullet$] the number of times a variable~$x$ occurs in~$\bw$ is denoted by $\occ(x,\bw)$;
\item[$\bullet$] the \textit{content} of~$\bw$, denoted by $\cont(\bw)$, is the set of variables occurring in~$\bw$, that is, $\cont(\bw) = \{ x \in \A \mid \occ(x,\bw) \geq 1\}$;
\item[$\bullet$] the \textit{initial part} of~$\bw$, denoted by $\ini(\bw)$, is the word obtained by retaining the first occurrence of each variable in~$\bw$;
\item[$\bullet$] the \textit{final part} of~$\bw$, denoted by $\fin(\bw)$, is the word obtained by retaining the last occurrence of each variable in~$\bw$.
\end{enumerate}

\begin{lemma} \label{L: word Zn NnI LZI RZI}
Let $\bu \id \bv$ be any semigroup identity\up.
Then
\begin{enumerate}[\rm(i)]
\item $\Z_n \models \bu \id \bv$ if and only if $\occ(x,\bu) \equiv \occ(x,\bv) \pmod n$ for all $x \in \A$\up;
\item $\N_n^I \models \bu \id \bv$ if and only if for all $x \in \A$\up, either $\occ(x,\bu) = \occ(x,\bv)$ or $\occ(x,\bu), \occ(x,\bv) \geq n$\up;
\item $\LZ^I \models \bu \id \bv$ if and only if $\ini(\bu) = \ini(\bv)$\up;
\item $\RZ^I \models \bu \id \bv$ if and only if $\fin(\bu) = \fin(\bv)$\up.
\end{enumerate}
\end{lemma}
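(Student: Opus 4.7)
The plan is to verify each of the four equivalences by direct computation using the multiplication tables and standard substitution arguments. In every case the backward direction will be a short calculation, and the forward direction will be obtained by constructing substitutions that isolate the relevant word invariant.

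For part~(i), since $\Z_n = \langle \eg \rangle$ is monogenic, every substitution $\varphi \colon \A \to \Z_n$ has the form $x \mapsto \eg^{k_x}$, so that $\bu\varphi = \eg^{\sum_x \occ(x,\bu) k_x}$ and similarly for $\bv$. Thus $\Z_n \models \bu \id \bv$ if and only if $\sum_x \occ(x,\bu) k_x \equiv \sum_x \occ(x,\bv) k_x \pmod n$ for every choice of integers $k_x$, and the forward direction follows by isolating a single variable $x_0$ via $k_{x_0} = 1$ and $k_x = 0$ for $x \neq x_0$.

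For part~(ii), every substitution into $\N_n^I = \{I, \ea, \ea^2, \ldots, \ea^{n-1}, \ez\}$ sends each variable either to $\ez$ or to some $\ea^{k_x}$ with $0 \leq k_x \leq n-1$, where $\ea^0$ denotes the adjoined identity~$I$. The forward direction follows by isolating a single variable $x_0$ through $x_0 \mapsto \ea$ and $x \mapsto I$ for $x \neq x_0$; equality of the resulting values $\ea^{\occ(x_0,\bu)}$ and $\ea^{\occ(x_0,\bv)}$ in $\N_n^I$ immediately yields the stated alternative. For the backward direction, one first observes that the hypothesis forces $\cont(\bu) = \cont(\bv)$ (since $\occ(x,\bu) = 0 < n$ implies $\occ(x,\bv) = 0$); then, given any substitution, either some variable in this common content is sent to $\ez$ (making both sides~$\ez$), or a short case analysis on whether any variable with $\occ \geq n$ receives a strictly positive exponent $k_x$ shows that the total $\ea$-exponents of $\bu\varphi$ and $\bv\varphi$ either agree exactly or are both at least~$n$.

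For part~(iii), a routine check in $\LZ^I = \{\ee, \ef, I\}$ shows that the product of any sequence of elements equals the leftmost non-identity entry, or $I$ if all entries equal~$I$. Hence $\bu\varphi$ depends on $\bu$ only through $\ini(\bu)$, which gives the backward direction. For the forward direction, a substitution of the form ``one variable~$\mapsto \ee$, rest~$\mapsto I$'' first forces $\cont(\bu) = \cont(\bv)$; then at the first position where $\ini(\bu)$ and $\ini(\bv)$ differ, sending all preceding variables to $I$, the distinguishing variable from $\ini(\bu)$ to $\ee$, and every remaining variable to $\ef$ produces $\bu\varphi = \ee$ and $\bv\varphi = \ef$. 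Part~(iv) is formally dual: since $\RZ^I \cong (\LZ^I)^\op$ and reversing a word interchanges $\ini$ and $\fin$, it follows by applying~(iii) to the reversed identity. The only step requiring any real care is the backward direction of~(ii), because of the interaction between variables whose $\occ$-counts must be equal and those for which only the weaker $\geq n$ condition is guaranteed; every other step is entirely routine.
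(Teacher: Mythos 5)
Your proposal is correct. Note that the paper does not actually prove this lemma; it only cites Almeida \cite[Lemma~6.1.4]{Alm94} for parts~(i) and~(ii) and Petrich and Reilly \cite[Theorem~V.1.9]{PR99} for parts~(iii) and~(iv), so your direct substitution argument is exactly the routine verification the paper defers to the literature. All four parts check out: in~(i) commutativity reduces everything to exponent sums and the isolating substitution $k_{x_0}=1$, $k_x=0$ does the rest; in~(ii) your handling of the backward direction---noting $\cont(\bu)=\cont(\bv)$, disposing of the case where a content variable is sent to $\ez$, and then splitting the remaining variables into those with equal occurrence counts and those whose counts are both at least~$n$, so that any positive exponent on a variable of the latter kind forces both total exponents to be at least~$n$---is precisely the point that needs care, and you get it right; in~(iii) the observation that a product in $\LZ^I$ equals its leftmost non-identity factor gives $\bu\varphi=\ini(\bu)\varphi$, and your separating substitution ($I$ on the common prefix of the initial parts, $\ee$ on the distinguishing variable, $\ef$ elsewhere) is valid because equal contents make the two initial parts the same length; and~(iv) follows by the standard duality $\RZ^I\cong(\LZ^I)^\op$ with word reversal interchanging $\ini$ and $\fin$.
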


\begin{proof}
These results are well known and easily established.
For instance, parts~(i) and~(ii) follow from Almeida~\cite[Lemma~6.1.4]{Alm94} while parts~(iii) and~(iv) can be found in Petrich and Reilly~\cite[Theorem~V.1.9, parts~(viii) and~(ix)]{PR99}.
\end{proof}

The \textit{local} of a {\pvar}~$\bV$, denoted by $\mathbb{L}\bV$, is the {\pvar} of all finite semigroups~$S$ such that $eSe \in \bV$ for any idempotent $e \in S$.

\begin{lemma}[Almeida~{\cite[Exercise~10.10.1]{Alm94}}] \label{L: local}
Let~$S$ be any finite semigroup that is not a monoid\up.
If the {\pvar} $\lpbr S \rpbr$ is {\ji}\up, then the {\pvar} $\lpbr S^I\rpbr$ is also {\ji} and $\excl(S^I) = \mathbb{L} \excl(S)$\up.
\end{lemma}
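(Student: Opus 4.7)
The plan is to prove the stronger equality $\excl(S^I) = \mathbb{L}\excl(S)$; once this is in hand, the {\jirr} of $\lpbr S^I\rpbr$ is automatic by the characterization \cite[Theorem~7.1.2]{RS09} that $S^I$ is {\ji} if and only if $\excl(S^I)$ is a {\pvar}, provided one first checks that $\mathbb{L}$ sends {\pvars} to {\pvars}. This closure under the {\pvar} axioms is routine: closure of $\mathbb{L}\bV$ under subsemigroups is immediate, closure under finite direct products uses $(e,f)(T_1\times T_2)(e,f)=eT_1e\times fT_2f$, and closure under homomorphic images relies on the standard fact that an idempotent in a finite surjective image lifts to an idempotent upstairs.

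For the inclusion $\excl(S^I) \subseteq \mathbb{L}\excl(S)$ the argument proceeds by contrapositive. Suppose $T \notin \mathbb{L}\excl(S)$, so there is an idempotent $e\in T$ with $S \in \lpbr eTe\rpbr$, that is, $S \prec M$ for the monoid $M=(eTe)^n$ via some surjective homomorphism $\varphi\colon U \twoheadrightarrow S$ from a subsemigroup $U \leq M$. Let $\mathbf{1}$ denote the identity of $M$. If $\mathbf{1}\in U$, then $\varphi(\mathbf{1})$ would be a two-sided identity of $S$, forcing $S$ to be a monoid contrary to hypothesis; hence $\mathbf{1}\notin U$, and $U\cup\{\mathbf{1}\}$ is a submonoid of $M$. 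One then extends $\varphi$ to $\varphi'\colon U\cup\{\mathbf{1}\}\twoheadrightarrow S^I$ by $\mathbf{1}\mapsto I$, which is a well-defined homomorphism since $\mathbf{1}$ acts trivially on $U$ just as $I$ does on $S$. Therefore $S^I \prec M \in \lpbr T\rpbr$, proving $T \notin \excl(S^I)$.

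For the reverse inclusion, assume $T \in \mathbb{L}\excl(S)$ and, for contradiction, that $S^I \in \lpbr T\rpbr$, so $S^I \prec T^m$ via a surjection $\psi\colon V \twoheadrightarrow S^I$ with $V\leq T^m$. Since $I$ is an idempotent, $\psi^{-1}(I)$ is a nonempty finite subsemigroup and hence contains an idempotent $f=(f_1,\ldots,f_m)$; then $\psi(fVf)=I\cdot S^I\cdot I=S^I$. But $fVf \leq fT^mf = f_1Tf_1\times\cdots\times f_mTf_m$, each factor of which lies in $\excl(S)$ because $T\in\mathbb{L}\excl(S)$, so the product does too by closure of the {\pvar} $\excl(S)$ under finite direct products. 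This yields $S \prec S^I \in \lpbr fT^mf\rpbr \subseteq \excl(S)$, contradicting $S \notin \excl(S)$. Hence $T \in \excl(S^I)$, completing the equality.

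The delicate step is the extension of $\varphi$ by adjoining a preimage for $I$ in the first inclusion. It is precisely there that the hypothesis that $S$ is not a monoid is used; without it, $\varphi(\mathbf{1})$ could already be an identity of $S$ and the construction collapses. This mirrors the introduction's observation that $\lpbr S\rpbr \mapsto \lpbr S^I\rpbr$ does not preserve {\jirr} on monoids, as witnessed by $\Z_p$.
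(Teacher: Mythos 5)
Your proof is correct. Note that the paper itself offers no argument for this lemma---it simply cites Almeida's Exercise~10.10.1---so your write-up supplies a self-contained proof of what the paper leaves to the reference, and it is the natural one: both inclusions of $\excl(S^I)=\mathbb{L}\excl(S)$ check out. In the inclusion $\excl(S^I)\subseteq\mathbb{L}\excl(S)$ you correctly isolate the only use of the hypothesis that $S$ is not a monoid (ruling out $\mathbf{1}\in U$, so that $I$ can be given the preimage $\mathbf{1}$), and this part needs no assumption on $\excl(S)$; in the reverse inclusion the {\ji} hypothesis enters exactly where you use closure of $\excl(S)$ under products, subsemigroups and quotients, together with the idempotent-lifting trick applied to $\psi^{-1}(I)$ and the decomposition $fT^mf=f_1Tf_1\times\cdots\times f_mTf_m$. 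The final contradiction is best read as: $S\prec S^I$ and $S^I\in\excl(S)$ with $\excl(S)$ a {\pvar} would give $S\in\excl(S)$, which is absurd since $S\in\lpbr S\rpbr$---which is what you intend. With $\mathbb{L}$ of a {\pvar} again a {\pvar} (your sketch of that is fine), {\jirr} of $\lpbr S^I\rpbr$ follows from the exclusion-class characterization, as you say.
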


\subsection{The {\pvar} $\lpbr\Z_n\rpbr$} \label{sub: Zn ji}

For any set $\pi=\{p_1,p_2,p_3,\ldots\}$ of primes, let~$\pi'$ denote the set of primes complementary to~$\pi$.
If~$p$ is a prime, then simply write~$p'$ instead of~$\{p\}'$.
For example, $2'$ denotes the set of odd primes.
Retaining the above notation, recall that in~$\wh{\{x\}^+}$, the sequence $x^{(p_1p_2\cdots p_n)^{n!}}$ converges to an element (independent of the enumeration of~$\pi$), denoted by~$x^{\pi^{\omega}}$, with the following property: if~$s$ is an element of a finite semigroup~$S$, then~$s^{\pi^{\omega}}$ is a generator of the $\pi'$-primary component of the finite cyclic group generated by~$s^{\omega+1}$.
Here we recall that for a finite abelian group~$A$, the \textit{$\pi'$-primary component} of~$A$ is the direct product of the $p$-Sylow subgroups of~$A$ with $p \notin \pi$.
In this case, $s^{(\pi')^{\omega}}$ will then be a generator of the $\pi$-primary component of $\langle s^{\omega+1}\rangle$; see~\cite[Proposition~7.1.16]{RS09}.

\begin{theorem}\label{T: Zn}
For any prime~$p$ with $n \geq 1$\up, the {\pvar} $\lpbr\Z_{p^n}\rpbr$ is {\ji} and
\begin{equation}
\excl(\Z_{p^n}) = \lbr (x^{(p')^\omega})^{p^{n-1}} \id x^{\omega} \rbr. \label{id: Zpn excl}
\end{equation}
\end{theorem}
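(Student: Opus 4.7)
The plan is to identify $\excl(\Z_{p^n})$ with the pseudovariety defined by the pseudoidentity in~\eqref{id: Zpn excl}, since by~\cite[Theorem~7.1.2]{RS09} this automatically yields that $\Z_{p^n}$ is~{\ji}. Concretely, it suffices to prove, for every finite semigroup~$T$, that $\Z_{p^n} \in \lpbr T\rpbr$ if and only if~$T$ violates $(x^{(p')^\omega})^{p^{n-1}} \id x^\omega$. The entire argument rides on the generator property of the implicit operation~$x^{\pi^\omega}$ recalled in the paragraph preceding the theorem: for any element~$s$ of a finite semigroup, $s^{(p')^\omega}$ generates the $p$-primary component of the cyclic group~$\langle s^{\omega+1}\rangle$.

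First I would verify that $\Z_{p^n}$ itself violates~\eqref{id: Zpn excl}. Let~$g$ be a generator of~$\Z_{p^n}$, so $g^\omega$ is the identity~$1$ and $\langle g^{\omega+1}\rangle = \Z_{p^n}$. The $p$-primary component of $\Z_{p^n}$ is $\Z_{p^n}$ itself, so $g^{(p')^\omega}$ is a generator, of order~$p^n$. Hence $(g^{(p')^\omega})^{p^{n-1}}$ has order~$p$, distinct from~$1 = g^\omega$. Since the class of semigroups satisfying~\eqref{id: Zpn excl} is a pseudovariety (and in particular closed under division and finite direct products), this already delivers the ``only if'' direction: whenever $\Z_{p^n}\in \lpbr T\rpbr$, the semigroup~$T$ cannot satisfy~\eqref{id: Zpn excl}, for otherwise $\Z_{p^n}$ would too.

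Conversely, suppose $T$ violates~\eqref{id: Zpn excl}, witnessed by some $s \in T$ with $(s^{(p')^\omega})^{p^{n-1}} \neq s^\omega$. Writing the $p$-primary component of $\langle s^{\omega+1}\rangle$ as $P \cong \Z_{p^k}$, the generator $s^{(p')^\omega}$ of $P$ has order exactly $p^k$, and $s^\omega$ is the identity of~$P$. The inequality therefore forces $p^{n-1}<p^k$, i.e.\ $k \geq n$. Hence $\Z_{p^n}$ embeds in $P \leq T$ as a subgroup, so $\Z_{p^n} \prec T$ and $\Z_{p^n} \in \lpbr T\rpbr$.

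There is no serious obstacle: the argument is essentially a direct unpacking of the definition of $x^{\pi^\omega}$. The only care needed is in confirming that $s^\omega$ really is the identity of~$P$ (it is the idempotent of the cyclic group $\langle s^{\omega+1}\rangle$, which contains~$P$) and that $s^{(p')^\omega}$ has order exactly~$p^k$ (it is a generator of the cyclic $p$-group~$P$). Both are immediate consequences of the property of $x^{\pi^\omega}$ recalled before the theorem, and with them in hand the biconditional, and thus the theorem, follows at once.
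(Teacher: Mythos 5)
Your proposal is correct and follows essentially the same route as the paper: verify that $\Z_{p^n}$ itself violates the {\pid}, and conversely use the generator property of $x^{(p')^\omega}$ to see that a witness $s$ to the violation forces the $p$-primary component of $\langle s^{\omega+1}\rangle$ to be cyclic of order at least $p^n$, so that $\Z_{p^n}$ divides the offending semigroup. The only (cosmetic) difference is that the paper first restricts to the cyclic group generated by $s^{\omega+1}$, whereas you argue directly inside that subgroup of $T$.
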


\begin{proof}
The cyclic group $\Z_{p^n} = \langle \eg \mid \eg^{p^n} = \ei \rangle$ violates the {\pid} in~\eqref{id: Zpn excl} because $(\eg^{(p')^\omega})^{p^{n-1}} = \eg^{p^{n-1}} \neq \ei = \eg^{\omega}$.
Therefore, if~$\Z_{p^n}$ belongs to some {\pvar}~$\bV$, then~$\bV$ violates the {\pid} in~\eqref{id: Zpn excl}.

Conversely, suppose that the {\pid} in~\eqref{id: Zpn excl} is violated by~$\bV$, say it is violated by $S \in \bV$.
Generality is not lost by assuming that~$S$ is generated by an element~$s$ such that $(s^{(p')^\omega})^{p^{n-1}} \neq s^\omega$.
Replacing~$s$ by~$s^{\omega+1}$, we may assume that~$S$ is, in fact, a cyclic group generated by~$s$ such that $(s^{(p')^\omega})^{p^{n-1}} \neq 1$.
But then the $p$-primary component of~$S$ is a cyclic group of order~$p^m$ with $m\geq n$.
Therefore, $\Z_{p^n}$ divides~$S$, whence $\Z_{p^n} \in \bV$.
\end{proof}

\begin{proposition} \label{P: Zn}
Let $n \geq 1$\up.
\begin{enumerate}[\em(i)]
\item The identities satisfied by the group $\Z_n$ are axiomatized by
\[
xy \id yx, \quad x^ny \id y.
\]
\item The maximal sub{\pvars} of $\lpbr\Z_n\rpbr$ are precisely $\lpbr\Z_d\rpbr$\up, where~$d$ ranges over all maximal proper divisors of~$n$\up.
Consequently\up, for any prime~$p$ with $k \geq 1$\up, the sub{\pvar} of $\lpbr\Z_{p^k}\rpbr$ defined by
\[
x^{p^{k-1}+1} \id x
\]
is the unique maximal sub{\pvar} of $\lpbr\Z_{p^k}\rpbr$\up.
\end{enumerate}
\end{proposition}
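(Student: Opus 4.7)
The plan is to prove part~(i) first and then deduce part~(ii) by classifying all subpseudovarieties of $\lpbr\Z_n\rpbr$ via the divisor lattice of~$n$.

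For part~(i), first I would verify directly that $\Z_n$ satisfies both identities: commutativity is obvious, and $\eg^n = 1$ yields $x^n y \id y$. For the converse, let $S$ be any finite semigroup satisfying $xy \id yx$ and $x^n y \id y$. The key move is to extract the group structure from these identities. Applied with $y=x$, the second identity gives $x^{n+1} = x$, so $x^n$ is idempotent; combined with commutativity, $x^n$ acts as a two-sided identity on the whole subsemigroup it meets. Since any two idempotents $e,f$ in $S$ satisfy $e = ef = fe = f$, there is a unique idempotent $\Oe \in S$ serving as a global identity, and each $x$ has inverse $x^{n-1}$. Thus $S$ is a finite abelian group of exponent dividing~$n$. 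By the fundamental theorem of finite abelian groups, $S$ is a direct product of cyclic groups $\Z_{d_i}$ with $d_i \mid n$; each $\Z_{d_i}$ is a homomorphic image of $\Z_n$, so $S \in \lpbr\Z_n\rpbr$. This gives the reverse containment and proves~(i).

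For part~(ii), by~(i) the lattice of subpseudovarieties of $\lpbr\Z_n\rpbr$ coincides with the lattice of pseudovarieties of finite abelian groups of exponent dividing~$n$. Sending a divisor $d$ of $n$ to $\lpbr\Z_d\rpbr$ gives an order-preserving map from the divisor lattice of $n$ (under divisibility) into this subpseudovariety lattice. I would show it is a lattice isomorphism: injectivity holds because $\Z_d$ itself detects $d$ via its order, and surjectivity follows from~(i) applied at the divisor level, since any $\bV \subseteq \lpbr\Z_n\rpbr$ consists of abelian groups whose exponents have a common multiple $d$ dividing $n$, forcing $\bV = \lpbr\Z_d\rpbr$. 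Under this isomorphism, maximal proper subpseudovarieties correspond to maximal proper divisors, giving the first assertion of~(ii).

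For the prime-power case, the only maximal proper divisor of $p^k$ is $p^{k-1}$, so the unique maximal subpseudovariety is $\lpbr\Z_{p^{k-1}}\rpbr$. It remains to verify that, inside $\lpbr\Z_{p^k}\rpbr$, this subpseudovariety is cut out by the single identity $x^{p^{k-1}+1} \id x$: in an abelian group this identity is equivalent to $x^{p^{k-1}} = 1$, which in turn is equivalent to the exponent dividing $p^{k-1}$, and hence to membership in $\lpbr\Z_{p^{k-1}}\rpbr$ by~(i). The main obstacle is the surjectivity step in the lattice isomorphism, where one needs the exponent of any $\bV \subseteq \lpbr\Z_n\rpbr$ to be well-defined and divide $n$; this is handled by noting that the finitely many generators of any compact $\bV$ have bounded orders, and passing to directed joins then covers non-compact subpseudovarieties since $\lpbr\Z_n\rpbr$ has only finitely many subpseudovarieties by the divisor correspondence itself, closing the argument.
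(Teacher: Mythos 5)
Your argument is sound, but it is worth knowing that the paper does not actually prove this proposition: it dismisses both parts as well known, citing Almeida (Corollary~6.1.5) for~(i) and Petrich--Reilly (Lemma~VIII.6.14) for~(ii). So your submission is a genuinely different route, namely a self-contained elementary proof: extracting the group structure from $xy \id yx$ and $x^ny \id y$ (unique idempotent acting as identity, inverse $x^{n-1}$), invoking the structure theorem to place every finite model inside $\lpbr\Z_n\rpbr$, and then identifying the sub\-{\pvar} lattice of $\lpbr\Z_n\rpbr$ with the divisor lattice of~$n$. What your approach buys is independence from the references and an explicit picture of the lattice; what the paper's citations buy is brevity.

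Two places should be tightened. First, in the surjectivity step of~(ii) you only justify $\bV \subseteq \lpbr\Z_d\rpbr$ where $d$ is the least common multiple of the exponents occurring in~$\bV$; you also need $\Z_d \in \bV$. This is easy but not automatic: each member of exponent $m_i$ contains a cyclic subgroup $\Z_{m_i}$, so $\Z_{m_1}\times\cdots\times\Z_{m_k}\in\bV$ for suitable $m_1,\dots,m_k$ with $\operatorname{lcm}(m_1,\dots,m_k)=d$, and this product contains an element of order~$d$, hence $\Z_d$ as a subgroup. Second, your closing remark about compact generators and directed joins is both unnecessary and circular as phrased (``by the divisor correspondence itself''): the lcm is well defined simply because every exponent occurring in $\bV$ divides~$n$, a finite set of possibilities. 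Finally, a one-sentence remark would make~(i) airtight as a statement about identities rather than about finite membership: your idempotent-and-inverse argument applies verbatim to an arbitrary semigroup satisfying the two identities, so every finitely generated (hence finite, as it is an abelian group of exponent dividing~$n$ generated by finitely many elements) member of the variety they define lies in the variety generated by $\Z_n$, which is exactly what is needed to conclude that every identity of $\Z_n$ is a consequence of the given pair.
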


\begin{proof}
These results are well known and easily established.
For instance, part~(i) follows from Almeida~\cite[Corollary~6.1.5]{Alm94} while part~(ii) follows from Petrich and Reilly~\cite[Lemma~VIII.6.14]{PR99}.
\end{proof}

\subsection{The {\pvar} $\lpbr\ZB\rpbr$} \label{sub: ZB ji}

\begin{theorem} \label{T: Z2bar}
The {\pvar}~$\lpbr\ZB\rpbr$ is {\ji} and
\[
\excl(\ZB) = \lbr (\be yx^{(2')^\omega})^\omega \id (\be yx^\omega)^\omega \rbr,
\]
where~$\be$ is an idempotent in the minimal ideal of $\wh{\{x,y\}^+}$.
\end{theorem}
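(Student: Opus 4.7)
The plan is to obtain this result as an immediate application of Theorem~\ref{T: Sbar} to the case handled in Theorem~\ref{T: Zn}. Since $\ZB$ is by definition $\Z_2^\ba$, and Theorem~\ref{T: Zn} (with $p=2$, $n=1$) already gives us that $\lpbr \Z_2 \rpbr$ is {\ji} with $\excl(\Z_2) = \lbr x^{(2')^\omega} \id x^\omega \rbr$, essentially all the work has been done.

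First I would invoke Theorem~\ref{T: Zn} to record that $\excl(\Z_2) = \lbrs \bu \id \bv \rbrs$ with $\bu = x^{(2')^\omega}$, $\bv = x^\omega$ in $\wh{A^+}$ where $A=\{x\}$, and that $\lpbr \Z_2 \rpbr$ is {\ji}. Then I would apply Theorem~\ref{T: Sbar}: since join irreducibility is preserved by the operator $\bV \mapsto \bV^\ba$, the {\pvar} $\lpbr \Z_2^\ba \rpbr = \lpbr \ZB \rpbr$ is also {\ji}; and the explicit formula from that theorem states that $\excl(\Z_2^\ba) = \lbr (\be z \bu)^\omega \id (\be z \bv)^\omega \rbr$ for a fresh letter $z \notin A$ and an idempotent $\be$ in the minimal ideal of $\wh{(A\cup\{z\})^+}$. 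Substituting $z = y$ and the explicit forms of $\bu$ and $\bv$ yields exactly the asserted {\pid}
\[
\excl(\ZB) = \lbr (\be y x^{(2')^\omega})^\omega \id (\be y x^\omega)^\omega \rbr.
\]

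Since the proof reduces to a direct quotation of two already-established theorems, there is essentially no obstacle; the entire argument should fit in a couple of sentences. The only thing worth explicitly noting is the identification $\ZB = \Z_2^\ba$, so that Theorem~\ref{T: Sbar} may be applied, and the fact that Theorem~\ref{T: Zn} does provide a defining {\pid} of $\excl(\Z_2)$ in the required form (a single equality between two elements of $\wh{\{x\}^+}$), so that the hypothesis of Theorem~\ref{T: Sbar} is literally met.
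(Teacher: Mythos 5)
Your proposal is correct and is precisely the paper's own argument: the paper proves Theorem~\ref{T: Z2bar} by citing Theorems~\ref{T: Sbar} and~\ref{T: Zn} exactly as you do, with the case $p=2$, $n=1$ of Theorem~\ref{T: Zn} giving $\excl(\Z_2)=\lbr x^{(2')^\omega}\id x^\omega\rbr$ and Theorem~\ref{T: Sbar} transferring this to $\ZB=\Z_2^\ba$. Nothing is missing.
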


\begin{proof}
This follows from Theorems~\ref{T: Sbar} and~\ref{T: Zn}.
\end{proof}

Alternately, Rhodes and Steinberg~\cite[Example~7.3.20]{RS09} have shown that
\[
\excl(\ZB) = \lbr ((x^\omega \be x^\omega)^\omega x^{(2')^\omega})^\omega \id (x^\omega \be x^\omega)^\omega \rbr,
\]
where~$\be$ is an idempotent in the minimal ideal of $\wh{\{x,y\}^+}$.

\begin{proposition} \label{P: Z2bar}
\quad
\begin{enumerate}[\rm(i)]
\item The identities satisfied by the semigroup $\ZB$ are axiomatized by
\[
x^3 \id x, \quad xyxy \id yx^2y.
\]
\item The sub{\pvar} of $\lpbr\ZB\rpbr$ defined by the identity
\[
xyx \id yx^2
\]
is the unique maximal sub{\pvar} of $\lpbr\ZB\rpbr$\up.
\end{enumerate}
\end{proposition}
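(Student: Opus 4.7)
The plan is to establish both parts by combining direct computation with structural arguments, invoking the join-irreducibility of $\lpbr \ZB \rpbr$ from Theorem~\ref{T: Z2bar}.

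For part~(i), first verify directly from the multiplication table of $\ZB$ that both $x^3 \id x$ and $xyxy \id yx^2 y$ hold; the main point is that $\ZB$ acts faithfully on $\{\ei,\eg\}$ as the right regular representation of $\Z_2$ together with constant maps, so for any constant $\ov c$ and any $s \in \ZB$ we have $\ov c \cdot s = \ov{cs}$ if $s \in \Z_2$ and $\ov c \cdot s = \ov s$ if $s$ is itself a constant. This gives $\lpbr \ZB \rpbr \subseteq \bV := \lbrs x^3 \id x,\ xyxy \id yx^2 y \rbrs$. For the reverse inclusion, the task is to show that the relatively free semigroup $F_{\bV}(n)$ lies in $\lpbr \ZB \rpbr$ for every~$n$. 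To this end, observe that an evaluation in $\ZB$ of a word $\bu$ under $\sigma$ is determined by: the subset $X_0 \subseteq \cont(\bu)$ of variables sent to constants, the rightmost occurrence in $\bu$ of a variable in $X_0$ (which picks the surviving constant), and the parities of the letters in the suffix after that rightmost occurrence (which twist the constant via the group action of $\Z_2$). The plan is to show that equivalence in $F_{\bV}(n)$ is captured by exactly this combinatorial data via a normal-form argument: use $x^3 \id x$ to reduce each individual power to an exponent in $\{1,2\}$, and $xyxy \id yx^2 y$ (together with its right-multiplicative consequences) to rearrange interleaved repetitions into a canonical form, which directly mirrors the evaluation data.

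For part~(ii), first check that $\ZB$ violates $xyx \id yx^2$ by evaluating at $x = \eg$, $y = \ov\ei$: this gives $\eg \cdot \ov\ei \cdot \eg = \ov\eg$ while $\ov\ei \cdot \eg^2 = \ov\ei$. Hence $\bW := \lpbr \ZB \rpbr \cap \lbrs xyx \id yx^2 \rbrs$ is a proper sub{\pvar} of $\lpbr \ZB \rpbr$. By Theorem~\ref{T: Z2bar}, $\lpbr \ZB \rpbr$ is {\ji}, so it has a unique maximal proper sub{\pvar}, namely $\lpbr \ZB \rpbr \cap \excl(\ZB)$, and therefore $\bW \subseteq \lpbr \ZB \rpbr \cap \excl(\ZB)$. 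For the reverse inclusion, note that $\Z_2$ satisfies $xyx \id yx^2$ by commutativity, and the subsemigroup $\RZ^I = \{\ei,\ov\ei,\ov\eg\}$ of $\ZB$ also satisfies it by direct inspection, so $\lpbr \Z_2 \rpbr \vee \lpbr \RZ^I \rpbr \subseteq \bW$. Because $\lpbr \ZB \rpbr$ is {\ji}, the join $\lpbr \Z_2 \rpbr \vee \lpbr \RZ^I \rpbr$ cannot contain $\ZB$, and hence is itself a proper sub{\pvar} contained in the unique maximal one. To conclude that $\bW = \lpbr \ZB \rpbr \cap \excl(\ZB)$, it suffices to show that any $T \in \lpbr \ZB \rpbr$ which fails $xyx \id yx^2$ has $\ZB$ as a subquotient; this can be argued by analyzing the two-generated subsemigroup of $T$ witnessing the failure in light of the identities of $\lpbr \ZB \rpbr$, with $\ZB$ emerging as the unique obstruction.

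The main obstacle will be the normal-form argument in~(i): one must show that $x^3 \id x$ together with $xyxy \id yx^2 y$ suffice to rewrite every word to a unique canonical form determined precisely by the combinatorial data identified above. A secondary but more manageable step in~(ii) is the final identification of $\bW$ with $\lpbr \ZB \rpbr \cap \excl(\ZB)$, where the {\ji}-ness of $\lpbr \ZB \rpbr$ is the crucial lever, since it forces any proper sub{\pvar} to lie below the unique maximal one.
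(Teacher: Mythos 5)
Your outline genuinely differs from the paper, which disposes of both parts in one line by citing the dual of Tishchenko's Proposition~3.16, and its skeleton is reasonable: the evaluation invariants you describe for $\ZB$ (the set $X_0$ of constant-valued variables, the rightmost $X_0$-occurrence, and the occurrence parities in the suffix after it, together with global parities when $X_0=\emptyset$) are indeed the right characterization of $\ZB$-equivalence of words, and in (ii) the reduction, via the {\ji}-ness of $\lpbr\ZB\rpbr$, to showing that $xyx\id yx^2$ cuts out the unique maximal proper sub{\pvar} is the correct strategy. The problem is that the proposal leaves unproved exactly the two steps that carry all of the content. In (i), the completeness half --- that every identity of $\ZB$ is a consequence of $x^3\id x$ and $xyxy\id yx^2y$ --- is only announced: no canonical form is defined, no derived consequences of $xyxy\id yx^2y$ are exhibited, and no argument is given that two words with the same invariants can be rewritten into one another using the two identities. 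You flag this yourself as ``the main obstacle,'' but it is not a detail to be deferred; it is precisely the content that the paper outsources to Tishchenko.

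In (ii), after the correct computation showing $\ZB\not\models xyx\id yx^2$ (so that $\bW=\lpbr\ZB\rpbr\cap\lbrs xyx\id yx^2\rbrs$ is proper and hence contained in the unique maximal sub{\pvar} $\lpbr\ZB\rpbr\cap\excl(\ZB)$), the remaining and essential implication --- that every finite $T\in\lpbr\ZB\rpbr$ violating $xyx\id yx^2$ satisfies $\ZB\in\lpbr T\rpbr$ --- is merely asserted (``with $\ZB$ emerging as the unique obstruction''). Two further points: the paragraph establishing $\lpbr\Z_2\rpbr\vee\lpbr\RZ^I\rpbr\subseteq\bW$ is labeled a ``reverse inclusion'' but does not bear on the inclusion $\lpbr\ZB\rpbr\cap\excl(\ZB)\subseteq\bW$ at all; and the statement you propose to prove, that such a $T$ has $\ZB$ as a subquotient of $T$ itself, is stronger than what is needed (only $\ZB\prec T^n$, i.e.\ $\ZB\in\lpbr T\rpbr$, is required) and is not justified by the vague appeal to the two-generated subsemigroup of witnesses. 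The natural way to close both gaps along your lines is the syntactic scheme the paper carries out for Propositions~\ref{P: el3} and~\ref{P: W}: fix a normal form modulo the two basis identities, show distinct normal forms are separated by substitutions into $\ZB$, and show that the basis together with any identity equating distinct normal forms derives $xyx\id yx^2$. Without that work the proposal is a plan rather than a proof.
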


\begin{proof}
This follows from the dual of Tishchenko~\cite[Proposition~3.16]{Tis07}, where the variety generated by~$\ZBop$ is denoted by $\mathbf{W}_2$.
\end{proof}

\subsection{The {\pvar} $\lpbr\N_n\rpbr$} \label{sub: Nn ji}

\begin{theorem} \label{T: Nn}
For each $n \geq 2$\up, the {\pvar} $\lpbr\N_n\rpbr$ is {\ji} and
\begin{equation}
\excl(\N_n) = \lbr x^{\omega+n-1} \id x^{n-1} \rbr. \label{id: Nn excl}
\end{equation}
\end{theorem}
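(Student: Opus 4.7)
The plan is to exhibit the claimed pseudoidentity as one witnessing that every pseudovariety not containing $\N_n$ is contained in the class defined by it. Since $\excl(\N_n)$ is then a pseudovariety, join irreducibility of $\N_n$ follows from the characterization in \cite[Theorem~7.1.2]{RS09}.

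First I would verify that $\N_n$ itself violates $x^{\omega+n-1}\id x^{n-1}$. Indeed, every element of $\N_n$ is nilpotent, so the idempotent power of the generator $\ea$ is $\ez$, giving $\ea^{\omega+n-1}=\ez\cdot\ea^{n-1}=\ez$, whereas $\ea^{n-1}\neq\ez$ by definition of $\N_n$. Consequently, any pseudovariety containing $\N_n$ fails $x^{\omega+n-1}\id x^{n-1}$.

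For the converse, I would take a finite semigroup $T$ that violates $x^{\omega+n-1}\id x^{n-1}$ and show $\N_n\in\lpbr T\rpbr$; this will establish that $\lbr x^{\omega+n-1}\id x^{n-1}\rbr$ is the largest pseudovariety not containing $\N_n$, hence equals $\excl(\N_n)$. Choose $t\in T$ with $t^{\omega+n-1}\neq t^{n-1}$, and consider the monogenic subsemigroup $\langle t\rangle$ with index $m$ and period $p$. The condition $t^{\omega+n-1}=t^\omega\cdot t^{n-1}\neq t^{n-1}$ forces $t^{n-1}$ to lie outside the maximal subgroup of $\langle t\rangle$, equivalently $n-1<m$, i.e.\ $m\geq n$. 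Therefore the elements $t,t^2,\ldots,t^{n-1}$ are pairwise distinct and none equals any $t^k$ with $k\geq n$. Hence $J=\{t^k\mid k\geq n\}$ is a proper ideal of $\langle t\rangle$, and the Rees quotient $\langle t\rangle/J$ is isomorphic to $\N_n$ via $\ea\mapsto t$. This exhibits $\N_n\prec T$, so $\N_n\in\lpbr T\rpbr$.

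Combining the two directions yields $\excl(\N_n)=\lbr x^{\omega+n-1}\id x^{n-1}\rbr$. Because this class is a pseudovariety, $\lpbr\N_n\rpbr$ is {\ji} by \cite[Theorem~7.1.2]{RS09}. The only genuinely delicate step is the index-period analysis in the second paragraph, where one must recognize that the inequality $t^{\omega+n-1}\neq t^{n-1}$ translates precisely into the index of $t$ being at least $n$, so that the Rees quotient construction does not collapse any of the desired $n$ residue classes.
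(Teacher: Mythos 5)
Your proposal is correct and follows essentially the same route as the paper: verify that $\N_n$ itself violates $x^{\omega+n-1}\id x^{n-1}$, then show that any finite semigroup violating it contains a monogenic subsemigroup whose Rees quotient by the ideal of high powers is isomorphic to $\N_n$, so the displayed pseudoidentity defines $\excl(\N_n)$ and join irreducibility follows from \cite[Theorem~7.1.2]{RS09}. The only cosmetic difference is that you phrase the key step via the index--period structure of $\langle t\rangle$, whereas the paper argues directly that $a^i=a^j$ with $i\leq n-1<j$ would force $a^{n-1}=a^{\omega+n-1}$; these are the same fact about monogenic semigroups.
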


\begin{proof}
The semigroup $\N_n = \langle \ea \mid \ea^n = \ez \rangle$ violates the {\pid} in~\eqref{id: Nn excl} because $\ea^{\omega+n-1} = \ez \neq \ea^{n-1}$.
Therefore, if~$\N_n$ belongs to some {\pvar}~$\bV$, then~$\bV$ violates the {\pid} in~\eqref{id: Nn excl}.

Conversely, suppose that the {\pid} in~\eqref{id: Nn excl} is violated by the {\pvar}~$\bV$, say it is violated by $S \in \bV$.
Then there exists some $a \in S$ such that $a^{\omega+n-1} \neq a^{n-1}$.
If there exist some $i \leq n-1$ and some $j>i$ such that $a^i = a^j$, then $a^{n-1} = a^{n-1-i} a^i = a^{n-1-i} a^j = a^{n-1} a^{j-i}$, so that
\[
a^{n-1} = a^{n-1} a^{j-i} = a^{n-1} a^{2(j-i)} = \cdots = a^{n-1} a^{\omega(j-i)} = a^{n-1+\omega},
\]
which is a contradiction.
Hence the sets $\{ a \}, \{a^2\}, \ldots, \{a^{n-1} \}, \{ a^i \mid i \geq n \}$ are pairwise disjoint.
It follows that $J = \{ a^i \mid i \geq n \}$ is an ideal of the monogenic subsemigroup~$\langle a \rangle$ of~$S$ such that $\langle a\rangle / J \cong \N_n$.
Consequently, $\N_n \in \lpbr S \rpbr \subseteq \bV$.
\end{proof}

For each nonnegative real number~$x$, let $\lfloor x\rfloor$ denote the greatest integer bounded from above by~$x$.

\begin{proposition} \label{P: Nn}
Let $n \geq 2$\up.
\begin{enumerate}[\rm(i)]
\item The identities satisfied by the semigroup $\N_n$ are axiomatized by
\begin{gather*}
xy \id yx, \quad x^n \id y_1 y_2 \cdots y_n, \\
x^{k+1} y^k z_1 z_2 \cdots z_{n-3k-1} \id x^k y^{k+1} z_1 z_2 \cdots z_{n-3k-1},
\end{gather*}
where $k = 0,1,\ldots, \lfloor(n-1)/3\rfloor$\up.

\item The sub{\pvar} of $\lpbr\N_n\rpbr$ defined by the identity \[ x^n \id x^{n-1} \] is the unique maximal sub{\pvar} of $\lpbr\N_n\rpbr$\up.
\end{enumerate}
\end{proposition}

\begin{proof}
(i) This follows from Shevrin and Volkov \cite[Proposition~21.3]{SV85}.

(ii) This follows from Theorem~\ref{T: Nn} and part~(i).
\end{proof}

\subsection{The {\pvar} $\lpbr\N_n^I\rpbr$} \label{sub: NnI ji}

\begin{theorem} \label{T: NnI}
For any $n \geq 1$\up, the {\pvar} $\lpbr\N_n^I\rpbr$ is {\ji} and \[ \excl(\N_n^I) = \mathbb{L}\excl(\N_n) = \lbr h^\omega (xh^\omega)^{\omega+n-1} \id h^\omega (xh^\omega)^{n-1} \rbr. \]
\end{theorem}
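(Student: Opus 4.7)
The strategy is to reduce both assertions to results already available in the paper, treating the $n=1$ base case by hand. For $n\geq 2$ the semigroup $\N_n$ is not a monoid---its unique idempotent $\ez$ fails to be an identity---so Lemma~\ref{L: local} applies directly to Theorem~\ref{T: Nn} and yields immediately that $\lpbr \N_n^I\rpbr$ is {\ji} with
\[
\excl(\N_n^I)=\mathbb{L}\excl(\N_n)=\mathbb{L}\lbr x^{\omega+n-1}\id x^{n-1}\rbr.
\]
For $n=1$ one has $\N_1^I=\SL$, and I would establish the {\jirr} of $\lpbr\SL\rpbr$ together with the corresponding exclusion identity by a short direct argument (see below).

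The remaining task for $n\geq 2$ is to rewrite the {\pid} defining the {\pvar} $\mathbb{L}\lbr x^{\omega+n-1}\id x^{n-1}\rbr$ in the stated form. Using the standard translation rule
\[
\mathbb{L}\lbr \bu\id \bv\rbr=\lbr \bu(h^\omega x_1 h^\omega,\dots,h^\omega x_k h^\omega)\id \bv(h^\omega x_1 h^\omega,\dots,h^\omega x_k h^\omega)\rbr,
\]
which is valid because, for any idempotent $e$ of a finite semigroup $S$, the subsemigroup $eSe$ is exactly the image of the substitution $h\mapsto e$, $x_i\mapsto s_i$, one arrives at the intermediate pseudoidentity $(h^\omega x h^\omega)^{\omega+n-1}\id (h^\omega x h^\omega)^{n-1}$. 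A short induction using $h^\omega h^\omega=h^\omega$ shows that $(h^\omega x h^\omega)^k=h^\omega(xh^\omega)^k$ for every $k\geq 1$; passing to the idempotent power gives $(h^\omega x h^\omega)^\omega=h^\omega(xh^\omega)^\omega$, and the absorption $(xh^\omega)^\omega h^\omega=(xh^\omega)^\omega$, which holds because $(xh^\omega)^\omega=\lim_k(xh^\omega)^{k!}$ terminates in an $h^\omega$-factor, finally permits the rewriting $(h^\omega x h^\omega)^{\omega+n-1}=h^\omega(xh^\omega)^{\omega+n-1}$. This brings the {\pid} into the shape displayed in the theorem.

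The main obstacle is the base case $n=1$, where Lemma~\ref{L: local} cannot be invoked because $\lpbr\N_1\rpbr$ is the trivial {\pvar}. Here the claimed identity collapses to $h^\omega(xh^\omega)^\omega\id h^\omega$, which is visibly violated by $\SL$ upon setting $h=1$, $x=0$. Conversely, if a finite semigroup $T$ violates it via a substitution producing values $e=h^\omega$ and $f=(xh^\omega)^\omega$, then $e$ and $f$ are idempotents of $T$ with $fe=f$ and $ef\neq e$. Multiplying $fe=f$ on the left by $e$ gives $efe=ef$, and a direct computation shows that $\{e,ef\}$ is a two-element subsemigroup with $e$ as identity and $ef$ as zero; hence this subsemigroup is isomorphic to $\SL$, so $\SL$ divides $T$. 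This establishes the description of $\excl(\SL)$ and, with it, the theorem for $n=1$.
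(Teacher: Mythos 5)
Your proposal is correct and takes essentially the same route as the paper: for $n\geq 2$ the paper likewise deduces the result directly from Lemma~\ref{L: local} and Theorem~\ref{T: Nn}, treating $n=1$ separately. The only difference is that for $n=1$ the paper simply cites \cite[Table~7.2]{RS09} for $\excl(\SL)$, whereas you prove that case (and spell out the rewriting of $\mathbb{L}\lbr x^{\omega+n-1}\id x^{n-1}\rbr$ into the displayed form $h^\omega(xh^\omega)^{\omega+n-1}\id h^\omega(xh^\omega)^{n-1}$) by a short direct argument, which is accurate and self-contained.
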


\begin{proof}
For $n=1$, the result follows from~\cite[Table~7.2]{RS09} because $\N_1^I \cong \SL$.
For $n \geq 2$, the result follows from Lemma~\ref{L: local} and Theorem~\ref{T: Nn}.
\end{proof}

\begin{proposition} \label{P: NnI}
Let $n \geq 1$\up.
\begin{enumerate}[\rm(i)]
\item The identities satisfied by the semigroup $\N_n^I$ are axiomatized by \[ x^{n+1} \id x^n, \quad xy \id yx. \]
\item The sub{\pvar} of $\lpbr\N_n^I\rpbr$ defined by the identity \[ x^ny^{n-1} \id x^{n-1}y^n \] is the unique maximal sub{\pvar} of $\lpbr\N_n^I\rpbr$\up.
\end{enumerate}
\end{proposition}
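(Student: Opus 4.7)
\smallskip

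\noindent\emph{Proof plan.}
Part~(i) is the routine identities-of-a-finite-commutative-semigroup calculation, while Part~(ii) will be extracted from Theorem~\ref{T: NnI} by rewriting the defining pseudoidentity of $\excl(\N_n^I)$ inside $\lpbr \N_n^I\rpbr$.

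For (i), I first note that $\N_n^I$ is commutative and satisfies $x^{n+1}\id x^n$ (for $n\geq 2$; the case $n=1$ is the semilattice $\SL$, where the two proposed identities already give the well known axiomatisation). Conversely, let $\bu\id \bv$ be any identity satisfied by $\N_n^I$. By substituting $\ez$ for any variable lying in only one side and $I$ elsewhere, one forces $\cont(\bu)=\cont(\bv)=\{x_1,\ldots,x_m\}$. Using $xy\id yx$, I can rewrite each side in the canonical shape $x_1^{e_1}x_2^{e_2}\cdots x_m^{e_m}$ and $x_1^{f_1}x_2^{f_2}\cdots x_m^{f_m}$. Then Lemma~\ref{L: word Zn NnI LZI RZI}(ii) gives $\min(e_i,n)=\min(f_i,n)$ for each $i$, so after applying $x^{n+1}\id x^n$ to collapse each exponent to $\min(e_i,n)$, both words become identical; hence $\bu\id\bv$ is deducible from $\{xy\id yx,\ x^{n+1}\id x^n\}$.

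For (ii), since $\lpbr \N_n^I\rpbr$ is {\ji} by Theorem~\ref{T: NnI}, its unique maximal sub\-{\pvar} is $\lpbr \N_n^I\rpbr \cap \excl(\N_n^I)$, so it suffices to show that inside $\lpbr \N_n^I\rpbr$ the pseudoidentity $h^\omega (xh^\omega)^{\omega+n-1}\id h^\omega (xh^\omega)^{n-1}$ is equivalent to the ordinary identity $x^n y^{n-1}\id x^{n-1}y^n$, and that $\N_n^I$ itself violates this latter identity. Violation is immediate: setting $x=\ea$, $y=I$ gives $\ea^n\cdot I^{n-1}=\ez\neq \ea^{n-1}=\ea^{n-1}\cdot I^n$. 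For the rewriting, in any finite semigroup satisfying $x^{n+1}\id x^n$ we have $s^\omega=s^n$ and $s^{\omega+k}=s^n$ for $k\geq 0$; using commutativity, $h^\omega(xh^\omega)^{\omega+n-1}$ collapses to $h^n\cdot (xh^n)^n = x^n h^{n^2+n}= x^n h^n$, while $h^\omega(xh^\omega)^{n-1}$ collapses to $x^{n-1}h^{n^2}=x^{n-1}h^n$. So the pseudoidentity reduces to $x^n h^n\id x^{n-1}h^n$.

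Finally I verify that, modulo commutativity and $x^{n+1}\id x^n$, the identity $x^n h^n\id x^{n-1}h^n$ is equivalent to $x^n y^{n-1}\id x^{n-1}y^n$. One direction: multiplying $x^n y^{n-1}\id x^{n-1}y^n$ by $y$ and using $y^{n+1}\id y^n$ yields $x^n y^n\id x^{n-1}y^n$, i.e.\ the $h$-form. Conversely, swapping $x$ and $h$ in $x^n h^n\id x^{n-1}h^n$ and using commutativity gives $x^n h^n\id x^n h^{n-1}$, and combining this with the original equation produces $x^{n-1}h^n\id x^n h^{n-1}$, which is the desired identity after renaming. The only place where some care is needed is the passage from the pseudoidentity involving $\omega$ to the plain identity, but this is a straightforward profinite-arithmetic computation once one observes $s^\omega=s^n$ throughout $\lpbr \N_n^I\rpbr$; I do not anticipate a genuine obstacle.
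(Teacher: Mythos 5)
Your proposal is correct and takes essentially the same route the paper intends: part (i) is the standard canonical-form argument that the paper simply delegates to Almeida (you carry it out via Lemma~\ref{L: word Zn NnI LZI RZI}), and part (ii) is exactly the derivation compressed into the paper's one-line proof, namely that the unique maximal sub\-pseudovariety is $\lpbr\N_n^I\rpbr\cap\excl(\N_n^I)$ with $\excl(\N_n^I)$ given by Theorem~\ref{T: NnI}, followed by rewriting the defining pseudoidentity inside $\lpbr\N_n^I\rpbr$ using part (i). Your $\omega$-arithmetic ($s^\omega=s^n$, $s^{\omega+k}=s^n$) and the two-way deduction between $x^nh^n\id x^{n-1}h^n$ and $x^ny^{n-1}\id x^{n-1}y^n$ are both correct, so no gap remains.
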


\begin{proof}
(i) This easily established result is well known; see, for example, Almeida \cite[Corollary~6.1.5]{Alm94}.

(ii) This follows from Theorem~\ref{T: NnI} and part~(i).
\end{proof}

\subsection{The {\pvar} $\lpbr\NB\rpbr$} \label{sub: NB ji}

\begin{theorem} \label{T: N2bar}
The {\pvar}~$\lpbr\NB\rpbr$ is {\ji} and \[ \excl(\NB) = \lbr(\be zx^{\omega+1})^\omega  \id (\be zx)^\omega\rbr, \] where~$\be$ is an idempotent from the minimal ideal of $\wh{\{x,z\}^+}$.
\end{theorem}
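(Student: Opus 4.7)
The plan is to obtain Theorem~\ref{T: N2bar} as a direct application of the general augmentation principle in Theorem~\ref{T: Sbar} to the base case $n=2$ of Theorem~\ref{T: Nn}. Since $\NB = \N_2^\ba$ by definition, and since Proposition~\ref{p:aug.pv}(i) guarantees that $\lpbr \N_2 \rpbr^\ba = \lpbr \N_2^\ba \rpbr$, it suffices to start from the information already proved about $\N_2$ and push it through the bar operator.

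First I would record that by Theorem~\ref{T: Nn} with $n=2$, the {\pvar} $\lpbr \N_2 \rpbr$ is {\ji} with
\[
\excl(\N_2) \;=\; \lbr x^{\omega + 1} \id x \rbr,
\]
so the defining {\pid} is $\bu \id \bv$ with $\bu = x^{\omega+1}$ and $\bv = x$ in the free profinite semigroup $\wh{\{x\}^+}$. Next I would apply Theorem~\ref{T: Sbar} to the semigroup $S = \N_2$: that theorem asserts that $\lpbr S^\ba \rpbr$ is automatically {\ji} as soon as $\lpbr S \rpbr$ is, and it produces the exclusion class of $S^\ba$ explicitly as
\[
\excl(S^\ba) \;=\; \lbr (\be z \bu)^{\omega} \id (\be z \bv)^{\omega} \rbr,
\]
where $z$ is a fresh variable and $\be$ is an idempotent in the minimal ideal of $\wh{(A \cup \{z\})^+}$. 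Substituting $\bu = x^{\omega+1}$ and $\bv = x$ with $A = \{x\}$ yields exactly the claimed {\pid}
\[
(\be z x^{\omega+1})^{\omega} \id (\be z x)^{\omega},
\]
with $\be$ an idempotent in the minimal ideal of $\wh{\{x,z\}^+}$, and establishes that $\lpbr \NB \rpbr$ is {\ji} with the stated exclusion class.

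There is essentially no obstacle here beyond checking that the previously developed machinery applies cleanly; the only point worth verifying is that Theorem~\ref{T: Sbar}, although stated with a generic alphabet $A$, is indeed applicable when $A = \{x\}$ is a singleton, which is immediate from the proof of that theorem since the argument only uses the action of $S^\ba$ on the free profinite semigroup augmented by one additional letter $z$.
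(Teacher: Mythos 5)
Your proposal is correct and is exactly the paper's argument: the paper's proof of this theorem is simply an appeal to Theorem~\ref{T: Sbar} combined with Theorem~\ref{T: Nn} (with $n=2$), which is precisely the substitution $\bu = x^{\omega+1}$, $\bv = x$, $A=\{x\}$ that you carry out. The extra checks you mention (Proposition~\ref{p:aug.pv}(i) and the singleton alphabet) are harmless but not needed beyond what Theorem~\ref{T: Sbar} already states.
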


\begin{proof}
This follows from Theorems~\ref{T: Sbar} and~\ref{T: Nn}.
\end{proof}

\begin{proposition} \label{P: N2bar}
\quad
\begin{enumerate}[\rm(i)]
\item The identities satisfied by the semigroup $\NB$ are axiomatized by \[ xyz \id yz. \]
\item The sub\-{\pvar} of $\lpbr\NB\rpbr$ defined by the identity \[ xy \id y^2 \] is the unique maximal sub{\pvar} of $\lpbr\NB\rpbr$\up.
\end{enumerate}
\end{proposition}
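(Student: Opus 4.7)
The plan is to prove part (i) by a direct verification together with a short substitution analysis, and to deduce part (ii) by simplifying the defining pseudoidentity of $\excl(\NB)$ that Theorem~\ref{T: N2bar} provides, making essential use of part (i).

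For part (i), I would first check that $\NB$ satisfies $xyz \id yz$: the set of constants $\{\ov\ez,\ov\ea,\ov I\}$ is a right-zero subsemigroup of $\NB$, every product of two or more elements of $\NB$ lies in this subsemigroup, and constants absorb any element on the left (since $s\cdot\ov c=\ov c$ for all $s\in\NB$), so $xyz$ and $yz$ agree in $\NB$ for every choice of $x,y,z$. For the converse, given any identity $\bu\id\bv$ satisfied by $\NB$, the identity $xyz\id yz$ permits iterated deletion of the leading letter from any word of length at least three, so I may reduce to the case $|\bu|,|\bv|\leq2$. A short case analysis via substitutions into $\{\ea,\ov I,\ov\ea\}$ then forces $\bu=\bv$: if the lengths differ (say $\bu=x$ and $|\bv|=2$), sending $x\mapsto\ea$ with every other variable sent to $\ov I$ evaluates $\bu$ to $\ea$ but $\bv$ to a constant; and if $|\bu|=|\bv|=2$, writing $\bu=x_1x_2$ and $\bv=y_1y_2$, the substitution $x_2\mapsto\ea$ with other variables sent to $\ov I$ forces $y_2=x_2$, after which the substitution $x_1\mapsto\ov\ea$, $x_2\mapsto\ea$ with other variables sent to $\ov I$ forces $y_1=x_1$.

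For part (ii), I would apply Theorem~\ref{T: N2bar} to conclude that the unique maximal sub\-{\pvar} of $\lpbr\NB\rpbr$ equals $\lpbr\NB\rpbr\cap\excl(\NB)=\lpbr\NB\rpbr\cap\lbr(\be zx^{\omega+1})^\omega\id(\be zx)^\omega\rbr$, and then show that within $\lpbr\NB\rpbr$ this pseudoidentity is equivalent to the plain identity $xy\id y^2$. For any $T\in\lpbr\NB\rpbr$, which by part (i) satisfies $xyz\id yz$, two observations apply: the subsemigroup $T^2$ is right-zero (since $st\cdot uv=t\cdot uv=uv$ for all $s,t,u,v\in T$), so every element of $T^2$ is idempotent and equal to its own $\omega$-power; and $u^{\omega+1}=u^2$ for every $u\in T$ because $u^3=u^2$. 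Evaluating both sides of the pseudoidentity on $x\mapsto u$, $z\mapsto v$ and letting $e$ denote the image of $\be$ in $T$, repeated application of $xyz\id yz$ gives $(evu^{\omega+1})^\omega=(evu^2)^\omega=(vu^2)^\omega=(u^2)^\omega=u^2$ and $(evu)^\omega=(vu)^\omega=vu$; hence the pseudoidentity becomes $u^2\id vu$ for all $u,v\in T$, which is precisely $xy\id y^2$.

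The main obstacle will be the careful simplification of the pseudoidentity in part (ii), where the idempotent $\be$ from the minimal ideal and the profinite power $x^{\omega+1}$ must be tracked through the reduction using $xyz\id yz$ and the right-zero structure of $T^2$. Once this simplification is in hand, part (ii) is immediate.
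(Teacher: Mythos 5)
Your proposal is correct, but it takes a genuinely different route from the paper: the paper disposes of both parts by citing Tishchenko~\cite{Tis07}, whereas you give a self-contained argument. For (i) you verify $xyz \id yz$ from the right-zero structure of the constant maps and then show, after reducing both sides to suffixes of length at most two, that two separating substitutions force equality; in substance this is a proof of Lemma~\ref{L: N2bar word}, which the paper states without proof. For (ii) you invoke Theorem~\ref{T: N2bar} together with the general principle that for a {\ji} semigroup $S$ the {\pvar} $\lpbr S\rpbr\cap\excl(S)$ is the unique maximal sub{\pvar} of $\lpbr S\rpbr$, and then simplify the defining pseudoidentity of $\excl(\NB)$ modulo $xyz\id yz$; your computation that $(\be zx^{\omega+1})^\omega$ and $(\be zx)^\omega$ evaluate to $u^2$ and $vu$ is correct, since in any finite $T\models xyz\id yz$ all products of length at least two form a right-zero band of idempotents and $u^{\omega+1}=u^2$. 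This mirrors how the paper handles other maximal sub{\pvars} (compare Propositions~\ref{P: Nn} and~\ref{P: el3}) and buys independence from~\cite{Tis07}, at the cost of being longer than a citation. Two details to tighten in (i): when $x_1=x_2$ your second substitution is not well defined, and when $x_1\neq x_2$ it does not by itself exclude $y_1=x_2$; both residual possibilities are, however, already excluded by your first substitution, which then gives $\ov\ez$ on one side and $\ov\ea$ on the other, so the argument closes once this is noted; also record the trivial case in which both reduced words are single variables.
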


\begin{proof}
(i) This follows from Tishchenko~\cite[Corollary~2.5(c) and Proposition~4.4]{Tis07}.

(ii) This follows from Tishchenko~\cite[Proposition~3.4]{Tis07}.
\end{proof}

\subsection{The {\pvar} $\lpbr\NBI\rpbr$} \label{sub: NBI ji}

\begin{theorem} \label{T: N2barI}
The {\pvar} $\lpbr\NBI\rpbr$ is {\ji} and \[ \excl(\NBI) = \mathbb{L}\excl(\NB) = \mathbb{L} \lbr (\be zx^{\omega+1})^\omega  \id (\be zx)^\omega \rbr, \] where~$\be$ is an idempotent from the minimal ideal of $\wh{\{x,z\}^+}$.
\end{theorem}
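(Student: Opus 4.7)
The plan is to deduce this theorem as a direct application of Lemma~\ref{L: local} to the previously established Theorem~\ref{T: N2bar}. Recall that Lemma~\ref{L: local} asserts: if $S$ is a finite semigroup that is \emph{not} a monoid and $\lpbr S\rpbr$ is {\ji}, then $\lpbr S^I\rpbr$ is also {\ji} and $\excl(S^I) = \mathbb{L}\excl(S)$. Since Theorem~\ref{T: N2bar} already gives us that $\lpbr\NB\rpbr$ is {\ji} with $\excl(\NB) = \lbr (\be zx^{\omega+1})^\omega \id (\be zx)^\omega \rbr$, the only real work is to verify the non-monoid hypothesis on $\NB$.

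Step one, therefore, is to check that $\NB$ is not a monoid. This is immediate from the multiplication table for $\NB = \{\ov\ez, \ea, \ov\ea, \ov I\}$ displayed in Subsection on nilpotent semigroups: the element $\ov I$ (the most plausible candidate) satisfies $\ov I \cdot \ea = \ov\ea \neq \ea$, so $\ov I$ is not a left identity; and the only other way to get an identity would require some $e$ with $e\cdot \ov\ez = \ov\ez$ and $e\cdot \ov\ea = \ov\ea$ and $e\cdot \ov I = \ov I$ simultaneously, but inspection of the rows of the table shows no such $e$ exists. Hence $\NB$ has no two-sided identity.

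Step two is to invoke Lemma~\ref{L: local} with $S = \NB$. Since $\lpbr\NB\rpbr$ is {\ji} by Theorem~\ref{T: N2bar} and $\NB$ is not a monoid, the lemma yields that $\lpbr\NBI\rpbr = \lpbr\NB^I\rpbr$ is {\ji} and $\excl(\NBI) = \mathbb{L}\excl(\NB)$. Step three is purely cosmetic: substitute the explicit {\pid} from Theorem~\ref{T: N2bar} for $\excl(\NB)$ to obtain the displayed formula $\mathbb{L}\lbr (\be zx^{\omega+1})^\omega \id (\be zx)^\omega \rbr$.

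There is essentially no obstacle here, and in particular no new content beyond what is already encapsulated in Lemma~\ref{L: local} and Theorem~\ref{T: N2bar}; the only thing to double-check is that $\NB$ really does fail to be a monoid, since otherwise the operator $S \mapsto S^I$ would genuinely add new structure rather than returning~$S$. This parallels exactly the proof pattern used in Theorem~\ref{T: NnI} (for the case $n \geq 2$) and will presumably be stated in one or two sentences in the paper.
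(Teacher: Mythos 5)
Your proposal is correct and is exactly the paper's argument: the paper proves Theorem~\ref{T: N2barI} by citing Lemma~\ref{L: local} together with Theorem~\ref{T: N2bar}, leaving the (easy) non-monoid check on $\NB$ implicit, which you verify explicitly from the multiplication table.
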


\begin{proof}
This follows from Lemma~\ref{L: local} and Theorem~\ref{T: N2bar}.
\end{proof}

\begin{proposition}[{Lee and Li~\cite[Corollary~6.6 and Lemma~6.7]{LL11}}] \label{P: N2barI}
\quad
\begin{enumerate}[\rm(i)]
\item The identities satisfied by the semigroup $\NBI$ are axiomatized by
\begin{gather*}
x^3 \id x^2, \quad x^2hx \id xhx, \quad xhx^2 \id hx^2, \\ xyxy \id yx^2y, \quad xyhxy \id yxhxy, \quad xyxty \id yx^2ty, \quad xyhxty \id yxhxty.
\end{gather*}
\item The sub{\pvar} of $\lpbr\NBI\rpbr$ defined by the identity
\[
xyxyh^2 \id x^2y^2h^2
\]
is the unique maximal sub{\pvar} of $\lpbr\NBI\rpbr$.
\end{enumerate}
\end{proposition}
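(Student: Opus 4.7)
For part~(i), the plan is to follow the pattern of Proposition~\ref{P: N2bar} and Tishchenko~\cite{Tis07} by establishing a normal form for words modulo the eight listed identities, and then showing $\NBI$ separates distinct normal forms. Checking that $\NBI$ satisfies each identity is a routine verification using the multiplication table for $\NB$ in Subsection~\ref{sub: comp 0-simple} together with the action of the adjoined identity~$I$: since $\ea^2 = \ov\ez$ and the elements $\ov\ez, \ov\ea, \ov I$ act as right-constants, any substitution into a given identity either forces $x^2$ to appear or sends at least one variable to a right-constant, after which both sides collapse to the same value. For the reverse direction, the role of each identity is transparent: $x^3 \id x^2$ bounds single-variable powers; the three identities $x^2hx \id xhx$, $xhx^2 \id hx^2$, $xhxtx \id hxtx$ reduce three or more separated occurrences of one variable to at most two, the surviving configuration determined by whether extra context exists on the left, on the right, or on both sides; and the four identities $xyxy \id yx^2y$, $xyhxy \id yxhxy$, $xyxty \id yx^2ty$, $xyhxty \id yxhxty$ push the second occurrence of one variable past the second occurrence of another. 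Iterating these rewrites yields a normal form in which each variable appears with bounded multiplicity, and any two distinct normal forms can then be separated by an explicit substitution into $\NBI$ using the four nontrivial elements $\ea, \ov\ea, \ov I, I$.

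For part~(ii), the strategy exploits that $\lpbr \NBI \rpbr$ is join irreducible by Theorem~\ref{T: N2barI}, so it has a unique maximal proper sub{\pvar}, namely $\lpbr \NBI \rpbr \cap \excl(\NBI)$. The substitution $x \mapsto \ov I$, $y \mapsto \ea$, $h \mapsto I$ shows that $\NBI$ fails $xyxyh^2 \id x^2y^2h^2$, since the left-hand side evaluates to $\ov\ea$ while the right-hand side evaluates to $\ov\ez$; hence the sub{\pvar} $\bW$ of $\lpbr \NBI \rpbr$ defined by adjoining this identity to the basis from part~(i) is proper. To see $\bW$ is the maximal such, I would combine Lemma~\ref{L: local} (which gives $\excl(\NBI) = \mathbb{L} \excl(\NB)$) with Proposition~\ref{P: N2bar}(ii) (which identifies $\lbr xy \id y^2 \rbr$ as the maximal proper sub{\pvar} of $\lpbr \NB \rpbr$), and observe that, modulo the basis from part~(i), the identity $xyxyh^2 \id x^2y^2h^2$ is precisely the idempotent-localized form of $xy \id y^2$, with $h^2$ supplying the idempotent context needed to capture the behavior within $e \cdot \NBI \cdot e$ for idempotents~$e$.

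The main obstacle is the normal form analysis for part~(i): with eight axioms involving context variables on both sides, proving termination and confluence of the rewriting and pinning down the combinatorial invariants that $\NBI$ actually detects will require substantial case analysis; in contrast, part~(ii) should fall out comparatively smoothly once~(i) and the identification of $\excl(\NBI)$ with $\mathbb{L} \excl(\NB)$ are in hand, since the uniqueness of the maximal proper sub{\pvar} reduces the task to matching a single identity.
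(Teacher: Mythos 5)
The paper does not prove this proposition directly: it simply invokes the dual of Lee and Li \cite[Corollary~6.6 and Lemma~6.7]{LL11}. Your proposal is a plan for a direct proof, and as written both halves have genuine gaps. For part~(i), everything of substance is deferred: you never define the canonical form, never prove that the eight identities suffice to rewrite an arbitrary word into it, and never exhibit the substitutions separating distinct canonical words in $\NBI$. That analysis \emph{is} the content of the statement (in Lee--Li it occupies a substantial chapter), so what you have is a strategy, not a proof. The forward direction (that $\NBI$ satisfies the eight identities) is unproblematic since it is a finite check, though your heuristic ``either $x^2$ appears or some variable is sent to a right-constant, after which both sides collapse'' is not accurate as stated and would not by itself constitute the verification.

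For part~(ii), the framework is sound---$\lpbr\NBI\rpbr$ is {\ji}, hence has the unique maximal proper sub{\pvar} $\lpbr\NBI\rpbr\cap\excl(\NBI)$, and your substitution $x\mapsto\ov I$, $y\mapsto\ea$, $h\mapsto I$ correctly shows that $\NBI$ violates $xyxyh^2\id x^2y^2h^2$---but the decisive step, that every proper sub{\pvar} (equivalently the maximal one) satisfies this identity, is not established. The appeal to Lemma~\ref{L: local} together with Proposition~\ref{P: N2bar}(ii) does not deliver it: $\excl(\NB)$ is defined by the {\pid} $(\be zx^{\omega+1})^\omega\id(\be zx)^\omega$ of Theorem~\ref{T: N2bar}, not by $xy\id y^2$ (the latter only cuts out $\excl(\NB)\cap\lpbr\NB\rpbr$ \emph{inside} $\lpbr\NB\rpbr$), and the local monoids $eTe$ of a semigroup $T\in\lpbr\NBI\rpbr\cap\mathbb{L}\excl(\NB)$ need not lie in $\lpbr\NB\rpbr$ at all---for a monoid $T$ the local monoid at the identity is $T$ itself---so ``idempotent-localizing'' $xy\id y^2$ is not a licensed move, and it is telling that your identity carries $h^2$ only at the right end rather than as two-sided idempotent context. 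The workable route, parallel to the paper's proofs of Propositions~\ref{P: el3}(ii) and~\ref{P: W}(ii), is: take any identity $\bu\id\bv$ that holds in a proper sub{\pvar} but fails in $\NBI$, put both sides into the canonical form of part~(i), and show that, together with the basis, it implies $xyxyh^2\id x^2y^2h^2$. This again presupposes that the missing normal-form analysis of part~(i) has actually been carried out, so the admitted ``main obstacle'' is not a technicality but the whole proof.
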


\subsection{The {\pvar} $\lpbr\LZ\rpbr$} \label{sub: LZ ji}

\begin{theorem} \label{T: LZ}
The {\pvar} $\lpbr\LZ\rpbr$ is {\ji} and
\[
\excl(\LZ) = \lbr x^\omega (yx^\omega)^\omega \id (yx^\omega)^\omega \rbr.
\]
\end{theorem}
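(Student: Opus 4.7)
The plan is to establish the equality $\excl(\LZ) = \lbr x^\omega (yx^\omega)^\omega \id (yx^\omega)^\omega \rbr$ directly; the {\jirr} of $\lpbr \LZ \rpbr$ then follows from the criterion \cite[Theorem~7.1.2]{RS09}, since the exclusion class turns out to be definable by a single {\pid} and is therefore automatically a {\pvar}.

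One inclusion is immediate: in $\LZ$ itself, substituting $x \mapsto \ee$ and $y \mapsto \ef$ gives left-hand side $\ee \cdot \ef = \ee$ and right-hand side $\ef$, so $\LZ$ fails the {\pid}, and any {\pvar} satisfying it is therefore contained in $\excl(\LZ)$.

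For the reverse inclusion, suppose that a finite semigroup $T$ violates the {\pid}, witnessed by some $a, b \in T$ with $a^\omega (ba^\omega)^\omega \neq (ba^\omega)^\omega$; the task is to show $\LZ \prec T$. Set $e = a^\omega$ and $f = (ba^\omega)^\omega = (be)^\omega$, both idempotents of $T$ with $ef \neq f$ by hypothesis. The key observation is that $fe = f$: since $e^2 = e$ we have $(be)e = be$, hence $(be)^m e = (be)^m$ for every $m \geq 1$, and in particular for the $m$ realizing $(be)^m = f$. From $fe = f$ one immediately deduces that $ef$ is idempotent, because $(ef)(ef) = e(fe)f = ef \cdot f = ef$, and further that $(ef)f = ef$ and $f(ef) = (fe)f = f$. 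Thus $\{ ef, f \}$ is a two-element subsemigroup of $T$ isomorphic to $\LZ$, with $ef$ and $f$ playing the roles of the two distinct left zeros, so $\LZ \in \lpbr T \rpbr$ and $T \notin \excl(\LZ)$.

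The only subtle point is noticing that $fe = f$ comes for free from the idempotency of $e$; once that is in hand, the embedded copy of $\LZ$ falls out with no further structural work, and no appeal to the structure of the minimal ideal of $T$ or to $T$ being subdirectly indecomposable is required.
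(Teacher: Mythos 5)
Your proof is correct. In $\LZ$ the substitution $x\mapsto\ee$, $y\mapsto\ef$ does give $\ee\neq\ef$, so every finite semigroup satisfying the {\pid} lies in $\excl(\LZ)$; conversely, for a witness $a,b\in T$ of a violation, your computations are all valid: $(be)e=be$ gives $fe=f$, hence $(ef)(ef)=e(fe)f=ef$, $(ef)f=ef$, $f(ef)=(fe)f=f$, so $\{ef,f\}$ is a two-element left zero subsemigroup once $ef\neq f$, giving $\LZ\prec T$. Together with Reiterman's theorem and \cite[Theorem~7.1.2]{RS09} this yields {\jirr}, so the logic is complete. Note, though, that the paper offers no written argument for this statement: its proof is a one-line appeal to the dual of Almeida's Proposition~10.10.2(b). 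So your route differs in being self-contained rather than a citation; in substance it is the classical observation that a finite semigroup contains a copy of $\LZ$ exactly when it has two distinct $\mathscr{L}$-equivalent idempotents, which your pair $(ef,f)$ realizes explicitly, and it matches the style of the direct two-inclusion proofs the paper does write out for the analogous results (e.g.\ Theorems~\ref{T: Zn} and~\ref{T: Nn}). The citation buys brevity; your argument buys an explicit witness for the division $\LZ\prec T$ and independence from the external reference.
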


\begin{proof}
This result is dual to \cite[Proposition~10.10.2(b)]{Alm94}.
\end{proof}

\begin{proposition}[Rhodes and Steinberg~{\cite[Table~7.1]{RS09}}] \label{P: LZ}
\quad
\begin{enumerate}[\rm(i)]
\item The identities satisfied by the semigroup $\LZ$ are axiomatized by
\[
xy \id x.
\]
\item The {\pvar} $\lpbr\LZ\rpbr$ is an atom in the lattice~$\bPV$\up.
\end{enumerate}
\end{proposition}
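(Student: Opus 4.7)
The plan is to reduce both parts to the simple observation that in $\LZ$, a product evaluates to its first factor. Specifically, since $\LZ \models xy \id x$, an easy induction shows that for any word $\bw = y_1 y_2 \cdots y_n \in \A^+$ and any substitution $\varphi \colon \A \to \LZ$, one has $\bw\varphi = y_1\varphi$. Consequently, $\LZ \models \bu \id \bv$ if and only if $\bu$ and $\bv$ share the same leftmost variable (if they differ, choose $\varphi$ sending the first variable of $\bu$ to $\ee$ and the first variable of $\bv$ to $\ef$, and all else arbitrarily, to separate them).

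For part~(i), having established the above characterization, I would show that any such identity is a consequence of $xy \id x$: if $\bu$ and $\bv$ both begin with the variable $x$, write $\bu = x\bu'$ and $\bv = x\bv'$ (with $\bu', \bv' \in \A^*$) and apply $xy \id x$ repeatedly to collapse both $x\bu'$ and $x\bv'$ to $x$; formally, one deduces $\bu \id x \id \bv$. This gives that $\{xy \id x\}$ axiomatizes the identities of $\LZ$.

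For part~(ii), the plan is to use part~(i) to show that $\lpbr\LZ\rpbr$ has no proper nontrivial sub{\pvar}. Suppose $\bV \subsetneq \lpbr\LZ\rpbr$ is nontrivial. Pick any nontrivial $S \in \bV$. By part~(i), $S$ satisfies $xy \id x$, so $S$ is a left zero semigroup. Since $|S| \geq 2$, any two distinct elements of $S$ form a subsemigroup isomorphic to $\LZ$, so $\LZ \prec S$ and thus $\lpbr\LZ\rpbr \subseteq \bV$, a contradiction. Hence the only proper sub{\pvar} of $\lpbr\LZ\rpbr$ is the trivial one, so $\lpbr\LZ\rpbr$ is an atom.

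There is no real obstacle in this proposition; everything follows from the single observation that words in $\LZ$ evaluate to their first letter. The statement is genuinely folklore, which explains the citation to \cite[Table~7.1]{RS09} rather than a detailed proof.
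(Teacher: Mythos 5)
Your argument is correct and is exactly the standard folklore proof that the paper's citation to \cite[Table~7.1]{RS09} points to (the paper itself gives no proof): identities of $\LZ$ are precisely those whose sides begin with the same variable, these all follow from $xy \id x$, and any nontrivial semigroup satisfying $xy \id x$ contains a copy of $\LZ$, so $\lpbr\LZ\rpbr$ has no proper nontrivial sub\-{\pvar}. The only implicit step worth making explicit is that every member of $\lpbr\LZ\rpbr$ satisfies all identities of $\LZ$ (identities being inherited under division and finite direct products), which is what lets you apply part~(i) to an arbitrary $S$ in a sub\-{\pvar}.
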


\subsection{The {\pvar} $\lpbr\LZ^I\rpbr$} \label{sub: LZI ji}

\begin{theorem} \label{T: LZI}
The {\pvar} $\lpbr\LZ^I\rpbr$ is {\ji} and
\[
\excl(\LZ^I) = \mathbb{L} \excl(\LZ) = \lbr h^\omega (xh^\omega)^\omega (yh^\omega(xh^\omega)^\omega)^\omega \id h^\omega (yh^\omega(xh^\omega)^\omega)^\omega \rbr.
\]
\end{theorem}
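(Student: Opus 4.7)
The plan is to adapt the proof strategy of Theorem~\ref{T: NnI} essentially verbatim. First, I invoke Lemma~\ref{L: local}: since $\LZ$ is not a monoid and $\lpbr\LZ\rpbr$ is {\ji} by Theorem~\ref{T: LZ}, it follows immediately that $\lpbr\LZ^I\rpbr$ is {\ji} with $\excl(\LZ^I) = \mathbb{L}\excl(\LZ) = \mathbb{L}\lbr x^\omega(yx^\omega)^\omega \id (yx^\omega)^\omega\rbr$. This handles the {\ji} assertion and the first equality in the statement. All that remains is the second equality, namely the identification of this local {\pvar} as the one defined by the single pseudoidentity displayed in the theorem.

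For this, I would establish the standard localization recipe: given any pseudoidentity $\bu \id \bv$ on a finite alphabet $A$ and a fresh variable $h \notin A$, the {\pvar} $\mathbb{L}\lbr \bu \id \bv\rbr$ is defined by the single pseudoidentity
\[
h^\omega\, \bu[x \mapsto xh^\omega]_{x\in A} \id h^\omega\, \bv[x \mapsto xh^\omega]_{x\in A}.
\]
Applied to $\bu = x^\omega(yx^\omega)^\omega$ and $\bv = (yx^\omega)^\omega$, the substitution $x \mapsto xh^\omega$, $y \mapsto yh^\omega$ converts $x^\omega$ into $(xh^\omega)^\omega$ and $yx^\omega$ into $yh^\omega(xh^\omega)^\omega$, producing precisely the pseudoidentity displayed in the theorem.

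The crux of the argument is the computational identity
\[
h^\omega\, \bw[x \mapsto xh^\omega]_{x\in A} = \bw[x \mapsto h^\omega x h^\omega]_{x\in A}
\]
in $\wh{(A \cup \{h\})^+}$, valid for every pseudoword~$\bw$; this can be proved by induction on the structure of pseudoterms, using $h^\omega h^\omega = h^\omega$ to collapse consecutive idempotent factors and continuity of $\omega$-power to pass to profinite limits. Granted this identity, a finite semigroup $T$ satisfies the localized pseudoidentity if and only if, for every $H \in T$ and every assignment $x \mapsto X_x$ in $T$, the pseudowords $\bu$ and $\bv$ agree when evaluated at the elements $eX_xe$ with $e = H^\omega$. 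Since every idempotent $e \in T$ arises as $H^\omega$ with $H = e$ and every element of $eTe$ is of the form $eXe$, this is equivalent to $eTe \models \bu \id \bv$ for all idempotents $e \in T$, which is by definition membership in $\mathbb{L}\lbr \bu \id \bv \rbr$. The only step requiring genuine care is the inductive verification of the collapsing identity above, but this is routine because each intermediate $h^\omega$ factor produced when composing subterms is immediately absorbed by an adjacent one.
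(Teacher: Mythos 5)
Your proposal is correct and follows essentially the same route as the paper, whose proof is simply to combine Lemma~\ref{L: local} with Theorem~\ref{T: LZ}; the explicit pseudoidentity for $\mathbb{L}\excl(\LZ)$ is exactly the standard localization recipe you describe, which the paper (as in Theorems~\ref{T: NnI} and~\ref{T: A0I}) leaves implicit. Your justification of that recipe, including the collapsing identity $h^\omega\,\bw[x\mapsto xh^\omega]=\bw[x\mapsto h^\omega xh^\omega]$, is sound (for the $\omega$-terms involved the induction you sketch suffices), so the only difference is that you spell out a step the paper treats as routine.
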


\begin{proof}
This follows from Lemma~\ref{L: local} and Theorem~\ref{T: LZ}.
\end{proof}

\begin{proposition} \label{P: LZI}
\quad
\begin{enumerate}[\rm(i)]
\item The identities satisfied by the semigroup $\LZ^I$ are axiomatized by
\[
x^2 \id x, \quad xyx \id xy.
\]
\item The sub{\pvar} of $\lpbr\LZ^I\rpbr$ defined by the identity
\[
xyz \id xzy
\]
is the unique maximal sub{\pvar} of $\lpbr\LZ^I\rpbr$\up.
\end{enumerate}
\end{proposition}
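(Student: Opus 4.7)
The plan is to handle parts (i) and (ii) separately, relying on Lemma~\ref{L: word Zn NnI LZI RZI}(iii) throughout and on Theorem~\ref{T: LZI} for part~(ii). For part~(i), I would first verify that $\LZ^I$ satisfies both identities: $x^2 \id x$ holds because $\LZ^I$ is a band, and $xyx \id xy$ holds because $\ini(xyx) = xy = \ini(xy)$. For the converse, Lemma~\ref{L: word Zn NnI LZI RZI}(iii) reduces the problem to showing that modulo $\{x^2 \id x,\, xyx \id xy\}$, every word $\bw$ is equivalent to $\ini(\bw)$. I would establish this by iteratively deleting non-initial occurrences: if a variable~$a$ first appears at position $p_1$ and reappears at position $p_2$, then $\bw$ decomposes as $\mathbf{p}\,a\,\mathbf{q}\,a\,\mathbf{r}$ where the (possibly empty) word~$\mathbf{q}$ does not contain~$a$. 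If $\mathbf{q}$ is empty, replace $a^2$ by $a$ via $x^2 \id x$; otherwise, substitute $x \mapsto a$ and $y \mapsto \mathbf{q}$ in $xyx \id xy$ to obtain $a\mathbf{q}a \id a\mathbf{q}$, thereby removing the second occurrence of~$a$. Any identity $\bu \id \bv$ satisfied by $\LZ^I$ then follows from the basis, since both sides reduce to the common word $\ini(\bu) = \ini(\bv)$.

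For part~(ii), Theorem~\ref{T: LZI} implies that $\lpbr \LZ^I \rpbr$ has a unique maximal sub{\pvar}, namely $\mathbf{M} = \lpbr \LZ^I \rpbr \cap \excl(\LZ^I)$. I would identify $\mathbf{M}$ with $\lpbr \LZ^I \rpbr \cap \lbrs xyz \id xzy \rbrs$ by simplifying the defining {\pid}
\[
h^\omega (xh^\omega)^\omega (yh^\omega(xh^\omega)^\omega)^\omega \id h^\omega (yh^\omega(xh^\omega)^\omega)^\omega
\]
of $\excl(\LZ^I)$ modulo the basis from part~(i). Since every element of a semigroup in $\lpbr \LZ^I \rpbr$ is idempotent, $h^\omega = h$, and repeated applications of $xyx \id xy$ should yield $(xh)^\omega = xh$ and $(yh\cdot xh)^\omega = yhx$ (each of these inner words being idempotent modulo the basis). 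The left-hand side $hxhyhx$ and the right-hand side $hyhx$ then further collapse to their respective initial parts $hxy$ and $hyx$, so the {\pid} reduces within $\lpbr \LZ^I \rpbr$ to $hxy \id hyx$, which is $xyz \id xzy$ after relabeling. Since $\ini(xyz) \neq \ini(xzy)$, Lemma~\ref{L: word Zn NnI LZI RZI}(iii) confirms that $\LZ^I$ violates $xyz \id xzy$, so the sub{\pvar} $\lpbr \LZ^I \rpbr \cap \lbrs xyz \id xzy \rbrs$ is proper and must therefore coincide with $\mathbf{M}$.

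The main obstacle is the simplification in part~(ii). Each rewrite step uses only $x^2 \id x$ or $xyx \id xy$, but one must verify in sequence that the pseudo-exponents $(xh)^\omega$ and $(yhxh)^\omega$ truly collapse to $xh$ and $yhx$, and that the products $hxhyhx$ and $hyhx$ then reduce cleanly to $hxy$ and $hyx$. All of these collapses fit the pattern established in part~(i)---each word reduces to its own initial part---so the procedure developed there justifies each individual step, but the bookkeeping must be carried out carefully to confirm that the complicated {\pid} really does become the short identity $xyz \id xzy$.
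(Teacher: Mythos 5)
Your proof is correct, but it takes a different route from the paper: the paper does not prove Proposition~\ref{P: LZI} at all, it simply cites Almeida~\cite[Figure~5.1]{Alm94}, where $\lpbr\LZ^I\rpbr$ appears as $\mathbf{MK}_1$ in the known lattice of band varieties. Your argument is self-contained and uses exactly the machinery the paper deploys for its other small semigroups (compare the proofs of Propositions~\ref{P: Nn}, \ref{P: el3}, \ref{P: W}): part~(i) is the standard rewriting argument that, modulo $\{x^2\id x,\ xyx\id xy\}$, every word collapses to its initial part, which combined with Lemma~\ref{L: word Zn NnI LZI RZI}(iii) gives the basis; part~(ii) uses the fact (stated in the introduction, from \cite[Section~7.1]{RS09}) that for a {\ji} semigroup $S$ the unique maximal sub{\pvar} of $\lpbr S\rpbr$ is $\lpbr S\rpbr\cap\excl(S)$, together with Theorem~\ref{T: LZI}, and then translates the defining {\pid} of $\excl(\LZ^I)$ inside the band setting. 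The translation checks out: in a band every $\omega$-power is the element itself, so the {\pid} evaluates to the plain identity $hxhyhxh\id hyhxh$ (you wrote $hxhyhx\id hyhx$, having first shortened $yhxh$ to $yhx$; the dropped trailing $h$ is immaterial since both versions have initial parts $hxy$ and $hyx$), and by part~(i) a member of $\lpbr\LZ^I\rpbr$ satisfies it if and only if it satisfies $hxy\id hyx$, i.e.\ $xyz\id xzy$; your final remark about properness is redundant once this equality of sub{\pvars} is established, but harmless. What the citation buys is brevity and an appeal to the completely known band lattice; what your argument buys is independence from that classification and uniformity with the paper's own verification scheme in Sections~\ref{sec: ji} and~\ref{sec: proof}.
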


\begin{proof}
This can be found in Almeida~\cite[Figure~5.1]{Alm94}, where the {\pvar} $\lpbr\LZ^I\rpbr$ is denoted by~$\mathbf{MK}_1$.
\end{proof}

\subsection{The {\pvar} $\lpbr\LZB\rpbr$} \label{sub: LZB ji}

\begin{theorem} \label{T: LZbar}
The {\pvar}~$\lpbr\LZB\rpbr$ is {\ji} and
\[
\excl(\LZB)= \lbr(\be zx^\omega(yx^\omega)^\omega)^\omega \id (\be z(yx^\omega)^\omega)^\omega \rbr,
\]
where~$\be$ is an idempotent in the minimal ideal of $\wh{\{x,y,z\}^+}$\up.
\end{theorem}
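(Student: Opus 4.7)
The plan is to derive this statement as an immediate consequence of the two main preceding results of the paper: Theorem~\ref{T: LZ}, which identifies $\excl(\LZ)$ explicitly via the {\pid} $x^\omega (yx^\omega)^\omega \id (yx^\omega)^\omega$ over the alphabet $\{x,y\}$, and Theorem~\ref{T: Sbar}, which asserts that augmentation preserves {\jirr} and furnishes an explicit recipe for the defining {\pid} of $\excl(S^\ba)$ from any defining {\pid} of $\excl(S)$. Since $\LZB=\LZ^\ba$ by definition, both conclusions needed in the theorem statement will drop out by specialization.

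More concretely, I would begin by invoking Theorem~\ref{T: LZ} to record the equality $\excl(\LZ) = \lbr \bu \id \bv \rbr$ with $\bu = x^\omega (yx^\omega)^\omega$ and $\bv = (yx^\omega)^\omega$ in $\wh{\{x,y\}^+}$; simultaneously, this theorem guarantees that $\lpbr\LZ\rpbr$ is {\ji}. Next, I would apply Theorem~\ref{T: Sbar} to the semigroup $S=\LZ$, choosing a fresh variable $z\notin\{x,y\}$ and an idempotent $\be$ in the minimal ideal of $\wh{\{x,y,z\}^+}$. The "preservation" half of Theorem~\ref{T: Sbar} then yields that $\lpbr\LZ^\ba\rpbr=\lpbr\LZB\rpbr$ is {\ji}, while the "explicit {\pid}" half gives
\[
\excl(\LZB) \;=\; \lbr (\be z \bu)^\omega \id (\be z \bv)^\omega \rbr \;=\; \lbr (\be z x^\omega (yx^\omega)^\omega)^\omega \id (\be z (yx^\omega)^\omega)^\omega \rbr,
\]
which is precisely the {\pid} asserted in the statement.

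Because this argument is a direct combination of two earlier results, there is essentially no obstacle: the only thing to double-check is that the naming convention in Theorem~\ref{T: Sbar} matches the present statement (the auxiliary variable called $z$ and the idempotent called $\be$ living in the minimal ideal of the enlarged free profinite semigroup). No separate verification that $\LZB$ violates the displayed {\pid}, nor any direct division argument, is needed, since all of that content is already packaged inside the proof of Theorem~\ref{T: Sbar}. The proof therefore reduces to a two-line citation, and the theorem may be stated and proved in parallel with the analogous results (Theorems~\ref{T: Z2bar}, \ref{T: N2bar}, \ref{T: N2barI}) which follow the same template.
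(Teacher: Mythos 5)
Your proposal is correct and is exactly the paper's own argument: the paper proves Theorem~\ref{T: LZbar} by citing Theorems~\ref{T: Sbar} and~\ref{T: LZ}, specializing the {\pid} recipe $(\be z\bu)^\omega \id (\be z\bv)^\omega$ to $\bu = x^\omega(yx^\omega)^\omega$ and $\bv = (yx^\omega)^\omega$, just as you describe. Nothing further is needed.
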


\begin{proof}
This follows from Theorems~\ref{T: Sbar} and~\ref{T: LZ}.
\end{proof}

Alternately, Rhodes and Steinberg~\cite[Example~7.3.16]{RS09} have shown that
\[
\excl(\LZB)= \lbr((\be z)^\omega x^\omega(yx^\omega)^\omega)^\omega \id ((\be z)^\omega (yx^\omega)^\omega)^\omega \rbr,
\]
where~$\be$ is an idempotent in the minimal ideal of $\wh{\{x,y,z\}^+}$.

\begin{proposition} \label{P: LZbar}
\quad
\begin{enumerate}[\rm(i)]
\item The identities satisfied by the semigroup $\LZB$ are axiomatized by
\[
x^2 \id x, \quad xyz \id xzxyz.
\]
\item The sub{\pvar} of $\lpbr\LZB\rpbr$ defined by the identity
\[
xyz \id xzyz
\]
is the unique maximal sub{\pvar} of $\lpbr\LZB\rpbr$\up.
\end{enumerate}
\end{proposition}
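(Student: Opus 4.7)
My plan is to reduce both parts to the analysis of the lattice of band varieties already carried out in Proposition~\ref{p: sjisave}, together with Theorem~\ref{T: LZbar}.

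For part (i), I would start from Proposition~\ref{p: sjisave}, which establishes that $\lpbr \LZB \rpbr = \bRZ \malce \bLNB$. In the proof of that proposition, Pastijn's description of the lattice of band varieties (and the band-variety basis list recorded in Almeida~\cite[Figure~5.1]{Alm94}) is invoked to give $\bRZ \malce \bLNB = \lbr x^2 \id x, \, xyz \id xzxyz \rbr$. Combining these, the identities of~$\LZB$ are axiomatized by $\{x^2 \id x, \, xyz \id xzxyz\}$. A direct verification that $\LZB$ satisfies both identities is a routine calculation from the multiplication table (the first because $\LZB$ is a band, the second because each product of three elements in $\LZB$ already absorbs the factor $xz$ on the left).

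For part (ii), I would combine Theorem~\ref{T: LZbar} with the same analysis. Since $\lpbr \LZB \rpbr$ is {\ji} by Theorem~\ref{T: LZbar}, it has a unique maximal proper sub{\pvar}, namely $\lpbr \LZB \rpbr \cap \excl(\LZB)$. The proof of Proposition~\ref{p: sjisave} identifies this sub{\pvar} with $\lpbr \LZ, \RZ^I \rpbr = \lbr x^2 \id x, \, xyz \id xzyz \rbr$. To deduce the stated axiomatization I would verify that, modulo the basis from~(i), the identity $xyz \id xzyz$ gives the maximal sub{\pvar}: substituting $y \mapsto xy$ in $xyz \id xzyz$ yields $x^2yz \id xzxyz$, and $x^2 \id x$ then reduces this to $xyz \id xzxyz$, so the second identity of the basis becomes redundant in the presence of $xyz \id xzyz$. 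Hence the sub{\pvar} of $\lpbr\LZB\rpbr$ defined by $xyz \id xzyz$ coincides with $\lpbr \LZ, \RZ^I \rpbr$, the unique maximal proper sub{\pvar}.

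The main potential obstacle is simply ensuring that one is reading off the correct nodes in Pastijn's band-variety lattice; in particular, one must check that $\LZB$ indeed sits at $\bRZ \malce \bLNB$ rather than at one of its sub{\pvars}. This has already been handled in Proposition~\ref{p: sjisave} by exhibiting $\LZB$ as a member of $\bRZ \malce \bLNB$ that violates the identity $xyz \id xzyz$, so no further work is needed beyond citing that proposition.
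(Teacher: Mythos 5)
Your argument is correct, and it is not circular: Proposition~\ref{p: sjisave} and Theorem~\ref{T: LZbar} are established earlier and independently of Proposition~\ref{P: LZbar}. The paper itself disposes of both parts with a one-line citation, identifying $\lpbr\LZB\rpbr$ with the node denoted $\lbrs R_3^\rho = Q_3^\rho\rbrs_\bB$ in Almeida's picture of the lattice of band varieties, from which the basis and the unique maximal sub{\pvar} are read off directly. Your route rests on the same bedrock, since the Pastijn/Almeida description enters through the proof of Proposition~\ref{p: sjisave}, which supplies $\lpbr\LZB\rpbr=\bRZ\malce\bLNB=\lbrs x^2\id x,\ xyz\id xzxyz\rbrs$ together with the fact that $\lpbr\LZ,\RZ^I\rpbr=\lbrs x^2\id x,\ xyz\id xzyz\rbrs$ is its unique maximal sub{\pvar}; but you package this through results internal to the paper, invoke Theorem~\ref{T: LZbar} to get a unique maximal proper sub{\pvar} equal to $\lpbr\LZB\rpbr\cap\excl(\LZB)$ (strictly speaking this is redundant, as uniqueness is already part of the cited lattice fact), and add the small equational check that, modulo $x^2\id x$, the identity $xyz\id xzyz$ implies $xyz\id xzxyz$ via the substitution $y\mapsto xy$, so that the sub{\pvar} of $\lpbr\LZB\rpbr$ defined by $xyz\id xzyz$ is exactly $\lpbr\LZ,\RZ^I\rpbr$. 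That reduction is a genuine, if minor, addition: it is what ties the single-identity description of the maximal sub{\pvar} in the statement to the two-identity basis, a point the paper leaves implicit in its citation. One quibble for part~(i): passing from the equality of {\pvars} to ``the identities of $\LZB$ are axiomatized by'' tacitly uses that band varieties are locally finite, so equality of the {\pvars} of finite members forces equality of the generated varieties; the paper glosses this in the same way, so the gap is harmless, but it is worth a sentence.
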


\begin{proof}
This can be found in Almeida~\cite[Figure~5.1]{Alm94}, where the {\pvar} $\lpbr\LZB\rpbr$ is denoted by $\lbrs R_3^\rho = Q_3^\rho\rbrs_\bB$.
\end{proof}

\subsection{The {\pvar} $\lpbr\Az\rpbr$} \label{sub: Az ji}

\begin{theorem}[Lee~{\cite[Proposition~2.3]{Lee17}}] \label{T: A0}
The {\pvar} $\lpbr\Az\rpbr$ is {\ji} and
\[
\excl(\Az) = \lbr (x^\omega y^\omega)^{\omega+1} \id x^\omega y^\omega \rbr.
\]
\end{theorem}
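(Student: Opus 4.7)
The plan begins with the easy direction: $\Az$ itself violates the pseudoidentity. I would substitute $x\mapsto \ef$ and $y\mapsto \ee$, so that $x^\omega y^\omega = \ef\ee$. Reading off the multiplication table of $\Az$ gives $(\ef\ee)^2 = \ez$, hence $(\ef\ee)^\omega = \ez$ and $(\ef\ee)^{\omega+1} = \ez \neq \ef\ee$. This establishes the containment $\lbr(x^\omega y^\omega)^{\omega+1}\id x^\omega y^\omega\rbr \subseteq \excl(\Az)$.

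For the reverse inclusion, suppose a finite semigroup $T$ fails the pseudoidentity. Extracting a violating substitution, I obtain idempotents $a = x^\omega$ and $b = y^\omega$ in $T$ with $(ab)^{\omega+1}\neq ab$; in particular $ab$ is not a group element of the cyclic subsemigroup $\langle ab\rangle$ and so is not idempotent. My plan is to realize $\Az$ as the Rees quotient $S/I'$, where $S = \langle a,b\rangle \leq T$ and $I' = S^\bullet\cdot ba\cdot S^\bullet$ is the principal ideal of $S$ generated by $ba$. Under the assignment $[a]\mapsto \ef$, $[b]\mapsto \ee$, $[ab]\mapsto \ef\ee$, $[I']\mapsto \ez$, every product of representatives is either one of $a, b, ab$ or lies in $I'$: for instance, $b\cdot a = ba\in I'$, $b\cdot ab = (ba)b\in I'$, $ab\cdot a = a(ba)\in I'$, $(ab)(ab) = a(ba)b\in I'$, while $a\cdot ab = ab$, $ab\cdot b = ab$, and $a\cdot a = a$, $b\cdot b = b$. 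Matching these products against the table of $\Az$ confirms that, provided the four candidate classes $[a], [b], [ab], [I']$ really are distinct, $S/I'\cong \Az$, whence $\Az\prec S\leq T$ and $\Az\in \lpbr T\rpbr$. Together with the easy direction this gives $\excl(\Az) = \lbr(x^\omega y^\omega)^{\omega+1}\id x^\omega y^\omega\rbr$; as the right side is a pseudovariety, $\lpbr\Az\rpbr$ is automatically {\ji}.

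The main obstacle is verifying the distinctness of the four classes, equivalently, ruling out the identifications $a=b$, $a=ab$, $b=ab$ and the containments $a\in I'$, $b\in I'$, $ab\in I'$. The pairwise identifications dispatch quickly: each forces $ab$ to be idempotent, hence a group element of $T$, contradicting the hypothesis. For the containments I plan to show that every relation of the form $a = s_1\cdot ba\cdot s_2$ (and its analogues for $b$ and $ab$) forces $(ab)^{\omega+1}=ab$. Typical reductions: $a=ba$ gives $aba=a^2=a$, so $(ab)^n\cdot a = a$ by induction on $n\geq 1$, yielding $(ab)^{\omega+1}=(ab)^\omega a\cdot b = ab$; $ab = s\cdot ba$ gives $aba = ab$ after right-multiplying by $a$, whence $(ab)^2 = (aba)b = ab$; $b = ba\cdot s$ gives $bab = b$ after left-multiplying by $b$ and using $b^2=b$, whence $(ab)^2 = a\cdot bab = ab$. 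The remaining cases, in which $s_1$ or $s_2$ is an alternating word of higher length, reduce by applications of $a^2=a$, $b^2=b$, and the alternation structure to one of these short-length cases. Assembling all of these reductions excludes every possible collapse, so $S/I'\cong \Az$ and the theorem follows.
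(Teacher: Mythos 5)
The paper gives no internal proof of this statement---it is imported from Lee~\cite[Proposition~2.3]{Lee17} (note $\Az\cong H_1$, so this is the case $n=1$ of the result on $\excl(H_n)$ quoted in the introduction)---so your argument must stand on its own, and in outline it does. The easy direction is correct, and the plan of exhibiting $\Az$ as the Rees quotient of $S=\langle a,b\rangle$ by $I'=S^\bullet\, ba\, S^\bullet$ is viable: using $a^2=a$ and $b^2=b$, every element of $I'$ is equal in $T$ to the value of an alternating word over $\{a,b\}$ containing $ba$ as a factor, i.e.\ one of $(ab)^k$ with $k\geq 2$, $(ab)^k a$, $(ba)^k$, or $(ba)^k b$ with $k\geq 1$; equating $a$, $b$ or $ab$ with any such word and multiplying by $a$ on the left and/or $b$ on the right always yields either $ab=(ab)^m$ with $m\geq 2$ or $(ab)^\omega a=a$, and in every case $(ab)^{\omega+1}=ab$, contradicting the choice of $a,b$. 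Together with your (correct) treatment of the identifications $a=b$, $a=ab$, $b=ab$, this shows $S/I'\cong\Az$, hence $\Az\prec T$, which is even slightly stronger than what is needed for $\excl(\Az)$ to be the stated pseudovariety.

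Two places need repair before this is a proof. First, your reduction for the case $b\in I'$ is wrong as written: from $b=ba\cdot s$, left-multiplying by $b$ merely reproduces $b=ba\cdot s$ (the $b$ is absorbed by $b^2=b$), and it does not give $bab=b$. The correct move is to rewrite $ba\cdot s$ as an alternating word beginning with $ba$, so that $b=(ba)^k$ or $b=(ba)^k b$, and then left-multiply by $a$ to obtain $ab=(ab)^{k+1}$ with $k+1\geq 2$, so $ab$ is a group element. Second, the sentence asserting that the remaining cases ``reduce by applications of $a^2=a$, $b^2=b$, and the alternation structure to one of these short-length cases'' is where most of the work lies and should be replaced by the explicit finite case analysis indicated above; note in particular that $a=(ba)^k$ does not reduce to any of your displayed short cases but instead gives $a=(ab)^k a$ upon left-multiplication by $a$, hence $a=(ab)^\omega a$ by iteration, and then $(ab)^{\omega+1}=((ab)^\omega a)b=ab$. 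With these corrections the argument is complete and, unlike the paper, self-contained.
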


\begin{proposition}[{Lee~\cite[Section~4]{Lee04}, Lee and Volkov~\cite[Theorem~4.1]{LV07}}] \label{P: A0}
\quad
\begin{enumerate}[\rm(i)]
\item The identities satisfied by the semigroup $\Az$ are axiomatized by
\[
x^3 \id x^2, \quad x^2yx^2 \id yxy.
\]
\item The sub{\pvar} of $\lpbr\Az\rpbr$ defined by the identity
\[
x^2y^2 \id y^2x^2
\]
is the unique maximal sub{\pvar} of $\lpbr\Az\rpbr$\up.
\end{enumerate}
\end{proposition}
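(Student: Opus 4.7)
The plan is to treat the two parts separately: part~(i) by a normal-form argument, and part~(ii) as a short deduction from Theorem~\ref{T: A0}.

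For part~(i), I would first verify by direct inspection of the $4\times 4$ multiplication table that $\Az$ satisfies each of $x^3 \id x^2$, $xyx \id xyxy$, and $xyx \id yxyx$; most cases are handled uniformly by the observation that if a substitution produces the factor $\ee\ef$ anywhere, the entire product collapses to $\ez$, which is absorbing.  Let $\bV$ denote the variety defined by these three identities, so $\lpbr\Az\rpbr\subseteq\bV$.  For the reverse inclusion, observe that swapping $x$ and $y$ in $xyx \id yxyx$ yields $yxy \id xyxy$, which combined with $xyx \id xyxy$ gives the symmetric relation $xyx \id yxy$.  Together with $x^3 \id x^2$ these rewriting rules bound exponents by~$2$ and let any word $\bw\in\A^+$ be reduced to a canonical form recording its content, its initial part $\ini(\bw)$, its final part $\fin(\bw)$, and a controlled arrangement of the internal letters.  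I would then show, for any two distinct canonical forms $\bu$ and $\bv$, that a substitution $\varphi\colon\A\to\Az$ can be chosen to separate them by assigning certain variables to $\ee$, others to $\ef$, and the remainder to $\ef\ee$, so as to place an $\ee\ef$-adjacency in exactly one side.

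For part~(ii) I would invoke Theorem~\ref{T: A0}: since $\Az$ is~{\ji}, the unique maximal sub{\pvar} of $\lpbr\Az\rpbr$ is $\lpbr\Az\rpbr\cap\excl(\Az)$, with $\excl(\Az)$ defined by $(x^\omega y^\omega)^{\omega+1}\id x^\omega y^\omega$.  Because every member of $\lpbr\Az\rpbr$ satisfies $x^3\id x^2$, we may replace $x^\omega$ by $x^2$, and the exclusion {\pid} becomes $(x^2y^2)^3\id x^2y^2$.  It then suffices to show, modulo the three identities of part~(i), that this is equivalent to $x^2y^2\id y^2x^2$.  For the forward direction, assuming $x^2y^2\id y^2x^2$, one computes
\[
(x^2y^2)^2 \;=\; x^2\cdot(y^2x^2)\cdot y^2 \;=\; x^2\cdot(x^2y^2)\cdot y^2 \;=\; x^4y^4 \;=\; x^2y^2,
\]
using $x^4\id x^3\id x^2$ and likewise for $y$, so $(x^2y^2)^3\id x^2y^2$.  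For the reverse direction I would work inside the free object in $\lpbr\Az\rpbr$ on the two generators $x,y$, enumerate its elements via the normal form of part~(i), and read off directly that the collapse $(x^2y^2)^3\id x^2y^2$ is already strong enough to identify $x^2y^2$ with $y^2x^2$.

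The main obstacle is the normal-form analysis underlying part~(i): choosing invariants that are coarse enough to survive the rewriting rules derivable from the three axioms yet fine enough that distinct canonical forms are always separated by some substitution into the very small semigroup~$\Az$.  Once this is in place, the remaining steps, including the reverse direction of part~(ii), reduce to finite computations in a two-generated free object that already has manageable cardinality.
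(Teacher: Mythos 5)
The first thing to note is that the paper offers no proof of this proposition at all: it is quoted from Lee~\cite[Section~4]{Lee04} and Lee and Volkov~\cite[Theorem~4.1]{LV07}, so you are attempting to reprove results the paper only cites. Measured against that goal, part~(i) of your proposal has a genuine gap: everything that is hard is deferred. You correctly note that $\Az$ satisfies the three identities and correctly derive $xyx \id yxy$ from them, but you never define the canonical form, never show that every word can be brought to a unique such form, and never exhibit the separating substitutions into $\Az$ for distinct canonical words; you concede as much in your closing paragraph. The invariants you name (content, $\ini$, $\fin$, plus ``a controlled arrangement of the internal letters'') are not a proof with routine details omitted---identifying the correct invariant for word equality modulo $\{x^3\id x^2,\ xyx\id xyxy,\ xyx\id yxyx\}$ and proving completeness of the basis is precisely the content of the cited Section~4 of \cite{Lee04}, and nothing in your outline indicates how the separation argument goes when two candidate normal forms differ only in their internal arrangement. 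As it stands, part~(i) is a plan, not a proof.

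Part~(ii) is in much better shape, and your route---Theorem~\ref{T: A0} plus the general fact that for a {\ji} semigroup $S$ the unique maximal sub{\pvar} of $\lpbr S\rpbr$ is $\lpbr S\rpbr\cap\excl(S)$---is a legitimate alternative to simply citing \cite{LV07}. Your forward computation is correct, and within $\lpbr\Az\rpbr$ one may indeed replace $(x^\omega y^\omega)^{\omega+1}\id x^\omega y^\omega$ by $(x^2y^2)^2\id x^2y^2$. However, the reverse direction is again only promised, and as formulated it both leans on the missing normal form of part~(i) and is loosely phrased: one must impose all substitution instances of the collapse on the two-generated free object, not factor by the single relation $(x^2y^2)^2=x^2y^2$. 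This half can in fact be finished without part~(i): since $\Az\models xyx\id yxy$ (direct check, or an instance suffices), every member of $\lpbr\Az\rpbr$ satisfies $x^2y^2x^2\id y^2x^2y^2$; multiplying on the right by $y^2$ and using $(x^2y^2)^2\id x^2y^2$ and $y^4\id y^2$ gives $x^2y^2\id y^2x^2y^2$, while multiplying on the left by $y^2$ and using $(y^2x^2)^2\id y^2x^2$ gives $y^2x^2\id y^2x^2y^2$, whence $x^2y^2\id y^2x^2$. With that repair part~(ii) stands on its own; part~(i), the substance of the cited results, remains unproved.
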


\subsection{The {\pvar} $\lpbr\Az^I\rpbr$} \label{sub: AzI ji}

\begin{theorem} \label{T: A0I}
The {\pvar} $\lpbr\Az^I\rpbr$ is {\ji} and
\[
\excl(\Az^I) = \mathbb{L}\excl(\Az) = \lbr h^\omega ((xh^\omega)^\omega (yh^\omega)^\omega)^{\omega+1} \id h^\omega (xh^\omega)^\omega (yh^\omega)^\omega \rbr.
\]
\end{theorem}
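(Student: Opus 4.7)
The plan is to derive this result directly from two earlier statements in the paper: Theorem~\ref{T: A0}, which establishes that $\lpbr\Az\rpbr$ is {\ji} with $\excl(\Az) = \lbr (x^\omega y^\omega)^{\omega+1} \id x^\omega y^\omega \rbr$, and Lemma~\ref{L: local}, which transports {\jirr} from $\lpbr S\rpbr$ to $\lpbr S^I \rpbr$ via the local operator when $S$ is not itself a monoid. The proof should therefore be very short, parallel to those of Theorems~\ref{T: NnI}, \ref{T: N2barI}, and \ref{T: LZI}.

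First I would observe that $\Az$ is not a monoid; this is immediate from its multiplication table in Subsection~\ref{sub: comp 0-simple}, since none of $\ez$, $\ee$, $\ef$, $\ef\ee$ acts as a two-sided identity. Hence Lemma~\ref{L: local} applies with $S=\Az$ and yields both that $\lpbr\Az^I\rpbr$ is {\ji} and that $\excl(\Az^I) = \mathbb{L}\excl(\Az)$. Substituting the expression for $\excl(\Az)$ from Theorem~\ref{T: A0} gives the first displayed equality of the theorem.

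It then remains to rewrite $\mathbb{L}\lbr (x^\omega y^\omega)^{\omega+1} \id x^\omega y^\omega\rbr$ as the single {\pid} displayed on the right. For this I would appeal to the standard translation principle: if a {\pvar} $\bV$ is defined by a single {\pid} $\bu(x_1,\ldots,x_n)\id \bv(x_1,\ldots,x_n)$, then $\mathbb{L}\bV$ is defined by the {\pid} obtained by replacing each variable $x_i$ by $x_ih^\omega$ and prepending $h^\omega$ on both sides, where $h$ is a new variable. This rests on the routine fact that for any finite semigroup $S$ and any idempotent $e\in S$ one has $e = h^\omega$ for some $h$ (take $h=e$), together with $eSe = h^\omega Sh^\omega$, so that the {\pid}s valid in all local monoids $eSe$ are exactly those obtained by the above rewriting. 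Precisely this recipe was used without comment to pass from $\excl(\N_n)$, $\excl(\NB)$, and $\excl(\LZ)$ to their local forms in Theorems~\ref{T: NnI}, \ref{T: N2barI}, and \ref{T: LZI}. Specialising it to $\bu=(x^\omega y^\omega)^{\omega+1}$ and $\bv=x^\omega y^\omega$ produces
\[
h^\omega((xh^\omega)^\omega(yh^\omega)^\omega)^{\omega+1} \id h^\omega(xh^\omega)^\omega(yh^\omega)^\omega,
\]
which is exactly the claimed {\pid}.

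The hard part is really only verifying the translation principle for $\mathbb{L}$ acting on a single {\pid}; but this is elementary and has been used tacitly several times already in the section, so in the present theorem there is no new substantive obstacle, only the bookkeeping of applying the substitution correctly.
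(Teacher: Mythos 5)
Your proposal is correct and follows essentially the same route as the paper, whose proof of Theorem~\ref{T: A0I} is simply the combination of Lemma~\ref{L: local} and Theorem~\ref{T: A0}. The only addition is your explicit justification of the standard rewriting of $\mathbb{L}\lbrs\bu\id\bv\rbrs$ via the substitution $x_i\mapsto x_ih^\omega$ with $h^\omega$ prepended, which the paper (like you note) uses tacitly and which you verify correctly.
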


\begin{proof}
This follows from Lemma~\ref{L: local} and Theorem~\ref{T: A0}.
\end{proof}

\begin{proposition}[{Lee~\cite[Propositions~1.1 and~1.5(ii)]{Lee08}}] \label{P: A0I}
\quad
\begin{enumerate}[\rm(i)]
\item The identities satisfied by the semigroup $\Az^I$ are axiomatized by
\[
x^3 \id x^2, \quad x^2yx^2 \id xyx, \quad xyxy \id yxyx, \quad xyxzx \id xyzx.
\]
\item The sub{\pvar} of $\lpbr\Az^I\rpbr$ defined by the identity
\[
hx^2y^2h \id hy^2x^2h
\]
is the unique maximal sub{\pvar} of $\lpbr\Az^I\rpbr$\up.
\end{enumerate}
\end{proposition}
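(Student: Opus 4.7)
The plan is to prove part~(i) by direct verification together with a canonical-form argument, and then to deduce part~(ii) from Theorem~\ref{T: A0I}.

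First I would check by a finite computation that $\Az^I$, on its five elements $\ez, \ee, \ef, \ef\ee, I$, satisfies each of the four identities; since both sides involve at most three variables, this is a small case analysis over assignments. For the converse, I would introduce a normal form for words modulo the four identities and show that distinct normal forms can be separated by substitutions into $\Az^I$. The identity $x^3 \id x^2$ reduces every exponent to at most~$2$; the identity $xyxzx \id xyzx$ lets me eliminate any occurrence of a letter $x$ strictly sandwiched between two further occurrences of $x$; the identity $x^2yx^2 \id xyx$ handles the symmetric case in which $x$ is flanked by two $x^2$ blocks; and $xyxy \id yxyx$ permits a controlled commutation of adjacent squared blocks. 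After applying these rules exhaustively, a word $\bw$ should be reducible to a canonical factorisation consisting of a linear prefix, a middle product of pairwise distinct squared variables that commute among themselves, and a linear suffix, where the prefix and suffix record the first and last occurrences of each variable. Separation of distinct canonical forms by maps into $\Az^I$ would then exploit that $\ef\ee$ witnesses left/right-zero asymmetry while $I$ serves as a truly neutral letter and $\ez$ annihilates.

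For part~(ii), by Theorem~\ref{T: A0I} the {\pvar} $\lpbr \Az^I \rpbr$ is {\ji}, so it has a unique maximal proper sub{\pvar}, namely $\lpbr \Az^I \rpbr \cap \excl(\Az^I)$. It therefore suffices to show that within $\lpbr \Az^I \rpbr$ the identity $hx^2y^2h \id hy^2x^2h$ defines exactly this maximal sub{\pvar}. The substitution $h \mapsto I$, $x \mapsto \ef$, $y \mapsto \ee$ in $\Az^I$ gives $hx^2y^2h \mapsto \ef\ee$ while $hy^2x^2h \mapsto \ez$, so $\Az^I$ itself violates the identity and the sub{\pvar} it defines is strictly smaller than $\lpbr \Az^I \rpbr$; being proper, it is contained in the unique maximal one. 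For the reverse inclusion I would show directly that any finite $T \in \lpbr \Az^I \rpbr$ satisfying $hx^2y^2h \id hy^2x^2h$ lies in $\excl(\Az^I)$, by lifting generators of $\Az^I$ through a hypothetical division $\Az^I \prec T$ along the witness assignment to derive $\ef\ee = \ez$ in $\Az^I$, a contradiction.

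I expect part~(i) to be the main obstacle: completeness of the four-identity basis requires carefully tracking how the surrounding rules $x^2yx^2 \id xyx$ and $xyxzx \id xyzx$ interact with the commutation $xyxy \id yxyx$ when several letters appear squared, and one must verify that the resulting normal form is uniquely determined modulo the four identities. Once a clean canonical form is established the separation by $\Az^I$ is mechanical, and part~(ii) then follows almost immediately from Theorem~\ref{T: A0I} together with the explicit non-identity above.
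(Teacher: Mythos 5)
Your part~(ii) never proves the direction that actually carries the content. Your first paragraph shows that the sub{\pvar} of $\lpbr\Az^I\rpbr$ defined by $hx^2y^2h \id hy^2x^2h$ is proper (the witness $h \mapsto I$, $x \mapsto \ef$, $y \mapsto \ee$ giving $\ef\ee \neq \ez$ is correct), hence contained in the unique maximal sub{\pvar} $\lpbr\Az^I\rpbr \cap \excl(\Az^I)$. But your ``reverse inclusion'' --- that any $T \in \lpbr\Az^I\rpbr$ satisfying the identity lies in $\excl(\Az^I)$ --- is again the containment of the identity-defined sub{\pvar} in the maximal one, so you establish the same inclusion twice. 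What remains, and is the substantive half, is that the maximal sub{\pvar} \emph{satisfies} $hx^2y^2h \id hy^2x^2h$, i.e.\ that every proper sub{\pvar} $\bV$ of $\lpbr\Az^I\rpbr$ satisfies it; the standard route is to take an identity $\bu \id \bv$ holding in $\bV$ but violated by $\Az^I$ (such a word identity exists because $\Az^I$ is finite) and derive $hx^2y^2h \id hy^2x^2h$ from $\bu \id \bv$ together with the basis of part~(i). This is exactly how the paper argues the analogous statements for $\elBop$ (Proposition~\ref{P: el3}) and $\W$ (Proposition~\ref{P: W}), and it is the content of Lee's Proposition~1.5(ii) in~\cite{Lee08}, which the present paper cites rather than reproves; your proposal never engages with it.

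Part~(i) also has a concrete problem beyond being only a plan. Your claimed normal form --- linear prefix, ``middle product of pairwise distinct squared variables that commute among themselves,'' linear suffix --- cannot be right: if squared middle blocks commuted modulo the four identities, then $hx^2y^2h \id hy^2x^2h$ would be a consequence of the basis, contradicting the violation you exhibit in part~(ii); indeed $\Az$ already violates $x^2y^2 \id y^2x^2$ (Proposition~\ref{P: A0}(ii)), and the whole point of part~(ii) is that even flanked squared blocks do not commute in $\Az^I$. So the invariants your reduction retains are too coarse, and any correct canonical form must separate $hx^2y^2h$ from $hy^2x^2h$. Completeness of the four-identity basis is the heart of Lee's Proposition~1.1 and requires a genuine confluence-plus-separation argument; ``apply the rules exhaustively'' with the commutation you posit would identify words that $\Az^I$ distinguishes, so as written the approach fails rather than merely being unfinished.
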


\subsection{The {\pvar} $\lpbr\At\rpbr$} \label{sub: At ji}

\begin{theorem}[Lee~\cite{Lee07a}] \label{T: A2}
The {\pvar} $\lpbr\At\rpbr$ is {\ji} and
\[
\excl(\At) = \lbr((x^\omega y)^\omega(yx^\omega)^\omega)^\omega \id (x^\omega yx^\omega)^\omega\rbr.
\]
\end{theorem}

\begin{proposition}[{Lee~\cite[Theorem~2.7]{Lee04}, Trahtman~\cite{Tra94}}] \label{P: A2}
\quad
\begin{enumerate}[\rm(i)]
\item The identities satisfied by the semigroup $\At$ are axiomatized by
\[
x^3 \id x^2, \quad xyxyx \id xyx, \quad xyxzx \id xzxyx.
\]
\item The sub{\pvar} of $\lpbr\At\rpbr$ defined by the identity
\[
x^2y^2x^2 \id x^2yx^2
\]
is the unique maximal sub{\pvar} of $\lpbr\At\rpbr$\up.
\end{enumerate}
\end{proposition}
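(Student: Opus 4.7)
My plan is to split the proof into parts (i) and (ii), treating (i) as the main technical work. The easy direction of (i) is handled by direct substitution into the 5-element carrier $\{\ez, \ea, \ea\ee, \ee\ea, \ee\}$: $\At$ satisfies each of $x^3 \id x^2$, $xyxyx \id xyx$, and $xyxzx \id xzxyx$ by a finite case check against the multiplication table, exploiting $\ea^2 = \ez$, $\ee^2 = \ee$, $\ea\ee\ea = \ea$, and $\ee\ea\ee = \ee$.

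For the completeness direction of (i), I would establish a normal form for words modulo the three identities and show that two words with distinct normal forms are separated by a substitution into $\At$. Rewriting using $x^3 \id x^2$ collapses every maximal run of consecutive occurrences of a single variable to length at most two; the identity $xyxyx \id xyx$ bounds the length of long alternating patterns; and $xyxzx \id xzxyx$ permits permuting the material sandwiched between two occurrences of a fixed variable. Iterating these rewrites, every word $\bw$ reduces to a canonical form specified by data such as $\ini(\bw)$, $\fin(\bw)$, the partition of $\cont(\bw)$ into single- and multi-occurrence variables, and the unordered content of the ``middle'' block for each multi-occurrence variable. A counting argument confirms that the canonical form is well-defined, and a concrete substitution $\A \to \At$ (typically sending two distinguished variables to $\ee$ and $\ea$ and the remaining variables to carefully chosen elements) separates words with distinct canonical forms, exploiting the nontrivial arithmetic in the $\mathscr{H}$-class $\{\ee, \ea\ee, \ee\ea\}$ and the nilpotency of $\ea$. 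This follows the line of the original arguments of Trahtman~\cite{Tra94} and Lee~\cite{Lee04}.

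For part (ii), Theorem~\ref{T: A2} implies that $\lpbr\At\rpbr$ is {\ji}, so it has a unique maximal proper sub{\pvar}, namely $\mathbf{M} = \lpbr\At\rpbr \cap \excl(\At)$. It therefore suffices to identify $\mathbf{M}$ with $\mathbf{N} = \lpbr\At\rpbr \cap \lbrs x^2y^2x^2 \id x^2yx^2\rbrs$. The containment $\mathbf{N} \subseteq \mathbf{M}$ is immediate: $\At$ violates $x^2y^2x^2 \id x^2yx^2$ (with $x \mapsto \ee$ and $y \mapsto \ea$, the left side evaluates to $\ee \cdot \ez \cdot \ee = \ez$ and the right to $\ee \cdot \ea \cdot \ee = \ee$), so $\mathbf{N}$ is proper in $\lpbr\At\rpbr$ and lies inside the unique maximal $\mathbf{M}$. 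For $\mathbf{M} \subseteq \mathbf{N}$, I would rewrite the defining {\pid} of $\excl(\At)$ from Theorem~\ref{T: A2} modulo the basis established in (i): in any member of $\lpbr\At\rpbr$ we have $w^\omega = w^2$ for every word~$w$ (since $x^3 \id x^2$ holds), so the {\pid} $((x^\omega y)^\omega(yx^\omega)^\omega)^\omega \id (x^\omega yx^\omega)^\omega$ becomes an ordinary equational identity in $x$ and $y$; applying $xyxyx \id xyx$ and $xyxzx \id xzxyx$ then simplifies it to an identity equivalent, modulo the basis, to $x^2y^2x^2 \id x^2yx^2$. The main obstacle is the normal-form argument in (i) and its accompanying separation check, both of which demand careful combinatorial bookkeeping and a case analysis organized around the $\mathscr{H}$-class structure of $\At$; the equational derivation in (ii) is more routine once (i) is in hand, though verifying that the rewritten {\pid} is indeed equivalent to $x^2y^2x^2 \id x^2yx^2$ (rather than the longer identity $x^2y^2x^2yx^2 \id x^2yx^2$ that the reduction first produces) requires a short additional derivation via $xyxzx \id xzxyx$.
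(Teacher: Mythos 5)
The paper offers no proof of this proposition at all---it is quoted from Trahtman~\cite{Tra94} (part~(i)) and Lee~\cite{Lee04} (part~(ii))---so your sketch has to stand on its own, and as written it does not. For part~(i), the satisfaction check and the overall plan (a canonical form modulo the three identities, plus separating substitutions into $\At$) have the right shape, but the canonical form is never actually defined: the invariant data you name ($\ini$, $\fin$, the partition into single- and multi-occurrence variables, unordered ``middle'' contents) is neither shown to be nor obviously a complete invariant for words modulo $x^3\id x^2$, $xyxyx\id xyx$, $xyxzx\id xzxyx$, and the separating substitutions are not exhibited. That combinatorial analysis \emph{is} Trahtman's theorem, so part~(i) remains an outline rather than a proof (defensible only to the extent that the paper itself defers to \cite{Tra94}).

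The concrete gap is in part~(ii). Your setup is sound: by Theorem~\ref{T: A2} the unique maximal subpseudovariety is $\mathbf M=\lpbr\At\rpbr\cap\excl(\At)$, the violation of $x^2y^2x^2\id x^2yx^2$ in $\At$ (via $x\mapsto\ee$, $y\mapsto\ea$) gives $\mathbf N\subseteq\mathbf M$, and on finite members of $\lpbr\At\rpbr$ one may indeed replace $\omega$ by the exponent $2$, after which the exclusion pseudoidentity collapses modulo the basis to $x^2yx^2y^2x^2\id x^2yx^2$ (your $x^2y^2x^2yx^2\id x^2yx^2$, the same up to $xyxzx\id xzxyx$). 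What remains is to show that, modulo the basis, this identity implies $x^2y^2x^2\id x^2yx^2$; that is the inclusion $\mathbf M\subseteq\mathbf N$, i.e.\ the entire nontrivial content of (ii). Your claim that this needs only ``a short additional derivation via $xyxzx\id xzxyx$'' does not hold up: with $x\mapsto x^2$ that identity merely permutes the factors lying between occurrences of the idempotent $x^2$, and together with $x^3\id x^2$, $xyxyx\id xyx$ and the reduced identity itself, all manipulations of words of the form $x^2w_1x^2\cdots w_kx^2$ with $w_i\in\{y,y^2\}$ only permute, duplicate or collapse the factors $w_i$ or delete/create a $y^2$ factor in the presence of a bare $y$ factor; none of them turns $x^2y^2x^2$ into $x^2yx^2$. (The direction that \emph{is} a one-line derivation, $x^2y^2x^2\id x^2yx^2$ implying the longer identity, is the one you do not need, since $\mathbf N\subseteq\mathbf M$ already follows from uniqueness of the maximal subpseudovariety.) So the key implication is asserted rather than proved, and the indicated method cannot supply it; any correct argument must use substitution instances mixing the two variables in an essential way, and in practice requires the full canonical-form analysis of identities violated by $\At$, in the style of the paper's own proofs of Propositions~\ref{P: el3}(ii) and~\ref{P: W}(ii), which is how \cite{Lee04} proceeds.
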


\subsection{The {\pvar} $\lpbr\Bt\rpbr$} \label{sub: Bt ji}

\begin{theorem}[Rhodes and Steinberg~{\cite[Example~7.3.4]{RS09}}] \label{T: B2}
The {\pvar} $\lpbr\Bt\rpbr$ is {\ji} and
\[
\excl(\Bt) = \lbr((xy)^\omega(yx)^\omega (xy)^\omega)^\omega \id (xy)^\omega\rbr.
\]
\end{theorem}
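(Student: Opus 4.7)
The plan is to identify $\excl(\Bt)$ with the pseudovariety $\mathbf{DS}$ of finite semigroups whose regular $\mathscr{J}$-classes are subsemigroups, already noted in the introduction to be axiomatized by $\bigl((xy)^\omega(yx)^\omega(xy)^\omega\bigr)^\omega \id (xy)^\omega$. Since $\mathbf{DS}$ is a pseudovariety, the equality $\excl(\Bt) = \mathbf{DS}$ simultaneously yields the displayed basis for the exclusion class and, via \cite[Theorem~7.1.2]{RS09}, the join irreducibility of $\lpbr\Bt\rpbr$.

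For the easy inclusion $\mathbf{DS}\subseteq\excl(\Bt)$, I would directly exhibit $\Bt \notin \mathbf{DS}$ by the substitution $x\mapsto \ea$, $y\mapsto \eb$. Both $\ea\eb$ and $\eb\ea$ are already idempotents in $\Bt$, so $(xy)^\omega \mapsto \ea\eb$ and $(yx)^\omega \mapsto \eb\ea$, and then $(\ea\eb)(\eb\ea)(\ea\eb) = \ea\ez\eb = \ez$, so the left side of the pseudoidentity evaluates to $\ez$ while the right side is $\ea\eb \ne \ez$. Geometrically, this records the fact that the four-element non-zero $\mathscr{D}$-class of $\Bt$ fails to be a subsemigroup.

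The real work lies in the reverse inclusion: if a finite semigroup $T$ violates the pseudoidentity, then $\Bt\in\lpbr T\rpbr$. Unpacking the hypothesis, there exist $a,b\in T$ with $e=(ab)^\omega$, $f=(ba)^\omega$ satisfying $(efe)^\omega \ne e$; since $(efe)\le_{\mathscr R} e$ and $(efe)\le_{\mathscr L} e$, this forces $efe$ strictly $\mathscr{J}$-below $e$, so the regular $\mathscr{J}$-class $D$ of $e$ is not closed under multiplication ($e,f \in D$ but $ef\notin D$). I then pass to the principal factor $D^0 = \mathscr{M}^0(G;I,\Lambda;P)$, which divides $T$. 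The sandwich matrix $P$ has no zero row and no zero column (by regularity of $D$) but contains at least one zero entry (since $D$ is not a subsemigroup). A standard Rees normalization then produces indices $i_1,i_2\in I$, $\lambda_1,\lambda_2\in\Lambda$ such that, after rescaling rows and columns by elements of $G$, the $2\times 2$ submatrix $(P_{\lambda_j i_k})$ becomes $\big[\begin{smallmatrix}1&0\\0&1\end{smallmatrix}\big]$. The corresponding five elements of $D^0$ form a subsemigroup isomorphic to $\Bt$, and so $\Bt \prec D^0 \prec T$.

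The main obstacle is the sandwich-matrix argument: extracting a Brandt-pattern $2\times2$ submatrix from the existence of a single zero entry combined with the no-zero-row/column hypothesis. The natural case split starts from a zero entry $P_{\lambda i}=0$, uses the existence of non-zero entries $P_{\lambda i'}\ne 0$ and $P_{\lambda' i}\ne 0$ in the same row and column, and then branches on whether $P_{\lambda' i'}$ is zero or non-zero, possibly iterating; in every case one reaches a $2\times 2$ submatrix with exactly two non-zero entries forming a diagonal (or one can reduce to this case after further manipulation). A careful but elementary verification then shows that the resulting five elements (together with $0$) multiply exactly as $\Bt$ does, completing the division $\Bt \prec D^0$ and hence $\Bt \in \lpbr T\rpbr$.
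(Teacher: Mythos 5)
Your first half is sound: the computation showing $\Bt$ violates the pseudoidentity (so $\mathbf{DS}\subseteq\excl(\Bt)$), the stability argument that $(efe)^\omega\neq e$ forces $efe<_{\mathscr J}e$, and the passage to the regular principal factor $D^0\prec T$. The gap is in the sandwich-matrix step. What the pair $e,f$ actually gives you is a $2\times2$ submatrix of $P$ with both diagonal entries nonzero and \emph{at least one} off-diagonal entry zero; the other off-diagonal entry may be nonzero, and in that case no further case split, iteration, or Rees normalization inside $D^0$ can produce a Brandt pattern. The semigroup $\At=\mathscr{M}^0\big(\{1\},\{1,2\},\{1,2\};\big[\begin{smallmatrix}1&1\\0&1\end{smallmatrix}\big]\big)$ already refutes your combinatorial claim: its matrix is regular and has a zero entry, but no $2\times2$ submatrix has exactly the two diagonal entries nonzero. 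Worse, $T=\At$ refutes the conclusion $\Bt\prec T$ itself: $\At$ violates the pseudoidentity, yet $\Bt$ does not divide $\At$ (the only subsemigroup of $\At$ with five elements is $\At$, which is idempotent-generated, hence so is every homomorphic image of it, while $\Bt$ is not idempotent-generated). A minor additional slip: $efe\notin D$ only yields that $ef\notin D$ \emph{or} $fe\notin D$, not necessarily the former.

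The theorem survives because join irreducibility only requires $\Bt\prec T^n$ for some $n$, not $\Bt\prec T$. The standard argument — essentially what the cited source \cite[Example~7.3.4]{RS09} does, since the paper itself proves this theorem only by citation — is a dichotomy: after normalization, the $2\times2$ configuration determined by $e$ and $f$ is either a Brandt pattern, in which case $\Bt\prec D^0\prec T$ as you say, or an $\At$-pattern, in which case $\At\prec T$, and one then invokes the separate classical fact $\Bt\in\lpbr\At\rpbr$ (for instance $\Bt\prec\At\times\At$) to conclude $\Bt\in\lpbr T\rpbr$. Your proposal is missing exactly this last ingredient, and without it the final step fails.
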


\begin{proposition}[{Lee~\cite[Theorem~3.6]{Lee04}, Trahtman~\cite{Tra81}}] \label{P: B2}
\quad
\begin{enumerate}[\rm(i)]
\item The identities satisfied by the semigroup $\Bt$ are axiomatized by
\[
x^3 \id x^2, \quad xyxyx \id xyx, \quad x^2y^2 \id y^2x^2.
\]

\item The sub{\pvar} of $\lpbr\Bt\rpbr$ defined by the identity
\[
xy^2x \id xyx
\]
is the unique maximal sub{\pvar} of $\lpbr\Bt\rpbr$\up.
\end{enumerate}
\end{proposition}

\subsection{The {\pvar} $\lpbr\elB\rpbr$} \label{sub: elB ji}

\begin{theorem} \label{T: el3}
The {\pvar} $\lpbr\elB\rpbr$ is {\ji} and
\[
\excl(\elB) = \lbr (\be zx^\omega y)^\omega \id (\be z(x^\omega y)^{\omega+1})^\omega \rbr,
\]
where $\be$ is an idempotent in the minimal ideal of $\wh{\{x,y,z\}^+}$\up.
\end{theorem}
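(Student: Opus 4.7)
The plan is to recognize $\elB$ as the augmentation of $\OO_2$ studied in Subsection~\ref{sub: Ok} and to derive the statement directly from Theorem~\ref{T: Ok}(i) with $k=2$.

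First I would verify carefully that $\el \cong \OO_2$, as asserted in Subsection~\ref{sub: Ok}. The presentation of $\OO_2$ imposes $\Ox^2 = \Ox\Oe = \ez$, $\Oe\Ox = \Ox$, and $\Oe^2 = \Oe$, while $\el$ is presented by $\ea\ee = \ez$, $\ee\ea = \ea$, and $\ee^2 = \ee$. The only extra relation appearing in $\OO_2$ is $\Ox^2 = \ez$, but this is already a consequence of associativity and the two-sided zero property of $\ez$ in $\el$: indeed, $\ea^2 = \ea(\ee\ea) = (\ea\ee)\ea = \ez\cdot \ea = \ez$. Comparing multiplication tables then confirms that the assignment $\ea \mapsto \Ox$, $\ee \mapsto \Oe$ extends to a semigroup isomorphism $\el \to \OO_2$. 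Consequently $\elB = \el^\ba \cong \OO_2^\ba$, and Proposition~\ref{p:aug.pv}(i) yields $\lpbr \elB \rpbr = \lpbr \OO_2^\ba \rpbr = \lpbr \OO_2 \rpbr^\ba$.

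Next I would invoke Theorem~\ref{T: Ok}(i) at $k = 2$. Since $(a^\omega b)^{k-1} = a^\omega b$ when $k = 2$, the displayed {\pid} there specializes to
\[ \excl(\OO_2^\ba) = \lbr (\be c (a^\omega b))^\omega \id (\be c (a^\omega b)^{\omega+1})^\omega \rbr, \]
where $\be$ is an idempotent in the minimal ideal of $\wh{\{a,b,c\}^+}$. Renaming the variables $a,b,c$ as $x,y,z$ produces precisely the {\pid} in the statement of the theorem, and the {\jirr} of $\lpbr \elB \rpbr$ is inherited from that of $\lpbr \OO_2\rpbr^\ba$.

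All substantive work—establishing the {\jirr} and pinning down the defining {\pid}—was carried out in the proof of Theorem~\ref{T: Ok} via Lemmas~\ref{L: Ok order}, \ref{l:general.good.cyclic}, and~\ref{l:good.cyc.rz}, the passage to the right letter mapping image $\mathsf{RLM}(T)$, and Corollary~\ref{c:freeze}. The only new verification required here is the routine identification $\el \cong \OO_2$, so there is no serious obstacle remaining.
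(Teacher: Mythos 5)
Your proposal is correct and is exactly the paper's argument: the paper proves Theorem~\ref{T: el3} by observing $\elB \cong \OO_2^\ba$ and citing Theorem~\ref{T: Ok} as a special case ($k=2$), which after renaming variables gives the stated {\pid}. Your additional verification of the isomorphism $\el \cong \OO_2$ and the appeal to Proposition~\ref{p:aug.pv}(i) are fine and just make explicit what the paper leaves as routine.
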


\begin{proof}
This is a special case of Theorem~\ref{T: Ok} since $\elB \cong \OO_2^\ba$.
\end{proof}

The remainder of this subsection is devoted to establishing a basis for the identities satisfied by $\elB$.
It turns out that it is notationally simpler to consider the dual semigroup $\elBop = \{ a,b,c,d,e\}$ given in Table~\ref{Tab: el3bar op}.

\begin{table}[ht!]
\[
\begin{array} [c]{r|ccccc}
\elBop \, & \, a & b & c & d & e \\ \hline
     a \, & \, a & a & a & a & a \\
     b \, & \, a & a & b & a & d \\
     c \, & \, a & a & c & a & e \\
     d \, & \, d & d & d & d & d \\
     e \, & \, e & e & e & e & e
\end{array}
\]
\caption{Multiplication table of $\elBop$} \label{Tab: el3bar op}
\end{table}

\begin{proposition} \label{P: el3}
\quad
\begin{enumerate}[\rm(i)]
\item The identities satisfied by the semigroup $\elBop$ are axiomatized by
\begin{equation}
xy^2 \id xy, \quad xyz \id xyzy. \label{id: el3 basis}
\end{equation}
\item The sub{\pvar} of $\lpbr\elBop\rpbr$ defined by the identity
\begin{equation}
xyzx \id xyxz \label{id: el3 max}
\end{equation}
is the unique maximal sub{\pvar} of $\lpbr\elBop\rpbr$\up.
\end{enumerate}
\end{proposition}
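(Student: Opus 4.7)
For Part~(i), my plan is threefold. First, verify by direct multiplication-table computation that $\elBop$ satisfies both $xy^2 \id xy$ and $xyz \id xyzy$: since $a$, $d$, $e$ are left zeros of $\elBop$, each identity is trivial whenever the relevant prefix evaluates in $\{a,d,e\}$, and the remaining cases (prefixes evaluating to $b$ or $c$) reduce to a handful of lookups using $bc=b$, $be=d$, $cc=c$, $ce=e$. Second, use the two identities as length-reducing rewriting rules to bring any word $\bu \in \A^+$ into a canonical form $a_1 a_2 \cdots a_n$ in which $a_2, a_3, \ldots, a_n$ are pairwise distinct variables (while $a_1$ may coincide with some $a_j$ for $j \geq 2$): the identity $xy^2 \id xy$ eliminates consecutive duplicates at positions $\geq 2$, while $xyz \id xyzy$, read right-to-left, removes any letter at position $r \geq 4$ whose value matches an earlier letter at some position $q$ with $2 \leq q \leq r-2$. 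Iterating these strictly decreases length and terminates exactly at the described canonical form. Third, show that any two distinct canonical forms are separated by some substitution $\sigma : \A \to \elBop$, exploiting the automaton-like dynamics of $\elBop$: from an initial state $b$ or $c$, reading a $c$-valued letter preserves the state, the first $e$-valued letter produces the transition $b \mapsto d$ or $c \mapsto e$, and any other letter collapses the state to the left zero $a$. A case analysis on the first position of disagreement between two canonical forms produces the required distinguishing substitution.

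For Part~(ii), I would first verify $\elBop \not\models xyzx \id xyxz$ via the substitution $x \mapsto b$, $y \mapsto c$, $z \mapsto e$, which gives left-hand side $bceb = d$ and right-hand side $bcbe = a$. Hence $\lpbr\elBop\rpbr \cap \lbrs xyzx \id xyxz\rbrs$ is a proper sub{\pvar}. Since $\lpbr\elBop\rpbr$ is {\ji} by the dual of Theorem~\ref{T: el3} and hence {\sji}, there is a unique maximal proper sub{\pvar} $\mathbf{M}$ of $\lpbr\elBop\rpbr$, and the proper sub{\pvar} just identified is contained in $\mathbf{M}$. To secure equality, it suffices to show the following implication: any $S \in \lpbr\elBop\rpbr$ violating $xyzx \id xyxz$ must have $\elBop \in \lpbr S\rpbr$, for then no proper sub{\pvar} of $\lpbr\elBop\rpbr$ can strictly contain $\lpbr\elBop\rpbr \cap \lbrs xyzx \id xyxz\rbrs$.

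The principal obstacle is this last implication. My approach is to observe that $\elBop$ is generated as a semigroup by $\{b, c, e\}$ (since $bb=a$, $bc=b$, $be=d$, $cc=c$, $ce=e$ together produce all five elements), so witnesses $u, v, w \in S$ for the failure of $xyzx \id xyxz$ supply a candidate generating triple for an $\elBop$-structure inside the subsemigroup $\langle u, v, w\rangle$ of~$S$. Using the canonical-form description from Part~(i), together with the fact that the failure of $xyzx \id xyxz$ captures precisely the transition $b \cdot e = d$ that is lost in every proper subsemigroup and every nontrivial quotient of $\elBop$ (a direct case analysis on $\{a,b,c,d\}$, $\{a,b,d,e\}$, $\{a,c,d,e\}$, and $\elBop/(a \sim d)$ confirms each satisfies $xyzx \id xyxz$), I would reconstruct a division $\elBop \prec \langle u, v, w\rangle^n$ for some $n$, completing the proof.
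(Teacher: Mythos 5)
Your Part~(i) is correct and takes a genuinely different route from the paper's for the hard direction. The paper reduces to the same canonical form (your description of it coincides with (CF1)/(CF2)), but then argues indirectly: equality of initial parts comes from the copy of $\LZ^I=\{a,c,e\}$ inside $\elBop$, and two \emph{distinct} canonical words with the same initial part are ruled out because from such an identity together with \eqref{id: el3 basis} one can derive \eqref{id: el3 max}, which $\elBop$ violates. Your plan instead separates distinct canonical forms directly by substitutions into $\elBop$, using the automaton dynamics you describe (from a first letter valued $b$ or $c$, the value $c$ is neutral, the first $e$ gives $b\mapsto d$ or $c\mapsto e$, anything else collapses to the absorbing $a$). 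This works: the three cases (different first letters; same first letter but different initial parts; same initial part but different recurrence position of the first letter) are all handled by substitutions of the shape ``first letter $\mapsto b$, letters before the critical position $\mapsto c$, critical letter $\mapsto e$ or $b$'', and the rewriting/termination argument for reaching canonical form is fine. So Part~(i) as proposed is sound and, unlike the paper's version, does not lean on the Part~(ii) case analysis.

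Part~(ii) has a genuine gap exactly at what you call the principal obstacle. You correctly reduce the problem to the implication: if $S\models\eqref{id: el3 basis}$ and $S\not\models\eqref{id: el3 max}$, then $\elBop\in\lpbr S\rpbr$ (the detour through {\ji}/{\sji} via Theorem~\ref{T: el3} is legitimate but unnecessary, since this implication alone forces every proper sub{\pvar} to satisfy \eqref{id: el3 max}). But your proof of the implication is only an intention. The facts you cite are all internal to $\elBop$: that $b,c,e$ generate it, and that its three maximal proper subsemigroups and the quotient by $a\sim d$ satisfy \eqref{id: el3 max}, only show that no proper divisor of $\elBop$ itself violates \eqref{id: el3 max}; they provide no mechanism for manufacturing a division $\elBop\prec\langle u,v,w\rangle^{\,n}$ from an \emph{arbitrary} $S$ in the variety with witnesses $u,v,w$, and no construction is indicated. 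Note also that the desired implication is exactly equivalent to $\elBop\models\mathrm{Id}(S)$, i.e.\ to the statement of Proposition~\ref{P: el3}(ii) itself (an analogous implication holds for the semigroup $\W$ of Subsection~\ref{sub: W}, which is {\sji} but not {\ji}, so nothing about ``the lost transition $b\cdot e=d$'' makes it automatic). The missing content is precisely what the paper supplies: take an identity $\bu\id\bv$ holding in $S$ but failing in $\elBop$, put both sides in canonical form, and derive \eqref{id: el3 max} from \eqref{id: el3 basis} together with $\bu\id\bv$ --- via the {\pid} defining $\excl(\LZ^I)$ when $\ini(\bu)\neq\ini(\bv)$, and via explicit variable-collapsing substitutions when the initial parts agree but the recurrence positions of the first letter differ. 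Until you carry out that derivation (or give an actual construction of the claimed division), Part~(ii) is not proved.
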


\begin{remark} \label{R: el3 id}
It is routinely shown that the semigroup~$\elBop$ satisfies the identities~\eqref{id: el3 basis} but violates the identity~\eqref{id: el3 max}.
\end{remark}

In this subsection, a word~$\bw$ is said to be in \textit{canonical form} if either
\begin{enumerate}[({CF}1)]
\item $\bw = x_0 x_1 \cdots x_m$ or
\item $\bw = x_0 x_1 \cdots x_k \cdot x_0 \cdot x_{k+1} x_{k+2} \cdots x_m$,
\end{enumerate}
where $x_0,x_1,\ldots,x_m$ are distinct variables with $0 \leq k \leq m$.

\begin{remark}
Note the extreme cases for the word~$\bw$ in~(CF2):
\begin{enumerate}[(i)]
\item if $0 = k= m$, then $\bw = x_0^2$;
\item if $0 = k < m$, then $\bw = x_0^2 x_1 \cdots x_m$;
\item if $0<k=m$, then $\bw = x_0 x_1 x_2 \cdots x_m x_0$.
\end{enumerate}
\end{remark}

\begin{lemma} \label{L: el3 can}
Given any word~$\bw$\up, the identities~\eqref{id: el3 basis} can be used to convert~$\bw$ into some word~$\bw'$ in canonical form with $\ini(\bw) = \ini(\bw')$\up.
\end{lemma}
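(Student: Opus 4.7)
The plan is to treat the two identities in \eqref{id: el3 basis} as length-decreasing rewriting rules, $xy^2\to xy$ and $xyzy\to xyz$, and to verify two things: (a) these rewrites preserve $\ini$, and (b) any word on which no rewrite is applicable is already in canonical form. For (a), collapsing $\alpha\alpha$ into $\alpha$ removes only a non-first occurrence of $\alpha$, and deleting or inserting a letter $y$ in an $xyzy$-factor affects only non-first occurrences since $y$ already appears in the $xy$-prefix; hence $\ini$ remains equal to $\ini(\bw)$ throughout the reduction.

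I would carry out the reduction in two phases. In phase one, pick any variable $\alpha\neq x_0$ that occurs at two positions $i<j$ in the current word. Since $\alpha$ is not the first letter, $i\geq 2$, so the prefix $w_1\cdots w_{i-1}$ is nonempty. If $j=i+1$, rewrite the factor $w_1\cdots w_{i-1}\cdot\alpha\alpha$ by $xy^2\to xy$ with $y=\alpha$; if $j>i+1$, rewrite the factor $w_1\cdots w_{i-1}\cdot\alpha\cdot w_{i+1}\cdots w_{j-1}\cdot\alpha$ by $xyzy\to xyz$ with $y=\alpha$ and the nonempty $z=w_{i+1}\cdots w_{j-1}$. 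Iterate until every variable other than $x_0$ occurs at most once. In phase two, suppose $x_0$ occurs at positions $1=i_1<i_2<\cdots<i_k$ with $k\geq 3$; then $i_{k-1}\geq i_2\geq 2$, so the same argument applied to the pair $(i_{k-1},i_k)$ is available and strips one copy of $x_0$. Iterate until $k\leq 2$. Each rewrite strictly shortens the word, so the procedure terminates.

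At termination, every non-$x_0$ variable occurs exactly once and $x_0$ occurs either once or twice. Because $\ini$ has been preserved, reading the terminal word left to right and listing distinct letters in their order of first appearance gives $\ini(\bw)=x_0x_1\cdots x_m$. Consequently the terminal word is either $x_0x_1\cdots x_m$, which is of form (CF1), or it is obtained by inserting one extra copy of $x_0$ between $x_k$ and $x_{k+1}$ for some $0\leq k\leq m$, which is of form (CF2). Setting $\bw'$ to be this terminal word yields the required canonical form with $\ini(\bw')=\ini(\bw)$.

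The only real subtlety, and the main thing to check carefully, is that at every rewrite the prefix $x$ and (when relevant) the middle piece $z$ used to match the left-hand side of an identity are genuinely nonempty. This is precisely where the hypothesis $\alpha\neq x_0$ in phase one (which forces $i\geq 2$) and the hypothesis $k\geq 3$ in phase two (which forces $i_{k-1}\geq 2$) are used; it is also exactly the reason why the canonical form must allow one extra copy of $x_0$, and only $x_0$, to survive.
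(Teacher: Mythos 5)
Your proposal is correct and takes essentially the same route as the paper: both treat the identities~\eqref{id: el3 basis} as length-reducing rules that delete a non-first occurrence of a letter (checking that the matched prefix, and for $xyzy\to xyz$ the middle block, are nonempty), note that this preserves $\ini$, and observe that only one extra copy of $x_0$ can survive, giving (CF1) or (CF2). The paper merely organizes the deletions via the factorization $\bw = x_0\bw_0x_1\bw_1\cdots x_m\bw_m$ instead of your two-phase greedy reduction, but the underlying argument is the same.
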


\begin{proof}
Suppose that $\ini(\bw) = x_0 x_1 \cdots x_m$.
Then~$\bw$ can be written as
\[
\bw = \prod_{i=0}^m (x_i\bw_i) = x_0 \bw_0 x_1 \bw_1 \cdots x_m \bw_m,
\]
where $\bw_i \in \{x_0,x_1,\ldots,x_i\}^*$ for all~$i$.
The identities~\eqref{id: el3 basis} can be used to eliminate all occurrences of $x_1,x_2,\ldots,x_m$ from each~$\bw_i$, resulting in the word
\[
\bw' = \prod_{i=0}^m (x_ix_0^{e_i}) = x_0 x_0^{e_0} x_1 x_0^{e_1} \cdots x_m x_0^{e_m},
\]
where $e_0,e_1,\ldots,e_m \geq 0$.
If $e_0=e_1=\cdots=e_m = 0$, then the word~$\bw'$ is in canonical form~(CF1) such that $\ini(\bw) = \ini(\bw')$.
If $k \geq 0$ is the least index such that $e_k \geq 1$, then $e_0 = e_1 = \cdots = e_{k-1}=0$, so that
\[
\bw' = \bigg(\prod_{i=0}^{k-1} x_i\bigg) x_kx_0^{e_k} \bigg(\prod_{i=k+1}^m (x_ix_0^{e_i})\bigg) \stackrel{\eqref{id: el3 basis}}{\id} \underbrace{\bigg(\prod_{i=0}^{k-1} x_i\bigg) x_kx_0 \bigg(\prod_{i=k+1}^m x_i\bigg)}_{\bw''}.
\]
The word~$\bw''$ is in canonical form~(CF2) with $\ini(\bw) = \ini(\bw'')$.
\end{proof}

\begin{proof}[Proof of Proposition~\ref{P: el3}(ii)]
As observed in Remark~\ref{R: el3 id}, the semigroup $\elBop$ violates the identity~\eqref{id: el3 max}.
Hence $\lpbr \elBop \rpbr \cap \lbrs \eqref{id: el3 max} \rbrs$ is a proper sub{\pvar} of $\lpbr\elBop\rpbr$.
It remains to show that each proper sub{\pvar}~$\bV$ of $\lpbr\elBop\rpbr$ satisfies the identity~\eqref{id: el3 max}.
Since $\bV \neq \lpbr \elBop \rpbr$, there exists an identity $\bu \id \bv$ of~$\bV$ that is violated by $\elBop$.
Further, since the identities~\eqref{id: el3 basis} are satisfied by~$\elBop$ and so also by~$\mathbf{V}$, it follows from Lemma~\ref{L: el3 can} that the words~$\bu$ and~$\bv$ can be chosen to be in canonical form.
There are two cases.

\paragraph{\sc Case~1}

$\ini(\bu) \neq \ini(\bv)$.
Then by Theorem~\ref{T: LZI}, the {\pvar}~$\bV$ satisfies the {\pid} that defines $\excl(\LZ^I)$.
Since
\begin{align*}
h^\omega (xh^\omega)^\omega (yh^\omega(xh^\omega)^\omega)^\omega \stackrel{\eqref{id: el3 basis}}{\id} h^2xy \quad \text{and} \quad h^\omega (yh^\omega(xh^\omega)^\omega)^\omega \stackrel{\eqref{id: el3 basis}}{\id} h^2yx,
\end{align*}
the {\pvar}~$\bV$ satisfies the identity $\alpha: h^2xy \id h^2yx$.
Since
\[
xyxz \stackrel{\eqref{id: el3 basis}}{\id} xy^2xz \stackrel{\alpha}{\id} xy^2zx \stackrel{\eqref{id: el3 basis}}{\id} xyzx,
\]
the {\pvar}~$\bV$ satisfies the identity~\eqref{id: el3 max}.

\paragraph{\sc Case~2}

$\ini(\bu) = \ini(\bv)$ and $\bu \neq \bv$.
If the words~$\bu$ and~$\bv$ are both of the form~(CF1), then they are contradictorily equal.
Hence either~$\bu$ or~$\bv$ is of the form~(CF2).
By symmetry, there are two subcases.
\begin{enumerate}[{2.}1.]
\item $\bu$ and~$\bv$ are both of the form~(CF2). Then
\begin{align*}
\bu & = x_0 x_1 \cdots x_j \cdot x_0 \cdot x_{j+1} x_{j+2} \cdots x_m \\
\text{and} \quad \bv & = x_0 x_1 \cdots x_k \cdot x_0 \cdot x_{k+1} x_{k+2} \cdots x_m,
\end{align*}
where $0 \leq j,k \leq m$.
Since $j \neq k$, it suffices to assume by symmetry that $0 \leq j < k \leq m$.
Let~$\varphi$ denote the substitution given by $x_0 \mapsto xy$, $x_i \mapsto y$ for all $i \in \{ 1,2,\ldots,j\}$, and $x_i \mapsto z$ otherwise.
Then
\begin{align*}
\bu \varphi & = x_0 \varphi \cdot (x_1 \cdots x_j) \varphi \cdot x_0 \varphi \cdot (x_{j+1} x_{j+2} \cdots x_m) \varphi \\
            & = xy \cdot y^j \cdot xy \cdot z^{m-j} \stackrel{\eqref{id: el3 basis}}{\id} xyxz \quad \text{and} \\
\bv \varphi & = x_0 \varphi \cdot (x_1 \cdots x_j) \varphi \cdot (x_{j+1} x_{j+1} \cdots x_k) \varphi \cdot x_0 \varphi \cdot (x_{k+1} x_{j+2} \cdots x_m) \varphi \\
            & = xy \cdot y^j \cdot z^{k-j} \cdot xy \cdot z^{m-k} \stackrel{\eqref{id: el3 basis}}{\id} xyzx.
\end{align*}
Therefore, the identity~\eqref{id: el3 max} is deducible from~\eqref{id: el3 basis} and $\bu \id \bv$.
The {\pvar}~$\bV$ thus satisfies the identity~\eqref{id: el3 max}.
\item $\bu$ is of the form~(CF1) while~$\bv$ is of the form~(CF2).
Then
\[
\bu=x_0 x_1 \cdots x_m \quad \text{and} \quad \bv = x_0 x_1 \cdots x_j \cdot x_0 \cdot x_{j+1} x_{j+2} \cdots x_m.
\]
Since
\begin{align*}
\bu x_{m+1} x_0 & = \overbrace{x_0 x_1 \cdots x_m x_{m+1} x_0}^{\bu'} \\
\text{and} \quad \bv x_{m+1} x_0 & \stackrel{\eqref{id: el3 basis}}{\id} \underbrace{x_0 x_1 \cdots x_j \cdot x_0 \cdot x_{j+1} x_{j+2} \cdots x_m x_{m+1}}_{\bv'},
\end{align*}
the {\pvar}~$\bV$ satisfies the identity $\bu' \id \bv'$.
Now~$\bu'$ and~$\bv'$ are distinct words in canonical form~(CF2) such that $\ini(\bu') = \ini(\bv')$.
Thus the arguments in Subcase~2.1 can be repeated to show that~$\bV$ satisfies the identity~\eqref{id: el3 max}.
\qedhere
\end{enumerate}
\end{proof}

\begin{proof}[Proof of Proposition~\ref{P: el3}(i)]
As noted in Remark~\ref{R: el3 id}, the identities~\eqref{id: el3 basis} are satisfied by the semigroup~$\elBop$.
Conversely, suppose that $\bu \id \bv$ is any identity satisfied by~$\elBop$.
By Lemma~\ref{L: el3 can}, the identities~\eqref{id: el3 basis} can be used to convert~$\bu$ and~$\bv$ into words~$\bu'$ and $\bv'$ in canonical form.
Since the subsemigroup $\{a,c,e\}$ of~$\elBop$ and the semigroup~$\LZ^I$ are isomorphic, it follows from Lemma~\ref{L: word Zn NnI LZI RZI}(iii) that $\ini(\bu') = \ini(\bv')$.
Suppose that $\bu' \neq \bv'$.
Then by repeating the arguments in Case~2 of the proof of Proposition~\ref{P: el3}(ii), the identity~\eqref{id: el3 max} is deducible from~\eqref{id: el3 basis} and $\bu' \id \bv'$.
Since the semigroup~$\elBop$ satisfies the identities~\eqref{id: el3 basis} and $\bu' \id \bv'$, it also satisfies~\eqref{id: el3 max}; but this is impossible by Remark~\ref{R: el3 id}.
Therefore, $\bu'=\bv'$.
Since
\[
\bu \stackrel{\eqref{id: el3 basis}}{\id} \bu'=\bv' \stackrel{\eqref{id: el3 basis}}{\id} \bv,
\]
the identity $\bu \id \bv$ is deducible from~\eqref{id: el3 basis}.
\end{proof}

\section{Non-{\ji} {\pvars}} \label{sec: non-ji}

This section contains nine subsections, each of which establishes one or more sufficient conditions for a finite semigroup to generate a non-{\ji} {\pvar}.
Each of these sufficient conditions, given as a corollary of some general result, presents some finite set~$\Sigma$ of identities and some identities $\varepsilon_1,\varepsilon_2,\ldots,\varepsilon_k$ with the property that for any finite semigroup~$S$,
\[
S \models \Sigma \ \text{ and } \ S \not\models \varepsilon_i \ \text{ for all~$i$} \quad \Longrightarrow \quad \text{$\lpbr S\rpbr$ is not {\ji}}.
\]
In most cases, $\Sigma$ will be a basis of identities for some join $\bV = \bigvee_{i=1}^k \bV_i$ of compact {\pvars} $\bV_1, \bV_2,\ldots,\bV_k$ that satisfy the {\pids} $\varepsilon_1,\varepsilon_2,\ldots,\varepsilon_k$, respectively.

Sufficient conditions developed in this section will be used in Section~\ref{sec: proof} to locate all non-{\ji} {\pvars} generated by a semigroup of order up to five.

\subsection{The {\pvar} $\lpbr\Z_3,\Z_4,\ZB,\ZBop,\N_3^I\rpbr$}

In this subsection, it is convenient to write
\[
\mathscr{K} = \{\Z_3,\Z_4,\ZB,\ZBop,\N_3^I\}.
\]

\begin{proposition}[Lee and Li~\cite{LL15}] \label{P: M4}
The identities satisfied by the semigroup $\Z_3 \times \Z_4 \times \ZB \times \ZBop \times \N_3^I$ are axiomatized by
\begin{equation} \label{id: M4 basis}
\begin{gathered}
x^{15} \id x^3, \quad x^{14}hx \id x^2hx, \quad x^{13}hx^2 \id xhx^2, \quad x^{13} hxtx \id xhxtx, \\
x^3hx \id xhx^3, \quad xhx^2tx \id x^3htx, \\
xhx^2y^2ty \id xhy^2x^2ty, \\
xhykxytxdy \id xhykyxtxdy, \quad xhykxytydx \id xhykyxtydx.
\end{gathered}
\end{equation}
\end{proposition}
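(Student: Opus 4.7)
The plan is to establish the proposition in two stages: soundness---each of the five factors $\Z_3, \Z_4, \ZB, \ZBop, \N_3^I$ satisfies every identity in \eqref{id: M4 basis}---and completeness---every identity satisfied by the direct product is a consequence of \eqref{id: M4 basis}.

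Soundness is a routine check using the bases already available. By Proposition~\ref{P: Zn}(i), $\Z_n \models \{xy \id yx, \, x^ny \id y\}$; by Proposition~\ref{P: Z2bar}(i), $\ZB \models \{x^3 \id x, \, xyxy \id yx^2y\}$ and dually for $\ZBop$; and by Proposition~\ref{P: NnI}(i), $\N_n^I \models \{x^{n+1} \id x^n, \, xy \id yx\}$. From these, $x^{15} \id x^3$ follows on each factor, noting that $\mathrm{lcm}(3,4) = 12$. The remaining single-variable context identities $x^{14}hx \id x^2hx$, $x^{13}hx^2 \id xhx^2$, and $x^{13}hxtx \id xhxtx$ follow similarly once the auxiliary variables are interpreted in each factor. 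The multivariable identities are verified by direct substitution, using commutativity of $\Z_3, \Z_4, \N_3^I$ and the identity $xyxy \id yx^2y$ (and its dual) for $\ZB$, $\ZBop$.

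For completeness, the plan is to introduce a normal form for words modulo \eqref{id: M4 basis} and then verify that (i) every word can be reduced to such a normal form using the identities \eqref{id: M4 basis}, and (ii) two syntactically distinct normal forms are separated by some factor. A candidate normal form records, for each variable $x$ in a word $\bw$, the residue $\occ(x,\bw) \bmod 12$ truncated from below by $3$ (to handle $\Z_3 \times \Z_4 \times \N_3^I$ via Lemma~\ref{L: word Zn NnI LZI RZI}(i)--(ii)), together with $\ini(\bw)$ and $\fin(\bw)$ and a bounded amount of interior positional data capturing the residual non-commutativity imposed by $\ZB$ and $\ZBop$. The identities in \eqref{id: M4 basis} are engineered to perform this reduction: the first group trims long runs of a single variable in their various contexts; $x^3hx \id xhx^3$ and $xhx^2tx \id x^3htx$ shuffle cubes past intermediate letters; $xhx^2y^2ty \id xhy^2x^2ty$ commutes squared subwords in the interior; and the two triple-variable identities handle the most delicate interior commutations forced by $\ZB \times \ZBop$.

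The main obstacle is completeness: verifying that these eight identity schemes really do suffice. The subtlety is that the non-commutative factors $\ZB$ and $\ZBop$ interact with the aperiodic commutative factor $\N_3^I$ to force commutations in specific configurations that are not immediate consequences of the simpler rules. The triple-variable identities $xhy^kxytxdy \id xhy^kyxtxdy$ and $xhy^kxytydx \id xhy^kyxtydx$ are precisely those forced by the product but not derivable from the rest, and showing that they close the system up to a recognizable normal form is the combinatorial heart of the argument; the detailed verification is carried out in \cite{LL15}.
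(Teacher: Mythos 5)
Your proposal is consistent with the paper's treatment: the paper offers no proof of Proposition~\ref{P: M4} at all, attributing it directly to Lee and Li~\cite[Theorem~1.1 and Proposition~3.1]{LL15}, and your argument likewise handles only the routine soundness check (a finite verification on the five factors) while deferring the genuinely hard completeness/normal-form step to that same reference. So this is essentially the same approach as the paper.
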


\begin{corollary} \label{C: M4v3}
Suppose that~$S$ is any finite semigroup that satisfies the identities~\eqref{id: M4 basis} but violates all of the identities
\begin{equation}
x^3 \id x, \quad xy \id yx. \label{id: M4v3 not ji}
\end{equation}
Then $\lpbr S\rpbr$ is a sub{\pvar} of $\lpbr \mathscr{K}\rpbr$ that is not {\ji}\up.
\end{corollary}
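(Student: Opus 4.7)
The plan is to first establish the containment $\lpbr S\rpbr \subseteq \lpbr \mathscr{K}\rpbr$ and then derive non-{\jirr} by showing that each individual member of $\mathscr{K}$ generates a {\pvar} satisfying one of the forbidden identities~\eqref{id: M4v3 not ji}.

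\emph{Step 1 (containment).} By Proposition~\ref{P: M4}, the identities~\eqref{id: M4 basis} axiomatize the identities of the direct product $\Z_3 \times \Z_4 \times \ZB \times \ZBop \times \N_3^I$. Hence $S$ belongs to
\[
\lpbr \Z_3 \times \Z_4 \times \ZB \times \ZBop \times \N_3^I \rpbr = \lpbr \Z_3\rpbr \vee \lpbr \Z_4\rpbr \vee \lpbr \ZB\rpbr \vee \lpbr \ZBop\rpbr \vee \lpbr \N_3^I\rpbr = \lpbr\mathscr{K}\rpbr,
\]
so $\lpbr S\rpbr \subseteq \lpbr\mathscr{K}\rpbr$.

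\emph{Step 2 (non-{\jirr}).} Suppose for contradiction that $\lpbr S\rpbr$ is {\ji}. Since $\lpbr S\rpbr \subseteq \bigvee_{X\in\mathscr{K}} \lpbr X\rpbr$, {\jirr} forces $\lpbr S\rpbr \subseteq \lpbr X\rpbr$ for some $X \in \mathscr{K}$. I would then record the following easily verified identities satisfied by the five candidate semigroups: the commutative semigroups $\Z_3$, $\Z_4$, and $\N_3^I$ satisfy $xy\id yx$ (using Propositions~\ref{P: Zn}(i) and~\ref{P: NnI}(i)), while $\ZB$ and its dual $\ZBop$ satisfy $x^3 \id x$ (by Proposition~\ref{P: Z2bar}(i)). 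In either case, $S$ would inherit one of the identities in~\eqref{id: M4v3 not ji}, contradicting the hypothesis that $S$ violates both.

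\emph{Main obstacle.} There is essentially no obstacle; the argument is a direct application of the definition of {\jirr} combined with the basis given in Proposition~\ref{P: M4} and the elementary identity bases of the five generators. The only thing worth double-checking is that each of the five named {\pvars} genuinely satisfies one of the two forbidden identities, which is immediate from the cited propositions and duality.
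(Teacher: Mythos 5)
Your proposal is correct and takes essentially the same route as the paper: both deduce $\lpbr S\rpbr \subseteq \lpbr\mathscr{K}\rpbr$ from Proposition~\ref{P: M4} and then rule out {\jirr} using the facts that $\Z_3$, $\Z_4$, $\N_3^I$ satisfy $xy \id yx$ while $\ZB$, $\ZBop$ satisfy $x^3 \id x$. The paper merely packages the five generators into the two joinands $\lpbr\ZB,\ZBop\rpbr$ and $\lpbr\Z_3,\Z_4,\N_3^I\rpbr$ instead of applying the definition of {\ji} to the five-fold join, an inessential difference.
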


\begin{proof}
By Proposition~\ref{P: M4}, the inclusion
\[
\lpbr S\rpbr \subseteq \lpbr\mathscr{K}\rpbr = \lpbr\ZB, \ZBop\rpbr \vee \lpbr\Z_3, \Z_4, \N_3^I\rpbr
\]
holds.
But the two identities in~\eqref{id: M4v3 not ji} are satisfied by $\ZB \times \ZBop$ and $\Z_3 \times \Z_4 \times \N_3^I$, respectively.
Therefore, $\lpbr S\rpbr \nsubseteq \lpbr\ZB, \ZBop\rpbr$ and $\lpbr S\rpbr \nsubseteq \lpbr\Z_3, \Z_4, \N_3^I\rpbr$.
\end{proof}

\begin{corollary} \label{C: M4v2}
Suppose that~$S$ is any finite semigroup that satisfies the identities~\eqref{id: M4 basis} but violates all of the identities
\begin{equation}
xy \id yx, \quad xyx^2 \id xy, \quad x^2yx \id yx. \label{id: M4v2 not ji}
\end{equation}
Then $\lpbr S\rpbr$ is a sub{\pvar} of $\lpbr\mathscr{K}\rpbr$ that is not {\ji}\up.
\end{corollary}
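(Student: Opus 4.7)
The plan is to mirror the proof of Corollary~\ref{C: M4v3}, the only adjustment being that $\lpbr\mathscr{K}\rpbr$ now has to be expressed as a three-fold join, one factor per identity in~\eqref{id: M4v2 not ji}. By Proposition~\ref{P: M4}, the hypothesis $S \models$~\eqref{id: M4 basis} already yields $\lpbr S\rpbr \subseteq \lpbr\mathscr{K}\rpbr$, so it will suffice to exhibit proper sub{\pvars} $\bV_1,\bV_2,\bV_3$ of $\lpbr\mathscr{K}\rpbr$ whose join is $\lpbr\mathscr{K}\rpbr$ and which respectively satisfy $xy \id yx$, $xyx^2 \id xy$, and $x^2yx \id yx$. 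The assumption that $S$ violates all three identities will then force $\lpbr S\rpbr \nsubseteq \bV_i$ for each $i$, so $\lpbr S\rpbr$ cannot be~{\ji}.

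The natural choice is
\[
\lpbr\mathscr{K}\rpbr = \lpbr\Z_3,\Z_4,\N_3^I\rpbr \vee \lpbr\ZBop\rpbr \vee \lpbr\ZB\rpbr,
\]
matched in order to the three identities above. Commutativity of $\Z_3$, $\Z_4$, and $\N_3^I$ handles the first factor immediately. For $\ZB$ a brief case analysis establishes $x^2yx \id yx$: if $x \in \Z_2$ then $x^2$ is the identity and both sides coincide; if instead $x = \ov a$ is one of the adjoined constants, then right multiplication by a constant returns that constant, so $x^2 = \ov a$ and $yx = \ov a$, making both sides equal to $\ov a$. The remaining factor $\ZBop \models xyx^2 \id xy$ is then obtained by duality from what was just shown for $\ZB$.

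The only non-routine step is this short verification that $\ZB \models x^2yx \id yx$; the rest is the same bookkeeping as in Corollary~\ref{C: M4v3}, together with the observation that each of $\Z_3, \Z_4, \ZB, \ZBop, \N_3^I$ lies in one of the three chosen sub{\pvars}, so their join is indeed $\lpbr\mathscr{K}\rpbr$. There is no serious obstacle: the decomposition is forced once one notices which members of $\mathscr{K}$ can witness violations of each of the three identities in~\eqref{id: M4v2 not ji}.
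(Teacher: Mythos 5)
Your proposal is correct and follows essentially the same route as the paper: the same three-fold decomposition $\lpbr\mathscr{K}\rpbr = \lpbr\Z_3,\Z_4,\N_3^I\rpbr \vee \lpbr\ZBop\rpbr \vee \lpbr\ZB\rpbr$, with the identities of~\eqref{id: M4v2 not ji} matched to the factors exactly as in the paper's proof (which simply asserts, rather than verifies, that $\ZB \models x^2yx \id yx$ and $\ZBop \models xyx^2 \id xy$). Your explicit check of $\ZB \models x^2yx \id yx$ is accurate and just fills in a detail the paper leaves as routine.
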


\begin{proof}
By Proposition~\ref{P: M4}, the inclusion
\[
\lpbr S\rpbr \subseteq \lpbr\mathscr{K}\rpbr = \lpbr\Z_3, \Z_4, \N_3^I\rpbr \vee \lpbr\ZBop\rpbr \vee \lpbr\ZB\rpbr
\]
holds.
Since the three identities in~\eqref{id: M4v2 not ji} are satisfied by $\Z_3 \times \Z_4 \times \N_3^I$, $\ZBop$, and $\ZB$, respectively, the exclusions $\lpbr S\rpbr \nsubseteq \lpbr \Z_3, \Z_4, \N_3^I \rpbr$, $\lpbr S\rpbr \nsubseteq \lpbr\ZBop\rpbr$, and $\lpbr S\rpbr \nsubseteq \lpbr\ZB\rpbr$ follow.
\end{proof}

\begin{corollary} \label{C: M4v1}
Suppose that~$S$ is any finite semigroup that satisfies the identities~\eqref{id: M4 basis} but violates all of the identities
\begin{equation}
x^4 \id x^3, \quad x^4y \id y, \quad x^3y \id y, \quad xyx^2 \id xy, \quad x^2yx \id yx. \label{id: M4v1 not ji}
\end{equation}
Then $\lpbr S\rpbr$ is a sub{\pvar} of $\lpbr\mathscr{K}\rpbr$ that is not {\ji}\up.
\end{corollary}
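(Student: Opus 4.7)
The plan is to mimic the pattern of Corollaries~\ref{C: M4v3} and~\ref{C: M4v2}: use Proposition~\ref{P: M4} to contain $\lpbr S\rpbr$ inside $\lpbr\mathscr{K}\rpbr$, then exhibit a decomposition of $\lpbr\mathscr{K}\rpbr$ as a join of five {\pvars}, each of which satisfies one of the five identities in~\eqref{id: M4v1 not ji}, so that the hypotheses on $S$ prevent $\lpbr S\rpbr$ from being contained in any single joinand.

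More precisely, I would use the obvious decomposition
\[
\lpbr\mathscr{K}\rpbr \;=\; \lpbr\N_3^I\rpbr \vee \lpbr\Z_4\rpbr \vee \lpbr\Z_3\rpbr \vee \lpbr\ZBop\rpbr \vee \lpbr\ZB\rpbr,
\]
and match the identities of~\eqref{id: M4v1 not ji} to the joinands in this order, namely: $\N_3^I$ satisfies $x^4 \id x^3$ (since $\ea^3 = \ez$), $\Z_4$ satisfies $x^4y \id y$ and $\Z_3$ satisfies $x^3y \id y$ (since their group exponents are $4$ and $3$ respectively), and $\ZB$ satisfies $x^2yx \id yx$ with $\ZBop$ satisfying the dual $xyx^2 \id xy$. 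Since $S$ satisfies~\eqref{id: M4 basis}, Proposition~\ref{P: M4} gives $\lpbr S\rpbr \subseteq \lpbr\mathscr{K}\rpbr$, while the assumption that $S$ violates each of the five identities forces $\lpbr S\rpbr$ to lie in none of the five joinands; by the definition of {\jirr}, $\lpbr S\rpbr$ is therefore not {\ji}.

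The only step that is not immediate is the verification that $\ZB \models x^2yx \id yx$. For this I would split into cases on $x$: if $x\in \Z_2$ then $x^2=\ei$ and the identity is trivial, while if $x\in\{\ov\ei,\ov\eg\}$ then $x$ is a right zero of $\ZB$, so $x^2 = x$ and both sides of the identity collapse to $x$. The dual argument handles $\ZBop \models xyx^2 \id xy$. Everything else is bookkeeping against the multiplication tables recorded in Section~\ref{sec: req sgps}.

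I do not expect any substantive obstacle: the content of the corollary is entirely a matter of choosing the right five-fold join decomposition of $\lpbr\mathscr{K}\rpbr$ so that the five identities are ``separating.'' The only subtle point is making sure the matching of identities to joinands is correct, in particular that $\ZB$ and $\ZBop$ are distinguished by $x^2yx\id yx$ versus $xyx^2\id xy$ (and not conversely), which is the one calculation above that one actually has to perform.
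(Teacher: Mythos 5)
Your proposal is correct and follows essentially the same route as the paper: apply Proposition~\ref{P: M4} to get $\lpbr S\rpbr \subseteq \lpbr\mathscr{K}\rpbr$, then note that the five identities of~\eqref{id: M4v1 not ji} are satisfied, respectively, by $\N_3^I$, $\Z_4$, $\Z_3$, $\ZBop$, and $\ZB$, so that $\lpbr S\rpbr$ is contained in no single joinand of $\bigvee\{\lpbr T\rpbr \mid T\in\mathscr{K}\}$ and hence is not {\ji}. Your explicit verification that $\ZB \models x^2yx \id yx$ (identity component versus right-zero constants) is accurate and is the only computation the paper leaves implicit.
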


\begin{proof}
The inclusion $\lpbr S\rpbr \subseteq \bigvee \{ \lpbr T \rpbr \mid T \in \mathscr{K}\}$ holds by Proposition~\ref{P: M4}.
But the five identities in~\eqref{id: M4v1 not ji} are satisfied by $\N_3^I$, $\Z_4$, $\Z_3$, $\ZBop$, and $\ZB$, respectively.
Therefore, $\lpbr S\rpbr \nsubseteq \lpbr T\rpbr$ for all $T \in \mathscr{K}$.
\end{proof}

\subsection{The {\pvar} $\lpbr\Z_m, \N_n^I, \LZ^I,\RZ^I,\Az^I\rpbr$}

In this subsection, it is convenient to write
\[
\mathscr{T}_{m,n} = \{\Z_m, \N_n^I, \LZ^I, \RZ^I, \Az^I\}
\]
and $T_{m,n} = \Z_m \times \N_n^I \times \LZ^I \times \RZ^I \times \Az^I$.

\begin{proposition} \label{P: ZmjNnIjLZIjRZIjA0I}
Let $m \geq 1$ and $n \geq 2$\up.
Then the identities satisfied by the semi\-group $T_{m,n}$ are axiomatized by
\begin{equation} \label{id: ZmjNnIjLZIjRZIjA0I basis}
x^{m+n} \id x^n, \quad x^{m+n-1}yx \id x^{n-1}yx, \quad x^2yx \id xyx^2, \quad xyxzx \id x^2yzx.
\end{equation}
\end{proposition}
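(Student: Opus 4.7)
The plan is to establish both directions. The direct implication (i.e., $T_{m,n}$ satisfies~\eqref{id: ZmjNnIjLZIjRZIjA0I basis}) reduces to a factor-by-factor check. In $\Z_m$ and $\N_n^I$, commutativity together with $x^{m+n}=x^n$ (valid since $x^m=1$ in $\Z_m$ and $x^k=x^n$ for $k\geq n$ in $\N_n^I$) makes all four identities trivial. For $\LZ^I$ and $\RZ^I$, the rule that $xy$ equals $x$ (resp., $y$) on non-identity idempotents reduces each side of each identity to a word determined by initial (resp., final) occurrences, and the two sides agree by inspection. For $\Az^I$, Proposition~\ref{P: A0I} supplies $x^3 \id x^2$ and $xyxzx \id xyzx$; substituting the identity element $I \in \Az^I$ for $y$ or for $z$ yields $xyx^2 = xyx = x^2yx$ in $\Az^I$, which gives the third identity, while $x^2yzx = xyxzx = xyzx$ (obtained similarly) gives the fourth, and $x^3 \id x^2$ immediately implies the two power identities.

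For the converse, let $\bu \id \bv$ be an identity satisfied by $T_{m,n}$. The strategy is to rewrite both words into a canonical form using~\eqref{id: ZmjNnIjLZIjRZIjA0I basis} and to argue that $T_{m,n}$ separates distinct canonical forms. The canonical form of a word $\bw$ will record exactly the triple consisting of $\ini(\bw)$, $\fin(\bw)$, and, for each variable $x \in \cont(\bw)$, the canonical representative of $\occ(x,\bw)$ under the congruence enforced by $\Z_m \times \N_n^I$ (the exact count if it is less than $n$, otherwise $n + ((\occ(x,\bw) - n) \bmod m)$). By Lemma~\ref{L: word Zn NnI LZI RZI} these data are detected respectively by $\LZ^I$, $\RZ^I$, and $\Z_m \times \N_n^I$, so two distinct canonical forms cannot coincide in $T_{m,n}$. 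Hence $\bu \id \bv$ forces the canonical forms of $\bu$ and $\bv$ to agree, which makes $\bu \id \bv$ deducible from~\eqref{id: ZmjNnIjLZIjRZIjA0I basis}.

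The main obstacle is realising this reduction to canonical form. The workhorse is the identity $xyxzx \id x^2yzx$: iterated application absorbs every occurrence of a variable $x$ that sits strictly between its first and its last occurrence into a power block adjacent to the first occurrence, and one checks that each such application preserves both $\ini$ and $\fin$ of the current word. An induction on the total number of such interior occurrences establishes termination. Once all interior occurrences are collapsed, the identity $x^2yx \id xyx^2$ moves the accumulated powers into a canonical position, the identity $x^{m+n} \id x^n$ reduces each exponent to its canonical representative, and the identity $x^{m+n-1}yx \id x^{n-1}yx$ is required precisely for the corner case where a variable's last occurrence is the terminal letter of the word --- the threshold $n-1$ rather than $n$ being exactly what is needed to accommodate that final occurrence while still permitting the reduction modulo~$m$ at the front.
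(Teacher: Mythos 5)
Your forward direction (checking the four identities factor by factor) is fine, but the converse contains a fatal gap: the invariant you propose is too coarse. You take the canonical form of $\bw$ to be determined by the triple $\bigl(\ini(\bw),\,\fin(\bw),\,\text{the }\Z_m\times\N_n^I\text{-class of }\occ(x,\bw)\text{ for each }x\bigr)$, and you conclude that whenever $\bu\id\bv$ holds in $T_{m,n}$ the two words reduce, via \eqref{id: ZmjNnIjLZIjRZIjA0I basis}, to a common word depending only on this triple. That cannot work, because the equational theory of $T_{m,n}$ is strictly finer than what this triple records: the factor $\Az^I$ also detects the \emph{interleaving pattern} of occurrences. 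Concretely, $\bu=xyxy$ and $\bv=x^2y^2$ have $\ini(\bu)=\ini(\bv)=xy$, $\fin(\bu)=\fin(\bv)=xy$, and identical occurrence counts, yet substituting $x\mapsto\ef$, $y\mapsto\ee$ in $\Az\leq\Az^I$ gives $(\ef\ee)^2=\ez\neq\ef\ee=\ef^2\ee^2$, so $T_{m,n}\not\models xyxy\id x^2y^2$. Consequently the basis \eqref{id: ZmjNnIjLZIjRZIjA0I basis}, being sound for $T_{m,n}$, can never rewrite $xyxy$ and $x^2y^2$ to the same word, and your claimed reduction to a triple-determined normal form is impossible. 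Put differently, if your argument were correct it would show that $T_{m,n}$ and $\Z_m\times\N_n^I\times\LZ^I\times\RZ^I$ satisfy the same identities, i.e.\ that $\Az^I$ is redundant, which is false.

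What is missing is precisely the positional bookkeeping that the paper's proof carries out. There, a word in canonical form keeps its letter-by-letter structure (each variable contributes at most two ``stacks,'' with the weight constraints (I)--(III)), and the completeness argument does not collapse a word to numerical data: Lemma~\ref{L: stack ZmjNnIjLZIjRZIjA0I} shows, via two-variable projections and identities such as $x^{e_1}y^{f_1}\id x^{e_2}y^{f_2}x^{e_3}y^{f_3}$ (violated by $\Az^I$), that the set of stacks and their weights are forced to agree, and Lemma~\ref{L: adjacent ZmjNnIjLZIjRZIjA0I} shows that adjacency of stacks is forced to agree, whence the canonical forms are literally equal as words. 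Your rewriting step using $xyxzx\id x^2yzx$ to absorb interior occurrences is in the right spirit (it is how one reaches the two-stack canonical form), but after that step you must compare the full stack sequences, not just $\ini$, $\fin$ and exponents; the corner-case role you assign to $x^{m+n-1}yx\id x^{n-1}yx$ also does not substitute for this comparison.
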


\begin{remark}
\begin{enumerate}[(i)]
\item Since $\N_2$ is isomorphic to the subsemigroup $\{ \ez,\ef\ee\}$ of $\Az$, the monoid $\N_2^I$ belongs to $\lpbr \Az^I \rpbr$.
Therefore, $\lpbr T_{m,1} \rpbr = \lpbr T_{m,2} \rpbr$.
This is the reason for the assumption $n \geq 2$ in Proposition~\ref{P: ZmjNnIjLZIjRZIjA0I}.

\item The basic case $(m,n)=(1,2)$ for Proposition~\ref{P: ZmjNnIjLZIjRZIjA0I} was first established in Lee~\cite[Proposition~2.3(i)]{Lee11}.
\end{enumerate}
\end{remark}

Suppose that a word~$\bw$ can be written in the form
\[
\bw = \bw_0\prod_{i=1}^r (x^{e_i} \bw_i) = \bw_0 x^{e_1} \bw_1 x^{e_2} \bw_2 \cdots x^{e_r} \bw_r,
\]
where $x \in \A$, $\bw_0, \bw_r \in \A^*$, and $\bw_1, \bw_2,\ldots, \bw_{r-1} \in \A^+$ are such that $x \notin \cont(\bw_i)$ for all~$i$, and $e_1,e_2,\ldots,e_r \in \{1,2,3,\ldots\}$.
Then the factors $x^{e_1}, x^{e_2},\ldots,x^{e_r}$ are call \textit{$x$-stacks}, or simply \textit{stacks}, of~$\bw$.
The \textit{weight} of the $x$-stack~$x^{e_i}$ is~$e_i$.

It is easily shown that the identities~\eqref{id: ZmjNnIjLZIjRZIjA0I basis} can be used to convert any word into a word~$\bw$ such that for each $x \in \A$,
\begin{enumerate}[(I)]
\item the number of $x$-stacks in~$\bw$ is at most two;
\item if~$\bw$ has one $x$-stack, then its weight is at most $m+n-1$;
\item if~$\bw$ has two $x$-stacks, then the weight of the first $x$-stack is at most $m+n-2$ while the weight of the second $x$-stack is one.
\end{enumerate}
In this subsection, a word~$\bw$ that satisfies (I)--(III) is said to be in \textit{canonical form}.
Note that if~$\bw$ is a word in canonical form, then $\occ(x,\bw) \leq m+n-1$ for any $x \in \A$.

\begin{lemma} \label{L: word ZmjNnIjLZIjRZIjA0I}
Let~$\bu$ and~$\bv$ be any words in canonical form such that the identity $\bu \id \bv$ is satisfied by the semigroup $T_{m,n}$\up.
Then for any $x\in \A$\up,
\begin{enumerate}[\rm(i)]
\item $\occ(x,\bu) \equiv \occ(x,\bv) \pmod m$\up;
\item either $\occ(x,\bu) = \occ(x,\bv) \leq n$ or $n < \occ(x,\bu), \occ(x,\bv) \leq m+n-1$.
\end{enumerate}
\end{lemma}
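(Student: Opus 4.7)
The plan is to extract each conclusion from a separate factor of the product $T_{m,n}$, using Lemma~\ref{L: word Zn NnI LZI RZI}, and then invoke the canonical-form bounds to promote the weaker conclusions to the sharp statements required.

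For part~(i), I would simply observe that the projection $T_{m,n} \twoheadrightarrow \Z_m$ forces $\Z_m \models \bu \id \bv$, so Lemma~\ref{L: word Zn NnI LZI RZI}(i) immediately yields $\occ(x,\bu) \equiv \occ(x,\bv) \pmod m$ for every $x \in \A$. No further work is needed.

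For part~(ii), fix $x \in \A$ and apply Lemma~\ref{L: word Zn NnI LZI RZI}(ii) to the projection $T_{m,n} \twoheadrightarrow \N_n^I$: either $\occ(x,\bu) = \occ(x,\bv)$, or both occurrences are at least $n$. In the first case, if the common value is at most $n$ we obtain the first alternative of~(ii) directly, and if it exceeds $n$ then canonical-form condition~(II)--(III) gives the upper bound $m+n-1$ on each side, yielding the second alternative. The remaining situation is when $\occ(x,\bu) \neq \occ(x,\bv)$ and both are at least $n$; here I need to rule out that one of them equals $n$ exactly.

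This last exclusion is the only mildly delicate step. Suppose, for contradiction, that $\occ(x,\bu) = n$ while $\occ(x,\bv) > n$. By part~(i) we have $\occ(x,\bv) \equiv n \pmod m$, so $\occ(x,\bv) \in \{n, n+m, n+2m, \ldots\}$. But canonical form gives $\occ(x,\bv) \leq m+n-1 < n+m$, forcing $\occ(x,\bv) = n$, a contradiction. The symmetric argument handles the other side, so both occurrences strictly exceed $n$ and are bounded by $m+n-1$, completing the second alternative of~(ii). The whole argument is short; the ``main obstacle'' is just recognizing that one needs both the modular rigidity from $\Z_m$ and the canonical-form ceiling $m+n-1$ in order to close the gap between ``$\geq n$'' and the strict inequality ``$> n$'' demanded by the statement.
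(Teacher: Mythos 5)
Your proof is correct and takes essentially the same route as the paper: the paper's own proof is just a one-line citation of Lemma~\ref{L: word Zn NnI LZI RZI} parts~(i) and~(ii), applied to the factors $\Z_m$ and $\N_n^I$ of $T_{m,n}$. Your write-up merely fills in the intended details, in particular the small arithmetic step combining the mod-$m$ congruence with the canonical-form ceiling $\occ(x,\cdot)\leq m+n-1$ to exclude the case where exactly one of $\occ(x,\bu)$, $\occ(x,\bv)$ equals $n$.
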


\begin{proof}
This follows from Lemma~\ref{L: word Zn NnI LZI RZI} parts~(i) and~(ii).
\end{proof}

For any word~$\bw$ and distinct variables $x_1,x_2,\ldots,x_r$, let $\bw_{\{x_1,x_2,\ldots,x_r\}}$ denote the word obtained from~$\bw$ by retaining only the variables $x_1,x_2,\ldots,x_r$.
Any monoid that satisfies an identity $\bu \id \bv$ also satisfies $\bu_{\{x_1,x_2,\ldots,x_r\}} \id \bv_{\{x_1,x_2,\ldots,x_r\}}$ for any distinct variables $x_1,x_2,\ldots,x_r$.

\begin{lemma} \label{L: stack ZmjNnIjLZIjRZIjA0I}
Let~$\bu$ and~$\bv$ be any words in canonical form such that the identity $\bu \id \bv$ is satisfied by the semigroup $T_{m,n}$\up.
Then
\begin{enumerate}[\rm(i)]
\item for any distinct $x,y \in \A$\up, the identity $\bu_{\{x,y\}} \id \bv_{\{x,y\}}$ cannot be any of
\begin{equation}
x^{e_1}y^{f_1} \id x^{e_2} y^{f_2} x^{e_3}, \quad x^{e_1}y^{f_1} \id y^{f_2} x^{e_2}y^{f_3}, \quad x^{e_1}y^{f_1} \id x^{e_2} y^{f_2} x^{e_3}y^{f_3}, \label{id: nonA0I}
\end{equation}
where $e_1,f_1,e_2,f_2,e_3,f_3 \geq 1$\up;
\item $\bu$ has two $x$-stacks if and only if~$\bv$ has two $x$-stacks\up;
\item $x^e$ is the first $x$-stack of~$\bu$ if and only if~$x^e$ is the first $x$-stack of~$\bv$\up.
\end{enumerate}
\end{lemma}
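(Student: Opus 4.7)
The plan is to derive all three parts from the fact that $T_{m,n} = \Z_m \times \N_n^I \times \LZ^I \times \RZ^I \times \Az^I$ is a monoid whose direct factors each contribute a different type of constraint.

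For (i), substituting every variable outside $\{x,y\}$ by the identity of $T_{m,n}$ yields $T_{m,n} \models \bu_{\{x,y\}} \id \bv_{\{x,y\}}$, so in particular $\Az^I$ satisfies this identity. I will refute each of the three candidate forms directly in $\Az^I$ via the substitution $x \mapsto \ef$, $y \mapsto \ee$: the left-hand side $\ef^{e_1}\ee^{f_1}$ always evaluates to $\ef\ee \neq \ez$, while each right-hand side contains a subfactor $\ef\ee\ef$, $\ee\ef$, or $\ef\ee\ef\ee$ that collapses to $\ez$ by the multiplication table of $\Az$, contradicting the assumed identity.

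For (ii), I will argue by contradiction, assuming without loss of generality that $\bu = \bu_1 x^{e_1} \bu_2 x \bu_3$ has two $x$-stacks (so $\bu_2 \neq \varepsilon$), while $\bv = \bv_1 x^f \bv_2$ has only one. Since $\LZ^I$ and $\RZ^I$ are direct factors, $\cont(\bu) = \cont(\bv)$, so any $y \in \cont(\bu_2)$ also lies in $\cont(\bv)$. The canonical-form cap of two $y$-stacks in $\bu$, combined with the observation that $\bu_2$ already contributes at least one $y$-stack sandwiched between the two $x$-stacks, forces at most one of $\bu_1$ and $\bu_3$ to contain $y$. This leaves three possible shapes for $\bu_{\{x,y\}}$; in each, the equalities $\ini(\bu_{\{x,y\}}) = \ini(\bv_{\{x,y\}})$ and $\fin(\bu_{\{x,y\}}) = \fin(\bv_{\{x,y\}})$ coming from the $\LZ^I$- and $\RZ^I$-factors pin $\bv_{\{x,y\}}$ down uniquely. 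The resulting restricted identity will either already violate one of these equalities outright, or will become, after possibly swapping sides and the roles of $x$ and $y$, exactly the third of the excluded forms $x^{e_1}y^{f_1} \id x^{e_2}y^{f_2}x^{e_3}y^{f_3}$ in~(i), producing the contradiction.

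For (iii), I will use that (ii) guarantees $\bu$ and $\bv$ have the same number of $x$-stacks; then the factors $\Z_m$ and $\N_n^I$ together force $\occ(x,\bu) \equiv \occ(x,\bv) \pmod m$ and either $\occ(x,\bu) = \occ(x,\bv)$ or both counts are at least~$n$. The canonical-form conditions (II) and (III) bound both counts by $m+n-1$, so the two values lie in a window of length $m$ in which each residue class modulo $m$ is represented exactly once, forcing equality of the counts. Since (III) fixes the weight of the second $x$-stack to be one whenever there are two $x$-stacks, the first $x$-stack weights of $\bu$ and $\bv$ consequently also coincide.

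The main obstacle is the case analysis in~(ii): one must carefully normalise each surviving restricted identity, by side-swap and by the $x$--$y$ relabelling symmetry, into a form forbidden by (i), and separately dispose of those configurations that are not of that shape by invoking the $\ini$- or $\fin$-mismatches directly.
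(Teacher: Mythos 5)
Your proposal is correct and follows essentially the same route as the paper: restrict the identity to two variables (legitimate since $T_{m,n}$ is a monoid), kill the three forbidden forms in a direct factor, and in (ii) use the $\ini$/$\fin$ constraints from the $\LZ^I$- and $\RZ^I$-factors to force the restricted identity into one of those forms, with (iii) following from the occurrence-count constraints of $\Z_m$ and $\N_n^I$ together with the canonical-form bounds. The only cosmetic differences are that you refute all three forms in~(i) inside $\Az^I$ alone (the paper uses $\RZ^I$, $\LZ^I$, $\Az^I$ respectively) and that your case split in~(ii) is organized by where $y$ sits in $\bu$ rather than in $\bv$; both variants check out.
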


\begin{proof}
(i) The three identities in~\eqref{id: nonA0I} are violated by the semigroups~$\RZ^I$, $\LZ^I$, and $\Az^I$, respectively.

(ii) Suppose that~$\bu$ has two $x$-stacks.
Then by~(III),
\[
\bu = \bu_1 x^{e-1} \bu_2 x \bu_3
\]
for some $\bu_1, \bu_3 \in \A^*$ and $\bu_2 \in \A^+$ with $x \notin \cont(\bu_1\bu_2\bu_3)$ and $2 \leq e \leq m+n-1$.
Seeking a contradiction, suppose that~$\bv$ has only one $x$-stack.
Then by~(II) and part~(i),
\[
\bv = \bv_1 x^f \bv_2
\]
for some $\bv_1,\bv_2 \in \A^*$ with $x \notin \cont(\bv_1 \bv_2)$ and $1 \leq f \leq m+n-1$.
Since the word~$\bu_2$ is nonempty, it contains some $y$-stack.
Since $\cont(\bu)=\cont(\bv)$ by Lemma~\ref{L: word Zn NnI LZI RZI}(ii), it follows that $y \in \cont(\bv_1 \bv_2)$.
By symmetry, it suffices to assume that $y \in \cont(\bv_1)$, so that $\ini(\bv) = \cdots y \cdots x \cdots$.
Since $\ini(\bu)=\ini(\bv)$ by Lemma~\ref{L: word Zn NnI LZI RZI}(iii), it follows that $y \in \cont(\bu_1)$.
Hence the word~$\bu$ contains two $y$-stacks, the first of which occurs in~$\bu_1$ while the second occurs in~$\bu_2$.
Thus $\fin(\bu) = \cdots y \cdots x \cdots$.
Since $\fin(\bu)=\fin(\bv)$ by Lemma~\ref{L: word Zn NnI LZI RZI}(iv), it follows that $y \notin \cont(\bv_2)$.
Therefore, the identity $\bu_{\{x,y\}} \id \bv_{\{x,y\}}$ is $y^rx^{e-1}yx \id y^sx^f$ for some $r,s \geq 1$, but this contradicts part~(i).

(iii) Let~$x^e$ be a first $x$-stack of~$\bu$.
By part~(ii), there are two cases.

\paragraph{\sc Case~1}

$\bu$ and~$\bv$ each has only one $x$-stack.
Then by~(II),
\[
\bu = \bu_1 x^e \bu_2 \quad \text{and} \quad \bv = \bv_1 x^f \bv_2
\]
for some $\bu_1, \bu_2, \bv_1, \bv_2 \in \A^*$ with $x \notin \cont(\bu_1 \bu_2 \bv_1 \bv_2)$ and $1 \leq e,f \leq m+n-1$.
Since $e = \occ(x,\bu)$ and $f = \occ(x,\bv)$, it follows from part~(ii) that either $e=f \leq n$ or $n < e,f \leq m+n-1$.
If $n < e,f \leq m+n-1$, then $e=f$ by part~(i).

\paragraph{\sc Case~2}

$\bu$ and~$\bv$ each has two $x$-stacks.
Then by~(III),
\[
\bu = \bu_1 x^{e-1} \bu_2 x \bu_3 \quad \text{and} \quad \bv = \bv_1 x^{f-1} \bv_2 x \bv_3
\]
for some $\bu_1, \bu_3, \bv_1, \bv_3 \in \A^*$ and $\bu_2, \bv_2 \in \A^+$ with $x \notin \cont(\bu_1 \bu_2 \bu_3 \bv_1 \bv_2 \bv_3)$ and $2 \leq e,f \leq m+n-1$.
Since $e = \occ(x,\bu)$ and $f = \occ(x,\bv)$, it follows from the same argument in Case~1 that $e=f$.
\end{proof}

\begin{lemma} \label{L: adjacent ZmjNnIjLZIjRZIjA0I}
Let~$\bu$ and~$\bv$ be any words in canonical form such that the identity $\bu \id \bv$ is satisfied by the semigroup $T_{m,n}$\up.
Then the following are equivalent\up:
\begin{enumerate}[\rm(a)]
\item $\bu \in \A^* x^ey^f \A^*$ where~$x^e$ and~$y^f$ are stacks of~$\bu$\up;
\item $\bv \in \A^* x^ey^f \A^*$ where~$x^e$ and~$y^f$ are stacks of~$\bv$\up.
\end{enumerate}
Further\up, $x^e$ is the first $x$-stack of~$\bu$ if and only if~$x^e$ is the first $x$-stack of~$\bv$\up, and $y^f$ is the first $y$-stack of~$\bu$ if and only if~$y^f$ is the first $y$-stack of~$\bv$\up.
\end{lemma}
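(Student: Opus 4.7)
Both implications are symmetric, so it suffices to prove that (a) implies (b); the ``Further'' statement of the lemma then follows by combining this with Lemma~\ref{L: stack ZmjNnIjLZIjRZIjA0I}(iii). Assume $\bu=\bu_1 x^e y^f \bu_2$ with $x^e$ and $y^f$ stacks of $\bu$. By Lemma~\ref{L: stack ZmjNnIjLZIjRZIjA0I}(ii)--(iii), $\bv$ contains a unique $x$-stack $\xi$ of weight $e$ occupying the same (first or second) position among the $x$-stacks of $\bv$ as $x^e$ does in $\bu$, and a unique $y$-stack $\eta$ of weight $f$ matched analogously to $y^f$. The goal is to show that $\xi$ immediately precedes $\eta$ among the stacks of $\bv$.

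The plan is to rule out, one at a time, every stack of $\bv$ that could lie strictly between $\xi$ and $\eta$, and to confirm that $\xi$ precedes $\eta$. Throughout I exploit projected identities $\bu_A\id\bv_A$ on small alphabets $A\subseteq\A$ satisfied by $T_{m,n}$, the resulting equalities $\ini(\bu_A)=\ini(\bv_A)$ and $\fin(\bu_A)=\fin(\bv_A)$ (from $\LZ^I,\RZ^I\subseteq T_{m,n}$ via Lemma~\ref{L: word Zn NnI LZI RZI}(iii)--(iv)), and the three forbidden identity shapes of Lemma~\ref{L: stack ZmjNnIjLZIjRZIjA0I}(i). The wrong order ($\eta$ before $\xi$ in $\bv$) is excluded by projecting to $\{x,y\}$ and checking the resulting $\ini$- or $\fin$-mismatch. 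Assuming the correct order, write $\bv=\bv_1 \xi \bw \eta \bv_2$ and let $\sigma\in\cont(\bw)$. If $\sigma=x$, then $\bv$ has two $x$-stacks; the positional matching forces $x^e$ to be the first $x$-stack of $\bu$ and the second $x$-stack of $\bu$ to sit in $\bu_2$, and projecting to $\{x,y\}$ gives a $\fin$-mismatch. If $\sigma=y$, a dual argument yields an $\ini$-mismatch. If $\sigma=z\in\A\setminus\{x,y\}$, then $z\in\cont(\bu)$ by Lemma~\ref{L: word Zn NnI LZI RZI}(ii); I split on whether $z$ occurs in $\bu_1$ only, in $\bu_2$ only, or in both, and use the $\ini$- and $\fin$-equalities together with the forbidden identity shapes of Lemma~\ref{L: stack ZmjNnIjLZIjRZIjA0I}(i) on projections to $\{x,y,z\}$ and its two-letter subsets to locate and constrain the $z$-stacks of $\bv$. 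The forced extra $z$-stack in $\bw$ then either produces a third $z$-stack in $\bv$ (violating canonical form~(I)) or a forbidden identity shape.

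The main obstacle is the bookkeeping in the $\sigma=z$ subcase: the $z$-stacks of $\bu$ may split across $\bu_1$ and $\bu_2$, and those of $\bv$ across the three regions $\bv_1$, $\bw$, $\bv_2$, possibly with extra $x$- or $y$-stacks of $\bu$ and $\bv$ interfering. However, the canonical-form cap of two stacks per variable, combined with the $\ini$- and $\fin$-constraints and Lemma~\ref{L: stack ZmjNnIjLZIjRZIjA0I}(i), collapses the analysis to finitely many concrete configurations, each yielding a contradiction.
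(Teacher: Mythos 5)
Your plan follows essentially the same route as the paper (restrict the identity to two- and three-letter alphabets, use $\ini$/$\fin$ via Lemma~\ref{L: word Zn NnI LZI RZI}, the forbidden shapes of Lemma~\ref{L: stack ZmjNnIjLZIjRZIjA0I}(i), and the two-stacks-per-variable cap), but as written it has genuine gaps. Two of the concrete mechanisms you commit to fail. For the ``wrong order'' step, take $\bu=y^{a}x^{e}yx$, so $x^{e}$ is the first $x$-stack and $y^{f}=y$ the second $y$-stack, adjacent in $\bu$; if in $\bv$ the second $y$-stack preceded the first $x$-stack, then $\bu_{\{x,y\}}=y^{a}x^{e}yx$ while $\bv_{\{x,y\}}=y^{a+1}x^{e+1}$, and these two words have the same initial part ($yx$) and the same final part ($yx$), so no $\ini$- or $\fin$-mismatch arises; the configuration is killed only by the forbidden shape $x^{e_1}y^{f_1}\id x^{e_2}y^{f_2}x^{e_3}y^{f_3}$ of Lemma~\ref{L: stack ZmjNnIjLZIjRZIjA0I}(i) (with the roles of $x$ and $y$ interchanged). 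Similarly, in your $\sigma=x$ subcase take $\bu=x^{e}y^{f}xy$: then $\bu_{\{x,y\}}=x^{e}y^{f}xy$ and $\bv_{\{x,y\}}=x^{e+1}y^{f+1}$ (second $x$-stack inside $\bw$, second $y$-stack after $\eta$), again with identical $\ini$ and $\fin$, so the promised $\fin$-mismatch does not exist and one must again invoke Lemma~\ref{L: stack ZmjNnIjLZIjRZIjA0I}(i). These steps are repairable, since the needed lemma is in your toolkit, but the arguments you actually state are not correct.

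More seriously, the $\sigma=z$ subcase is where almost all of the work lies --- in the paper it is the substance of Cases~1--3 of the proof, each with two further subcases in which specific projections such as $\bu_{\{x,z\}}\id\bv_{\{x,z\}}$ or $\bu_{\{y,z\}}\id\bv_{\{y,z\}}$ are computed and matched against the forbidden shapes --- and you do not carry it out. Saying that the canonical-form cap, the $\ini$/$\fin$ constraints, and Lemma~\ref{L: stack ZmjNnIjLZIjRZIjA0I}(i) ``collapse the analysis to finitely many concrete configurations, each yielding a contradiction'' is precisely the claim that needs proof, not an argument for it; nothing in your text identifies the configurations or exhibits the contradictions. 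So the proposal is an outline of the paper's argument in which the decisive verifications are missing and two of the stated mechanisms are wrong.
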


\begin{proof}
First, note that $\ini(\bu) = \ini(\bv)$ and $\fin(\bu)=\fin(\bv)$ by Lemma~\ref{L: word Zn NnI LZI RZI}.
Suppose that~(a) holds. Then
\[
\bu = \bu_1 x^e y^f \bu_2
\]
for some $\bu_1, \bu_2 \in \A^*$ such that~$\bu_1$ does not end with~$x$ while~$\bu_2$ does not begin with~$y$.
There are four cases depending on which of~$x^e$ and~$y^f$ are first stacks in~$\bu$.

\paragraph{\sc Case~1}

$x^e$ is the first $x$-stack in~$\bu$ and~$y^f$ is the first $y$-stack in~$\bu$.
Then clearly $x,y \notin \cont(\bu_1)$, so that $\ini(\bu) = \cdots xy \cdots$.
By Lemma~\ref{L: stack ZmjNnIjLZIjRZIjA0I}(iii), $x^e$ is the first $x$-stack of~$\bv$ and~$y^f$ is the first $y$-stack of~$\bv$.
Since $\ini(\bv) = \ini(\bu) = \cdots xy \cdots$,
\[
\bv = \bv_1 x^e \bv_2 y^f \bv_3
\]
for some $\bv_1, \bv_2, \bv_3 \in \A^*$ such that $x \notin \cont(\bv_1)$ and $y \notin \cont(\bv_1 \bv_2)$, and that any stack of~$\bv$ that occurs in~$\bv_2$ cannot be a first stack.
Suppose that $\bv_2 \neq \emptyset$.
Then the first variable~$z$ of~$\bv_2$ constitutes the second $z$-stack of~$\bv$.
Hence
\[
\bv = \underbrace{\,\cdots z^r \cdots\,}_{\bv_1} \, x^e \underbrace{\,z\cdots\,}_{\bv_2} \, y^f \bv_3,
\]
where $z^r$ is the first $z$-stack of~$\bv$, and $\ini(\bv) = \cdots z \cdots xy \cdots$.
By Lemma \ref{L: stack ZmjNnIjLZIjRZIjA0I}(ii), the word~$\bu$ contains two $z$-stacks; by part~(iii) of the same lemma, the first $z$-stack of~$\bu$ is~$z^r$.
Since $\ini(\bu) = \ini(\bv) = \cdots z \cdots xy \cdots$, the $z$-stack~$z^r$ of~$\bu$ occurs in~$\bu_1$:
\[
\bu = \underbrace{\,\cdots z^r \cdots\,}_{\bu_1} \, x^e y^f \bu_2.
\]
The second $z$-stack of~$\bu$ occurs in either~$\bu_1$ or~$\bu_2$.
There are two subcases.
\begin{enumerate}[{1.}1.]
\item The second $z$-stack of~$\bu$ occurs in~$\bu_1$.
Then $\fin(\bv) = \fin(\bu) = \cdots z \cdots x \cdots$, so that~$\bv$ must contain a second $x$-stack occurring in either~$\bv_2$ or~$\bv_3$.
The identity $\bu_{\{x,z\}} \id \bv_{\{x,z\}}$ is thus $z^{r+1} x^{e+1} \id z^rx^ezx$, which is impossible by Lemma~\ref{L: stack ZmjNnIjLZIjRZIjA0I}(i).
\item The second $z$-stack of~$\bu$ occurs in~$\bu_2$.
Then $\fin(\bu) = \fin(\bv) = \cdots z \cdots y \cdots$, so that~$\bu$ must contain a second $y$-stack occurring after the second $z$-stack:
\[
\bu = \underbrace{\,\cdots z^r \cdots\,}_{\bu_1} \, x^e y^f \underbrace{\,\cdots z \cdots y \cdots\,}_{\bu_2}.
\]
The identity $\bu_{\{y,z\}} \id \bv_{\{y,z\}}$ is thus $z^ry^fzy \id z^{r+1} y^{f+1}$, which is impossible by Lemma~\ref{L: stack
ZmjNnIjLZIjRZIjA0I}(i).
\end{enumerate}
Since both subcases are impossible, $\bv_2 = \emptyset$.
Hence~(b) holds.

\paragraph{\sc Case~2}

$x^e$ is the first $x$-stack in~$\bu$ and~$y^f$ is the second $y$-stack in~$\bu$.
Then $f=1$ by~(III) and
\[
\bu = \underbrace{\,\cdots y^r \cdots\,}_{\bu_1} \, x^e y \bu_2,
\]
where~$y^r$ is the first $y$-stack of~$\bu$.
Since $\ini(\bv) = \ini(\bu) = \cdots y \cdots x \cdots$, it follows from Lemma~\ref{L: stack ZmjNnIjLZIjRZIjA0I} parts~(i) and~(iii) that
\[
\bv = \bv_1 y^r \bv_2 x^e \bv_3 y \bv_4
\]
for some $\bv_1,\bv_2,\bv_3,\bv_4 \in \A^*$, where~$y^r$ is the first $y$-stack of~$\bv$ and~$x^e$ is the first $x$-stack of~$\bv$.
Suppose that $\bv_3 \neq \emptyset$. Then $\bv_3$ contains some $z$-stack~$z^s$:
\[
\bv = \bv_1 y^r \bv_2 x^e \underbrace{\,\cdots z^s \cdots\,}_{\bv_3} \, y \bv_4.
\]
There are two subcases depending on whether $z^s$ is the first or second $z$-stack in~$\bv$.
\begin{enumerate}[{2.}1.]
\item $z^s$ is the first $z$-stack in~$\bv$.
Then $\ini(\bu) = \ini(\bv) = \cdots y \cdots x \cdots z \cdots$, so that every~$z$ of~$\bu$ occurs in~$\bu_2$.
Hence $\bu_{\{y,z\}} \in y^{r+1} \{z\}^+$ and
\[
\bv_{\{y,z\}} =
\begin{cases}
y^r z^syz & \text{if~$\bv$ has a second $z$-stack occurring in $\bv_4$}, \\
y^r z^{s+1}y & \text{if~$\bv$ has a second $z$-stack occurring in $\bv_3$}, \\
y^r z^sy & \text{if~$\bv$ has no second $z$-stack}.
\end{cases}
\]
But this is impossible by Lemma~\ref{L: stack ZmjNnIjLZIjRZIjA0I}(i).
\item $z^s$ is the second $z$-stack in~$\bv$.
Then $\fin(\bu) = \fin(\bv) = \cdots z \cdots y \cdots$, so that every~$z$ of~$\bu$ occurs in~$\bu_1$.
Hence $\bu_{\{x,z\}} \in \{z\}^+\{x\}^+$ and
\[
\bv_{\{x,z\}} \in
\begin{cases}
\{z\}^+\{x\}^+z^s\{x\}^* & \text{if the first $z$-stack of~$\bv$ occurs in~$\bv_1$ or~$\bv_2$}, \\
\{x\}^+\{z\}^+\{x\}^* & \text{if the first $z$-stack of~$\bv$ occurs in~$\bv_3$}.
\end{cases}
\]
But this is impossible by Lemma~\ref{L: stack ZmjNnIjLZIjRZIjA0I}(i).
\end{enumerate}
Since both subcases are impossible, $\bv_3 = \emptyset$.
Hence~(b) holds.

\paragraph{\sc Case~3}

$x^e$ is the second $x$-stack in~$\bu$ and~$y^f$ is the first $y$-stack in~$\bu$.
Then $e=1$ by~(III) and
\[
\bu = \underbrace{\,\cdots x^r \cdots\,}_{\bu_1} \, x y^f \bu_2,
\]
where~$x^r$ is the first $x$-stack of~$\bu$ with and $y \notin \cont(\bu_1)$ and $x \notin \cont(\bu_2)$.
Since $\ini(\bv) = \ini(\bu) = \cdots x \cdots y \cdots$, it follows from Lemma~\ref{L: stack ZmjNnIjLZIjRZIjA0I} that
\[
\bv = \bv_1 x^r \bv_2 x \bv_3 y^f \bv_4
\]
for some $\bv_1, \bv_2, \bv_3, \bv_4 \in \A^*$ with $x \notin \cont(\bv_1 \bv_2 \bv_3 \bv_4)$ and $y\notin \cont(\bv_1 \bv_2 \bv_3)$.
Suppose that $\bv_3 \neq \emptyset$. Then $\bv_3$ contains some $z$-stack~$z^s$:
\[
\bv = \bv_1 x^r \bv_2 x \underbrace{\,\cdots z^s \cdots\,}_{\bv_3} \, y^f \bv_4.
\]
There are two subcases depending on whether or not~$z^s$ is the first $z$-stack of~$\bv$.
\begin{enumerate}[{3.}1.]
\item $z^s$ is the first $z$-stack of~$\bv$.
Since $\ini(\bu) = \ini(\bv) = \cdots z \cdots y \cdots$, the first $z$-stack~$z^s$ of~$\bu$ occurs in~$\bu_1$.
But since $\fin(\bu) = \fin(\bv) = \cdots x \cdots z \cdots$, the word~$\bu$ must contain a second $z$-stack in~$\bu_2$, whence~$\bv$ must also contain a second $z$-stack by Lemma~\ref{L: stack ZmjNnIjLZIjRZIjA0I}(ii).
Hence $\bv_{\{x,z\}} = x^{r+1}z^{s+1}$ and
\[
\bu_{\{x,z\}} =
\begin{cases}
z^sx^{r+1}z & \text{if~$z^s$ in~$\bu$ occurs before the first $x$-stack}, \\
x^rz^sxz & \text{if~$z^s$ in~$\bu$ occurs between the two $x$-stacks}.
\end{cases}
\]
But this is impossible by Lemma~\ref{L: stack ZmjNnIjLZIjRZIjA0I}(i).
\item $z^s$ is the second $z$-stack of~$\bv$. Then the identity $\bu_{\{y,z\}} \id \bv_{\{y,z\}}$ obtained by an argument symmetrical to the one in Subcase~3.1 produces a similar contradiction.
\end{enumerate}
Since both subcases are impossible, $\bv_3 = \emptyset$. Hence~(b) holds.

\paragraph{\sc Case~4}

$x^e$ is the second $x$-stack in~$\bu$ and~$y^f$ is the second $y$-stack in~$\bu$.
Then~(b) holds by an argument symmetrical to Case~1.

Therefore, (b) holds in all four cases. By symmetry, (b) implies~(a).
\end{proof}

\begin{proof}[Proof of Proposition~\ref{P: ZmjNnIjLZIjRZIjA0I}]
It is routinely verified that the semigroup $T_{m,n}$ satisfies the identities~\eqref{id: ZmjNnIjLZIjRZIjA0I basis}.
Conversely, suppose that $\bu \id \bv$ is any identity satisfied by the semigroup $T_{m,n}$.
As observed earlier, the identities~\eqref{id: ZmjNnIjLZIjRZIjA0I basis} can be used to convert~$\bu$ and~$\bv$ into words~$\bu'$ and~$\bv'$ in canonical form.
By Lemma~\ref{L: word ZmjNnIjLZIjRZIjA0I} parts~(ii) and~(iii), the words~$\bu'$ and~$\bv'$ share the same set of stacks.
By Lemma~\ref{L: adjacent ZmjNnIjLZIjRZIjA0I}, two stacks are adjacent in~$\bu'$ if and only if they are adjacent in~$\bv'$.
Therefore, $\bu'=\bv'$.
Since
\[
\bu \stackrel{\eqref{id: ZmjNnIjLZIjRZIjA0I basis}}{\id} \bu'=\bv' \stackrel{\eqref{id: ZmjNnIjLZIjRZIjA0I basis}}{\id} \bv,
\]
the identity $\bu \id \bv$ is deducible from~\eqref{id: ZmjNnIjLZIjRZIjA0I basis}.
\end{proof}

\begin{corollary} \label{C: Z6jN5IjLZIjRZIjA0Iv1}
Suppose that~$S$ is any finite semigroup that satisfies the identities
\begin{equation}
x^{11} \id x^5, \quad x^{10}yx \id x^4yx, \quad x^2yx \id xyx^2, \quad xyxzx \id x^2yzx \label{id: Z6jN5IjLZIjRZIjA0I basis}
\end{equation}
but violates all of the identities
\begin{equation}
x^2 \id x, \quad xyxy \id yxyx. \label{id: Z6jN5IjLZIjRZIjA0Iv1 not ji}
\end{equation}
Then $\lpbr S\rpbr$ is a sub{\pvar} of $\lpbr\mathscr{T}_{6,5}\rpbr$ that is not {\ji}.
\end{corollary}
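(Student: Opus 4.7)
The plan is to mimic the strategy used in Corollaries~\ref{C: M4v3}, \ref{C: M4v2}, and~\ref{C: M4v1}: invoke Proposition~\ref{P: ZmjNnIjLZIjRZIjA0I} to get the containment $\lpbr S\rpbr \subseteq \lpbr \mathscr{T}_{6,5} \rpbr$, then decompose $\lpbr \mathscr{T}_{6,5} \rpbr$ as a join of two strictly smaller compact sub\-{\pvars}, each of which satisfies one of the forbidden identities in~\eqref{id: Z6jN5IjLZIjRZIjA0Iv1 not ji}.

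First I would note that the identities in~\eqref{id: Z6jN5IjLZIjRZIjA0I basis} are exactly the basis \eqref{id: ZmjNnIjLZIjRZIjA0I basis} for $(m,n)=(6,5)$, so by Proposition~\ref{P: ZmjNnIjLZIjRZIjA0I} the inclusion $\lpbr S \rpbr \subseteq \lpbr \mathscr{T}_{6,5} \rpbr$ holds. The natural splitting to try is
\[
\lpbr \mathscr{T}_{6,5} \rpbr = \lpbr \Z_6, \N_5^I, \Az^I \rpbr \vee \lpbr \LZ^I, \RZ^I \rpbr.
\]
The second factor consists of bands, so it satisfies $x^2 \id x$. For the first factor, $\Z_6$ and $\N_5^I$ are commutative and hence satisfy $xyxy \id yxyx$, while $\Az^I$ satisfies this identity by Proposition~\ref{P: A0I}(i) (it is literally in the basis for $\Az^I$). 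Thus the first factor also satisfies $xyxy \id yxyx$.

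Since $S$ violates both $x^2 \id x$ and $xyxy \id yxyx$, we obtain $\lpbr S \rpbr \nsubseteq \lpbr \Z_6, \N_5^I, \Az^I \rpbr$ and $\lpbr S \rpbr \nsubseteq \lpbr \LZ^I, \RZ^I \rpbr$, while $\lpbr S \rpbr$ is contained in the join of these two {\pvars}. This directly contradicts {\jirr}. The only genuinely content-bearing step is the observation that $\Az^I$ satisfies $xyxy \id yxyx$, which is not a purely commutative identity but is still in the identity basis from Proposition~\ref{P: A0I}(i); no subtle obstacle arises since the basis for the ambient join is already in hand via Proposition~\ref{P: ZmjNnIjLZIjRZIjA0I}, and the rest is a one-line verification analogous to the earlier corollaries of this subsection.
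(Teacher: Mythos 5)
Your proposal is correct and follows essentially the same route as the paper: apply Proposition~\ref{P: ZmjNnIjLZIjRZIjA0I} with $(m,n)=(6,5)$ and split $\lpbr\mathscr{T}_{6,5}\rpbr$ as $\lpbr\LZ^I,\RZ^I\rpbr \vee \lpbr\Z_6,\N_5^I,\Az^I\rpbr$, with the two factors satisfying $x^2\id x$ and $xyxy\id yxyx$ respectively. Your extra remark that $xyxy\id yxyx$ lies in the identity basis of $\Az^I$ from Proposition~\ref{P: A0I}(i) is a harmless elaboration of a step the paper leaves implicit.
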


\begin{proof}
By Proposition~\ref{P: ZmjNnIjLZIjRZIjA0I} with $m=6$ and $n=5$, the identities satisfied by~$T_{6,5}$ are axiomatized by~\eqref{id: Z6jN5IjLZIjRZIjA0I basis}.
Hence the inclusion
\[
\lpbr S\rpbr \subseteq \lpbr\mathscr{T}_{6,5}\rpbr = \lpbr\LZ^I, \RZ^I\rpbr \vee \lpbr\Z_6, \N_5^I, \Az^I\rpbr
\]
holds.
But the two identities in~\eqref{id: Z6jN5IjLZIjRZIjA0Iv1 not ji} are satisfied by $\LZ^I \times \RZ^I$ and $\Z_6 \times \N_5^I \times \Az^I$, respectively.
Therefore, $\lpbr S\rpbr \nsubseteq \lpbr\LZ^I, \RZ^I\rpbr$ and $\lpbr S\rpbr \nsubseteq \lpbr\Z_6, \N_5^I, \Az^I\rpbr$.
\end{proof}

\begin{corollary} \label{C: Z6jN5IjLZIjRZIjA0Iv2}
Suppose that~$S$ is any finite semigroup that satisfies the identities~\eqref{id: Z6jN5IjLZIjRZIjA0I basis} but violates all of the identities
\begin{equation}
x^6 \id x^5, \quad x^6y \id y. \label{id: Z6jN5IjLZIjRZIjA0Iv2 not ji}
\end{equation}
Then $\lpbr S\rpbr$ is a sub{\pvar} of $\lpbr\mathscr{T}_{6,5}\rpbr$ that is not {\ji}.
\end{corollary}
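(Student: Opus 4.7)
The plan is to follow the same template as the proof of Corollary~\ref{C: Z6jN5IjLZIjRZIjA0Iv1}, simply re-choosing the decomposition of $\lpbr \mathscr{T}_{6,5} \rpbr$ so that the new pair of forbidden identities separates the summands. First, I would apply Proposition~\ref{P: ZmjNnIjLZIjRZIjA0I} with $m = 6$ and $n = 5$, which asserts that~\eqref{id: Z6jN5IjLZIjRZIjA0I basis} axiomatizes the identities of $T_{6,5}$. Since $S$ satisfies~\eqref{id: Z6jN5IjLZIjRZIjA0I basis} by hypothesis, this yields the containment $\lpbr S \rpbr \subseteq \lpbr \mathscr{T}_{6,5} \rpbr$.

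The key step is the decomposition
\[
\lpbr \mathscr{T}_{6,5} \rpbr = \lpbr \Z_6 \rpbr \vee \lpbr \N_5^I, \LZ^I, \RZ^I, \Az^I \rpbr,
\]
in which the two summands are chosen so that they satisfy the two identities in~\eqref{id: Z6jN5IjLZIjRZIjA0Iv2 not ji}, respectively. For the first summand, Lemma~\ref{L: word Zn NnI LZI RZI}(i) immediately gives $\Z_6 \models x^6 y \id y$, since $\occ(x, x^6 y) = 6 \equiv 0 = \occ(x, y) \pmod 6$ and the $y$-contents coincide. For the second summand, I would verify that each of its four generators satisfies $x^6 \id x^5$: $\N_5^I$ does so by Proposition~\ref{P: NnI}(i); $\Az^I$ does so as a consequence of $x^3 \id x^2$ from Proposition~\ref{P: A0I}(i); and $\LZ^I$, $\RZ^I$ each satisfy $x^2 \id x$ by Proposition~\ref{P: LZI}(i) and its dual, hence a fortiori $x^6 \id x^5$.

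Since by hypothesis $S$ violates both $x^6 \id x^5$ and $x^6 y \id y$, it would follow that $\lpbr S \rpbr \nsubseteq \lpbr \N_5^I, \LZ^I, \RZ^I, \Az^I \rpbr$ and $\lpbr S \rpbr \nsubseteq \lpbr \Z_6 \rpbr$, so $\lpbr S \rpbr$ is a sub{\pvar} of $\lpbr \mathscr{T}_{6,5} \rpbr$ that is not {\ji}. There is no genuine obstacle; the only point of substance is identifying the correct split of $\mathscr{T}_{6,5}$, which is forced by the observation that one of the forbidden identities is a purely group-theoretic identity (detected by $\Z_6$) while the other isolates the aperiodic generators.
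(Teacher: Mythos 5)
Your proposal is correct and follows essentially the same route as the paper: Proposition~\ref{P: ZmjNnIjLZIjRZIjA0I} with $m=6$, $n=5$ gives $\lpbr S\rpbr \subseteq \lpbr\mathscr{T}_{6,5}\rpbr$, and the same split $\lpbr\N_5^I, \LZ^I, \RZ^I, \Az^I\rpbr \vee \lpbr\Z_6\rpbr$ is used, with the aperiodic joinand satisfying $x^6 \id x^5$ and $\Z_6$ satisfying $x^6y \id y$. The only difference is that you spell out the routine verifications of these identities for each generator, which the paper leaves implicit.
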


\begin{proof}
The argument in the proof of Corollary~\ref{C: Z6jN5IjLZIjRZIjA0Iv1} implies the inclusion
\[
\lpbr S\rpbr \subseteq \lpbr\mathscr{T}_{6,5}\rpbr = \lpbr\N_5^I, \LZ^I, \RZ^I, \Az^I\rpbr \vee \lpbr\Z_6\rpbr.
\]
The two identities in~\eqref{id: Z6jN5IjLZIjRZIjA0Iv2 not ji} are satisfied by $\N_5^I \times \LZ^I \times \RZ^I \times \Az^I$ and $\Z_6$, respectively.
Therefore, $\lpbr S\rpbr \nsubseteq \lpbr\N_5^I, \LZ^I, \RZ^I, \Az^I\rpbr$ and $\lpbr S\rpbr \nsubseteq \lpbr\Z_6\rpbr$.
\end{proof}

\subsection{Pseudovarieties of noncommutative nilpotent semigroups}

\begin{proposition} \label{P: noncomm nil}
Any {\ji} {\pvar} of nilpotent semigroups is commutative.
\end{proposition}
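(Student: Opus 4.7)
The plan is to prove the contrapositive: any {\pvar} $\bV$ of nilpotent semigroups that contains a noncommutative member fails to be {\ji}. Since {\ji} {\pvars} are compact (as noted in the introduction), it suffices to show that if $S$ is a finite noncommutative nilpotent semigroup, then $\lpbr S \rpbr$ is not {\ji}. By the {\mog} argument also recorded in the introduction, we may further assume $S$ is subdirectly indecomposable ({\sdi}), and since any semigroup with $S^2=0$ is null (hence commutative), the minimal $m$ with $S^m=0$ satisfies $m \geq 3$; fix $a,b \in S$ with $ab \neq ba$.

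The strategy is to exhibit finite semigroups $A,B$ with $S \prec A \times B$ yet $S \not\prec A^n$ and $S \not\prec B^n$ for all $n \geq 1$; by the characterization at the start of the paper this contradicts {\jirr}. The first factor is essentially forced: take $A = S/\theta$, the commutative quotient of $S$ obtained from the smallest congruence identifying $uv$ with $vu$. Every power $A^n$ is then commutative while $S$ is not, so $S \not\prec A^n$ is automatic, and $A \in \lpbr S \rpbr$ ensures $\lpbr A \rpbr \subsetneq \lpbr S \rpbr$.

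The heart of the matter is the construction of $B$: it must separate $ab$ from $ba$ (so that the induced map $S \to A \times B$ is injective on $\{ab,ba\}$ and ultimately on all of $S$), yet lie in a {\pvar} strictly smaller than $\lpbr S \rpbr$. The most natural candidates are a quotient $S/I$ of strictly smaller nilpotency class, for instance $I = S^{m-1}$, or the monolith of $S$. The diagonal map $s \mapsto (s\theta,\, sI)$ is an injective homomorphism on $S$ precisely when $\theta$ and the ideal-congruence for $I$ intersect to the diagonal on $S$; using {\sdi}-ness, the monolith $M$ of $S$ is unique and contained in $S^{m-1}$ (so $MS = SM = 0$), which helps control this intersection.

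The main difficulty, and where most of the effort is expected, is handling noncommutative witnesses that lie deep in the filtration $S \supseteq S^2 \supseteq \cdots \supseteq S^{m-1}$: distinct elements of $S^{m-1}$ may already be identified under $\theta$ (for example, a product $a_1 a_2 \cdots a_{m-1}$ and a reordering of it), which defeats the naive choice $I = S^{m-1}$. I would handle this by induction on $m$, with the base case $m=3$ amenable to a direct check, refining the choice of $I$ by applying the inductive hypothesis to $S/M$; the {\sdi} reduction then concentrates the difficulty at the monolith, which by nilpotency has trivial multiplication and is therefore a null (hence commutative) semigroup, providing the leverage needed to close the argument.
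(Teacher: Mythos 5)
Your reduction to the case where $S$ is {\sdi} is exactly what makes your central construction impossible, so there is a genuine gap at the heart of the proposal. A finite semigroup is {\sdi} precisely when it has a minimum nontrivial congruence $\mu$ (the monolith congruence), which is contained in every nontrivial congruence. Since $S$ is noncommutative, the least commutative congruence $\theta$ is nontrivial, so $\mu \subseteq \theta$; and for $B = S/\rho$ to be a proper quotient (which you need, together with taking $S$ to be a {\mog}, to get $S \notin \lpbr B\rpbr$) the congruence $\rho$ must be nontrivial, so $\mu \subseteq \rho$ as well. Hence $\theta \cap \rho \supseteq \mu \neq \Delta$ and the diagonal map $s \mapsto (s\theta, s\rho)$ is never injective; no refinement of the ideal $I$ and no induction on the nilpotency index can repair this, because the obstruction is the definition of {\sdi}, not a bookkeeping problem inside $S^{m-1}$. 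Your base case $m=3$ already exhibits the failure: the four-element semigroup $S=\{a,b,c,0\}$ with $ab=c$ and all other products equal to $0$ is nilpotent, noncommutative, and {\sdi}; its monolith congruence identifies exactly $c=ab$ with $0=ba$ and coincides with $\theta$, so every proper quotient of $S$ is commutative and no family of quotients of $S$ can witness that $\lpbr S\rpbr$ is not {\ji}. Note also that the complementary case you set aside, $S$ not {\sdi}, is precisely the case where the conclusion is immediate from the {\mog} argument, so your plan concentrates all of the work exactly where it cannot succeed.

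What is missing is a second factor lying outside the lattice of quotients of $S$, and supplying it is the real content of the statement. The paper's proof is different and much shorter: it quotes the nontrivial fact that every finite nilpotent semigroup lies in $\mathbf{Com} \vee \mathbf{G}$ \cite[Figure~9.1]{Alm94}, so a {\ji} {\pvar} $\bV$ of nilpotent semigroups satisfies $\bV \subseteq \mathbf{Com}$ or $\bV \subseteq \mathbf{G}$, and the latter forces $\bV$ to be trivial (a nontrivial nilpotent semigroup is not a group), hence in either case $\bV$ is commutative. The group coordinate in $\mathbf{Com}\vee\mathbf{G}$ is precisely the device that separates $ab$ from $ba$ in a division $S \prec A \times B$ without using a congruence on $S$. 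An argument in the spirit of your proposal would have to prove some version of this containment itself; as written, the proposal contains no mechanism for producing the separating factor.
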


\begin{proof}
Let~$\bV$ be any {\ji} {\pvar} of nilpotent semigroups.
Then the inclusion $\bV \subseteq \bCom \vee \bG$ holds~\cite[Figure~9.1]{Alm94}.
Since~$\bV$ is {\ji} and $\bV \nsubseteq \bG$, the inclusion $\bV \subseteq \bCom$ follows.
\end{proof}

\begin{corollary} \label{C: noncomm nil}
Suppose that~$S$ is any finite semigroup that satisfies the identity
\[
x^6 \id y_1y_2y_3y_4y_5y_6
\]
but violates the identity
\[
xy \id yx.
\]
Then $\lpbr S\rpbr$ is a {\pvar} of nilpotent semigroups that is not {\ji}.
\end{corollary}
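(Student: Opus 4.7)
The plan is to reduce the corollary to Proposition~\ref{P: noncomm nil} in a two-step manner: first verify that every member of $\lpbr S\rpbr$ is nilpotent, and then observe that $\lpbr S\rpbr$ fails to be commutative, so that Proposition~\ref{P: noncomm nil} forbids it from being {\ji}.

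For the nilpotency, I would argue directly from the hypothesis $S\models x^6\id y_1y_2y_3y_4y_5y_6$. Substituting any six elements $a_1,\ldots,a_6\in S$ for $y_1,\ldots,y_6$ and any $b\in S$ for $x$ shows that $a_1a_2\cdots a_6=b^6$, so every six-fold product in $S$ equals a single element, call it $0$. A seven-fold product $c a_1\cdots a_6$ can be regrouped as $(ca_1)a_2\cdots a_6$, which is again a six-fold product, hence equals $0$; symmetrically $0\cdot c=0$. Thus $0$ is a (necessarily unique) zero of $S$ and $S^6=\{0\}$, so $S$ is nilpotent. Since the identity $x^6\id y_1y_2y_3y_4y_5y_6$ is preserved under taking subsemigroups, homomorphic images, and finite direct products, the whole pseudovariety $\lpbr S\rpbr$ satisfies it and therefore consists entirely of nilpotent semigroups.

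The second step is immediate: because $S$ violates $xy\id yx$, the pseudovariety $\lpbr S\rpbr$ is not contained in the pseudovariety $\mathbf{Com}$ of commutative semigroups. Applying Proposition~\ref{P: noncomm nil} to $\mathbf{V}=\lpbr S\rpbr$, we conclude that $\lpbr S\rpbr$ cannot be {\ji}, which is exactly the desired conclusion.

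There is really no main obstacle here; the only point requiring a moment of care is the verification that $S^6=\{0\}$ and that $0$ is a genuine zero, which I sketched above. Everything else is a direct appeal to Proposition~\ref{P: noncomm nil}.
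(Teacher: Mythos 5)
Your proposal is correct and follows the same route as the paper: note that the identity forces $S$ (and hence every member of $\lpbr S\rpbr$) to be nilpotent, note that $S$ is noncommutative, and invoke Proposition~\ref{P: noncomm nil}. The only difference is that you spell out the easy verification that all six-fold products collapse to a zero, which the paper leaves implicit.
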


\begin{proof}
By assumption, the semigroup~$S$ is nilpotent and noncommutative.
The result then holds by Proposition~\ref{P: noncomm nil}.
\end{proof}

\subsection{The {\pvar} $\lpbr\N_{n+r},\N_n^I\rpbr$}

\begin{proposition} \label{P: Nn+rjNnI}
Let $n,r \geq 1$.
Then the identities satisfied by the semigroup $\N_{n+r} \times \N_n^I$ are axiomatized by
\begin{subequations} \label{id: Nn+rjNnI basis}
\begin{align}
xy & \id yx, \label{id: Nn+rjNnI basis xy} \\
x^{n+1} y_1 y_2 \cdots y_r & \id x^n y_1 y_2\cdots x_r, \label{id: Nn+rjNnI basis xn+1y1y2...yr} \\
x_1^{e_1} x_2^{e_2} \cdots x_m^{e_m} & \id x_1^{f_1} x_2^{f_2} \cdots x_m^{f_m} \label{id: Nn+rjNnI basis list}
\end{align}
\end{subequations}
for all $m \geq 1$ and $e_1,e_2,\ldots,e_m,f_1,f_2,\ldots,f_m \geq 1$ such that
\begin{enumerate}[\ \rm(a)]
\item $e = f < n+r$\up, where $e = \sum_{i=1}^m e_i$ and $f = \sum_{i=1}^m f_i$;
\item for each $k \in \{ 1,2, \ldots, m\}$\up, either
\begin{enumerate}[1.]
\item[$\bullet$] $e_k = f_k$ or
\item[$\bullet$] $e_k,f_k \geq n$ and $e+e_k, f+f_k \geq n+r$\up.
\end{enumerate}
\end{enumerate}
\end{proposition}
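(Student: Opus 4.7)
The plan is to adapt the argument used for Proposition~\ref{P: Nn}. One direction is a routine check: commutativity in $\N_{n+r}\times\N_n^I$ is immediate, and~\eqref{id: Nn+rjNnI basis xn+1y1y2...yr} holds because $\N_n^I$ already satisfies $x^{n+1}\id x^n$, while in $\N_{n+r}$ any substitution either sends $x$ to~$\ez$ or forces the total degree of both sides past $n+r-1$, making both sides~$\ez$. The identities~\eqref{id: Nn+rjNnI basis list} with conditions~(a)(b) are validated by tracking how each substitution lands in the two factors: condition~(b) ensures that whenever $e_k\neq f_k$ the resulting degrees in both factors exceed the respective nilpotency thresholds.

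For the converse, let $\bu\id\bv$ be an identity of $\N_{n+r}\times\N_n^I$ with $|\bu|\le|\bv|$. I would apply~\eqref{id: Nn+rjNnI basis xy} to put both sides into canonical forms $\bu^* = x_1^{e_1}\cdots x_m^{e_m}$ and $\bv^* = x_1^{f_1}\cdots x_m^{f_m}$. As in the proof of Proposition~\ref{P: Nn}, a substitution sending a variable in the symmetric difference to~$\ez$ shows $\cont(\bu)=\cont(\bv)$, while the substitution sending every variable to~$\ea$ in the $\N_{n+r}$-factor yields $\ea^e=\ea^f$, ruling out $e<n+r\le f$. Hence either $e=f<n+r$ or both $e,f\ge n+r$.

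In the first case, condition~(a) of~\eqref{id: Nn+rjNnI basis list} holds; Lemma~\ref{L: word Zn NnI LZI RZI}(ii) applied in $\N_n^I$ gives $e_k=f_k$ or $e_k,f_k\ge n$, and the substitution $x_k\mapsto\ea^2$ (others $\mapsto\ea$) in $\N_{n+r}$ forces $e+e_k=f+f_k$ or both $\ge n+r$; combined with $e=f$, this delivers condition~(b), so $\bu^*\id\bv^*$ is an instance of~\eqref{id: Nn+rjNnI basis list}, hence derivable.

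The crux is the case $e,f\ge n+r$, which I would handle through the following key lemma, proved by induction on $\sum_i|e_i-f_i|$: any identity $\prod_i x_i^{e_i}\id\prod_i x_i^{f_i}$ with $e,f\ge n+r$ and $e_i=f_i$ or $e_i,f_i\ge n$ for each~$i$ is derivable from~\eqref{id: Nn+rjNnI basis xy} together with~\eqref{id: Nn+rjNnI basis xn+1y1y2...yr}. The crucial observation is that~\eqref{id: Nn+rjNnI basis xn+1y1y2...yr} is a two-way rewrite: given a canonical word $\prod_i x_i^{h_i}$ of length $h\ge n+r$ with $h_i\ge n$, one may derivably \emph{increment} $h_i$ by one by commutatively factoring out an $x_i^n$, applying $x_i^n w \id x_i^{n+1}w$ to the context $w$ (of length $h-n\ge r$), and reassembling. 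In the inductive step, if $e_i<f_i$ for some~$i$, then $e_i,f_i\ge n$, so one may increment $e_i$ in $\bu^*$ to strictly decrease $\sum_j|e_j-f_j|$ while preserving both the length condition and the per-variable condition, at which point the induction hypothesis closes the argument. The anticipated main obstacle is the bookkeeping in this inductive step---specifically, handling the boundary case $h_i=n$ cleanly (so the extracted factor is well-defined in the free semigroup) and checking that incrementing leaves all hypotheses of the key lemma intact---but no new ideas are required beyond the two-way rewrite.
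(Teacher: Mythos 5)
Your proposal is correct and takes essentially the same route as the paper's proof: commutative canonical forms, Lemma~\ref{L: word Zn NnI LZI RZI}(ii) for the per-variable constraints, the total-degree split into $e=f<n+r$ versus $e,f\ge n+r$, and the use of~\eqref{id: Nn+rjNnI basis xn+1y1y2...yr} as a two-way rewrite to adjust exponents that are at least~$n$ when the total length is at least $n+r$ (the paper's Case~1 symmetrizes each differing exponent to $e_i+f_i$ in one pass rather than incrementing by one, which is only a bookkeeping difference). One small point: equality of contents should be drawn from Lemma~\ref{L: word Zn NnI LZI RZI}(ii) itself, or from a substitution in the $\N_n^I$-factor sending the offending variable to $\ez$ and all others to $I$, since in the $\N_{n+r}$-factor alone both sides may evaluate to $\ez$ when $|\bv|\ge n+r$ and no contradiction results.
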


\begin{proof}
It is straightforwardly verified that the semigroup $\N_{n+r} \times \N_n^I$ satisfies the identities~\eqref{id: Nn+rjNnI basis}.
Conversely, let $\bu \id \bv$ be any identity satisfied by the semigroup $\N_{n+r} \times \N_n^I$. In view of Lemma~\ref{L: word Zn NnI LZI RZI}(ii), the identity~\eqref{id: Nn+rjNnI basis xy} can be used to convert~$\bu$ and~$\bv$ into
\[
\bu' = x_1^{e_1} x_2^{e_2} \cdots x_m^{e_m} \quad \text{and} \quad \bv' = x_1^{f_1} x_2^{f_2} \cdots x_m^{f_m},
\]
respectively, where $e_i = \occ(x_i,\bu)$ and $f_i = \occ(x_i,\bv)$ are such that either $e_i = f_i$ or $e_i,f_i \geq n$.
Let $e = \sum_{i=1}^m e_i$ and $f = \sum_{i=1}^m f_i$.
Generality is not lost by assuming that $e \leq f$.
There are four cases to consider.

\paragraph{\sc Case~1}

$n+r \leq e \leq f$.
Choose any $i \in \{1,2,\ldots,m\}$.
Suppose that $e_i \neq f_i$.
Then as observed earlier, $e_i,f_i \geq n$.
Hence
\begin{align*}
\bu' & \stackrel{\eqref{id: Nn+rjNnI basis xy}}{\id} x_i^n \, \overbrace{x_1^{e_1} x_2^{e_2} \cdots x_{i-1}^{e_{i-1}} x_i^{e_i-n} x_{i+1}^{e_{i+1}} \cdots x_m^{e_m}}^\text{$e-n \geq r$ variables} \\
     & \stackrel{\eqref{id: Nn+rjNnI basis xn+1y1y2...yr}}{\id} x_i^{n+f_i} x_1^{e_1} x_2^{e_2} \cdots x_{i-1}^{e_{i-1}} x_i^{e_i-n} x_{i+1}^{e_{i+1}} \cdots x_m^{e_m} \\
     & \stackrel{\eqref{id: Nn+rjNnI basis xy}}{\id} x_1^{e_1} x_2^{e_2} \cdots x_{i-1}^{e_{i-1}} x_i^{e_i+f_i} x_{i+1}^{e_{i+1}} \cdots x_m^{e_m}.
\end{align*}
Similarly, $\bv' \stackrel{\eqref{id: Nn+rjNnI basis}}{\id} x_1^{f_1} x_2^{f_2} \cdots x_{i-1}^{f_{i-1}} x_i^{f_i+e_i} x_{i+1}^{f_{i+1}} \cdots x_m^{f_m}$.
Therefore, the identities~\eqref{id: Nn+rjNnI basis} can be used to convert~$\bu'$ into~$\bv'$.
It follows that $\bu \id \bv$ is deducible from~\eqref{id: Nn+rjNnI basis}.

\paragraph{\sc Case~2}

$e < n+r \leq f$.
Let $\varphi : \A \rightarrow \N_{n+r}$ be the substitution that maps all variables to~$\ea$.
Then $\bu' \varphi = \ea^e \neq 0$ and $\bv' \varphi = \ea^f = 0$ imply the contradiction $\bu' \varphi \neq \bv' \varphi$.
The present case is thus impossible.

\paragraph{\sc Case~3}

$e < f < n+r$.
Then the contradiction $\bu' \varphi = \ea^e \neq \ea^f = \bv' \varphi$ is obtained.
Hence the present case is impossible.

\paragraph{\sc Case~4}

$e = f < n+r$.
Suppose that $e_k \neq f_k$ for some~$k$ so that $e+e_k \neq f+f_k$.
Then as observed earlier, $e_k,f_k \geq n$.
Let $\psi : \A \rightarrow \N_{n+r}$ be the substitution that maps~$x_k$ to $\ea^2$ and all other variables to~$\ea$.
Then $\bu' \psi = \bv' \psi$ in $\N_n$, where
\[
\bu' \psi = \bigg(\prod_{i=1}^{k-1} \ea^{e_i} \bigg) (\ea^2)^{e_k} \bigg(\prod_{i=k+1}^m \ea^{e_i} \bigg) = \ea^{e+e_k}
\]
and $\bv' \psi = \ea^{f+f_k}$ similarly.
Thus $\ea^{e+e_k} = \ea^{f+f_k}$.
But $e+e_k \neq f+f_k$ implies that $e+e_k, f+f_k \geq n$.
Hence the identity $\bu' \id \bv'$ also satisfies~(ii) and is deducible from~\eqref{id: Nn+rjNnI basis list}.
The identity $\bu \id \bv$ is thus deducible from~\eqref{id: Nn+rjNnI basis}.
\end{proof}

\begin{corollary} \label{C: N4jN2I}
Suppose that~$S$ is any finite semigroup that satisfies the identities
\begin{equation}
xy \id yx, \quad x^3y_1y_2 \id x^2y_1y_2 \label{id: N4jN2I basis}
\end{equation}
but violates all of the identities
\begin{equation}
x^3 \id x^2, \quad x^2y \id xy^2. \label{id: N4jN2I not ji}
\end{equation}
Then $\lpbr S\rpbr$ is a sub{\pvar} of $\lpbr\N_4,\N_2^I\rpbr$ that is not {\ji}.
\end{corollary}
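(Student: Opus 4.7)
The strategy follows the pattern of the preceding corollaries in this section: apply Proposition~\ref{P: Nn+rjNnI} to confine $\lpbr S\rpbr$ inside a join of two compact {\pvars}, neither of which contains $\lpbr S\rpbr$.

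The first step is to specialize Proposition~\ref{P: Nn+rjNnI} at $n=r=2$.  The resulting basis for the identities of $\N_4 \times \N_2^I$ consists of commutativity, the identity $x^3 y_1 y_2 \id x^2 y_1 y_2$, and the list~\eqref{id: Nn+rjNnI basis list} of identities $x_1^{e_1}\cdots x_m^{e_m}\id x_1^{f_1}\cdots x_m^{f_m}$ with common exponent-sum $e=f<n+r=4$ satisfying condition~(b).  The combinatorial crux is that for $n=r=2$ every such list identity is trivial.  Fix one, and suppose $e_k\neq f_k$ for some~$k$; condition~(b) then forces $e_k,f_k\geq 2$ and $e+e_k\geq 4$.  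For $e\in\{1,2\}$, the requirement $e_k\geq 2$ together with $\sum e_i = e$ forces $m=1$ and $e_k=e=f_k$, making the identity trivial.  For $e=3$, the requirement $e_k\geq 2$ combined with $e_{k'}\geq 1$ for every other variable forces $e_k=2$ and $m=2$; the identical argument gives $f_k=2$, contradicting $e_k\neq f_k$.  Thus the basis of Proposition~\ref{P: Nn+rjNnI} at $n=r=2$ reduces to the two identities~\eqref{id: N4jN2I basis}, and $\lpbr \N_4,\N_2^I\rpbr$ is precisely the {\pvar} defined by~\eqref{id: N4jN2I basis}.

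From $S \models \eqref{id: N4jN2I basis}$ it then follows that $\lpbr S\rpbr \subseteq \lpbr \N_4, \N_2^I\rpbr = \lpbr \N_4\rpbr \vee \lpbr \N_2^I\rpbr$.  A direct check shows $\N_4 \models x^2y \id xy^2$: since $\N_4$ is monogenic commutative with $a^4=\ez$, substituting $x\mapsto a^i$, $y\mapsto a^j$ with $i,j\geq 1$ yields $a^{2i+j}$ and $a^{i+2j}$, which are equal when $i=j$ and both equal $\ez$ when $i\neq j$ (because then $\min(2i+j,\,i+2j)\geq 4$).  Likewise $\N_2^I \models x^3 \id x^2$ trivially, since every element of $\N_2^I$ squares to an idempotent.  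As $S$ violates both identities in~\eqref{id: N4jN2I not ji}, we conclude $\lpbr S\rpbr \nsubseteq \lpbr \N_4\rpbr$ and $\lpbr S\rpbr \nsubseteq \lpbr \N_2^I\rpbr$, so $\lpbr S\rpbr$ is contained in a join of two {\pvars} but in neither joinand, whence it fails to be {\ji}.

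The only step that is not purely mechanical is the combinatorial collapse of the list~\eqref{id: Nn+rjNnI basis list} at $n=r=2$; once that is verified, the remainder is a routine instance of the template used throughout Section~\ref{sec: non-ji}.
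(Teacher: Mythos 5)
Your proposal is correct and follows essentially the same route as the paper: specialize Proposition~\ref{P: Nn+rjNnI} at $n=r=2$, deduce $\lpbr S\rpbr \subseteq \lpbr\N_4,\N_2^I\rpbr = \lpbr\N_4\rpbr \vee \lpbr\N_2^I\rpbr$, and exclude each joinand using the identities $x^2y \id xy^2$ (satisfied by $\N_4$) and $x^3 \id x^2$ (satisfied by $\N_2^I$). Your explicit verification that the list identities~\eqref{id: Nn+rjNnI basis list} become trivial when $n=r=2$ is precisely the step the paper leaves implicit, and it is sound; the only subcase you gloss over ($e=3$ with $m=1$) is automatically excluded under the assumption $e_k\neq f_k$, since there $e_k=e=f=f_k$.
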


\begin{proof}
By Proposition~\ref{P: Nn+rjNnI} with $n=r=2$, the identities satisfied by $\N_4 \times \N_2^I$ are axiomatized by~\eqref{id: N4jN2I basis}.
Hence the inclusion
\[
\lpbr S\rpbr \subseteq \lpbr\N_4,\N_2^I\rpbr = \lpbr\N_4\rpbr \vee \lpbr\N_2^I\rpbr
\]
holds.
But the two identities in~\eqref{id: N4jN2I not ji} are satisfied by $\N_2^I$ and $\N_4$, respectively.
Therefore, $\lpbr S\rpbr \nsubseteq \lpbr\N_4\rpbr$ and $\lpbr S\rpbr \nsubseteq \lpbr\N_2^I\rpbr$.
\end{proof}

\begin{corollary} \label{C: N5jN1I}
Suppose that~$S$ is any finite semigroup that satisfies the identities
\begin{equation}
xy \id yx, \quad x^2yz \id xy^2z, \quad x^2y_1y_2y_3y_4 \id xy_1y_2y_3y_4 \label{id: N5jN1I basis}
\end{equation}
but violates all of the identities
\begin{equation}
x^2 \id x, \quad x^5 \id y^5. \label{id: N5jN1I not ji}
\end{equation}
Then $\lpbr S\rpbr$ is a sub{\pvar} of $\lpbr\N_5,\N_1^I\rpbr$ that is not {\ji}.
\end{corollary}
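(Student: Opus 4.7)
The plan is to adapt the template of Corollary~\ref{C: N4jN2I}: invoke Proposition~\ref{P: Nn+rjNnI} (with $n=1$ and $r=4$, noting $\N_1^I \cong \SL$) to identify the three identities~\eqref{id: N5jN1I basis} as an axiomatic basis for $\lpbr\N_5,\N_1^I\rpbr$, deduce $\lpbr S\rpbr \subseteq \lpbr\N_5\rpbr \vee \lpbr\N_1^I\rpbr$, and then supply two witness identities---one valid on each join factor---that are violated by $S$, whence $\lpbr S\rpbr$ is strictly below the join but below neither factor.

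The substantive step is verifying that~\eqref{id: N5jN1I basis} really is a basis for $\N_5 \times \N_1^I$. With $n=1$, $r=4$, the general basis supplied by Proposition~\ref{P: Nn+rjNnI} consists of commutativity~\eqref{id: Nn+rjNnI basis xy}, the identity $x^2 y_1 y_2 y_3 y_4 \id x y_1 y_2 y_3 y_4$ from~\eqref{id: Nn+rjNnI basis xn+1y1y2...yr}, and the list~\eqref{id: Nn+rjNnI basis list}. I would first argue that, under $(n,r)=(1,4)$, conditions~(a) and~(b) force any non-trivial list identity to have $e=f=4$: for $e=f\le 3$, condition~(b) requires $e_k+e,\, f_k+f \geq 5$, hence $e_k,f_k \geq 2$, and a short case analysis on $m$ shows this is inconsistent with $\sum e_i = e \le 3$ unless $e_k = f_k$ for every $k$. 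Hence the only non-trivial list identities are the commutative identities between length-four monomials of the same content; these are then all derivable from $xy \id yx$ and $x^2 y z \id x y^2 z$ by substitutions such as $z\mapsto x$ (yielding $x^3 y \id x^2 y^2$) and $z\mapsto y$ (yielding $x^2 y^2 \id x y^3$), with renamings covering the remaining three-variable cases like $xy^2 z \id xyz^2$.

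With the basis in hand, the inclusion $\lpbr S \rpbr \subseteq \lpbr \N_5, \N_1^I \rpbr = \lpbr \N_5 \rpbr \vee \lpbr \N_1^I \rpbr$ is immediate. Since $\N_1^I \cong \SL$ satisfies $x^2 \id x$, and every element of $\N_5$ has fifth power equal to the zero so $\N_5$ satisfies $x^5 \id y^5$, the hypothesis that $S$ violates both identities in~\eqref{id: N5jN1I not ji} gives $\lpbr S \rpbr \nsubseteq \lpbr \N_1^I \rpbr$ and $\lpbr S \rpbr \nsubseteq \lpbr \N_5 \rpbr$, so $\lpbr S \rpbr$ is not {\ji}.

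The main obstacle is the bookkeeping in the basis verification---specifically, showing that $x^2 y z \id x y^2 z$ together with commutativity captures every redistribution of four letters among a fixed content. Once that is dispatched, the remainder of the argument mirrors Corollary~\ref{C: N4jN2I} line-for-line.
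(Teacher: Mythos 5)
Your proposal is correct and takes essentially the same route as the paper: apply Proposition~\ref{P: Nn+rjNnI} with $(n,r)=(1,4)$ to see that~\eqref{id: N5jN1I basis} axiomatizes the identities of $\N_5\times\N_1^I$, deduce $\lpbr S\rpbr \subseteq \lpbr\N_5\rpbr \vee \lpbr\N_1^I\rpbr$, and use $x^2 \id x$ (valid in $\N_1^I$) and $x^5 \id y^5$ (valid in $\N_5$) to rule out containment in either factor. Your explicit verification that the list identities~\eqref{id: Nn+rjNnI basis list} reduce, for these parameters, to consequences of $xy \id yx$ and $x^2yz \id xy^2z$ merely fills in a detail the paper leaves implicit; note also that the conclusion needs only that $\lpbr S\rpbr$ lies below the join and below neither factor, not that it is strictly below the join.
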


\begin{proof}
By Proposition~\ref{P: Nn+rjNnI} with $n=1$ and $r=4$, the identities satisfied by $\N_5 \times \N_1^I$ are axiomatized by~\eqref{id: N5jN1I basis}.
Hence the inclusion
\[
\lpbr S\rpbr \subseteq \lpbr\N_5,\N_1^I\rpbr = \lpbr\N_5\rpbr \vee \lpbr\N_1^I\rpbr
\]
holds.
But the two identities in~\eqref{id: N5jN1I not ji} are satisfied by $\N_1^I$ and $\N_5$, respectively.
Therefore, $\lpbr S\rpbr \nsubseteq \lpbr\N_5\rpbr$ and $\lpbr S\rpbr \nsubseteq \lpbr\N_1^I\rpbr$.
\end{proof}

\subsection{The {\pvar} $\lpbr\N_n^I,\NB\rpbr$}

It turns out to be notationally simpler to find a basis of identities for $\N_n^I \times \NBop$ instead of $\N_n^I \times \NB$.

\begin{proposition} \label{P: NnIjN2barop}
Let $n \geq 2$.
Then the identities satisfied by the semigroup $\N_n^I \times \NBop$ are axiomatized by
\begin{subequations} \label{id: NnIjN2barop basis}
\begin{gather}
x^{n+1} \id x^n, \quad xyx^n \id xyx^{n-1}, \label{id: NnIjN2barop basis xn+1} \\
xyzt \id xytz. \label{id: NnIjN2barop basis xyzt}
\end{gather}
\end{subequations}
\end{proposition}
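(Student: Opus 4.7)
The proof splits into verifying that the three listed identities are satisfied by $\N_n^I \times \NBop$ and then showing, conversely, that every identity of this semigroup is deducible from them.

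For the forward direction I would argue factorwise. The monoid $\N_n^I$ is commutative and satisfies $x^{n+1} \id x^n$ by Proposition~\ref{P: NnI}(i), so (3) follows from commutativity, (1) is immediate, and the chain $xyx^n \id y x^{n+1} \id y x^n \id xyx^{n-1}$ yields (2). By dualizing Proposition~\ref{P: N2bar}(i), the semigroup $\NBop$ satisfies $xyz \id xy$, which immediately reduces any word of length at least three to its first two letters; each side of (1), (2) and (3) has length at least three (using $n \geq 2$) and agrees with its partner on the first two letters, so all three identities also hold in $\NBop$.

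For the converse, let $\bu \id \bv$ be any identity of $\N_n^I \times \NBop$. Restricting to the $\NBop$ factor and using its axiomatization $xyz \id xy$, either $\bu = \bv$ is a single variable (trivial case) or both words have length at least two and share the same first two letters, call them $x_1$ and $x_2$. Restricting to the $\N_n^I$ factor, Lemma~\ref{L: word Zn NnI LZI RZI}(ii) yields, for every variable $z$, either $\occ(z,\bu) = \occ(z,\bv)$ or $\occ(z,\bu), \occ(z,\bv) \geq n$. The key observation for the deduction is that (3) gives the transposition of positions $i$ and $i+1$ for every $i \geq 3$ (substitute $y$ by the letters in positions $2, \ldots, i-1$); consequently, all letters from the third position onward of any word can be freely permuted using (3).

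I would then reduce every word with first two letters $x_1, x_2$ to a canonical form depending only on the pair $(x_1, x_2)$ and the saturated occurrence data $\{\min(\occ(z, \bw), n)\}_{z \in \A}$. When $x_1 \neq x_2$, the canonical form is $x_1 x_2 \bw' x_1^{b}$ with $\bw'$ sorted in a fixed total order on $\A \setminus \{x_1\}$ and containing no $x_1$, each $z \neq x_1$ occurring in total at most $n$ times, and $b \in \{0, 1, \ldots, n-1\}$. When $x_1 = x_2$, the canonical form is $x_1^{a} \bw'$ obtained by moving all $x_1$'s to the front via (3), with $a = \min(\occ(x_1, \bu), n) \geq 2$ and $\bw'$ sorted and free of $x_1$. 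The reduction proceeds in three stages: (i) use (3) to sort the tail and group the copies of $x_1$ together (to the right of $\bw'$ if $x_1 \neq x_2$, to the left otherwise); (ii) apply (1) to each contiguous block $z^m$ with $m \geq n+1$ to collapse it to $z^n$, where, if $z = x_2 \neq x_1$, the isolated position-two occurrence is first temporarily merged with the $\bw'$-block via (3) before (1) is applied; (iii) when $x_1 \neq x_2$ and $\occ(x_1, \bu) \geq n+1$, use (1) to squash the trailing block $x_1^m$ down to $x_1^n$ and then (2), with $x \mapsto x_1$ and $y$ instantiated by $x_2 \bw'$, to shrink it to $x_1^{n-1}$. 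Since the output depends only on $(x_1, x_2)$ and the saturated occurrence multiset, $\bu$ and $\bv$ reach identical canonical words, yielding the deducibility of $\bu \id \bv$ from~\eqref{id: NnIjN2barop basis}. The main technical obstacle I expect is the degenerate case $x_1 = x_2$: there (2) reduces to a consequence of (1), and the $x_1$-overflow must be eliminated by first forming a contiguous power $x_1^{n+1}$ via (3) and collapsing it through (1), rather than by a trailing-block reduction as in the generic case.
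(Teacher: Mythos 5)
Your proposal is correct and follows essentially the same route as the paper: verify the identities factorwise, then use $xyzt \id xytz$ to sort everything after the second position and $x^{n+1}\id x^n$, $xyx^n\id xyx^{n-1}$ to cap exponents, reducing both sides to a canonical word determined by the length-two prefix (forced by the $\NBop$ factor, the dual of Lemma~\ref{L: N2bar word}) and the occurrence data modulo saturation at $n$ (forced by the $\N_n^I$ factor via Lemma~\ref{L: word Zn NnI LZI RZI}(ii)). The only difference is cosmetic: you park the surplus copies of the first letter in a trailing block and cap them with identity~\eqref{id: NnIjN2barop basis xn+1}, whereas the paper's canonical form (CF1)/(CF2) places them immediately after the two-letter prefix.
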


In this subsection, a word of length at least two is said to be in \textit{canonical form} if it is either
\begin{enumerate}[({CF}1)]
\item $x^2 \cdot x^ez_1^{f_1} z_2^{f_2} \cdots z_k^{f_k}$ or
\item $xy \cdot x^{e_1}y^{e_2}z_1^{f_1} z_2^{f_2} \cdots z_k^{f_k}$,
\end{enumerate}
where
\begin{enumerate}[(I)]
\item $x,y,z_1,z_2,\ldots,z_k$ are distinct variables with $k \geq 0$;
\item $z_1,z_2,\ldots,z_k$ are in alphabetical order;
\item $e \in \{0,1,\ldots,n-2\}$, $e_i \in \{ 0,1,\ldots,n-1\}$, and $f_i \in \{ 1,2,\ldots,n\}$.
\end{enumerate}

\begin{lemma} \label{L: NnIjN2barop can}
The identities~\eqref{id: NnIjN2barop basis} can be used to convert any word of length at least two into a word in canonical form.
\end{lemma}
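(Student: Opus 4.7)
The plan is to first use the identity $xyzt \id xytz$ to achieve commutativity of the suffix starting at position~$3$, and then to use $x^{n+1} \id x^n$ and $xyx^n \id xyx^{n-1}$ to pull each exponent into its stated range. For any word $\bw = w_1 w_2 \cdots w_m$ with $m \geq 4$ and any index $i$ satisfying $3 \leq i < m$, the substitution $x \mapsto w_1 \cdots w_{i-2}$, $y \mapsto w_{i-1}$, $z \mapsto w_i$, $t \mapsto w_{i+1}$ into $xyzt \id xytz$ turns the prefix $w_1 \cdots w_{i+1}$ into $w_1 \cdots w_{i-1} w_{i+1} w_i$, thereby swapping the letters at positions $i$ and $i+1$. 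Since such adjacent transpositions generate the symmetric group on $\{3,\ldots,m\}$, one may rearrange $w_3 \cdots w_m$ into any desired order modulo \eqref{id: NnIjN2barop basis}; words of length $2$ or $3$ are in canonical form after a direct inspection of exponents, reducing via $x^{n+1} \id x^n$ only when necessary.

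Next, I split into cases on whether $w_1$ and $w_2$ coincide. If $w_1 = w_2 = x$, I list the remaining variables occurring in $\bw$ in alphabetical order as $z_1, \ldots, z_k$ and use tail commutativity to write $\bw \id x^{2+a} z_1^{f_1} \cdots z_k^{f_k}$, where $a = \occ(x,\bw)-2$ and $f_i = \occ(z_i,\bw)$. If $w_1 = x \neq y = w_2$, I analogously write $\bw \id xy \cdot x^{e_1} y^{e_2} z_1^{f_1} \cdots z_k^{f_k}$. Repeated applications of $x^{n+1} \id x^n$, specialised in turn to each variable, shrink every $f_i$ to at most~$n$; in Case~$1$ they also shrink the block $x^{2+a}$ until $a \leq n-2$, producing CF$1$ with $e = a$; in Case~$2$ they reduce $e_1$ to at most~$n$, after which a single application of $xyx^n \id xyx^{n-1}$ to the prefix $xyx^n$ (if needed) brings $e_1$ down to $n-1$. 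Finally, to handle $e_2$, I permute the tail into $\bw \id xy^{e_2+1} x^{e_1} z_1^{f_1} \cdots z_k^{f_k}$ and apply $y^{n+1} \id y^n$ repeatedly to the contiguous $y$-block until its length is at most~$n$; a last rearrangement restores CF$2$ with $e_2 \leq n-1$.

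The subtle point is the reduction of $e_2$ in Case~$2$. In the natural arrangement $xy\cdot x^{e_1} y^{e_2} z_1^{f_1} \cdots z_k^{f_k}$, the $y$ at position~$2$ is separated from the block $y^{e_2}$ by $x^{e_1}$, so no substitution instance of $x^{n+1} \id x^n$ or $xyx^n \id xyx^{n-1}$ directly reduces the $y$-count when $e_2 = n$. The remedy is to first absorb $y^{e_2}$ into position~$2$ via tail commutativity, creating the contiguous block $y^{e_2+1}$ on which $y^{n+1} \id y^n$ alone does the work. The stronger identity $xyx^n \id xyx^{n-1}$ is needed only for the analogous $x$-reduction, where no such merge is possible because position~$2$ is occupied by $y$; this asymmetry between the treatments of $e_1$ and $e_2$ in Case~$2$ is the sole substantive feature of the argument.
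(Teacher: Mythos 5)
Your proof is correct and takes essentially the same route as the paper's (which only sketches it): use $xyzt \id xytz$ to permute the letters beyond the first two positions into the required block order, then shrink the exponents with $x^{n+1} \id x^n$ and $xyx^n \id xyx^{n-1}$. Your explicit handling of the $e_2$-reduction by temporarily merging the $y$-block with the second letter, and of the $e_1$-reduction via the prefix $xyx^n$, simply fills in details the paper leaves implicit.
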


\begin{proof}
Let $\bw$ be any word of length at least two.
Then $\bw=x^2\bu$ or $\bw = xy\bu$ for some distinct $x,y \in \A$ and $\bu \in \A^*$.
The identity~\eqref{id: NnIjN2barop basis xyzt} can first be used to rearrange the variables of the suffix~$\bu$ until~$\bw$ becomes a word of the form~(CF1) or~(CF2) with~(I) and~(II) satisfied.
The identities~\eqref{id: NnIjN2barop basis xn+1} can then be used to reduce the exponents $e,e_i,f_i$ so that~(III) is satisfied.
\end{proof}

\begin{lemma} \label{L: N2bar word}
The semigroup $\NB$ satisfies an identity $\bu \id \bv$ if and only if the words~$\bu$ and~$\bv$ share the same suffix of length two.
\end{lemma}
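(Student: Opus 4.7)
The plan is to invoke Proposition~\ref{P: N2bar}(i), which asserts that the identities of $\NB$ are axiomatized by the single identity $xyz \id yz$, and then combine this with direct inspection of the multiplication table of $\NB$. Throughout I treat $\bu,\bv \in \A^+$ as having length at least two, as is implicit in the hypothesis that they share a length-two suffix.

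For the ``if'' direction, suppose $\bu$ and $\bv$ share a common length-two suffix $xy$, so $\bu = \bu_0 \cdot xy$ and $\bv = \bv_0 \cdot xy$ for some $\bu_0,\bv_0 \in \A^*$. I would apply $xyz \id yz$ iteratively, at each stage taking the leading letter as $x$, the second letter as $y$, and the remaining trailing factor (of length at least one) as $z$, to strip one leading letter off at a time. After finitely many applications $\bu$ reduces to $xy$, and symmetrically for $\bv$, so both reduce to the same length-two word as consequences of $xyz \id yz$. Hence $\NB$ satisfies $\bu \id \bv$.

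For the ``only if'' direction, I would prove the contrapositive: if the length-two suffixes $u_{n-1}u_n$ of $\bu$ and $v_{m-1}v_m$ of $\bv$ differ, then a concrete substitution $\varphi\colon \A \to \NB$ separates them. The ``if'' direction already shows that $\bu\varphi$ depends only on $(u_{n-1}\varphi, u_n\varphi)$, and similarly for $\bv$, so it suffices to make the two length-two products in $\NB$ unequal. If $u_n \neq v_m$, send them to the distinct constants $\ov\ez$ and $\ov\ea$; since right-multiplying by a constant in $\NB$ returns that constant, the products evaluate to $\ov\ez$ and $\ov\ea$ respectively. If instead $u_n = v_m$ but $u_{n-1} \neq v_{m-1}$, send the shared last variable to $\ea$ and assign $u_{n-1} \mapsto \ov I$ and $v_{m-1} \mapsto \ov\ez$; the multiplication table then yields $\ov I \cdot \ea = \ov\ea$ versus $\ov\ez \cdot \ea = \ov\ez$, as required.

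The principal obstacle is combinatorial bookkeeping in the second case: the shared last variable may coincide with one of the penultimate variables, obstructing the naive assignment above. For example, if $\bu$ ends in $aa$ while $\bv$ ends in $ba$ with $a \neq b$, then I would instead send $a \mapsto \ea$ and $b \mapsto \ov I$, obtaining $\ea\cdot\ea = \ov\ez$ and $\ov I \cdot \ea = \ov\ea$. Since $\NB$ has four elements and at most three distinct variables appear across the two suffixes, a short enumeration of the coincidence patterns shows that a suitable $\varphi$ always exists, so no genuine difficulty arises.
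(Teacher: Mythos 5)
Your argument is correct. The paper itself offers no proof of this lemma -- it simply declares it ``routinely established'' and cites Lee and Li for the dual statement about $\NBop$ -- so there is no written argument to compare against; your two-step verification is precisely the routine check that is intended. Two small remarks. First, you do not actually need the full strength of Proposition~\ref{P: N2bar}(i): the only fact used in your ``if'' direction is that $\NB$ satisfies $xyz \id yz$, which is immediate from the multiplication table (every product of two elements is a constant map, and constants are right zeros), so the lemma can be kept independent of Tishchenko's axiomatization; the ``only if'' direction never uses the axiomatization at all, only your explicit separating substitutions, and the key observation that $\bu\varphi$ depends only on the images of the last two letters is exactly the reduction $\NB \models \bu \id u_{n-1}u_n$ from the ``if'' direction. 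Second, your standing assumption that $|\bu|,|\bv|\geq 2$ is the right reading and matches how the paper uses the lemma: in the proof of Proposition~\ref{P: NnIjN2barop} identities with a single-variable side are disposed of separately before the lemma is invoked. Your case analysis in the ``only if'' direction is complete: when the last letters differ, sending them to the distinct right zeros $\ov\ez$ and $\ov\ea$ separates the sides regardless of any coincidences among the remaining variables, and when the last letters agree but the penultimate ones differ, the three possible coincidence patterns (penultimate letter of $\bu$ equal to the last letter, penultimate letter of $\bv$ equal to the last letter, or neither) are each handled by the assignments you give, using $\ov I\cdot\ea=\ov\ea$, $\ov\ez\cdot\ea=\ov\ez$, and $\ea\cdot\ea=\ov\ez$, all of which check out against the table.
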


\begin{proof}
This is routinely established and its dual result for~$\NBop$ was observed by Lee and Li~\cite[Remark~6.2(i)]{LL11}.
\end{proof}

\begin{proof}[Proof of Proposition~\ref{P: NnIjN2barop}]
It is easily verified, either directly or by Lemmas~\ref{L: word Zn NnI LZI RZI}(ii) and~\ref{L: N2bar word}, that the identities~\eqref{id: NnIjN2barop basis} are satisfied by the semigroup $\N_n^I \times \NBop$.
Hence it remains to show that any identity $\bu \id \bv$ satisfied by the semigroup $\N_n^I \times \NBop$ is deducible from the identities~\eqref{id: NnIjN2barop basis}.
It is easily shown that if either~$\bu$ or~$\bv$ is a single variable, then the identity $\bu \id \bv$ is trivial by Lemma~\ref{L: word Zn NnI LZI RZI}(ii) and so is vacuously deducible from the identities~\eqref{id: NnIjN2barop basis}.
Therefore, assume that~$\bu$ and~$\bv$ are words of length at least two.
By Lemma~\ref{L: NnIjN2barop can}, the identities~\eqref{id: NnIjN2barop basis} can be used to convert~$\bu$ and~$\bv$ into words~$\bu'$ and~$\bv'$ in canonical form.
By Lemma~\ref{L: N2bar word}, the words~$\bu'$ and~$\bv'$ share the same prefix of length two.
Therefore, $\bu'$ and~$\bv'$ are both of the form~(CF1) or both of the form~(CF2).
In any case, it is routinely verified by Lemma~\ref{L: word Zn NnI LZI RZI}(ii) that $\bu'=\bv'$.
Since
\[
\bu \stackrel{\eqref{id: NnIjN2barop basis}}{\id} \bu'=\bv' \stackrel{\eqref{id: NnIjN2barop basis}}{\id} \bv,
\]
the identity $\bu \id \bv$ is deducible from~\eqref{id: NnIjN2barop basis}.
\end{proof}

\begin{corollary} \label{C: N5IjN2bar}
Suppose that $S$ is any finite semigroup that satisfies the identities
\begin{equation}
x^6 \id x^5, \quad x^5yx \id x^4yx, \quad xyzt \id yxzt \label{id: N5IjN2bar basis}
\end{equation}
but violates all of the identities
\begin{equation}
xy \id yx, \quad xyz \id yz. \label{id: N5IjN2bar not ji}
\end{equation}
Then $\lpbr S\rpbr$ is a sub{\pvar} of $\lpbr\N_5^I, \NB\rpbr$ that is not {\ji}.
\end{corollary}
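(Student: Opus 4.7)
The plan is to follow the template of the preceding corollaries in this subsection and identify~\eqref{id: N5IjN2bar basis} as a basis of identities for the join $\lpbr\N_5^I\rpbr \vee \lpbr\NB\rpbr$, then separate this join using the two identities in~\eqref{id: N5IjN2bar not ji}.

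First I would invoke Proposition~\ref{P: NnIjN2barop} with $n=5$ and dualize. Since $\N_5^I$ is commutative, $(\N_5^I)^\op \cong \N_5^I$, while $(\NBop)^\op = \NB$; hence a basis for $\lpbr\N_5^I \times \NB\rpbr$ is obtained by reversing each identity in the basis~\eqref{id: NnIjN2barop basis} for $\lpbr\N_5^I \times \NBop\rpbr$. The reversal of $x^{n+1} \id x^n$ is itself, the reversal of $xyx^n \id xyx^{n-1}$ is $x^nyx \id x^{n-1}yx$, and the reversal of $xyzt \id xytz$, after the alphabet swap $x \leftrightarrow t$ and $y \leftrightarrow z$, becomes $xyzt \id yxzt$. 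For $n=5$ these are exactly the identities in~\eqref{id: N5IjN2bar basis}, so I conclude that $\lpbr S \rpbr \subseteq \lpbr\N_5^I,\NB\rpbr = \lpbr\N_5^I\rpbr \vee \lpbr\NB\rpbr$.

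Next I would verify that $\N_5^I$ satisfies $xy \id yx$ (by commutativity; see Proposition~\ref{P: NnI}(i)) and that $\NB$ satisfies $xyz \id yz$ (by Proposition~\ref{P: N2bar}(i)). Since~$S$ violates both identities in~\eqref{id: N5IjN2bar not ji}, we have $\lpbr S\rpbr \nsubseteq \lpbr\N_5^I\rpbr$ and $\lpbr S\rpbr \nsubseteq \lpbr\NB\rpbr$. Therefore $\lpbr S\rpbr$ is contained in a join of two proper sub{\pvars} but is not contained in either of them, so $\lpbr S\rpbr$ is not~{\ji}. The only potentially delicate point is the dualization bookkeeping---confirming that reversing~\eqref{id: NnIjN2barop basis} and relabeling variables yields precisely~\eqref{id: N5IjN2bar basis}---and this is straightforward; no new ideas beyond those of the preceding corollaries are required.
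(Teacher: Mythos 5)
Your proposal is correct and follows essentially the same route as the paper: invoke Proposition~\ref{P: NnIjN2barop} with $n=5$ (the paper leaves the dualization from $\NBop$ to $\NB$ implicit, which you spell out correctly), deduce $\lpbr S\rpbr \subseteq \lpbr\N_5^I\rpbr \vee \lpbr\NB\rpbr$, and then exclude containment in either factor via the identities $xy \id yx$ and $xyz \id yz$. No gaps.
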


\begin{proof}
By Proposition~\ref{P: NnIjN2barop} with $n=5$, the identities satisfied by $\N_5^I \times \NB$ are axiomatized by~\eqref{id: N5IjN2bar basis}.
Hence the inclusion
\[
\lpbr S\rpbr \subseteq \lpbr\N_5^I, \NB\rpbr = \lpbr\N_5^I\rpbr \vee \lpbr\NB\rpbr
\]
holds.
But the two identities in~\eqref{id: N5IjN2bar not ji} are satisfied by $\N_5^I$ and $\NB$, respectively.
Therefore, $\lpbr S\rpbr \nsubseteq \lpbr\N_5^I\rpbr$ and $\lpbr S\rpbr \nsubseteq \lpbr\NB\rpbr$.
\end{proof}

\subsection{The {\pvar} $\lpbr\N_2^I,\LZBop\rpbr$}

\begin{proposition} \label{P: N2IjLZbarop}
The identities satisfied by the semigroup $\N_2^I \times \LZBop$ are axiomatized by
\begin{equation}
x^3 \id x^2, \quad x^2yx^2 \id xyx, \quad xhytxy \id x^2hyty, \quad xhytyx \id xhy^2tx. \label{id: N2IjLZbarop basis}
\end{equation}
\end{proposition}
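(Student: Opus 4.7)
The plan is to mimic the structure of the proof of Proposition~\ref{P: NnIjN2barop}: first verify the identities, then introduce a canonical form, and finally argue uniqueness via the known identity bases of each factor.

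First I would verify that $\N_2^I \times \LZBop$ satisfies the identities~\eqref{id: N2IjLZbarop basis}. The semigroup $\N_2^I$ is commutative and satisfies $x^3 \id x^2$, so each of the four identities collapses in $\N_2^I$ to a trivial equation in the multiplicities of the variables (using Lemma~\ref{L: word Zn NnI LZI RZI}(ii)). For $\LZBop$, dualizing Proposition~\ref{P: LZbar}(i) gives the basis $\{x^2 \id x,\ xyz \id xyzxz\}$, from which $x^3 \id x^2$ and $x^2yx^2 \id xyx$ follow because $\LZBop$ is a band, and the two identities $xhytxy \id x^2hyty$ and $xhytyx \id xhy^2tx$ can be derived by judicious insertion of the consequence $w \id w \cdot (\text{first}(w))(\text{last}(w))$ of the defining identity of $\LZBop$.

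Next I would introduce a canonical form. A word $\bw$ would be declared canonical if: (a)~no variable has a stack of height $\geq 3$ (enforced by $x^3 \id x^2$); (b)~no two consecutive height-two stacks of the same variable are separated by a single variable factor (reduced by $x^2yx^2 \id xyx$); and (c)~each variable $x$ that occurs twice has its second occurrence in a designated position determined by the first-occurrence order—informally, the second occurrence is pushed as far left as possible and written as a contiguous repetition of the first occurrence. The two identities $xhytxy \id x^2hyty$ and $xhytyx \id xhy^2tx$ are precisely the moves needed to carry out (c): they transfer a pair of ``late'' second occurrences into contiguous squared stacks. I would prove by induction on word length that every word can be reduced to a canonical one using only the identities~\eqref{id: N2IjLZbarop basis}.

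Finally I would prove uniqueness. Suppose $\bu' \id \bv'$ is an identity of $\N_2^I \times \LZBop$ with both sides canonical. Since $\N_2^I \models \bu' \id \bv'$, Lemma~\ref{L: word Zn NnI LZI RZI}(ii) forces $\occ(x,\bu') = \occ(x,\bv')$ whenever either side is $\leq 1$, and both sides are $2$ otherwise; combined with canonicity, the positions of all squared stacks are then forced to coincide. Since $\LZBop \models \bu' \id \bv'$, dualizing Proposition~\ref{P: LZbar}(i) together with canonicity forces the first-occurrence order (and the positions of singly occurring variables relative to it) to coincide. Hence $\bu' = \bv'$, completing the proof.

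The main obstacle is step two, namely showing that identities (c) can always be enforced using only the four given identities. This requires a systematic case analysis on the possible relative positions of the two occurrences of a variable with respect to first and second occurrences of other variables, and showing that in each case either $xhytxy \id x^2hyty$ or $xhytyx \id xhy^2tx$ (composed with the simpler moves $x^3 \id x^2$ and $x^2yx^2 \id xyx$) suffices to drag the offending occurrence into its canonical slot. The delicate point is that applying one of the two nontrivial identities can disrupt the canonical position of another variable, so the reduction must be carried out with a termination argument—probably using a well-founded measure such as the lexicographic pair (total number of ``late'' second occurrences, positional sum of those occurrences).
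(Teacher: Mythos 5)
Your step (2) is where the whole proof lives, and it is not only unfinished (you yourself flag the reduction and its termination as the unresolved ``main obstacle'') but the canonical form you describe cannot exist as stated. Take $\bw=xyx$. The band $\LZB$ satisfies neither $xyx\id xy$ (take $x=\ee$, $y=\ov I$) nor $xyx\id yx$ (take $x=\ee$, $y=\ef$), so dually $\LZBop$ satisfies neither $xyx\id yx^2$ nor $xyx\id x^2y$; and by Lemma~\ref{L: word Zn NnI LZI RZI}(ii) the factor $\N_2^I$ forbids changing the single occurrence of $y$. Hence in $\lpbr\N_2^I\times\LZBop\rpbr$ the word $xyx$ is equal to no word in which the two occurrences of $x$ are contiguous, so your clause (c) (``the second occurrence \ldots written as a contiguous repetition of the first occurrence'') is unattainable, and with only the hedge ``pushed as far left as possible'' you have not actually specified which words are canonical. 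Consequently neither the rewriting claim nor the uniqueness claim can be checked. The uniqueness step is in any case only asserted: equality of initial parts is the complete invariant for $\LZ^I$, but it is far from sufficient for an identity to hold in $\LZBop$ (whose equational theory, dual to Proposition~\ref{P: LZbar}(i), is $\lbrs x^2\id x,\,xyz\id xyzxz\rbrs$ and violates $xyx\id xy$), and you never determine the actual invariant, so ``hence $\bu'=\bv'$'' does not follow from what is written. This is a genuine gap, not a routine verification.

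For comparison, the paper's proof contains none of this combinatorics: it quotes \cite[Proposition~26.1]{LZ15}, which axiomatizes the identities of an explicit semigroup $S$ of order six by exactly the basis \eqref{id: N2IjLZbarop basis}, and then observes that every identity falsified in $S$ is already falsified in one of the subsemigroups $\LZ^I$, $\N_2^I$, $\RZ$, $\LZBop$ of $S$; since $\LZ^I,\RZ\in\lpbr\LZBop\rpbr$, this yields $\lpbr S\rpbr=\lpbr\N_2^I\times\LZBop\rpbr$, and the basis transfers. If you want a self-contained syntactic proof in the style of Proposition~\ref{P: NnIjN2barop}, you must first solve the word problem for $\lpbr\N_2^I\times\LZBop\rpbr$ (i.e., isolate its complete invariants, which refine initial part, final letter and capped multiplicities), define the canonical form precisely, and then prove both the reduction and the separation steps---in effect redoing the relevant part of \cite{LZ15}.
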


\begin{proof}
Let $T_6 = \{a,b,c,d,e,f\}$ be the semigroup given in Table~\ref{Tab: T6}.
The identities satisfied by~$T_6$ are axiomatized by~\eqref{id: N2IjLZbarop basis} \cite[Proposition~26.1]{LZ15}.
It is easily deduced from the proof of this result that any identity violated by~$T_6$ is also violated by one of the following subsemigroups of~$T_6$:
\begin{gather*}
\{a,d,e\} \cong \LZ^I, \quad \{a,b,e\} \cong \N_2^I, \quad \{e,f\} \cong \RZ, \\ \text{and} \quad \langle c,e,f\rangle = \{a,c,d,e,f\} \cong \LZBop.
\end{gather*}
Since $\LZ^I, \RZ \in \lpbr\LZBop\rpbr$, any identity violated by~$T_6$ is violated by $\N_2^I$ or~$\LZBop$.
Therefore, the semigroup $\N_2^I \times \LZBop$ does not generate any proper sub{\pvar} of $\lpbr T_6 \rpbr$, whence $\lpbr\N_2^I \times \LZBop\rpbr=\lpbr T_6 \rpbr$.
\end{proof}

\begin{table}[ht!]
\[\def\arraystretch{1.3}
\begin{array} [c]{r|cccccc}
T_6 \, & \, a & b & c & d & e & f \\ \hline
  a \, & \, a & a & a & a & a & a \\
  b \, & \, a & a & a & a & b & b \\
  c \, & \, c & c & c & c & c & c \\
  d \, & \, d & d & d & d & d & d \\
  e \, & \, a & b & a & d & e & f \\
  f \, & \, a & b & d & d & e & f
\end{array}
\]
\caption{Multiplication table of $T_6$} \label{Tab: T6}
\end{table}

\begin{corollary} \label{C: N2IjLZbarop}
Suppose that~$S$ is any finite semigroup that satisfies the identities~\eqref{id: N2IjLZbarop basis} but violates all of the identities
\begin{equation}
x^2 \id x, \quad xy \id yx. \label{id: N2IjLZbarop not ji}
\end{equation}
Then $\lpbr S\rpbr$ is a sub{\pvar} of $\lpbr\N_2^I, \LZBop\rpbr$ that is not {\ji}.
\end{corollary}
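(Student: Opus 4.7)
The plan is to follow the template used for the analogous corollaries earlier in Section~\ref{sec: non-ji} (e.g., Corollary~\ref{C: N5IjN2bar}, Corollary~\ref{C: N4jN2I}). Proposition~\ref{P: N2IjLZbarop} does all the heavy lifting: it asserts that \eqref{id: N2IjLZbarop basis} is a basis of identities for $\N_2^I \times \LZBop$. So the very first step is to invoke this proposition to translate the hypothesis that $S \models \eqref{id: N2IjLZbarop basis}$ into the inclusion
\[
\lpbr S\rpbr \subseteq \lpbr\N_2^I \times \LZBop\rpbr = \lpbr\N_2^I\rpbr \vee \lpbr\LZBop\rpbr.
\]

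Next, I would verify that the two identities listed in~\eqref{id: N2IjLZbarop not ji} witness that neither joinand on the right-hand side contains $\lpbr S\rpbr$. Specifically, $\N_2^I$ is monogenic and therefore commutative, so $\N_2^I \models xy \id yx$; on the other hand, inspection of the multiplication table of $\LZB$ in Subsection~\ref{sub: LZB ji} shows that every element is idempotent, so $\LZB$ (and hence $\LZBop$) is a band and satisfies $x^2 \id x$. Since by hypothesis $S$ fails both identities in~\eqref{id: N2IjLZbarop not ji}, we conclude $\lpbr S\rpbr \nsubseteq \lpbr\N_2^I\rpbr$ and $\lpbr S\rpbr \nsubseteq \lpbr\LZBop\rpbr$.

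Combining these two observations, $\lpbr S\rpbr$ is contained in the join $\lpbr\N_2^I\rpbr \vee \lpbr\LZBop\rpbr$ but not in either joinand, which directly contradicts the definition of join irreducibility. Hence $\lpbr S\rpbr$ is a sub{\pvar} of $\lpbr\N_2^I,\LZBop\rpbr$ that is not~{\ji}, as required.

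Because the structure is dictated entirely by the shape of Proposition~\ref{P: N2IjLZbarop}, there is essentially no obstacle to clear; the only non-bookkeeping observation is the band check for $\LZBop$, which is a one-line verification from the multiplication table. All technical content is concentrated upstream in Proposition~\ref{P: N2IjLZbarop}, whose proof already reduces to the finite-basis result of~\cite[Proposition~26.1]{LZ15}.
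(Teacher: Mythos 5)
Your proposal is correct and is essentially identical to the paper's own proof: invoke Proposition~\ref{P: N2IjLZbarop} to get $\lpbr S\rpbr \subseteq \lpbr\N_2^I\rpbr \vee \lpbr\LZBop\rpbr$, then note that $\LZBop$ satisfies $x^2 \id x$ and $\N_2^I$ satisfies $xy \id yx$, so $S$ violating both rules out containment in either joinand, contradicting {\jirr}. The only nitpick is the justification that $\N_2^I$ is commutative because it is ``monogenic'' (as a semigroup it is two-generated); commutativity is immediate anyway, so this does not affect the argument.
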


\begin{proof}
By Proposition~\ref{P: N2IjLZbarop}, the inclusion
\[
\lpbr S\rpbr \subseteq \lpbr\N_2^I, \LZBop\rpbr = \lpbr\N_2^I\rpbr \vee \lpbr\LZBop\rpbr
\]
holds.
But the two identities in~\eqref{id: N2IjLZbarop not ji} are satisfied by $\LZBop$ and $\N_2^I$, respectively.
Therefore, $\lpbr S\rpbr \nsubseteq \lpbr\N_2^I\rpbr$ and $\lpbr S\rpbr \nsubseteq \lpbr\LZBop\rpbr$.
\end{proof}

\subsection{The {\pvar} $\lpbr\LZ^I, \el, \elop\rpbr$}

\begin{proposition}[{Zhang and Luo~\cite[Proposition~3.2(3) and Figure~5]{ZL09}}] \label{P: LZIjl3jl3op}
The identities satisfied by the semigroup $\LZ^I \times \el \times \elop$ are axiomatized by
\begin{equation}
x^3 \id x^2, \quad xyx \id x^2y^2, \quad xy^2z \id xyz. \label{id: LZIjl3jl3op basis}
\end{equation}
\end{proposition}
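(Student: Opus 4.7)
The plan is to prove the proposition in two directions. The forward inclusion $\lpbr \LZ^I \times \el \times \elop \rpbr \subseteq \lbrs \eqref{id: LZIjl3jl3op basis} \rbrs$ is a routine verification of the three identities on the multiplication tables of each factor. In $\LZ^I$, every non-identity element is a left zero, so each identity collapses to an equality of first letters (the identity-element cases being trivial). In $\el$, note that $\ea \cdot s = \ez$ for every $s \in \el$ other than~$\ee$ acting on the right; hence any substitution assigning $\ea$ to a non-terminal position forces both sides to equal~$\ez$, and the remaining substitutions involve only $\{\ee, \ez\}$, where checking is immediate. By duality the identities hold in $\elop$ as well.

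For the reverse inclusion, the strategy parallels Proposition~\ref{P: el3}. Define a canonical form as follows: for a word $\bw$ with $\ini(\bw) = y_1 y_2 \cdots y_k$, let the \textit{canonical form} of $\bw$ be $y_1^{a_1} y_2 y_3 \cdots y_{k-1} y_k^{a_k}$, where $a_1 = 2$ if and only if the first letter of $\bw$ occurs more than once in $\bw$, and $a_k = 2$ if and only if the last letter of $\bw$ occurs more than once in $\bw$ (with $a_1, a_k = 1$ otherwise; for $k = 1$ one takes $y_1$ or $y_1^2$ according as $|\bw| = 1$ or $|\bw| \geq 2$). I would then establish the reduction lemma: modulo the three identities, every word can be rewritten into its canonical form. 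The identity $x^3 \id x^2$ normalizes exponents to be at most two, $xy^2z \id xyz$ eliminates internal squares, and the key step is the elimination of interior back-references via $xyx \id x^2y^2$: if $\bw = \bu_1 x \bu_2 x \bv$ with $\bu_1, \bv$ nonempty, then applying $xyx \id x^2y^2$ to the inner $x\bu_2 x$ and then collapsing the leading $x^2$ via $xy^2z \id xyz$ yields $\bu_1 x \bu_2^2 \bv$, after which a secondary induction on $|\bu_2|$ produces $\bu_1 x \bu_2 \bv$. Distinct canonical forms are then separated by the three factor semigroups: $\LZ^I$ recovers the $\ini$-sequence by Lemma~\ref{L: word Zn NnI LZI RZI}(iii); the substitution $y_k \mapsto \ea$ and $y_i \mapsto \ee$ for $i < k$ evaluates in $\el$ to~$\ea$ exactly when $a_k = 1$, since $\ea^2 = \ez$ annihilates any squared terminal letter; and $\elop$ detects $a_1$ dually. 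Any identity $\bu \id \bv$ holding in $\LZ^I \times \el \times \elop$ therefore forces $\bu$ and $\bv$ to share a canonical form, whence $\bu \id \bv$ is deducible from the three identities.

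The principal obstacle will be the back-reference erasure step: the duplicated factor $\bu_2^2$ introduced by the application of $xyx \id x^2y^2$ typically contains many new non-adjacent repetitions, and the induction must be set up with a suitable complexity measure---for instance, the lexicographic pair $(|\bw| - |\ini(\bw)|, |\bw|)$---that strictly decreases under an appropriate sequence of rewriting steps. With the reduction lemma in place, the remainder is a straightforward comparison of canonical forms inside the three factor semigroups.
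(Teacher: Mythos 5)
The paper gives no proof of this proposition at all—it is quoted from Zhang and Luo~\cite{ZL09}—so your direct syntactic argument, modeled on the paper's treatment of $\elBop$ in Proposition~\ref{P: el3}, is a genuinely different (and reasonable) route, and its skeleton is right: the three identities do hold in each factor; your canonical form $y_1^{a_1}y_2\cdots y_{k-1}y_k^{a_k}$ encodes exactly the invariants the three factors detect, namely $\ini(\bw)$ via Lemma~\ref{L: word Zn NnI LZI RZI}(iii) and the two Booleans ``first letter of $\bw$ repeated'' and ``last letter of $\bw$ repeated'', which $\elop$ and $\el$ read off through the substitutions you describe; and the concluding deduction scheme is the standard one. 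Note, however, that the obstacle you single out as principal is not one: the factor $\bu_2^2$ created by $xyx\id x^2y^2$ is erased by a \emph{single} application of $xy^2z\id xyz$ under the substitution $x\mapsto\bu_1x$, $y\mapsto\bu_2$, $z\mapsto\bv$, so no complexity measure or secondary induction on $|\bu_2|$ is needed; the derived rule $\bu_1x\bu_2x\bv\id\bu_1x\bu_2\bv$ (with $\bu_1,\bv$ nonempty) strictly shortens the word.

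The genuine gap is elsewhere in the reduction lemma: exhaustive use of that deletion rule does not by itself reach your canonical form, because its side conditions $\bu_1\neq\emptyset$ and $\bv\neq\emptyset$ exclude precisely the two kinds of repeated occurrences that survive, namely a second occurrence of the first letter (whose only earlier occurrence is at position $1$) and the occurrence sitting at the last position of the word. These cannot be deleted---and must not be, since they carry the data $a_1$ and $a_k$---but they still have to be normalized: the extra occurrence of the first letter must be moved adjacent to position $1$ to form $y_1^2$, and, more delicately, when the last letter of $\bw$ is a repeat of some $y_j$ with $j<k$, the doubling recorded by $a_k=2$ must be transferred onto the \emph{different} variable $y_k$. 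Both steps need further derivations of the same flavour: apply $xyx\id x^2y^2$ anchored at the earlier occurrence, collapse the created square of that letter using its nonempty left context (or retain it when that letter is $y_1$), then delete the duplicated interior letters, leaving $y_k^2$ at the end; for instance $xyzy\id xy^2z^2\id xyz^2$ and $xyxz\id x^2y^2z\id x^2yz$. Your sketch stops at the deletion rule and asserts that canonical form is attained, so these boundary normalizations---which are exactly what forces the doubled letters to be $y_1$ and $y_k$ rather than the letters actually repeated in $\bw$---need to be supplied before the comparison of canonical forms can be invoked.
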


\begin{corollary} \label{C: LZIjl3jl3op}
Suppose that $S$ is any finite semigroup that satisfies the identities~\eqref{id: LZIjl3jl3op basis} but violates all of the identities
\begin{equation}
x^2y \id xy, \quad xy^2 \id xy. \label{id: LZIjI3jI3op not ji}
\end{equation}
Then $\lpbr S\rpbr$ is a sub{\pvar} of $\lpbr\LZ^I, \el, \elop\rpbr$ that is not {\ji}\up.
\end{corollary}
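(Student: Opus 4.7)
The plan is to mimic exactly the template of the preceding corollaries in this section (e.g., Corollary~\ref{C: N2IjLZbarop} and Corollary~\ref{C: LZIjl3jl3op} itself). By Proposition~\ref{P: LZIjl3jl3op}, from the hypothesis that $S$ satisfies the identities~\eqref{id: LZIjl3jl3op basis} we deduce the inclusion
\[ \lpbr S\rpbr \subseteq \lpbr\LZ^I, \el, \elop\rpbr. \]
I then want to split the right-hand side as a join of two proper sub{\pvars}, each of which satisfies one of the two identities in~\eqref{id: LZIjI3jI3op not ji}, so that the hypothesis that $S$ violates both of these identities forces $\lpbr S\rpbr$ to not be contained in either summand, witnessing failure of join \irreducibility.

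The natural decomposition is
\[ \lpbr\LZ^I, \el, \elop\rpbr = \lpbr\LZ^I, \el\rpbr \vee \lpbr\elop\rpbr. \]
The first verification needed is that the {\pvar} $\lpbr\LZ^I, \el\rpbr$ satisfies $x^2y \id xy$. This holds because $\LZ^I$ is a band and so satisfies $x^2 \id x$, while a direct inspection of the multiplication table of $\el$ in Subsection~\ref{sub: comp 0-simple} (only $\ee$ is a non-nilpotent element, and $\ee^2 = \ee$, while for any nilpotent $x$ one has $x^2 y = \ez = xy$) shows that $\el \models x^2y \id xy$. Dually, $\elop \models xy^2 \id xy$. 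Since $S$ violates the identity $x^2y \id xy$, one gets $\lpbr S\rpbr \nsubseteq \lpbr\LZ^I, \el\rpbr$, and since $S$ violates the identity $xy^2 \id xy$, one gets $\lpbr S\rpbr \nsubseteq \lpbr\elop\rpbr$. This shows $\lpbr S\rpbr$ is a join of two proper sub{\pvars} of itself, hence is not {\ji}.

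There is no hard step here: the argument is a direct instance of the by-now-routine mechanism of Section~\ref{sec: non-ji}, and the only substantive computation—checking the two small multiplication-table identities $\el \models x^2y \id xy$ and $\elop \models xy^2 \id xy$—is immediate. The main point to make sure of in writing is to cite Proposition~\ref{P: LZIjl3jl3op} for the containment in $\lpbr\LZ^I, \el, \elop\rpbr$, and to keep the presentation completely parallel to the other corollaries in the section so the reader recognises the pattern.
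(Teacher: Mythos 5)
Your argument is correct and is essentially the paper's own proof: Proposition~\ref{P: LZIjl3jl3op} gives $\lpbr S\rpbr \subseteq \lpbr\LZ^I, \el\rpbr \vee \lpbr\elop\rpbr$, and since $\LZ^I \times \el \models x^2y \id xy$ and $\elop \models xy^2 \id xy$ while $S$ violates both identities, $\lpbr S\rpbr$ is contained in neither join factor, so it is not {\ji}. One small correction of wording: the conclusion is not that $\lpbr S\rpbr$ \emph{is a join} of two proper sub{\pvars} of itself, but that it lies below a join without lying below either joinand, which is precisely the failure of (finite) join irreducibility.
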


\begin{proof}
By Proposition~\ref{P: LZIjl3jl3op}, the inclusion
\[
\lpbr S\rpbr \subseteq \lpbr\LZ^I, \el, \elop\rpbr = \lpbr\LZ^I, \el\rpbr \vee \lpbr\elop\rpbr
\]
holds.
But the two identities in~\eqref{id: LZIjI3jI3op not ji} are satisfied by $\LZ^I \times \el$ and $\elop$, respectively.
Therefore, $\lpbr S\rpbr \nsubseteq \lpbr\LZ^I, \el\rpbr$ and $\lpbr S\rpbr \nsubseteq \lpbr\elop\rpbr$.
\end{proof}

\subsection{The {\pvar} $\lpbr\Az, \el^I, (\elop)^I \rpbr$}

\begin{proposition}[{Lee~\cite[Proposition~2.8]{Lee08}}] \label{P: A0jB0I}
The identities satisfied by the semigroup $\Az \times \el^I \times (\elop)^I$ are axiomatized by
\begin{equation} \label{id: A0jB0I basis}
\begin{gathered}
x^3 \id x^2, \quad x^2yx^2 \id xyx, \quad xyxy \id yxyx, \\ xyxzx \id xyzx, \quad
xy^2z^2x \id xz^2y^2x.
\end{gathered}
\end{equation}
\end{proposition}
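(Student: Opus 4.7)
The plan is to establish the two directions of axiomatization: (a) the semigroup $\Az \times \Bz^I$ satisfies all five identities in~\eqref{id: A0jB0I basis}, and (b) every identity satisfied by $\Az \times \Bz^I$ is deducible from them. Direction~(a) is a direct calculation. Since a direct product satisfies an identity if and only if each factor does, I would check each of the five identities separately in $\Az$ and in $\Bz^I$ using the multiplication tables given in Subsection~\ref{sub: comp 0-simple}. Most substitutions collapse to $\ez$ in $\Az$ and can be handled uniformly; the more delicate verification is $xy^2z^2x \id xz^2y^2x$ in $\Bz^I$, because the stronger commutation $x^2y^2 \id y^2x^2$ does not hold in $\Az$ (it defines the unique maximal sub\-{\pvar} of $\lpbr\Az\rpbr$ by Proposition~\ref{P: A0}(ii)), so the extra flanking letters are essential.

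For direction~(b) I would adopt a canonical-form strategy. Using the five identities I would reduce every word to a normal form by successively applying: $x^3 \id x^2$ to bound exponents by two; $xyxzx \id xyzx$ to delete interior occurrences of a variable appearing three or more times; $x^2yx^2 \id xyx$ to simplify flanking squares; and $xyxy \id yxyx$ together with $xy^2z^2x \id xz^2y^2x$ to reorder adjacent pairs of squared letters, either standing alone or flanked by a common variable. The goal is a canonical form recording, for each variable, the linear position of its first and last occurrences, together with a ``middle block'' of squared letters whose internal order is normalized by the commutation identities.

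Having produced canonical forms, I would then show that distinct canonical forms are separated by some substitution into $\Az \times \Bz^I$. The two factors play complementary roles: $\Az$ fails $x^2y^2 \id y^2x^2$ and therefore detects order among nonadjacent letters, while $\Bz^I$, with its idempotents~$\ee,\ef$ and element~$\ea$ satisfying $\ee\ea=\ea\ef=\ea$, detects first/last occurrence patterns by routing substitutions through the $\mathscr H$-classes of the Brandt structure. For any pair of distinct canonical forms one selects the appropriate factor and exhibits a substitution that distinguishes them. Analogous canonical-form arguments for $\Az^I$ (Proposition~\ref{P: A0I}) and for other semigroups in Section~\ref{sec: ji} serve as a template.

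The main obstacle will be verifying that the five identities genuinely suffice to normalize every word and that the resulting rewriting system is confluent; in particular one must confirm that the commutation identities $xyxy \id yxyx$ and $xy^2z^2x \id xz^2y^2x$ have enough reach, together with the deletion identity $xyxzx \id xyzx$, to permute all squared pairs into the chosen order without creating new obstructions. A secondary challenge is the case analysis in the separation step, which must treat words whose content variables have mixed multiplicities and variable positions. The details are worked out in Lee~\cite[Proposition~2.8]{Lee08}, and while intricate they are finite and align with the methodology already used for the closely related {\pvars} $\lpbr\Az\rpbr$ and $\lpbr\Az^I\rpbr$.
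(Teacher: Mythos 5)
Your proposal is in essentially the same position as the paper: the paper offers no proof of Proposition~\ref{P: A0jB0I} at all, simply citing Lee~\cite[Proposition~2.8]{Lee08}, and your argument likewise defers the substantive completeness step to that reference while sketching the standard canonical-form-plus-separation methodology used elsewhere in Section~\ref{sec: ji}. The sketch (direct verification in each factor, normalization by the five identities, separation of distinct normal forms using $\Az$ for the non-commutation of squares and $\Bz^I$ for occurrence patterns) is consistent with how such results are proved, so there is nothing to correct, only the caveat that your text is an outline resting on the cited source rather than a self-contained proof.
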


\begin{proof}
The identities satisfied by the semigroup $\Az \times \Bz^I$ are axiomatized by the identities~\eqref{id: A0jB0I basis} \cite[Proposition~2.8]{Lee08}.
Since $\lpbr \Bz^I \rpbr = \lpbr \el^I, (\elop)^I \rpbr$ \cite[Figure~4]{Lee08}, the result follows.
\end{proof}

\begin{corollary} \label{C: A0jB0I}
Suppose that~$S$ is any finite semigroup that satisfies the identities~\eqref{id: A0jB0I basis} but violates all of the identities
\begin{equation}
xyx \id yxy, \quad xyx \id x^2y, \quad xyx \id yx^2. \label{id: A0jB0I not ji}
\end{equation}
Then $\lpbr S\rpbr$ is a sub{\pvar} of $\lpbr\Az, \el^I, (\elop)^I \rpbr$ that is not {\ji}\up.
\end{corollary}

\begin{proof}
By Proposition~\ref{P: A0jB0I}, the inclusion
\[
\lpbr S\rpbr \subseteq \lpbr\Az, \el^I, (\elop)^I \rpbr = \lpbr\Az\rpbr \vee \lpbr\el^I\rpbr \vee \lpbr(\elop)^I\rpbr
\]
holds.
But the three identities in~\eqref{id: A0jB0I not ji} are satisfied by $\Az$, $(\elop)^I$, and $\el^I$, respectively.
Therefore, $\lpbr S\rpbr \nsubseteq \lpbr\Az\rpbr$, $\lpbr S\rpbr \nsubseteq \lpbr\el^I\rpbr$, and $\lpbr S\rpbr \nsubseteq \lpbr(\elop)^I\rpbr$.
\end{proof}

\subsection{The {\pvar} $\lpbr(\N_2^\ba)^I,\LZB\rpbr$} \label{sub: W}

The semigroup $\W = \{a,b,c,d,e\}$ given in Table~\ref{Tab: W} is required in this subsection.

\begin{table}[ht!]
\[\def\arraystretch{1.2}
\begin{array} [c]{c|ccccc}
\W & \, a & b & c & d & e \\ \hline
 a & \, a & a & a & d & e \\
 b & \, a & a & b & d & e \\
 c & \, a & a & c & d & e \\
 d & \, a & a & d & d & e \\
 e & \, a & d & a & d & e
\end{array}
\]
\caption{Multiplication table of $\W$} \label{Tab: W}
\end{table}

\begin{proposition} \label{P: W}
\quad
\begin{enumerate}[\rm(i)]
\item The identities satisfied by the semigroup $\W$ are axiomatized by
\begin{equation}
x^3 \id x^2, \quad xyx \id y^2x. \label{id: W basis}
\end{equation}
\item The sub{\pvar} of $\lpbr\W\rpbr$ defined by the identity
\begin{equation}
x^2y^2z^2 \id x^2yz^2 \label{id: W max}
\end{equation}
is the unique maximal proper sub{\pvar} of $\lpbr\W\rpbr$\up.
\end{enumerate}
\end{proposition}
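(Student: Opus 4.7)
\emph{Plan for part~(i).}
First I would verify by direct computation that $W$ satisfies both identities. Since every element of $W$ has an idempotent square ($a^2 = a$, $b^2 = a$, $c^2 = c$, $d^2 = d$, $e^2 = e$), the identity $x^3 \id x^2$ follows from $x^3 = x \cdot x^2 = x^2$ element by element. The identity $xyx \id xy^2$ splits by the value of $x$: for $x \in \{a,c,e\}$ both sides collapse to $x$ since $x$ is a left zero; for $x \in \{b,d\}$ a short row-by-row check on the multiplication table (using in particular $by = a$ for $y \ne e$ and $be = c$, and $dy = y$ for $y \ne e$ and $de = a$) confirms equality.

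Second, I would develop a canonical form for words modulo these two identities together with their substitution instances $\bu \bw \bu \id \bu \bw^2$ for arbitrary words $\bu, \bw$. A representative derived identity is $xyzx \id xyz^2$, obtained from the chain
\[
xyzx \id x(yz)^2 = xyzyz = x(yzy)z \id x(yz^2)z = xyz^3 \id xyz^2.
\]
Systematic application of such reductions absorbs late-appearing occurrences of previously-seen variables into powers of their immediate neighbors, rewriting each word as one determined by $\ini(\bw)$ together with a short, controlled suffix of squared and unsquared letters drawn from $\cont(\bw)$. Third, I would show that distinct canonical-form words are separated by substitutions into $W$. Three structural features are key: the three left zeros $a,c,e$ provide independent ``tags''; the element $d$ acts as a left identity on $W \setminus \{e\}$, allowing variables to be effectively skipped; and $b$ detects whether the next letter evaluates to $e$ (since $be=c$ while $bx = a$ for every $x \ne e$). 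Together these furnish enough evaluation maps to distinguish any two inequivalent canonical forms.

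\emph{Plan for part~(ii).}
I would first exhibit a violation of $x^2y^2z^2 \id x^2yz^2$ in $W$ by the substitution $x \mapsto d$, $y \mapsto b$, $z \mapsto e$: the left-hand side evaluates to $d \cdot a \cdot e = a$, while the right-hand side yields $d \cdot b \cdot e = c$, and $a \ne c$. Hence $\lpbr W \rpbr \cap \lbrs x^2y^2z^2 \id x^2yz^2 \rbrs$ is a proper sub\-{\pvar} of $\lpbr W \rpbr$. For the converse direction, let $\bV$ be any proper sub\-{\pvar} of $\lpbr W \rpbr$ and choose an identity $\bu \id \bv$ satisfied by $\bV$ but violated by $W$. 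Using the canonical form from part~(i), I may replace $\bu$ and $\bv$ by their canonical-form representatives and assume they are distinct. A finite case analysis on how they differ---by initial parts, by the multiset of squared versus unsquared variables appearing in the suffix, or by their relative positions---will show in each case that $\bu \id \bv$ together with $\{x^3 \id x^2,\, xyx \id xy^2\}$ entails $x^2y^2z^2 \id x^2yz^2$.

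\emph{Main obstacle.} The crux of the argument is establishing the canonical form in part~(i): showing termination of the rewriting, confirming that the resulting list of canonical shapes is faithfully separated in $W$, and producing an enumeration fine enough to drive the case analysis in part~(ii). Once the canonical form is pinned down and the separations verified, the remaining arguments reduce to finite, if somewhat lengthy, checks.
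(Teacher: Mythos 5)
Your part~(i) plan and the exponent-difference case of part~(ii) follow essentially the paper's route: the paper reduces every word, via \eqref{id: W basis}, to a canonical form $x_1^{e_1}\cdots x_m^{e_m}$ with the $x_i$ distinct and $e_i\in\{1,2\}$ (your derived identity $xyzx\id xyz^2$ is exactly the right reduction tool), obtains $\ini(\bu')=\ini(\bv')$ from the copy of $\LZ^I$ inside $\W$ (your ``left-zero tags''), and separates differing exponents with the substitution $x_k\mapsto b$, $x_i\mapsto d$ for $i<k$, $x_i\mapsto e$ for $i>k$ --- precisely the $d$/$b$/$e$ mechanism you describe. Your explicit computations (the verification of \eqref{id: W basis} in $\W$, the chain deriving $xyzx\id xyz^2$, and the witness $x\mapsto d$, $y\mapsto b$, $z\mapsto e$ violating \eqref{id: W max}) are correct.

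The genuine gap is in part~(ii), in the case where the two canonical words have different initial parts. You assert that ``a finite case analysis'' will show that $\bu\id\bv$ together with \eqref{id: W basis} entails \eqref{id: W max}, but canonical words have unbounded length, so there is no finite list of cases; a uniform argument is required, and you give no indication of one. This is exactly where the paper brings in a different ingredient: if $\ini(\bu)\neq\ini(\bv)$, then $\LZ^I$ violates $\bu\id\bv$ by Lemma~\ref{L: word Zn NnI LZI RZI}(iii), so $\LZ^I\notin\bV$ and hence $\bV\subseteq\excl(\LZ^I)$ by Theorem~\ref{T: LZI}; the defining pseudoidentity of $\excl(\LZ^I)$ collapses modulo \eqref{id: W basis} to $h^2x^2y^2\id h^2y^2x^2$, from which \eqref{id: W max} is then derived. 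Your plan never produces an identity of this kind, and deriving \eqref{id: W max} equationally from an arbitrary $\bu\id\bv$ with $\ini(\bu)\neq\ini(\bv)$ (including, say, identities whose two sides have different contents) is the substantive work the proposal leaves undone. Either supply such a uniform substitution-and-reduction argument (first extracting something like $h^2x^2y^2\id h^2y^2x^2$ from $\bu\id\bv$), or invoke the exclusion pseudoidentity of $\lpbr\LZ^I\rpbr$ as the paper does; without one of these, the case analysis in part~(ii) does not close.
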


\begin{remark}
Proposition~\ref{P: W}(i) was first established in Tishchenko and Volkov \cite[Theorem~2]{TV95}.
But since its proof is not long and an understanding of the identities satisfied by~$\W$ is fundamental to the proof of Proposition~\ref{P: W}(ii), it is given below for the sake of completeness.
\end{remark}

In this subsection, a word of the form \[ x_1^{e_1} x_2^{e_2} \cdots x_m^{e_m}, \] where $x_1,x_2,\ldots,x_m$ are distinct variables and $e_1,e_2,\ldots,e_m \in \{1,2\}$, is said to be in \textit{canonical form}.

\begin{remark} \label{R: W can}
It is easily shown that the identities~\eqref{id: W basis} can be used to convert any word into one in canonical form.
\end{remark}

\begin{proof}[Proof of Proposition~\ref{P: W}]
(i) It is routinely shown that the semigroup~$\W$ satisfies the identities~\eqref{id: W basis}.
Conversely, suppose that $\bu \id \bv$ is any identity satisfied by~$\W$.
By Remark~\ref{R: W can}, the identities~\eqref{id: W basis} can be used to convert~$\bu$ and~$\bv$ into some words~$\bu'$ and $\bv'$ in canonical form.
Since the subsemigroup $\{a,c,d\}$ of~$\W$ and the semigroup~$\RZ^I$ are isomorphic, $\fin(\bu') = \fin(\bv')$ by Lemma~\ref{L: word Zn NnI LZI RZI}(iv).
Hence \[ \bu' = x_1^{e_1} x_2^{e_2} \cdots x_m^{e_m} \quad \text{and} \quad \bv' = x_1^{f_1} x_2^{f_2} \cdots x_m^{f_m} \] for some distinct $x_1,x_2,\ldots,x_m \in \A$ and $e_1,e_2,\ldots,e_m, f_1,f_2,\ldots,f_m \in \{1,2\}$.
If $e_k \neq f_k$, then by making the substitution~$\varphi$ given by $x_k \mapsto b$, $x_i \mapsto e$ for any $i<k$, and $x_i \mapsto c$ for any $i > k$, the contradiction $\bu' \varphi \neq \bv' \varphi$ is obtained.
Therefore, $e_i = f_i$ for all~$i$, so that $\bu' = \bv'$.
Consequently, the identity $\bu \id \bv$ is deducible from the identities~\eqref{id: W basis}.

(ii) The semigroup~$\W$ violates the identity~\eqref{id: W max} because $e^2b^2c^2 \neq e^2bc^2$.
Therefore, $\lpbr\W\rpbr \cap \lbrs\eqref{id: W max}\rbrs$ is a proper sub{\pvar} of~$\lpbr\W \rpbr$.
It remains to verify that every proper sub{\pvar}~$\mathbf{V}$ of $\lpbr\W\rpbr$ satisfies the identity~\eqref{id: W max}.
Since $\bV \neq \lpbr\W\rpbr$, there exists an identity $\bu \id \bv$ of~$\bV$ that is violated by $\W$.
Further, since the identities~\eqref{id: W basis} are satisfied by~$\bV$, it follows from Remark~\ref{R: W can} that the words~$\bu$ and~$\bv$ can be chosen to be in canonical form.
There are two cases.

\paragraph{\sc Case~1}

$\fin(\bu) \neq \fin(\bv)$.
Then by Lemma~\ref{L: word Zn NnI LZI RZI}(iv) and the dual of Theorem~\ref{T: LZI}, the {\pvar}~$\bV$ satisfies the {\pid} \[ h^\omega (xh^\omega)^\omega (yh^\omega(xh^\omega)^\omega)^\omega \id h^\omega (xh^\omega(yh^\omega)^\omega)^\omega. \]
Since
\begin{align*}
h^\omega (xh^\omega)^\omega (yh^\omega(xh^\omega)^\omega)^\omega \stackrel{\eqref{id: W basis}}{\id} y^2x^2h^2 \quad
\text{and} \quad h^\omega (xh^\omega(yh^\omega)^\omega)^\omega \stackrel{\eqref{id: W basis}}{\id} x^2y^2h^2,
\end{align*}
the {\pvar}~$\bV$ satisfies the identity
\begin{equation}
x^2y^2h^2 \id y^2x^2h^2. \label{id: W no RZI}
\end{equation}
Since
\[
x^2yz^2 \stackrel{\eqref{id: W basis}}{\id} x^2(x^2y)^2z^2 \stackrel{\eqref{id: W no RZI}}{\id} (x^2y)^2x^2z^2 \stackrel{\eqref{id: W basis}}{\id} y^2x^2z^2 \stackrel{\eqref{id: W no RZI}}{\id} x^2y^2z^2,
\]
the {\pvar}~$\bV$ satisfies the identity~\eqref{id: W max}.

\paragraph{\sc Case~2}

$\fin(\bu) = \fin(\bv)$ and $\bu \neq \bv$.
Then \[ \bu = x_1^{e_1} x_2^{e_2} \cdots x_m^{e_m} \quad \text{and} \quad \bv = x_1^{f_1} x_2^{f_2} \cdots x_m^{f_m} \] for some distinct $x_1,x_2,\ldots,x_m \in \A$ and $e_1,e_2,\ldots,e_m, f_1,f_2,\ldots,f_m \in \{1,2\}$ such that $e_k \neq f_k$ for some~$k$, say $(e_k,f_k) = (2,1)$.
Let~$\varphi$ denote the substitution given by $x_k \mapsto y$, $x_i \mapsto x^2$ for any $i<k$, and $x_i \mapsto z^2$ for any $i>k$.
Then \[ x^2(\bu\varphi)z^2 \stackrel{\eqref{id: W basis}}{\id} x^2y^2z^2 \quad \text{and} \quad x^2(\bv\varphi)z^2 \stackrel{\eqref{id: W basis}}{\id} x^2yz^2, \] so that the {\pvar}~$\bV$ satisfies the identity~\eqref{id: W max}.
\end{proof}

\begin{proposition} \label{P: W not ji}
The {\pvar} $\lpbr\W\rpbr$ is {\sji} but not {\ji}\up.
\end{proposition}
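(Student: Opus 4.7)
The plan is to get both parts of Proposition~\ref{P: W not ji} essentially for free from results already established in this subsection.

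For the strictly join irreducible claim, I would simply invoke Proposition~\ref{P: W}(ii), which identifies a unique maximal proper sub{\pvar} of $\lpbr W\rpbr$, namely the sub{\pvar} defined by $x^2y^2z^2\id x^2yz^2$. As recalled in the introduction, a compact {\pvar} is {\sji} precisely when it has a unique proper maximal sub{\pvar}, so this handles (i).

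For the failure of {\jirr}, the key observation is that $W$ is not a monoid (the multiplication table shows that $d$ acts as an identity only on $\{a,b,c,d\}$, since $de=a\neq e$, and neither $a$ nor $e$ is an identity either). Therefore Lemma~\ref{L: local} applies: if $\lpbr W\rpbr$ were {\ji}, then $\lpbr W^I\rpbr$ would also be {\ji}. But Lemma~\ref{L: WI not ji} (proved via Proposition~\ref{P: WjN2barIop}(ii), which decomposes $\lpbr W^I\rpbr=\lpbr W\rpbr\vee\lpbr\NBIop\rpbr$ with $W^I$ lying in neither summand) says $\lpbr W^I\rpbr$ is not {\ji}. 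This contradiction rules out the {\jirr} of $\lpbr W\rpbr$.

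There is no real obstacle here: both pieces are immediate consequences of results established earlier in Subsection~\ref{sub: W}. The only thing to verify carefully is the hypothesis of Lemma~\ref{L: local}, namely that $W$ is not a monoid, which is a one-line check from the multiplication table. The entire proof should fit in two or three lines.
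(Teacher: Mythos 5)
Your proof is correct and follows exactly the paper's argument: Proposition~\ref{P: W}(ii) gives the unique maximal proper sub{\pvar} (hence {\sji}), and Lemma~\ref{L: local} together with Lemma~\ref{L: WI not ji} rules out {\jirr}. The only quibble is your parenthetical claim that $d$ ``acts as an identity'' on $\{a,b,c,d\}$ --- it is only a left identity there (e.g.\ $bd=a\neq b$) --- but the conclusion that $\W$ has no identity element, which is all Lemma~\ref{L: local} needs, is correct.
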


\begin{proof}
The {\pvar} $\lpbr\W\rpbr$ is {\sji} by Proposition~\ref{P: W}(ii).
To show that $\lpbr\W\rpbr$ is not {\ji}, the semigroups $\N_2^\ba = \{ \ov\ez, \ea, \ov\ea, \ov I \}$ and $\LZB = \{ \ee, \ef, \ov\ee, \ov\ef, \ov I \}$ from Subsections~\ref{sub: nilpotent} and~\ref{sub: bands} are required.
Let $T_{11}$ denote the subsemigroup of $(\N_2^\ba)^I \times \LZB$ generated by $\eE=(\ei,\ee)$, $\eX = (\ea,\ef)$, and $\eY = (\ov I, \ov I)$.
It is routinely checked that  \[ \eE^2 = \eE, \quad \eX\eE=\eX, \quad \eX^3=\eX^2, \quad \text{and} \quad \eE\eY=\eX\eY=\eY^2=\eY, \] so that the semigroup $T_{11}$ consists of the following 11~elements:
\begin{align*}
a & =\eE, &b&=\eE\eX, &c&=\eE\eX^2, &d&=\eX, &e&=\eX^2, &f&=\eY, \\ g & =\eY\eX, &h&=\eY\eX^2, &i&=\eY\eE, &j&=\eY\eE\eX, &k&=\eY\eE\eX^2; &&
\end{align*}
see Table~\ref{Tab: T11}.
Identifying the elements $\{b,c,e,h,i,j,k\}$ in $T_{11}$ results in a semigroup that is isomorphic to~$\W$.
Therefore, \[\lpbr \W \rpbr \subseteq \lpbr T_{11} \rpbr \subseteq \lpbr(\N_2^\ba)^I\rpbr \vee \lpbr\LZB\rpbr.\]
But the semigroup~$\W$ violates the identities
\begin{equation}
xyx^2 \id yx^2, \quad x^2 \id x, \label{id: W not ji}
\end{equation}
and these identities are satisfied by $(\N_2^\ba)^I$ and $\LZB$, respectively.
Consequently, $\lpbr \W \rpbr \nsubseteq \lpbr(\N_2^\ba)^I\rpbr$ and $\lpbr \W \rpbr \nsubseteq \lpbr\LZB\rpbr$.
\end{proof}

\begin{table}[ht!]
\[\def\arraystretch{1.2}
\begin{array} [c]{c|ccccccccccc}
T_{11} & \, a & b & c & d & e & f & g & h & i & j & k \\ \hline
     a & \, a & b & c & b & c & f & g & h & i & j & k \\
     b & \, b & c & c & c & c & f & g & h & i & j & k \\
     c & \, c & c & c & c & c & f & g & h & i & j & k \\
     d & \, d & e & e & e & e & f & g & h & i & j & k \\
     e & \, e & e & e & e & e & f & g & h & i & j & k \\
     f & \, i & j & k & g & h & f & g & h & i & j & k \\
     g & \, g & h & h & h & h & f & g & h & i & j & k \\
     h & \, h & h & h & h & h & f & g & h & i & j & k \\
     i & \, i & j & k & j & k & f & g & h & i & j & k \\
     j & \, j & k & k & k & k & f & g & h & i & j & k \\
     k & \, k & k & k & k & k & f & g & h & i & j & k
\end{array}
\]
\caption{Multiplication table of $T_{11}$} \label{Tab: T11}
\end{table}

\begin{corollary} \label{C: W}
Suppose that~$S$ is any finite semigroup that satisfies the identities~\eqref{id: W basis} but violates all the identities in~\eqref{id: W not ji}\up.
Then $\lpbr S\rpbr$ is a sub\-{\pvar} of $\lpbr(\N_2^\ba)^I,\LZB\rpbr$ that is not {\ji}\up.
\end{corollary}

\begin{proof}
The inclusions \[\lpbr S \rpbr \subseteq \lpbr \W \rpbr \subseteq \lpbr(\N_2^\ba)^I \rpbr \vee \lpbr\LZB\rpbr\] hold by Proposition~\ref{P: W}(i) and the proof of Proposition~\ref{P: W not ji}.
But the identities in~\eqref{id: W not ji} are satisfied by $(\N_2^\ba)^I$ and $\LZB$, respectively.
Therefore, $\lpbr S \rpbr \nsubseteq \lpbr(\N_2^\ba)^I\rpbr$ and $\lpbr S \rpbr \nsubseteq \lpbr\LZB\rpbr$.
\end{proof}

\section{Pseudovarieties generated by a semigroup of order up to five} \label{sec: proof}

\begin{theorem} \label{T: ji}
Let~$S$ be any nontrivial semigroup of order up to five\up.
Suppose that the {\pvar} $\lpbr S \rpbr$ is {\ji}\up.
Then $\lpbr S \rpbr$ coincides with one of the following $30$~{\pvars}\up:
\begin{align*}
& \lpbr\Z_2\rpbr,   && \lpbr\Z_3\rpbr,   && \lpbr\Z_4\rpbr,   && \lpbr\Z_5\rpbr,    && \lpbr\ZB\rpbr,       && \lpbr\ZBop\rpbr, \\
& \lpbr\N_2\rpbr,   && \lpbr\N_3\rpbr,   && \lpbr\N_4\rpbr,   && \lpbr\N_5\rpbr,    && \lpbr\NB\rpbr,       && \lpbr\NBop\rpbr, \\
& \lpbr\N_1^I\rpbr, && \lpbr\N_2^I\rpbr, && \lpbr\N_3^I\rpbr, && \lpbr\N_4^I\rpbr,  && \lpbr\NBI\rpbr,      && \lpbr\NBIop\rpbr, \\
& \lpbr\LZ\rpbr,    && \lpbr\LZ^I\rpbr,  && \lpbr\LZB\rpbr,   && \lpbr\LZ^\op\rpbr, && \lpbr(\LZ^I)^\op\rpbr, && \lpbr\LZBop\rpbr, \\
& \lpbr\Az\rpbr,    & & \lpbr\Az^I\rpbr,  & & \lpbr\At\rpbr,   & & \lpbr\Bt\rpbr,   & & \lpbr\elB\rpbr,   & & \lpbr\elBop\rpbr.
\end{align*}
\end{theorem}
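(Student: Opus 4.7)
The plan is to split the proof into two parts: first verify that each of the $30$ listed {\pvars} is indeed {\ji}, and then show that every other {\pvar} generated by a nontrivial semigroup of order at most five fails to be {\ji}. The first part is immediate from Section~\ref{sec: ji}: for each semigroup~$S$ in the list~\eqref{D: ji sgps}, Theorems~\ref{T: Zn}, \ref{T: Z2bar}, \ref{T: Nn}, \ref{T: NnI}, \ref{T: N2bar}, \ref{T: N2barI}, \ref{T: LZ}, \ref{T: LZI}, \ref{T: LZbar}, \ref{T: A0}, \ref{T: A0I}, \ref{T: A2}, \ref{T: B2}, and \ref{T: el3} explicitly exhibit a {\pid} defining $\excl(S)$, so $\lpbr S\rpbr$ is {\ji}; the duals $\lpbr S^\op\rpbr$ are then {\ji} by Example~\ref{E: Sdual}. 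These cases, together with the small-order coincidences $\N_1^I \cong \SL$ and $\N_4^I$, account for all $30$ entries.

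For the second and more substantial part, I would organize the argument by order $n \in \{2,3,4,5\}$ and, within each order, by the structure of the semigroup. Up to isomorphism and anti-isomorphism, there are only finitely many semigroups to consider, and we may freely restrict to {\mog}s which, by the discussion after Question~\ref{Q: ji decidable}, must be {\sdi}. For each remaining {\sdi} semigroup~$S$, the strategy is either to identify $\lpbr S\rpbr$ with one of the 30~listed {\pvars} (using a basis $\Sigma_{S'}$ and failed identity $\varepsilon_{S'}$ from Section~\ref{sec: ji} together with the test in the preamble of that section), or to apply one of the corollaries of Section~\ref{sec: non-ji} (Corollaries~\ref{C: M4v3}, \ref{C: M4v2}, \ref{C: M4v1}, \ref{C: Z6jN5IjLZIjRZIjA0Iv1}, \ref{C: Z6jN5IjLZIjRZIjA0Iv2}, \ref{C: noncomm nil}, \ref{C: N4jN2I}, \ref{C: N5jN1I}, \ref{C: N5IjN2bar}, \ref{C: N2IjLZbarop}, \ref{C: LZIjl3jl3op}, \ref{C: A0jB0I}, \ref{C: W}) to exhibit $\lpbr S\rpbr$ as a non-{\ji} subpseudovariety of some join $\bigvee\lpbr S_i\rpbr$.

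The case analysis can be streamlined by first stratifying candidates according to whether they are groups, nilpotent, bands, or completely 0-simple, and for the mixed cases by examining the Green's relation structure and the idempotent content. Groups of order at most five are cyclic of prime-power order (hence~{\ji} by Theorem~\ref{T: Zn}) or the non-prime-power cyclic groups $\Z_6$, etc., which do not appear below order six; nilpotent semigroups of order at most five that are not in the list are either noncommutative (excluded by Corollary~\ref{C: noncomm nil} and Proposition~\ref{P: noncomm nil}) or direct products like $\N_4\times\N_2^I$ or $\N_5\times\N_1^I$ (excluded by Corollaries~\ref{C: N4jN2I}, \ref{C: N5jN1I}); bands of order at most five that are not~$\LZ$, $\RZ$, $\LZ^I$, $\RZ^I$, $\LZB$, $\LZBop$, or~$\SL$ generate non-{\sji} {\pvars} by the classification of band {\pvars} in Section~\ref{sub: iterating}; the completely 0-simple candidates are exhausted by $\Az$, $\Az^I$, $\Bz$, $\Bz^I$, $\At$, $\Bt$ and their duals, with $\lpbr\Bz\rpbr$ and $\lpbr\Bz^I\rpbr$ excluded by Proposition~\ref{P: Bz not ji}; the semigroup $\W$ of Subsection~\ref{sub: W} and its dual are excluded by Corollary~\ref{C: W}.

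The main obstacle will be the mixed semigroups of order five that are neither nilpotent, nor bands, nor $0$-simple: for instance 5-element semigroups built from smaller {\ji} pieces (such as $\el$, $\elop$, and their monoids, or products involving $\LZ^I$, $\N_2^I$, and $\LZBop$). Here the exclusion from the {\ji} list is delicate because the ambient join $\bV_1\vee\cdots\vee \bV_k$ must be identified by a concrete basis of identities, and then one must locate $\lpbr S\rpbr$ inside it while checking $\lpbr S\rpbr\not\subseteq \lpbr S_i\rpbr$ for each~$i$. This is precisely why Section~\ref{sec: non-ji} developed Corollaries~\ref{C: N5IjN2bar}, \ref{C: N2IjLZbarop}, and~\ref{C: LZIjl3jl3op}; the bulk of the proof will consist of applying these together with the equational descriptions from Section~\ref{sec: ji} to each stubborn candidate in turn, relying heavily on the classifications in \cite{Lee04,Lee08,LL11,LL15,LV07,LZ15,Tis07,Tra94,ZL09} to identify $\lpbr S\rpbr$ from its identities.
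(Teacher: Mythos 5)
Your proposal is correct in its overall architecture and uses exactly the paper's toolkit, but the execution of the second half differs from the paper's. The first part coincides: the paper also cites Theorems~\ref{T: Zn}--\ref{T: el3} (in a table) to certify that the $30$ listed {\pvars} are {\ji}. For the converse direction, however, the paper does not argue structurally at all: it enumerates, up to isomorphism and anti-isomorphism, all $1308$ nontrivial semigroups of order at most five and verifies \emph{by computer} that each one satisfies either one of the purely equational Conditions~A1--A23 (so that $\lpbr S\rpbr$ equals one of the $30$ listed {\pvars}, via the $(\Sigma_S,\varepsilon_S)$ tests of Section~\ref{sec: ji}) or one of Conditions~B1--B13 (so that $\lpbr S\rpbr$ is not {\ji}, via the corollaries of Section~\ref{sec: non-ji}), together with their duals. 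Your route instead restricts to {\sdi} {\mog}s and stratifies by algebraic type (groups, nilpotent, bands, completely $0$-simple, mixed); the {\sdi}/{\mog} reduction is legitimate (by induction on order, a non-{\sdi} generator of a {\ji} {\pvar} is redundant) and is a genuine economy the paper does not exploit, and the band case can indeed be settled conceptually via Theorem~\ref{t: band sji}. What your plan does not deliver is the decisive step that the paper delegates to the machine: a guarantee that \emph{every} remaining order-five candidate --- in particular the ``mixed'' semigroups you correctly identify as the obstacle --- is caught by one of the listed corollaries or identified with one of the $30$ {\pvars}. As written, that part of your argument is a programme rather than a proof; completing it by hand would in effect reproduce the same candidate-by-candidate check against the A/B conditions that the paper performs exhaustively by computer. (Minor inaccuracies in your sketch --- e.g.\ $\Az^I$ and $\Bz^I$ are not completely $0$-simple, and completely $0$-simple semigroups of order five are not exhausted by the ones you name --- would also need repair, but they do not affect the overall strategy.)
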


\begin{proof}
The 30~{\pvars} are {\ji} by results in Sec.~\ref{sec: ji}; see Table~\ref{Tab: results}.
Up to isomorphism and anti-isomorphism, there exist $1308$~nontrivial semigroups of order up to five.
With the aid of a computer, it is routinely determined, using the sufficient conditions given in Subsections~\ref{sub: ji} and~\ref{sub: non-ji} below, which of these semigroups generate {\ji} {\pvars}.
Specifically, by Conditions~A1--A23 and their dual conditions, $241$~of the $1308$~semigroups generate the~30 {\ji} {\pvars}, while by Conditions~B1--B13 and their dual conditions, the remaining $1067$~semigroups generate {\pvars} that are not {\ji}; see Table~\ref{Tab: number}.
The proof of Theorem~\ref{T: ji} is thus complete.
\end{proof}

\newcommand{\spaceone}{-0.04in}
\begin{table}[ht!]
\[\def\arraystretch{1.8}
\begin{tabular}[m]{clllc} \hline
& Pseudovarieties                                                                & & Join {\irr} by          & \\ \hline
& $\lpbr\Z_2\rpbr$, $\lpbr\Z_3\rpbr$, $\lpbr\Z_4\rpbr$, $\lpbr\Z_5\rpbr$         & & Theorem~\ref{T: Zn}     & \\[\spaceone]
& $\lpbr\ZB\rpbr$, $\lpbr\ZBop\rpbr$                                             & & Theorem~\ref{T: Z2bar}  & \\[\spaceone]
& $\lpbr\N_2\rpbr$, $\lpbr\N_3\rpbr$, $\lpbr\N_4\rpbr$, $\lpbr\N_5\rpbr$         & & Theorem~\ref{T: Nn}     & \\[\spaceone]
& $\lpbr\N_1^I\rpbr$, $\lpbr\N_2^I\rpbr$, $\lpbr\N_3^I\rpbr$, $\lpbr\N_4^I\rpbr$ & & Theorem~\ref{T: NnI}    & \\[\spaceone]
& $\lpbr\NB\rpbr$, $\lpbr\NBop\rpbr$                                             & & Theorem~\ref{T: N2bar}  & \\[\spaceone]
& $\lpbr\NBI\rpbr$, $\lpbr\NBIop\rpbr$                                           & & Theorem~\ref{T: N2barI} & \\[\spaceone]
& $\lpbr\LZ\rpbr$, $\lpbr\LZ^\op\rpbr$                                           & & Theorem~\ref{T: LZ}     & \\[\spaceone]
& $\lpbr\LZ^I\rpbr$, $\lpbr(\LZ^I)^\op\rpbr$                                     & & Theorem~\ref{T: LZI}    & \\[\spaceone]
& $\lpbr\LZB\rpbr$, $\lpbr\LZBop\rpbr$                                           & & Theorem~\ref{T: LZbar}  & \\[\spaceone]
& $\lpbr\Az\rpbr$                                                                & & Theorem~\ref{T: A0}     & \\[\spaceone]
& $\lpbr\Az^I\rpbr$                                                              & & Theorem~\ref{T: A0I}    & \\[\spaceone]
& $\lpbr\At\rpbr$                                                                & & Theorem~\ref{T: A2}     & \\[\spaceone]
& $\lpbr\Bt\rpbr$                                                                & & Theorem~\ref{T: B2}     & \\[\spaceone]
& $\lpbr\elB\rpbr$, $\lpbr\elBop\rpbr$                                           & & Theorem~\ref{T: el3}    & \\[0.04in] \hline
\end{tabular}
\]
\caption{Results for {\jirr} of {\pvars} in Theorem~\ref{T: ji}} \label{Tab: results}
\end{table}

\begin{table}[ht!]
\[\def\arraystretch{1.5}
\begin{tabular}[m]{|l|cccc|c|} \hline
                        & $n=2$              & $n=3$               & $n=4$                & $n=5$                 & $2 \leq n \leq 5$     \\ \hline
Number of {\ji}         & \multirow{2}{*}{4} & \multirow{2}{*}{8}  & \multirow{2}{*}{33}  & \multirow{2}{*}{196}  & \multirow{2}{*}{241} \\[-0.08in]
semigroups of order~$n$ &                    &                     &                      &                       &                       \\ \hline
Number of non-{\ji}     & \multirow{2}{*}{0} & \multirow{2}{*}{10} & \multirow{2}{*}{93}  & \multirow{2}{*}{964}  & \multirow{2}{*}{1067} \\[-0.08in]
semigroups of order~$n$ &                    &                     &                      &                       &                       \\ \hline
Number of         & \multirow{2}{*}{4} & \multirow{2}{*}{18} & \multirow{2}{*}{126} & \multirow{2}{*}{1160} & \multirow{2}{*}{1308} \\[-0.08in]
semigroups of order~$n$ &                    &                     &                      &                       &                       \\ \hline
\end{tabular}
\]
\caption{Number of {\ji} semigroups of order up to five} \label{Tab: number}
\end{table}

\subsection{Conditions sufficient for {\jirr}} \label{sub: ji}

The following conditions and their dual conditions are sufficient for a finite semigroup~$S$ to generate a {\ji} {\pvar} in Theorem~\ref{T: ji}.

\begin{conA}[Proposition~\ref{P: Zn}] The equality ${ \lpbr S\rpbr = \lpbr\Z_2\rpbr }$ holds if
\begin{enumerate}[1.]
\item[$\bullet$] $S \models \{xy \id yx, \, x^2y \id y\}$\up,
\item[$\bullet$] $S \not\models x \id y$\up.
\end{enumerate}
\end{conA}

\begin{conA}[Proposition~\ref{P: Zn}] The equality ${ \lpbr S\rpbr = \lpbr\Z_3\rpbr }$ holds if
\begin{enumerate}[1.]
\item[$\bullet$] $S \models \{xy \id yx, \, x^3y \id y\}$\up,
\item[$\bullet$] $S \not\models x \id y$\up.
\end{enumerate}
\end{conA}

\begin{conA}[Proposition~\ref{P: Zn}] The equality ${ \lpbr S\rpbr = \lpbr\Z_4\rpbr }$ holds if
\begin{enumerate}[1.]
\item[$\bullet$] $S \models \{xy \id yx, \, x^4y \id y\}$\up,
\item[$\bullet$] $S \not\models x^3 \id x$\up.
\end{enumerate}
\end{conA}

\begin{conA}[Proposition~\ref{P: Zn}] The equality ${ \lpbr S\rpbr = \lpbr\Z_5\rpbr }$ holds if
\begin{enumerate}[1.]
\item[$\bullet$] $S \models \{xy \id yx, \, x^5y \id y\}$\up,
\item[$\bullet$] $S \not\models x \id y$\up.
\end{enumerate}
\end{conA}

\begin{conA}[Proposition~\ref{P: Z2bar}] The equality ${ \lpbr S\rpbr = \lpbr\ZB\rpbr }$ holds if
\begin{enumerate}[1.]
\item[$\bullet$] $S \models \{x^3 \id x, \, xyxy \id yx^2y\}$\up,
\item[$\bullet$] $S \not\models xyx \id yx^2$\up.
\end{enumerate}
\end{conA}

\begin{conA}[Proposition~\ref{P: Nn}] The equality ${ \lpbr S\rpbr = \lpbr\N_2\rpbr }$ holds if
\begin{enumerate}[1.]
\item[$\bullet$] $S \models x^2 \id y_1y_2$\up,
\item[$\bullet$] $S \not\models x^2 \id x$\up.
\end{enumerate}
\end{conA}

\begin{conA}[Proposition~\ref{P: Nn}] The equality ${ \lpbr S\rpbr = \lpbr\N_3\rpbr }$ holds if
\begin{enumerate}[1.]
\item[$\bullet$] $S \models \{xy \id yx, \, x^3 \id y_1y_2y_3\}$\up,
\item[$\bullet$] $S \not\models x^3 \id x^2$\up.
\end{enumerate}
\end{conA}

\begin{conA}[Proposition~\ref{P: Nn}] The equality ${ \lpbr S\rpbr = \lpbr\N_4\rpbr }$ holds if
\begin{enumerate}[1.]
\item[$\bullet$] $S \models \{xy \id yx, \, x^2y \id xy^2, \, x^4 \id y_1y_2y_3y_4\}$\up,
\item[$\bullet$] $S \not\models x^4 \id x^3$\up.
\end{enumerate}
\end{conA}

\begin{conA}[Proposition~\ref{P: Nn}] The equality ${ \lpbr S\rpbr = \lpbr\N_5\rpbr }$ holds if
\begin{enumerate}[1.]
\item[$\bullet$] $S \models \{xy \id yx, \, x^2yz \id xy^2z, \, x^5 \id y_1y_2y_3y_4y_5\}$\up,
\item[$\bullet$] $S \not\models x^5 \id x^4$\up.
\end{enumerate}
\end{conA}

\begin{conA}[Proposition~\ref{P: NnI}] The equality ${ \lpbr S\rpbr = \lpbr\N_1^I\rpbr }$ holds if
\begin{enumerate}[1.]
\item[$\bullet$] $S \models \{x^2 \id x, \, xy \id yx\}$\up,
\item[$\bullet$] $S \not\models x \id y$\up.
\end{enumerate}
\end{conA}

\begin{conA}[Proposition~\ref{P: NnI}] The equality ${ \lpbr S\rpbr = \lpbr\N_2^I\rpbr }$ holds if
\begin{enumerate}[1.]
\item[$\bullet$] $S \models \{x^3 \id x^2, \, xy \id yx\}$\up,
\item[$\bullet$] $S \not\models x^2y \id xy^2$\up.
\end{enumerate}
\end{conA}

\begin{conA}[Proposition~\ref{P: NnI}] The equality ${ \lpbr S\rpbr = \lpbr\N_3^I\rpbr }$ holds if
\begin{enumerate}[1.]
\item[$\bullet$] $S \models \{x^4 \id x^3, \, xy \id yx\}$\up,
\item[$\bullet$] $S \not\models x^3y^2 \id x^2y^3$\up.
\end{enumerate}
\end{conA}

\begin{conA}[Proposition~\ref{P: NnI}] The equality ${ \lpbr S\rpbr = \lpbr\N_4^I\rpbr }$ holds if
\begin{enumerate}[1.]
\item[$\bullet$] $S \models \{x^5 \id x^4, \, xy \id yx\}$\up,
\item[$\bullet$] $S \not\models x^4y^3 \id x^3y^4$\up.
\end{enumerate}
\end{conA}

\begin{conA}[Proposition~\ref{P: N2bar}] The equality ${ \lpbr S\rpbr = \lpbr\NB\rpbr }$ holds if
\begin{enumerate}[1.]
\item[$\bullet$] $S \models xyz \id yz$\up,
\item[$\bullet$] $S \not\models xy \id y^2$\up.
\end{enumerate}
\end{conA}

\begin{conA}[Proposition~\ref{P: N2barI}] The equality ${ \lpbr S\rpbr = \lpbr\NBI\rpbr }$ holds if
\begin{enumerate}[1.]
\item[$\bullet$] $S \models \left\{ \begin{array}[c]{l} x^3 \id x^2, \, x^2hx \id xhx, \, xhx^2 \id hx^2, \, xyxy \id yx^2y, \\ xyhxy \id yxhxy, \, xyxty \id yx^2ty, \, xyhxty \id yxhxty \end{array} \right\}$\up,
\item[$\bullet$] $S \not\models xyxyh^2 \id x^2y^2h^2$\up.
\end{enumerate}
\end{conA}

\begin{conA}[Proposition~\ref{P: LZ}] The equality ${ \lpbr S\rpbr = \lpbr\LZ\rpbr }$ holds if
\begin{enumerate}[1.]
\item[$\bullet$] $S \models xy \id x$\up,
\item[$\bullet$] $S \not\models x \id y$\up.
\end{enumerate}
\end{conA}

\begin{conA}[Proposition~\ref{P: LZI}] The equality ${ \lpbr S\rpbr = \lpbr\LZ^I\rpbr }$ holds if
\begin{enumerate}[1.]
\item[$\bullet$] $S \models \{x^2 \id x, \, xyx \id xy\}$\up,
\item[$\bullet$] $S \not\models xyz \id xzy$\up.
\end{enumerate}
\end{conA}

\begin{conA}[Proposition~\ref{P: LZbar}] The equality ${ \lpbr S\rpbr = \lpbr\LZB\rpbr }$ holds if
\begin{enumerate}[1.]
\item[$\bullet$] $S \models \{x^2 \id x, \, xyz \id xzxyz\}$\up,
\item[$\bullet$] $S \not\models xyz \id xzyz$\up.
\end{enumerate}
\end{conA}

\begin{conA}[Proposition~\ref{P: A0}] The equality ${ \lpbr S\rpbr = \lpbr\Az\rpbr }$ holds if
\begin{enumerate}[1.]
\item[$\bullet$] $S \models \{x^3 \id x^2, \, x^2yx^2 \id yxy \}$\up,
\item[$\bullet$] $S \not\models x^2y^2 \id y^2x^2$\up.
\end{enumerate}
\end{conA}

\begin{conA}[Proposition~\ref{P: A0I}] The equality ${ \lpbr S\rpbr = \lpbr\Az^I\rpbr }$ holds if
\begin{enumerate}[1.]
\item[$\bullet$] $S \models \{x^3 \id x^2, \, x^2yx^2 \id xyx, \, xyxy \id yxyx, \, xyxzx \id xyzx\}$\up,
\item[$\bullet$] $S \not\models hx^2y^2h \id hy^2x^2h$\up.
\end{enumerate}
\end{conA}

\begin{conA}[Proposition~\ref{P: A2}] The equality ${ \lpbr S\rpbr = \lpbr\At\rpbr }$ holds if
\begin{enumerate}[1.]
\item[$\bullet$] $S \models \{x^3 \id x^2, \, xyxyx \id xyx, \, xyxzx \id xzxyx\}$\up,
\item[$\bullet$] $S \not\models x^2y^2x^2 \id x^2yx^2$\up.
\end{enumerate}
\end{conA}

\begin{conA}[Proposition~\ref{P: B2}] The equality ${ \lpbr S\rpbr = \lpbr\Bt\rpbr }$ holds if
\begin{enumerate}[1.]
\item[$\bullet$] $S \models \{x^3 \id x^2, \, xyxyx \id xyx, \, x^2y^2 \id y^2x^2\}$\up,
\item[$\bullet$] $S \not\models xy^2x \id xyx$\up.
\end{enumerate}
\end{conA}

\begin{conA}[Proposition~\ref{P: el3}] The equality ${ \lpbr S\rpbr = \lpbr\elB\rpbr }$ holds if
\begin{enumerate}[1.]
\item[$\bullet$] $S \models \{x^2y \id xy, \, xyz \id yxyz\}$\up,
\item[$\bullet$] $S \not\models xyzx \id yxzx$\up.
\end{enumerate}
\end{conA}

\subsection{Conditions sufficient for non-{\jirr}} \label{sub: non-ji}

The following conditions and their dual conditions are sufficient for a finite semigroup~$S$ to generate a {\pvar} that is not {\ji}.

\begin{conB}[Corollary~\ref{C: M4v3}]
A {\pvar} $\lpbr S\rpbr$ is a non-{\ji} sub{\pvar} of ${ \lpbr\Z_3,\Z_4,\ZB,\ZBop,\N_3^I\rpbr }$ if
\begin{enumerate}[1.]
\item[$\bullet$] $S \models \left\{ \begin{array}[c]{l} x^{15} \id x^3, \, x^{14}hx \id x^2hx, \, x^{13}hx^2 \id xhx^2, \, x^{13} hxtx \id xhxtx, \\ x^3hx \id xhx^3, \, xhx^2tx \id x^3htx, \, xhx^2y^2ty \id xhy^2x^2ty, \\ xhykxytxdy \id xhykyxtxdy, \, xhykxytydx \id xhykyxtydx \end{array} \right\}$\up,
\item[$\bullet$] $S \not\models x^3 \id x$\up, \quad $S \not\models xy \id yx$\up.
\end{enumerate}
\end{conB}

\begin{conB}[Corollary~\ref{C: M4v2}]
A {\pvar} $\lpbr S\rpbr$ is a non-{\ji} sub{\pvar} of ${ \lpbr\Z_3,\Z_4,\ZB,\ZBop,\N_3^I\rpbr }$ if
\begin{enumerate}[1.]
\item[$\bullet$] $S \models \left\{ \begin{array}[c]{l} x^{15} \id x^3, \, x^{14}hx \id x^2hx, \, x^{13}hx^2 \id xhx^2, \, x^{13} hxtx \id xhxtx, \\ x^3hx \id xhx^3, \, xhx^2tx \id x^3htx, \, xhx^2y^2ty \id xhy^2x^2ty, \\ xhykxytxdy \id xhykyxtxdy, \, xhykxytydx \id xhykyxtydx \end{array} \right\}$\up,
\item[$\bullet$] $S \not\models xy \id yx$\up, \quad $S \not\models xyx^2 \id xy$\up, \quad $S \not\models x^2yx \id yx$\up.
\end{enumerate}
\end{conB}

\begin{conB}[Corollary~\ref{C: M4v1}]
A {\pvar} $\lpbr S\rpbr$ is a non-{\ji} sub{\pvar} of ${ \lpbr\Z_3,\Z_4,\ZB,\ZBop,\N_3^I \rpbr }$ if
\begin{enumerate}[1.]
\item[$\bullet$] $S \models \left\{ \begin{array}[c]{l} x^{15} \id x^3, \, x^{14}hx \id x^2hx, \, x^{13}hx^2 \id xhx^2, \, x^{13} hxtx \id xhxtx, \\ x^3hx \id xhx^3, \, xhx^2tx \id x^3htx, \, xhx^2y^2ty \id xhy^2x^2ty, \\ xhykxytxdy \id xhykyxtxdy, \, xhykxytydx \id xhykyxtydx \end{array} \right\}$\up,
\item[$\bullet$] $S \not\models x^4 \id x^3$\up, \quad $S \not\models x^4y \id y$\up, \quad $S \not\models x^3y \id y$\up, \quad $S \not\models xyx^2 \id xy$\up, \quad $S \not\models x^2yx \id yx$\up.
\end{enumerate}
\end{conB}

\begin{conB}[Corollary~\ref{C: Z6jN5IjLZIjRZIjA0Iv1}]
A {\pvar} $\lpbr S\rpbr$ is a non-{\ji} sub{\pvar} of ${ \lpbr \Z_6,\N_5^I,\LZ^I,\RZ^I,\Az^I \rpbr }$ if
\begin{enumerate}[1.]
\item[$\bullet$] $S \models \{x^{11} \id x^5, \, x^{10}yx \id x^4yx, \, x^2yx \id xyx^2, \, xyxzx \id x^2yzx\}$\up,
\item[$\bullet$] $S \not\models x^2 \id x$\up, \quad $S \not\models xyxy \id yxyx$\up.
\end{enumerate}
\end{conB}

\begin{conB}[Corollary~\ref{C: Z6jN5IjLZIjRZIjA0Iv2}]
A {\pvar} $\lpbr S\rpbr$ is a non-{\ji} sub{\pvar} of ${ \lpbr \Z_6,\N_5^I,\LZ^I,\RZ^I,\Az^I \rpbr }$ if
\begin{enumerate}[1.]
\item[$\bullet$] $S \models \{x^{11} \id x^5, \, x^{10}yx \id x^4yx, \, x^2yx \id xyx^2, \, xyxzx \id x^2yzx\}$\up,
\item[$\bullet$] $S \not\models x^6 \id x^5$\up, \quad $S \not\models x^6y \id y$\up.
\end{enumerate}
\end{conB}

\begin{conB}[Corollary~\ref{C: noncomm nil}]
A {\pvar} $\lpbr S\rpbr$ is a non-{\ji} {\pvar} of nilpotent semigroups if
\begin{enumerate}[1.]
\item[$\bullet$] $S \models x^6 \id y_1y_2y_3y_4y_5y_6$\up,
\item[$\bullet$] $S \not\models xy \id yx$\up.
\end{enumerate}
\end{conB}

\begin{conB}[Corollary~\ref{C: N4jN2I}]
A {\pvar} $\lpbr S\rpbr$ is a non-{\ji} sub{\pvar} of ${ \lpbr \N_4,\N_2^I \rpbr }$ if
\begin{enumerate}[1.]
\item[$\bullet$] $S \models \{xy \id yx, \, x^3y_1y_2 \id x^2y_1y_2\}$\up,
\item[$\bullet$] $S \not\models x^3 \id x^2$\up, \quad $S \not\models x^2y \id xy^2$\up.
\end{enumerate}
\end{conB}

\begin{conB}[Corollary~\ref{C: N5jN1I}]
A {\pvar} $\lpbr S\rpbr$ is a non-{\ji} sub{\pvar} of ${ \lpbr\N_5,\N_1^I \rpbr }$ if
\begin{enumerate}[1.]
\item[$\bullet$] $S \models \{xy \id yx, \, x^2yz \id xy^2z, \, x^2y_1y_2y_3y_4 \id xy_1y_2y_3y_4 \}$\up,
\item[$\bullet$] $S \not\models x^2 \id x$\up, \quad $S \not\models x^5 \id y^5$\up.
\end{enumerate}
\end{conB}

\begin{conB}[Corollary~\ref{C: N5IjN2bar}]
A {\pvar} $\lpbr S\rpbr$ is a non-{\ji} sub{\pvar} of ${ \lpbr\N_5^I,\NB \rpbr }$ if
\begin{enumerate}[1.]
\item[$\bullet$] $S \models \{x^6 \id x^5, \, x^5yx \id x^4yx, \, xyzt \id yxzt \}$\up,
\item[$\bullet$] $S \not\models xy \id yx$\up, \quad $S \not\models xyz \id yz$\up.
\end{enumerate}
\end{conB}

\begin{conB}[Corollary~\ref{C: N2IjLZbarop}]
A {\pvar} $\lpbr S\rpbr$ is a non-{\ji} sub{\pvar} of ${ \lpbr \N_2^I,\LZBop \rpbr }$ if
\begin{enumerate}[1.]
\item[$\bullet$] $S \models \{x^3 \id x^2, \, x^2yx^2 \id xyx, \, xhytxy \id x^2hyty, \, xhytyx \id xhy^2tx \}$\up,
\item[$\bullet$] $S \not\models x^2 \id x$\up, \quad $S \not\models xy \id yx$\up.
\end{enumerate}
\end{conB}

\begin{conB}[Corollary~\ref{C: LZIjl3jl3op}]
A {\pvar} $\lpbr S\rpbr$ is a non-{\ji} sub{\pvar} of ${ \lpbr\LZ^I,\el,\elop \rpbr }$ if
\begin{enumerate}[1.]
\item[$\bullet$] $S \models \{x^3 \id x^2, \, xyx \id x^2y^2, \, xy^2z \id xyz\}$\up,
\item[$\bullet$] $S \not\models x^2y \id xy$\up, \quad $S \not\models xy^2 \id xy$\up.
\end{enumerate}
\end{conB}

\begin{conB}[Corollary~\ref{C: A0jB0I}]
A {\pvar} $\lpbr S\rpbr$ is a non-{\ji} sub{\pvar} of ${ \lpbr\Az, \el^I,(\elop)^I \rpbr }$ if
\begin{enumerate}[1.]
\item[$\bullet$] $S \models \left\{ \begin{array}[c]{l} x^3 \id x^2, \, x^2yx^2 \id xyx, \, xyxy \id yxyx, \\ xyxzx \id xyzx, \, xy^2z^2x \id xz^2y^2x \end{array} \right\}$\up,
\item[$\bullet$] $S \not\models xyx \id yxy$\up, \quad $S \not\models xyx \id x^2y$\up, \quad $S \not\models xyx \id yx^2$\up.
\end{enumerate}
\end{conB}

\begin{conB}[Corollary~\ref{C: W}]
A {\pvar} $\lpbr S\rpbr$ is a non-{\ji} sub{\pvar} of $\lpbr(\N_2^\ba)^I,\LZB\rpbr$ if
\begin{enumerate}[1.]
\item[$\bullet$] $S \models \{ x^3 \id x^2, \, xyx \id y^2x \}$\up,
\item[$\bullet$] $S \not\models x^2 \id x$\up, \quad $S \not\models xyx^2 \id yx^2$\up.
\end{enumerate}
\end{conB}

\section*{Acknowledgments}

We are very grateful to the following colleagues:
the anonymous reviewer, for a careful reading of the entire paper and a number of useful suggestions;
Norman Reilly, for a helpful discussion on {\sji} {\pvars} of bands;
George Bergman, for allowing us to include Theorem~\ref{T: Bergman} and its proof in the paper;
and Wendy Wong, for checking the sufficient conditions in Sec.~\ref{sec: proof}, with a computer, against all semigroups of order up to five.
We also thank Keith Kearnes and the reviewer for information on Proposition~\ref{P: D4 Q8}.

John Rhodes was supported by Simons Foundation Collaboration Grants for Mathematicians \#313548.
Benjamin Steinberg was supported by Simons Foundation \#245268, United States--Israel Binational Science Foundation \#2012080, and NSA MSP \#H98230-16-1-0047.

\end{document}